\tikzset{font={\fontsize{4pt}{12}\selectfont}}
\newcommand{\supp}{\mathsf{supp}}
\newcommand{\f}{{\bf f}}
\newcommand{\ssf}{\mathsf{f}}
\newcommand{\pleasant}{pleasant}
\newcommand{\RT}{\mathsf{RT}}
\title{Hook formulas for skew
  shapes I. $q$-analogues and bijections}
\author[Alejandro Morales, Igor Pak, Greta Panova]{Alejandro H.~Morales$^\star$,
\ \ Igor Pak$^\star$, \ and \ \ Greta Panova$^\dagger$}
\thanks{\today}
\thanks{\thinspace ${\hspace{-.45ex}}^\star$Department of Mathematics,
UCLA, Los Angeles, CA~90095.
\hskip.06cm
Email:
\hskip.06cm
\texttt{\{ahmorales,\ts{pak}\}@math.ucla.edu}}
\thanks{\thinspace ${\hspace{-.45ex}}^\dagger$Department of Mathematics,
 UPenn, Philadelphia, PA~19104.
\hskip.06cm
Email:
\hskip.06cm
\texttt{panova@math.upenn.edu}}
\newcommand{\tr}{\mathrm{tr}}
\newcommand{\shape}{\mathrm{shape}}
\newcommand{\reading}{\mathrm{reading}}
\newcommand{\RSK}{\Psi}
\newcommand{\HG}{\Phi}
\newcommand{\GH}{\Omega}
\newcommand{\vf}[1]{{#1}^{\leftrightarrow}}
\newcommand{\hf}[1]{{#1}^{\updownarrow}}
\newcommand{\dd}{\mathsf{d}}
\newcommand{\PD}{\mathcal{P}}
\newcommand{\ED}{\mathcal{E}}
\newcommand{\PP}{\operatorname{PP}}
\newcommand{\RPP}{\operatorname{RPP}}
\newcommand{\SSYT}{\operatorname{SSYT}}
\newcommand{\SYT}{\operatorname{SYT}}
\newcommand{\EqSYT}{\operatorname{EqSYT}}
\newcommand{\TYYT}{\operatorname{DYT}}
\DeclareMathOperator{\area}{area}
\DeclareMathOperator{\shpeaks}{shpk}
\DeclareMathOperator{\expeaks}{expk}
\DeclareMathOperator{\tmaj}{tmaj}
\DeclareMathOperator{\maj}{maj}
\declaretheorem[numberwithin=section]{theorem}
\declaretheorem[numberlike=theorem]{lemma}
\declaretheorem[numberlike=theorem]{proposition}
\declaretheorem[numberlike=theorem]{corollary}
\declaretheorem[numberlike=theorem, style=definition]{definition}
\declaretheorem[numberlike=theorem, style=definition]{remark}
\declaretheorem[numberlike=theorem, style=definition]{example}
\numberwithin{equation}{section} 
\def\emp{\varnothing}
\def\sq{\square}
\def\la{\lambda}
\def\ga{\gamma}
\def\vp{\varphi}
\def\cP{\mathcal P}
\def\ssu{\subset}
\def\<{\langle}
\def\>{\rangle}
\def\0{{\mathbf 0}}
\def\SS{{S}}
\def\.{\hskip.06cm}
\def\ts{\hskip.03cm}
\def\nin{\noindent}
\begin{document}

\begin{abstract}
The celebrated \emph{hook-length formula} gives a product formula
for the number of standard Young tableaux of a straight shape.
In 2014, Naruse announced a more general formula for
the number of standard Young tableaux of skew shapes as a positive
sum over \emph{excited diagrams} of products of hook-lengths.
We give an algebraic and a combinatorial proof of Naruse's formula,
by using \emph{factorial Schur functions} and a generalization of the
\emph{Hillman--Grassl correspondence}, respectively.

The main new results are two different $q$-analogues of Naruse's formula: for the
skew Schur functions, and for counting reverse plane partitions of skew
shapes.  We establish explicit bijections between these objects and
families of integer arrays with certain nonzero entries, which also
proves the second formula.
\end{abstract}

\keywords{Hook-length formula, excited tableau, standard Young tableau,
flagged tableau, reverse plane partition,
Hillman--Grassl correspondence, Robinson--Schensted--Knuth correspondence, Greene's theorem,
Grassmannian permutation, factorial Schur function}

\ytableausetup{smalltableaux}

\maketitle

\section{Introduction} \label{sec:intro}

\subsection{Foreword}
The classical {\em hook-length formula} (HLF) for the number of
\emph{standard Young tableaux} (SYT) of a Young diagram, is a beautiful result
in enumerative combinatorics that is both mysterious and extremely well studied.
In a way it is a perfect formula -- highly nontrivial, clean, concise
and generalizing several others (binomial coefficients, Catalan numbers, etc.)
The HLF was discovered by Frame, Robinson and Thrall~\cite{FRT} in~1954,
and by now it has numerous proofs: probabilistic, bijective, inductive,
analytic, geometric, etc.\ (see~$\S$\ref{ss:finrem-GNW}). Arguably,
each of these proofs does not really explain the HLF on a deeper level,
but rather tells a different story, leading to new generalizations and
interesting connections to other areas.  In this paper we prove a new
generalization of the HLF for skew shapes which presented an unusual and
interesting challenge; it has yet to be fully explained and understood.

For skew shapes, there is no product formula for the number
$f^{\lambda/\mu}$ of standard Young tableaux (cf.~Section~\ref{sec:compare}).
Most recently, in the context of equivariant Schubert calculus,
Naruse presented and outlined a proof in~\cite{Strobl}
of a remarkable generalization on the HLF, which we call the
\emph{Naruse hook-length formula} (NHLF).  This formula (see below),
writes $f^{\lambda/\mu}$ as a sum of ``hook products'' over the
\emph{excited diagrams}, defined as certain generalizations of skew shapes.
These excited diagrams were introduced by Ikeda and Naruse~\cite{IkNa09},
and in a slightly different form independently by Kreiman~\cite{VK,VK2}
and Knutson, Miller and Yong~\cite{KMY}. They are a combinatorial model for the
terms appearing in the formula for {\em Kostant
  polynomials} discovered independently by Andersen, Jantzen and
  Soergel~\cite[Appendix D]{AJS}, and Billey~\cite{Bil}
(see Remark~\ref{rem:BilleyF} and~$\S$\ref{ss:finrem-hist}).
These diagrams are the main combinatorial objects in this paper and have difficult
structure even in nice special cases (cf.~\cite{MPP2} and Ex.~\ref{ex:skewhook}).

\smallskip

The goals of this paper are twofold.  First, we give Naruse-style hook
formulas for the Schur function $s_{\lambda/\mu}(1,q,q^2,\ldots)$,
which is the generating function for \emph{semistandard Young tableaux}
(SSYT) of shape~$\lambda/\mu$,
and for the generating function for \emph{reverse plane partitions} (RPP)
of the same shape.  Both can be viewed as $q$-analogues of~NHLF.
In contrast with the case of straight shapes, here these two formulas are
quite different.  Even the summations are over different sets --
in the case of RPP we
sum over \emph{pleasant diagrams} which we introduce.
The proofs employ a combination of algebraic and bijective
arguments, using the factorial Schur functions and the
Hillman--Grassl correspondence, respectively.  While the
algebraic proof uses some powerful known results, the bijective
proof is very involved and occupies much of the paper.

Second, as a biproduct of our proofs we give the first purely
combinatorial (but non-bijective) proof of Naruse's formula.
We also obtain {\rm trace generating functions} for both SSYT
and RPP of skew shape, simultaneously generalizing classical
Stanley and Gansner formulas, and our $q$-analogues.
We also investigate combinatorics of excited and pleasant
diagrams and how they related to each other, which allow us
simplify the RPP case.

\smallskip

\subsection{Hook formulas for straight and skew shapes}
We assume here the reader is familiar with the basic definitions, which
are postponed until the next two sections.

The {\em standard Young tableaux} (SYT)  of straight and skew shapes are
central objects in enumerative and algebraic combinatorics.  The number
$f^{\lambda} = |\SYT(\la)|$ of standard Young tableaux of shape $\lambda$ has the
celebrated {\em hook-length formula} (HLF):

\begin{theorem}[HLF; Frame--Robinson--Thrall~\cite{FRT}]
Let $\lambda$ be a partition of~$n$.  We have:
\begin{equation} \label{eq:hlf}
f^{\lambda} \, = \, \frac{n!}{\prod_{u\in [\lambda]} h(u)}\,,
\end{equation}
where $h(u)=\lambda_i-i+\lambda'_j-j+1$ is the {\em hook-length} of the
square $u=(i,j)$.
\end{theorem}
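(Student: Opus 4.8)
The plan is to prove \eqref{eq:hlf} algebraically, by expressing each side separately in terms of the \emph{shifted parts} of $\lambda$ and then dividing. Write $k=\ell(\lambda)$ for the number of parts and set $\ell_i:=\lambda_i+k-i$ for $1\le i\le k$, so that $\ell_1>\ell_2>\cdots>\ell_k\ge 1$. Everything reduces to two essentially independent computations: a determinantal formula for $f^{\lambda}$ in the $\ell_i$, and a factorization of the hook product $\prod_{u\in[\lambda]}h(u)$ in the same variables. The shared Vandermonde factor $\prod_{i<j}(\ell_i-\ell_j)$ will then cancel, leaving exactly \eqref{eq:hlf}.

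First I would establish the determinantal (Frobenius--Aitken) formula
\[
f^{\lambda} \ = \ n!\,\det\!\left[\frac{1}{(\lambda_i-i+j)!}\right]_{i,j=1}^{k} \ = \ \frac{n!\,\prod_{1\le i<j\le k}(\ell_i-\ell_j)}{\prod_{i=1}^{k}\ell_i!}\,.
\]
The first equality I would obtain by encoding an SYT as a family of nonintersecting lattice paths and applying the Lindstr\"om--Gessel--Viennot lemma (alternatively, one may specialize the Jacobi--Trudi identity, or invoke the Frobenius character formula). The second equality is a pure evaluation: writing $1/(\lambda_i-i+j)!=(\ell_i)(\ell_i-1)\cdots(\ell_i-k+j+1)/\ell_i!$, one factors $1/\ell_i!$ out of row $i$ and recognizes the remaining matrix of monic polynomials of degrees $k-1,\ldots,0$ in the $\ell_i$ as a Vandermonde determinant.

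Next I would prove the hook factorization
\[
\prod_{u\in[\lambda]} h(u) \ = \ \prod_{i=1}^{k}\frac{\ell_i!}{\prod_{j>i}(\ell_i-\ell_j)} \ = \ \frac{\prod_{i=1}^{k}\ell_i!}{\prod_{1\le i<j\le k}(\ell_i-\ell_j)}\,,
\]
by analyzing the hooks row by row. The key claim is that the multiset of hook lengths in row $i$ is exactly $\{1,2,\ldots,\ell_i\}\setminus\{\ell_i-\ell_j:i<j\le k\}$; note that the first-column hook $h(i,1)=\lambda_i+\lambda'_1-i=\ell_i$ (using $\lambda'_1=k$) sits at the top of this range. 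One verifies that moving rightward along row $i$ the hook lengths decrease, skipping a value precisely at each column where some lower row $j>i$ terminates, and that these skipped values are exactly the $\ell_i-\ell_j$. Multiplying the two displays and cancelling the common Vandermonde factor then yields the right-hand side of \eqref{eq:hlf}.

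The main obstacle is the first equality in the determinant display: the Vandermonde evaluation and the row-by-row hook bookkeeping are elementary, but passing from the \emph{enumeration} of SYT to a determinant requires a genuine idea (nonintersecting paths, Jacobi--Trudi, or representation theory). The one subtle point in the hook analysis is matching each skipped value $\ell_i-\ell_j$ to the column where row $j$ ends, which must be checked carefully. As an alternative route that avoids determinants entirely, one could instead prove the branching recurrence $f^{\lambda}=\sum_{c}f^{\lambda\setminus c}$ over removable corners $c$, verify that the right-hand side of \eqref{eq:hlf} satisfies the same recurrence, and thereby reduce the whole theorem to the single identity $\sum_{c=(r,s)}\prod_{j<s}\frac{h(r,j)}{h(r,j)-1}\prod_{i<r}\frac{h(i,s)}{h(i,s)-1}=n$; this identity is precisely what the Greene--Nijenhuis--Wilf probabilistic hook walk establishes.
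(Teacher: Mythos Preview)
Your argument is correct: this is the classical Frobenius determinantal proof, and both steps (the Vandermonde evaluation of $\det[1/(\lambda_i-i+j)!]$ and the row-by-row hook factorization $\{h(i,j):1\le j\le\lambda_i\}=\{1,\dots,\ell_i\}\setminus\{\ell_i-\ell_j:j>i\}$) are standard and sound. The one point you flag as ``subtle'' is indeed the only thing requiring care, and your description of how the skipped values arise at the columns where lower rows terminate is the right one.

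However, the paper does not give its own proof of this theorem. The HLF is stated as a classical background result attributed to Frame--Robinson--Thrall and is not re-proved; it serves only as motivation for the paper's actual content, the Naruse skew generalization and its $q$-analogues. The HLF is recovered in the paper only incidentally: once the NHLF is established, setting $\mu=\varnothing$ leaves a unique (empty) excited diagram and the sum collapses to the single hook product; and in the comparison section the paper observes that the HLF also drops out of the equivariant Schubert calculus framework by taking $\mu=\varnothing$ in Lemma~\ref{lem:key2NHLF}, since $C_{\varnothing,\lambda}^{\lambda}=1$.

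So your route is genuinely different from anything in the paper: you give a direct, self-contained argument using only the Jacobi--Trudi/LGV determinant and elementary bookkeeping, whereas the paper reaches the HLF only as a degenerate case of machinery (factorial Schur functions, excited diagrams) built for the skew setting. Your approach is far more economical for this statement alone; the paper's approach exists because its target is the skew case, where no product formula is available and your method does not extend.
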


Most recently, Naruse generalized~\eqref{eq:hlf} as follows.
For a skew shape $\lambda/\mu$, an {\em excited diagram} is a subset of the
Young diagram~$[\lambda]$ of size $|\mu|$, obtained from the Young diagram~$[\mu]$
by a sequence of {\em excited moves}:
\begin{center}
\includegraphics{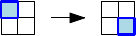}.
\end{center}
Such a move $(i,j) \to (i+1,j+1)$ is allowed only if cells $(i,j+1)$,
$(i+1,j)$ and $(i+1,j+1)$ are unoccupied (see the precise definition and
an example in~$\S$\ref{ss:excited-def}).
We use $\ED(\lambda/\mu)$ to denote
the set of excited diagrams of~$\lambda/\mu$.

\begin{theorem}[NHLF; Naruse \cite{Strobl}] \label{thm:IN}
Let $\lambda,\mu$ be partitions, such that $\mu \ssu \la$.  We have:
\begin{equation} \label{eq:Naruse}
f^{\lambda/\mu} \,  = \, |\la/\mu|! \, \sum_{D \in \ED(\lambda/\mu)}\,\.\.
 \prod_{u \in [\lambda]\setminus D} \frac{1}{ h(u)}\.\ts\..
\end{equation}
\end{theorem}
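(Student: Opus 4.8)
The plan is to deduce NHLF from a stronger, $q$-deformed identity for the skew Schur function, and then let $q \to 1$. The bridge is the classical \emph{principal specialization limit}: writing $n = |\la/\mu|$, one has
\[
\lim_{q\to1}\,(1-q)^{\,n}\,s_{\lambda/\mu}(1,q,q^2,\ldots)\;=\;\frac{f^{\lambda/\mu}}{n!}\,,
\]
which holds because $s_{\lambda/\mu}(1,q,q^2,\ldots)=\sum_{T\in\SSYT(\la/\mu)}q^{|T|}$ (with $|T|$ the sum of entries) has, as a rational function of $q$, a pole of order exactly $n$ at $q=1$ whose leading coefficient counts standard fillings. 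Thus it suffices to produce an excited-diagram expansion of $s_{\lambda/\mu}(1,q,q^2,\ldots)$ whose terms limit, after multiplication by $(1-q)^n$, to the hook products in \eqref{eq:Naruse}. Concretely, I would aim for
\[
s_{\lambda/\mu}(1,q,q^2,\ldots)\;=\;\sum_{D\in\ED(\lambda/\mu)}\ \prod_{(i,j)\in[\lambda]\setminus D}\frac{q^{\,\lambda'_j-i}}{1-q^{\,h(i,j)}}\,,
\]
since each factor $(1-q)\,q^{\,\lambda'_j-i}/(1-q^{\,h(i,j)})\to 1/h(i,j)$ as $q\to1$, and $|[\lambda]\setminus D|=n$, so summing over $D$ recovers \eqref{eq:Naruse} exactly.

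To prove the displayed $q$-identity I would lift everything to \emph{factorial Schur functions} $s_\lambda(x\,|\,a)$, where the excited diagrams occur naturally. The key input is the Ikeda--Naruse expansion coming from equivariant Schubert calculus on the Grassmannian: the restriction of the Schubert class $\sigma_\lambda$ to the torus-fixed point $\mu$ is a positive sum over $\ED(\lambda/\mu)$ of products of equivariant weights, one weight per cell of the excited diagram (equivalently, per complementary cell after a duality). The excited moves $(i,j)\to(i+1,j+1)$ are precisely the bookkeeping for sliding the diagram of $\mu$ around inside $[\lambda]$, so this expansion is governed by the very objects appearing in \eqref{eq:Naruse}. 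Specializing the parameters to the geometric progression $x_i=q^{\,c_i}$, $a_k=q^{\,d_k}$ that realizes the principal specialization, each equivariant weight should become a factor of the form $(1-q^{\,h(u)})$ or $q^{\,\lambda'_j-i}$, converting the restriction formula into the displayed $q$-expansion.

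The main obstacle is twofold. First, one must pin down the exact equivariant weights and verify that the chosen specialization sends them to factors ranging over the \emph{complement} $[\lambda]\setminus D$ rather than over $D$ itself; this requires a duality that trades the $|\mu|$ weights attached to $D$ for the $n$ hook factors of its complement, using the straight-shape HLF \eqref{eq:hlf} to absorb the total product $\prod_{u\in[\lambda]}h(u)$. Second, one must justify that the $q\to1$ limit may be taken termwise and that no excited diagram contributes a spurious higher-order pole, so that the leading $(1-q)^{-n}$ behaviour is captured term by term. As a sanity check, for $\mu=\emp$ there is a single excited diagram, the complement is all of $[\lambda]$, and the identity collapses to $s_\lambda(1,q,\ldots)=q^{\sum_j\binom{\lambda'_j}{2}}/\prod_{u\in[\lambda]}(1-q^{\,h(u)})$, whose $q\to1$ limit is exactly \eqref{eq:hlf}.

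An alternative, purely combinatorial route avoids factorial Schur functions altogether: one would establish a stronger reverse-plane-partition or SSYT identity directly by a weight-preserving bijection of Hillman--Grassl type between fillings of $\la/\mu$ and integer arrays whose nonzero support avoids the excited diagrams, and then specialize and let $q\to1$. This is more self-contained but combinatorially much heavier, and I expect the delicate part there to be controlling the support of the arrays so that the free entries are governed by exactly the cells of $[\lambda]\setminus D$.
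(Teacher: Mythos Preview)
Your proposal is correct and essentially coincides with the paper's first proof of the NHLF: the paper proves exactly your displayed $q$-identity (Theorem~\ref{thm:skewSSYT}) by specializing the Ikeda--Naruse/Billey localization formula for factorial Schur functions (Corollary~\ref{cor:IkNa}), dividing by the product $\prod_{(i,j)\in[\lambda]}(y_{v(d+1-i)}-y_{v(d+j)})$ to pass from weights on~$D$ to weights on~$[\lambda]\setminus D$ (your ``duality'' obstacle), and then setting $y_p=q^{p-1}$; the $q\to1$ limit is then taken termwise via the $P$-partition identity~\eqref{eq1:skewPpartitions} in Proposition~\ref{cor:getnaruse}, which is equivalent to your bridge $\lim_{q\to1}(1-q)^n s_{\lambda/\mu}(1,q,\ldots)=f^{\lambda/\mu}/n!$. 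Your alternative Hillman--Grassl route is also developed in the paper (Sections~\ref{sec:HGRPP}--\ref{sec:HGSSYT}) and yields the paper's second and third, fully combinatorial, proofs of the NHLF.
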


When $\mu = \emp$, there is a unique excited diagram $D=\emp$, and we obtain
the usual~HLF.

\smallskip

\subsection{Hook formulas for semistandard Young tableaux} \label{ss:intro-ssyt}
Recall that (a specialization of) a skew Schur function is the generating function for the
semistandard Young tableaux of shape~$\lambda/\mu$:
$$
s_{\lambda/\mu}(1,q,q^2,\ldots) \, = \, \sum_{\pi \in \SSYT(\lambda/\mu)} q^{|\pi|} \,.
$$
When $\mu=\emp$, Stanley found the following beautiful hook formula.

\begin{theorem}[Stanley \cite{St71}] \label{thm:HG}
\begin{equation} \label{eq:HG}
s_{\lambda}(1,q,q^2,\ldots) \, = \, q^{b(\lambda)}  \. \prod_{u\in [\lambda]} \. \frac{1}{1-q^{h(u)}}\,,
\end{equation}
where $b(\lambda)=\sum_i (i-1)\lambda_i$.
\end{theorem}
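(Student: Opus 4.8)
The plan is to reinterpret the principal specialization as a weighted count of semistandard tableaux and reduce it to the \emph{Hillman--Grassl theorem} for reverse plane partitions by peeling off a ``minimal'' tableau. All identities are read in the ring of formal power series in $q$, where each coefficient is a finite count, so there are no convergence issues. By the combinatorial definition of the Schur function under the substitution $x_k = q^{k-1}$,
\[
s_{\la}(1,q,q^2,\ldots) \, = \, \sum_{T \in \SSYT(\la)} \ \prod_{u \in [\la]} q^{\,T(u)-1}\,,
\]
and the strategy is to strip the smallest admissible value from each cell so that what remains is counted by the pure hook product, while the stripped part accumulates into the prefactor $q^{b(\la)}$.

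\smallskip

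First I would set up the classical shift bijection $\SSYT(\la) \to \RPP(\la)$ that sends a tableau $T$ to the array $P$ with $P(i,j) = T(i,j) - i$. Strict increase down columns of $T$ gives $P(i{+}1,j) = T(i{+}1,j) - (i{+}1) \ge T(i,j) - i = P(i,j)$, so columns of $P$ are weakly increasing; rows remain weakly increasing and all entries remain nonnegative, so $P \in \RPP(\la)$, and $P \mapsto \bigl(P(i,j)+i\bigr)$ is the inverse. Tracking the exponent,
\[
\sum_{u}\bigl(T(u)-1\bigr) \, = \, \sum_{u} P(u) \. + \. \sum_i (i-1)\la_i \, = \, |P| + b(\la)\,,
\]
since row $i$ contributes $(i-1)$ to each of its $\la_i$ cells. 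Hence the generating function factors as $s_{\la}(1,q,q^2,\ldots) = q^{b(\la)} \sum_{P \in \RPP(\la)} q^{\,|P|}$, which isolates the prefactor exactly as in~\eqref{eq:HG} and reduces the theorem to the RPP generating function.

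\smallskip

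The remaining step, proving $\sum_{P \in \RPP(\la)} q^{|P|} = \prod_{u\in[\la]} (1-q^{h(u)})^{-1}$, is the genuine content and the main obstacle. I would obtain it from the Hillman--Grassl correspondence, a weight-preserving bijection between $\RPP(\la)$ and arrays $(a_u)_{u \in [\la]}$ of nonnegative integers satisfying $|P| = \sum_{u} a_u\, h(u)$; once this is in hand the sum factors cell by cell into independent geometric series $\sum_{a \ge 0} q^{a\, h(u)} = (1-q^{h(u)})^{-1}$, giving the product. Constructing this bijection and verifying that it is weight-preserving via the hook-deletion algorithm is the delicate part---indeed it is precisely the machinery the paper goes on to generalize to skew shapes. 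A purely algebraic alternative sidesteps RPP: Jacobi--Trudi gives $s_\la(1,q,q^2,\ldots) = \det\bigl[\,h_{\la_i-i+j}(1,q,q^2,\ldots)\,\bigr]$, Euler's identity yields $h_k(1,q,q^2,\ldots) = 1/\bigl((1-q)\cdots(1-q^k)\bigr)$, and one evaluates the resulting determinant to the hook product---but that determinant evaluation is itself the nontrivial computation one would rather avoid.
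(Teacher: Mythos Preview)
Your proposal is correct and follows exactly the approach the paper sketches for this classical result: the shift bijection $\SSYT(\lambda)\to\RPP(\lambda)$ extracting the factor $q^{b(\lambda)}$ (the paper phrases it as ``subtracting $(i-1)$ from the entries in the $i$-th row'' since its SSYT entries start at~$0$, but the content is identical), followed by the Hillman--Grassl correspondence to obtain~\eqref{eq:RPP-prod}. The paper does not spell out a self-contained proof of Theorem~\ref{thm:HG}, treating it as known, but the surrounding discussion and the later development of the Hillman--Grassl machinery in Section~\ref{sec:HGSSYT} make clear that your argument is the intended one.
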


This formula can be viewed as $q$-analogue of the HLF.  In fact, one can
derive the HLF~\eqref{eq:hlf} from~\eqref{eq:HG} by Stanley's theory of
{\em $P$-partitions} \cite[Prop. 7.19.11]{EC2} or by a geometric argument~\cite[Lemma 1]{P1}.
Here we give the following natural analogue of NHLF~\eqref{eq:HG}.

\begin{theorem} \label{thm:skewSSYT}
We have:
\begin{equation} \label{eq:skewschur}
s_{\lambda/\mu}(1,q,q^2,\ldots) \, = \, \sum_{S\in \ED(\lambda/\mu)}
\. \.\.\prod_{(i,j) \in [\lambda]\setminus S}\frac{q^{\lambda'_j-i}}{1-q^{h(i,j)}}\ts.
\end{equation}
\end{theorem}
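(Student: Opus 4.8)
The plan is to prove \eqref{eq:skewschur} algebraically via factorial Schur functions, by first rewriting the principal specialization $s_{\lambda/\mu}(1,q,q^2,\dots)$ as a normalized evaluation of a factorial Schur function and then expanding that evaluation over excited diagrams. As a guide to the normalization, observe that for $\mu=\emptyset$ there is a unique excited diagram $S=\emptyset$, and since $\sum_{(i,j)\in[\lambda]}(\lambda'_j-i)=\sum_j\binom{\lambda'_j}{2}=b(\lambda)$, the right-hand side of \eqref{eq:skewschur} collapses to Stanley's \eqref{eq:HG}. Thus \eqref{eq:skewschur} is exactly the $\mu$-deformation of \eqref{eq:HG} whose extra data lives in $\ED(\lambda/\mu)$, and the task is to extract this deformation from an object that already records excited diagrams.

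The first key input is an identity of the form
\begin{equation*}
s_{\lambda/\mu}(1,q,q^2,\dots)\,\cdot\!\prod_{u\in[\lambda]}\bigl(1-q^{h(u)}\bigr)\;=\;q^{\,\beta}\,\cdot\,s_\mu\bigl(a_\lambda \mid a\bigr),
\end{equation*}
where $s_\mu(\,\cdot\mid a)$ is the factorial Schur function, $a_\lambda$ is the point attached to $\lambda$, and the parameters are set to a geometric progression $a_i=q^{i}$. Such a formula is a $\mu\neq\emptyset$ extension of Stanley's \eqref{eq:HG} and can be established from the combinatorial (tableau) definition of $s_\mu(\,\cdot\mid a)$ together with the standard principal-specialization machinery; for $\mu=\emptyset$ it reduces to \eqref{eq:HG} with $\beta=b(\lambda)$.

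The second input is the Ikeda--Naruse localization formula, which expands this factorial Schur evaluation positively over excited diagrams,
\begin{equation*}
s_\mu\bigl(a_\lambda \mid a\bigr)\;=\;\sum_{D\in\ED(\lambda/\mu)}\ \prod_{(i,j)\in D}\bigl(a_{\phi(i,j)}-a_{\psi(i,j)}\bigr),
\end{equation*}
with $\phi,\psi$ the index functions prescribed by the excited-move recursion; this is precisely the identity whose non-equivariant limit yields NHLF \eqref{eq:Naruse}. Under $a_i=q^{i}$ each binomial becomes $\pm\,q^{c(i,j)}\bigl(1-q^{h(i,j)}\bigr)$ with $h(i,j)$ the hook-length in $[\lambda]$, so combining the two inputs and dividing by $\prod_{u}(1-q^{h(u)})$ cancels the hook factors attached to the cells of each $D$ against their counterparts in the full product. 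This turns $\sum_D\prod_{(i,j)\in D}$ into $\sum_D\prod_{(i,j)\in[\lambda]\setminus D}$, produces the denominators $(1-q^{h(i,j)})^{-1}$ over the complement, and---after collecting $\beta$ and the content shifts---yields the numerators $q^{\lambda'_j-i}$, which is \eqref{eq:skewschur}.

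The main obstacle is the exponent and sign bookkeeping in this last step. One must verify that the content shifts $c(i,j)$ generated along the excited-move recursion, together with the global power $q^{\,\beta}$, redistribute over the complement to give exactly the leg-length exponents $\lambda'_j-i$, and that all the signs cancel. Both rest on a combinatorial compatibility between excited moves and hook-lengths---namely that the hook factors of the cells of $D$ match hook factors of $[\lambda]$ in the precise way needed for the cancellation---and establishing this compatibility is the technical heart of the argument.
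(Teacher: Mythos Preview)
Your overall strategy coincides with the paper's: use the Ikeda--Naruse expansion of the factorial Schur function $s_\mu^{(d)}(y_{v(1)},\ldots,y_{v(d)}\mid y_1,\ldots,y_{n-1})$ over excited diagrams (your ``second input''), divide by the full product $\prod_{(i,j)\in[\lambda]}(y_{v(d+1-i)}-y_{v(d+j)})$ to turn a product over $D$ into a product over $[\lambda]\setminus D$, specialize $y_p=q^{p-1}$ so that each linear factor becomes $-q^{\,d-\lambda'_j+j}(1-q^{h(i,j)})$, and then reconcile the exponents.

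The gap is your ``first key input.'' You assert an identity
\[
s_{\lambda/\mu}(1,q,q^2,\dots)\cdot\prod_{u\in[\lambda]}(1-q^{h(u)})\;=\;q^{\beta}\,s_\mu(a_\lambda\mid a)
\]
and say it ``can be established from the combinatorial (tableau) definition of $s_\mu(\cdot\mid a)$ together with the standard principal-specialization machinery.'' This is precisely the nontrivial step, and the tableau definition is not the way in. The tableau expansion of $s_\mu(a_\lambda\mid a)$ is a sum over $\SSYT(\mu)$, and there is no direct tableau-level mechanism that converts it into a sum over $\SSYT(\lambda/\mu)$; nothing in ``standard principal-specialization machinery'' bridges that gap. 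The paper establishes this identity by a different route: it uses the \emph{ratio-of-alternants} (determinantal) definition of $s_\mu^{(d)}$, divides through by the full hook product to obtain a determinant $\det[H_{i,r}(\mathbf y)]$ of explicit simple rational functions, specializes $y_p=q^{p-1}$ so that each $H_{i,r}$ becomes (up to a common power of~$q$) $h_{\lambda_i-i-\mu_r+r}(1,q,q^2,\ldots)$, and then recognizes the resulting determinant as $s_{\lambda/\mu}(1,q,q^2,\ldots)$ via the Jacobi--Trudi identity. In short, the link between the factorial Schur evaluation and the skew Schur specialization is a determinant-plus-Jacobi--Trudi argument, not a tableau argument. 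Once that link is in place, your exponent/sign bookkeeping is exactly the content of the paper's final proposition, which verifies $g(\lambda)-g(\mu)+\sum_{(i,j)\in[\lambda]\setminus D}(-d+\lambda'_j-j)=\sum_{(i,j)\in[\lambda]\setminus D}(\lambda'_j-i)$ using content invariance along excited moves.
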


By analogy with the straight shape, Theorem~\ref{thm:skewSSYT} implies~NHLF,
see Proposition~\ref{cor:getnaruse}.
We prove Theorem~\ref{thm:skewSSYT} in Section~\ref{sec:algproof}
by using algebraic tools.

\smallskip

\subsection{Hook formulas for reverse plane partitions and SSYT via bijections} \label{ss:intro-rpp}
In the case of straight shapes, the enumeration of
RPP can be obtained from SSYT, by subtracting $(i-1)$ from the
entries in the $i$-th row.  In other words, we have:
\begin{equation} \label{eq:RPP-prod}
\sum_{\pi \in \RPP(\lambda)} q^{|\pi|}
\, = \, \prod_{u\in [\lambda]} \. \frac{1}{1-q^{h(u)}}\,.
\end{equation}
Note that the above relation does not hold for skew shapes, since entries
on the $i$-th row of a skew SSYT do not have to be at least~$(i-1)$.

Formula~\eqref{eq:RPP-prod} has a classical combinatorial proof
by the {\em Hillman--Grassl correspondence}~\cite{HG},
which gives a bijection $\HG$ between RPP ranked by
the size and nonnegative arrays of shape~$\la$ ranked by the hook
weight.

In the case of skew shapes, we study both the enumeration of skew RPP
and of skew SSYT via the map $\HG$. First,
we view RPP of skew shape $\la/\mu$  as a special case of RPP of
shape~$\la$ with zeros in $\mu$. We obtain the following
generalization of formula~\eqref{eq:RPP-prod}. This result is natural from enumerative point of view, but is
unusual in the literature (cf.~Section~\ref{sec:compare} and~$\S$\ref{ss:finrem-Lam}),
and is completely independent of Theorem~\ref{thm:skewSSYT}.

\begin{theorem} \label{thm:skewRPP}
We have:
\begin{equation} \label{eq:skew-RPP}
\sum_{\pi \in \RPP(\lambda/\mu)} q^{|\pi|} \, = \, \sum_{S \in
 \PD(\lambda/\mu)} \. \.\prod_{u\in S}\frac{q^{h(u)}}{1-q^{h(u)}}\,,
\end{equation}
where $\PD(\lambda/\mu)$ is the set of pleasant diagrams $($see {\rm Definition~\ref{def:agog} }$)$.
\end{theorem}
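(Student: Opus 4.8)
The plan is to deduce the formula from the \emph{Hillman--Grassl correspondence} $\HG$, exactly as formula~\eqref{eq:RPP-prod} is deduced from it in the straight-shape case. Recall that $\HG$ is a weight-preserving bijection between $\RPP(\lambda)$ and the set of nonnegative integer arrays $A$ of shape~$\lambda$, where the weight of $A$ is the \emph{hook weight} $\sum_{u \in [\lambda]} A(u)\ts h(u)$; thus $|\pi| = \sum_u A(u)\ts h(u)$ for $\pi = \HG^{-1}(A)$. I would regard an element of $\RPP(\lambda/\mu)$ as a reverse plane partition of shape~$\lambda$ that is identically zero on the cells of~$[\mu]$, so that
\[
\sum_{\pi \in \RPP(\lambda/\mu)} q^{|\pi|} \, = \, \sum_{\substack{A \,:\, \HG^{-1}(A)|_{[\mu]} = 0}} q^{\,\sum_u A(u)\ts h(u)}\ts,
\]
and the whole problem reduces to identifying which arrays $A$ are sent by $\HG^{-1}$ to an RPP vanishing on~$[\mu]$.

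First I would isolate the structural fact on which everything rests: \textbf{the set of cells on which $\HG^{-1}(A)$ is nonzero depends only on the support $\supp(A)$, and not on the actual values of the entries of~$A$.} Granting this, whether $\HG^{-1}(A)$ vanishes on $[\mu]$ is a property of $\supp(A)$ alone, and I would \emph{define} $\PD(\lambda/\mu)$ to be the collection of subsets $S \ssu [\lambda]$ that occur as supports of arrays mapping to an RPP that is zero on~$[\mu]$ (this is the content I expect Definition~\ref{def:agog} to make precise). The empty set is always such a support, corresponding to $\pi = 0$.

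With the key fact in hand the computation is routine. The arrays contributing to the sum above are exactly those with $\supp(A) \in \PD(\lambda/\mu)$: if $\HG^{-1}(A)|_{[\mu]}=0$ then $\supp(A)\in\PD(\lambda/\mu)$ by definition, while conversely any array whose support equals some $S\in\PD(\lambda/\mu)$ has the same set of nonzero cells as the witnessing array, hence also vanishes on $[\mu]$. Partitioning by support then gives
\[
\sum_{\pi \in \RPP(\lambda/\mu)} q^{|\pi|} \, = \, \sum_{S \in \PD(\lambda/\mu)} \ \sum_{\substack{A \,:\, \supp(A) = S}} \ \prod_{u \in S} q^{\,A(u)\ts h(u)} \, = \, \sum_{S \in \PD(\lambda/\mu)} \ \prod_{u \in S} \sum_{k \geq 1} q^{\,k\ts h(u)}\ts,
\]
since each cell of $S$ ranges independently over the positive integers; evaluating the inner geometric series as $\frac{q^{h(u)}}{1-q^{h(u)}}$ yields~\eqref{eq:skew-RPP}.

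The main obstacle is the structural fact highlighted above, and it is genuinely the heart of the matter: it requires a hands-on analysis of how $\HG$ (equivalently its inverse) acts, rather than using the bijection as a black box. I would track the lattice paths produced by the algorithm and argue, by induction on the cells processed in the algorithm's canonical order, that the union of the paths built from $A$ — and hence the set of nonzero cells of $\HG^{-1}(A)$ — is governed purely by which entries of $A$ are positive. This is where I expect the bulk of the work to go, and it is plausible that a faithful proof first requires refining or reproving fine properties of the Hillman--Grassl bijection, in particular a precise description of the supports of the individual paths, before the support-invariance can be established.
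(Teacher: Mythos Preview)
Your high-level strategy is the paper's: regard $\RPP(\lambda/\mu)$ as those $\pi\in\RPP(\lambda)$ vanishing on $[\mu]$, push through Hillman--Grassl, isolate the admissible supports, and sum the geometric series. The structural fact you highlight---that the support of $\HG^{-1}(A)$ depends only on $\supp(A)$---is also correct and is indeed the crux.

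Where you diverge is in \emph{how} that fact is established, and correspondingly in what Definition~\ref{def:agog} actually says. The paper does not induct on the algorithm; it invokes Gansner's Greene-type theorem for Hillman--Grassl (Theorem~\ref{thm:HGdiag}\,(ii)): if $A=\HG(\pi)$ and $\nu^{(k)}$ is the partition along the $k$-diagonal of $\pi$, then $\ell(\nu^{(k)})=dc_1(A_k)$, the length of the longest descending chain in $A\cap\square^\lambda_k$. Since $dc_1$ sees only which entries are nonzero, your structural fact falls out immediately; and since $\pi$ vanishes on $[\mu]$ iff each diagonal carries at most $s_k=|\dd_k\cap[\lambda/\mu]|$ nonzero entries, the actual definition of a pleasant diagram is the explicit condition $dc_1(S\cap\square^\lambda_k)\le s_k$ for every $k$---not the implicit ``whichever supports happen to work'' you guessed. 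With Gansner's theorem in hand the whole proof is a few lines (Theorem~\ref{thm:rpp_pleasant}). The induction on paths you propose instead is delicate: the individual Hillman--Grassl paths genuinely depend on the entry values of $A$, not just on its support, and only their \emph{union} turns out to be support-invariant; seeing that directly, without the Greene-type statement, essentially amounts to reproving it.
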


The theorem employs a new family of combinatorial objects called
{\em pleasant diagrams}.  These diagrams can be defined as
subsets of complements of excited diagrams (see Theorem~\ref{conj:charpleasant}),
and are technically useful.  This allows us to write
the RHS of~\eqref{eq:skew-RPP} completely in terms of excited diagrams
(see Corollary~\ref{cor:skewRPP}).  Note also that as corollary of
Theorem~\ref{thm:skewRPP}, we obtain a combinatorial proof of
NHLF (see~$\S$\ref{ss:pleasant-skewRPP-excited}).

\smallskip

Second, we look at the restriction of~$\HG$ to SSYT of skew shape $\la/\mu$. The major technical result of this part is Theorem~\ref{thm:bij},
which states that such a restriction gives a \textbf{bijection}
between SSYT of shape $\lambda/\mu$ and arrays of
nonnegative integers of shape $\lambda$ with zeroes in the excited diagram
and certain nonzero cells ({\em excited arrays}, see Definition~\ref{def:hookarray}).
In other words, we fully characterize the preimage of the SSYT of shape $\lambda/\mu$
under the map~$\HG$.  This and the properties of~$\HG$ allows us to obtain a number
of generalizations of Theorem~\ref{thm:skewSSYT} (see below).

The proof of Theorem~\ref{thm:bij} goes through several steps of interpretations using
careful analysis of longest decreasing subsequences in these arrays and a detailed study of structure
of the resulting tableaux under the RSK.  We built on top of the celebrated \emph{Greene's
theorem} and several of Gansner's results.

\subsection{Further extensions} \label{ss:intro-trace}
One of the most celebrated formula in enumerative combinatorics is
\emph{MacMahon's formula} for enumeration
of \emph{plane partitions}, which can be viewed as a limit case
of {\em Stanley's trace formula} (see~\cite{St71,StPP}):
$$
\sum_{\pi \in \PP} q^{|\pi|} \, = \, \prod_{n=1}^\infty \.
\frac{1}{(1- q^{n})^n}\,,  \qquad
\sum_{\pi \in \PP(m^\ell)} \. q^{|\pi|}\ts t^{\tr(\pi)} \, = \, \prod_{i=1}^m \. \prod_{j=1}^\ell \,
\frac{1}{1-t\ts q^{i+j-1}}\,.
$$
Here $\tr(\pi)$ refers to the {\em trace} of the plane partition.

These results were further generalized by Gansner~\cite{G} by using the properties
of the Hillman--Grassl correspondence combined with that of the RSK correspondence
(cf.~\cite{G2}).

\begin{theorem}[Gansner~\cite{G}] \label{thm:trace-RPP}
We have:
\begin{equation} \label{eq:traceeq}
\sum_{\pi \in \RPP(\lambda)} \. q^{|\pi|}\ts t^{\tr(\pi)} \, = \, \prod_{u\in
  \square^{\lambda}}\.  \frac{1}{1-tq^{h(u)}} \. \prod_{u\in
  [\lambda]\setminus \square^{\lambda}} \. \frac{1}{1-q^{h(u)}}\,,
\end{equation}
where $\square^{\lambda}$ is the {\em Durfee square} of the Young diagram of~$\lambda$.
\end{theorem}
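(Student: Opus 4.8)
The plan is to prove~\eqref{eq:traceeq} by the same mechanism that yields the untraced identity~\eqref{eq:RPP-prod}, namely the Hillman--Grassl correspondence $\HG$, but refined to keep track of the trace. Recall that $\HG$ is a bijection from $\RPP(\lambda)$ to the set of nonnegative integer arrays $A=(a_u)_{u\in[\lambda]}$ of shape $\lambda$, with the defining property that $|\pi|=\sum_{u\in[\lambda]} a_u\,h(u)$. Summing $q^{|\pi|}$ over all RPP thus factors cellwise as $\sum_A\prod_u q^{a_u h(u)}=\prod_u(1-q^{h(u)})^{-1}$, recovering~\eqref{eq:RPP-prod}. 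To obtain the bivariate formula it therefore suffices to locate, on the array side, a \emph{cellwise} statistic whose total equals $\tr(\pi)$, and to show it is supported precisely on the Durfee square $\square^{\lambda}$.

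The first observation I would record is a purely local fact about hooks that explains the appearance of $\square^{\lambda}$: for $u=(i,j)\in[\lambda]$, the hook $H(u)$ meets the main diagonal $\{(k,k)\}$ if and only if $u\in\square^{\lambda}$, and in that case in exactly one cell. Indeed, if $i,j\le d$ (the Durfee side) then either the leg of $H(u)$ contains $(j,j)$ or its arm contains $(i,i)$, whereas if $i>d$ or $j>d$ the hook lies entirely off the diagonal. This motivates the key claim, the \emph{trace lemma}: if $A=\HG(\pi)$, then
\[
\tr(\pi)\;=\;\sum_{u\in\square^{\lambda}} a_u.
\]
Granting it, the weight of $\pi$ factors as $q^{|\pi|}t^{\tr(\pi)}=\prod_{u\in[\lambda]}q^{a_u h(u)}\cdot\prod_{u\in\square^{\lambda}}t^{a_u}$, and summing over all arrays gives
\[
\sum_{\pi\in\RPP(\lambda)}q^{|\pi|}t^{\tr(\pi)}
=\prod_{u\in\square^{\lambda}}\sum_{a\ge 0}\bigl(tq^{h(u)}\bigr)^{a}\cdot\prod_{u\in[\lambda]\setminus\square^{\lambda}}\sum_{a\ge 0}\bigl(q^{h(u)}\bigr)^{a},
\]
which is exactly~\eqref{eq:traceeq}. (The cleanest packaging is to prove the stronger diagonal-refined identity, weighting cell $(i,j)$ of $\pi$ by a variable $z_{j-i}$ and each array cell $u$ by the monomial $\prod_{(a,b)\in H(u)}z_{b-a}$; the trace formula is then the specialization $z_0=tq$, $z_k=q$ for $k\ne 0$, and the hook-monomial computation above shows $z_0$ survives with exponent $1$ precisely for $u\in\square^{\lambda}$.)

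The main obstacle, and where essentially all the work lies, is the trace lemma, equivalently the assertion that $\HG$ sends the diagonal content of $\pi$ to the hook-diagonal content of $A$. A naive path analysis already shows $\tr(\pi)$ decreases by the number of main-diagonal cells met by the extracted Hillman--Grassl paths (each path is a monotone lattice path recorded at a single array cell, through which it deletes one unit), but an individual path recorded at $u\in\square^{\lambda}$ need not cross the diagonal, and one recorded off $\square^{\lambda}$ need not avoid it; the matching holds only in aggregate, so a global argument is required. I would obtain this by passing to the RSK description of $\HG$ and invoking Greene's theorem, which expresses the diagonal entries $\pi_{kk}$ through maximal families of increasing and decreasing subsequences and thereby ties the trace to the diagonal data of the recording array. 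Once the trace lemma is in hand, the remaining factorization is a routine geometric-series manipulation, completing the proof of Gansner's formula.
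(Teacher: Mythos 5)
Your skeleton is the paper's own: Theorem~\ref{thm:trace-RPP} is derived there from the Hillman--Grassl bijection together with the trace identity $\tr(\pi)=|A_0|$ (Proposition~\ref{prop:HGtrace} with $k=0$), followed by exactly the cellwise geometric-series factorization you write down; your observation that the hook of $u$ meets the main diagonal in exactly one cell precisely when $u\in\square^{\lambda}$ is also correct. The genuine problem is your treatment of the trace lemma. You assert that the per-path argument cannot work --- that ``an individual path recorded at $u\in\square^{\lambda}$ need not cross the diagonal, and one recorded off $\square^{\lambda}$ need not avoid it,'' so that the matching holds only in aggregate --- and this assertion is false. Write $d$ for the side of $\square^{\lambda}$. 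For an RPP of \emph{straight} shape, weakly increasing columns force the most south-western nonzero entry to be the bottom cell $(\lambda'_{c_s},c_s)$ of its column, and weakly increasing rows imply that the eastward move of a forward Hillman--Grassl path is blocked only at the end of a row (the cell east of a nonzero cell is nonzero); hence every extracted path runs from a column bottom to a row end $(r_f,\lambda_{r_f})$. Dually, every path of $\HG^{-1}$ runs from the row end $(i,\lambda_i)$ to the column bottom $(\lambda'_j,j)$ by construction. Along such a monotone path the quantity $i-j$ changes by exactly one per step, monotonically, so the path meets the main diagonal exactly once when $c_s\le d$ and $r_f\le d$, that is, exactly when its recording cell lies in $\square^{\lambda}$, and never otherwise. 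Thus each unit deleted (or inserted) changes the trace by one precisely when the corresponding array cell lies in the Durfee square: the trace lemma is local after all, and this boundary-endpoint observation is precisely the paper's one-line justification of Proposition~\ref{prop:HGtrace}.

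Your proposed substitute --- ``passing to the RSK description of $\HG$ and invoking Greene's theorem'' --- cannot stand as written, because Greene's theorem is a statement about RSK, not about $\HG$: the fact that the diagonal data of $\pi$ is governed by the subarrays $A_k$ of $A=\HG(\pi)$ is not something you may quote from Greene; it is itself the Greene-type theorem for the Hillman--Grassl correspondence (Theorem~\ref{thm:HGdiag}, due to Hillman--Grassl and Gansner, equivalently Corollary~\ref{cor:HGvsRSK}). If you do cite that theorem your derivation closes: taking $k=0$ and $t$ sufficiently large gives $\tr(\pi)=|\nu^{(0)}|=ac_t(A_0)=|A_0|$, and then your factorization finishes the proof. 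But then the crucial step of your argument rests on a result far deeper than the observation it replaces, invoked without proof, and motivated by a necessity claim that does not hold.
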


For SSYT and RPP of skew shapes, our analysis of the
Hillman--Grassl correspondence gives the following simultaneous
generalizations of Gansner's theorem and our Theorems~\ref{thm:skewSSYT} and~\ref{thm:skewRPP}.

\begin{theorem} \label{thm:RPP-trace}
We have:
\begin{equation}
\label{eq:RPP-trace}
\sum_{\pi \in \RPP(\lambda/\mu)} \. q^{|\pi|}\ts t^{\tr(\pi)} \, = \, \sum_{S \in
  \PD(\lambda/\mu)} \, \prod_{u\in S \cap \square^\lambda} \.
  \frac{t\ts q^{h(u)}}{1-t\ts q^{h(u)}}\, \prod_{u\in S \setminus \square^\lambda} \. \frac{q^{h(u)}}{1-q^{h(u)}}\,.
\end{equation}
\end{theorem}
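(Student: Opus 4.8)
The plan is to obtain this trace refinement from exactly the Hillman--Grassl machinery that drives Theorem~\ref{thm:skewRPP}, sharpened by Gansner's observation that $\HG$ also records the trace. Recall that an element $\pi \in \RPP(\lambda/\mu)$ is an RPP of shape $\lambda$ vanishing on $\mu$, and that under $\HG$ such RPP correspond bijectively to nonnegative integer arrays $A = (a_u)_{u \in [\lambda]}$ whose support $\supp(A)$ is a pleasant diagram, with $|\pi| = \sum_{u} a_u\, h(u)$. Grouping these arrays by their support $S = \supp(A) \in \PD(\lambda/\mu)$, and summing the resulting geometric series in each cell (where now $a_u \geq 1$ precisely for $u \in S$), already recovers the $t=1$ case, so the only new ingredient I need is a companion identity for $\tr(\pi)$.

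For that I would invoke Gansner's refinement of the Hillman--Grassl correspondence, which is the engine behind Theorem~\ref{thm:trace-RPP}: if $A = \HG(\pi)$, then the trace is read off as the total mass of $A$ inside the Durfee square,
$$
\tr(\pi) \, = \, \sum_{u \in \square^\lambda} a_u.
$$
Granting this, the bivariate generating function factors cell by cell over $S$. A cell $u \in S \setminus \square^\lambda$ contributes $\sum_{a_u \geq 1} q^{a_u h(u)} = q^{h(u)}/(1-q^{h(u)})$ as before, while a cell $u \in S \cap \square^\lambda$ now carries the extra weight $t^{a_u}$ and contributes $\sum_{a_u \geq 1} (t\ts q^{h(u)})^{a_u} = t\ts q^{h(u)}/(1-t\ts q^{h(u)})$. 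Summing these products over all $S \in \PD(\lambda/\mu)$ then yields precisely the right-hand side of~\eqref{eq:RPP-trace}.

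The main obstacle is the compatibility step: I must confirm that Gansner's trace identity $\tr(\pi) = \sum_{u \in \square^\lambda} a_u$ persists verbatim when $\HG$ is restricted to skew-shape RPP, so that the weight refinement (support is a pleasant diagram) and the trace refinement (Durfee-square mass) factor simultaneously and independently over cells. Concretely, one checks that both statistics are \emph{cell-local} functions of the array $A$ -- the exponent of $q$ splits as $\sum_u a_u h(u)$ and the exponent of $t$ splits as $\sum_{u \in \square^\lambda} a_u$ -- so that the pleasant-diagram bookkeeping of $\supp(\HG(\pi))$ does not interfere with Gansner's diagonal bookkeeping, and the double sum separates as above. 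As sanity checks, setting $t=1$ collapses the two cell types and returns Theorem~\ref{thm:skewRPP}, while taking $\mu = \emp$ makes every subset of $[\lambda]$ pleasant, so the sum over $S$ factors into $\prod_{u \in \square^\lambda}(1-t\ts q^{h(u)})^{-1} \prod_{u \in [\lambda]\setminus \square^\lambda}(1-q^{h(u)})^{-1}$, recovering Gansner's Theorem~\ref{thm:trace-RPP}.
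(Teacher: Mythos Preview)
Your proposal is correct and follows essentially the same route as the paper's proof: both arguments group arrays $A=\HG(\pi)$ by their exact support $S\in\PD(\lambda/\mu)$ (Theorem~\ref{thm:rpp_pleasant}), invoke Gansner's Proposition~\ref{prop:HGtrace} at $k=0$ to read $\tr(\pi)=\sum_{u\in\square^\lambda}a_u$, and then factor the resulting geometric series cell by cell, with the $t$ attaching precisely to the cells of $S$ inside the Durfee square. The only addition on your side is the pair of sanity checks at $t=1$ and $\mu=\emp$, which are welcome but not part of the paper's argument.
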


As with the~\eqref{eq:RPP-prod}, the RHS of~\eqref{eq:RPP-trace}
can be stated completely in terms of excited diagrams
(see Corollary~\ref{cor:trace-RPP-excited}).

\begin{theorem} \label{thm:SSYT-trace}
We have:
\begin{equation}
\label{eq:SSYT-trace}
\sum_{\pi \in \SSYT(\lambda/\mu)} \. q^{|\pi|}\ts t^{\tr(\pi)} \, = \, \sum_{S\in
  \ED(\lambda/\mu)} q^{a(S)}\ts t^{c(S)}\, \prod_{u \in \overline{S} \cap \square^{\lambda}} \.
\frac{1}{1-t\ts q^{h(u)}} \, \prod_{u\in \overline{S} \setminus
  \square^{\lambda}} \. \frac{1}{1-q^{h(u)}}\,,
\end{equation}
where $\overline{S}=[\lambda]\setminus S$,
$a(S)=\sum_{(i,j)\in [\lambda]\setminus S} (\lambda'_j-i)$ and
$c(S)=|\supp(A_S)\cap \square^{\lambda}|$ is the size of the support of
the excited array $A_S$ inside the Durfee square $\square^{\lambda}$ of $\lambda$.
\end{theorem}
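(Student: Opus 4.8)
The plan is to deduce Theorem~\ref{thm:SSYT-trace} from the bijection of Theorem~\ref{thm:bij} together with the trace-tracking behaviour of the Hillman--Grassl map $\HG$ that underlies Gansner's Theorem~\ref{thm:trace-RPP}. Since $\HG$ restricts to a bijection between $\SSYT(\lambda/\mu)$ and the excited arrays of Definition~\ref{def:hookarray}, and every excited array is zero on a unique excited diagram $S\in\ED(\lambda/\mu)$ (so that its support lies in $\overline{S}=[\lambda]\setminus S$), I would first partition the left-hand side according to this $S$:
\begin{equation*}
\sum_{\pi \in \SSYT(\lambda/\mu)} q^{|\pi|}\ts t^{\tr(\pi)} \, = \, \sum_{S \in \ED(\lambda/\mu)} \ \sum_{A} \. q^{|\HG^{-1}(A)|}\ts t^{\tr(\HG^{-1}(A))},
\end{equation*}
the inner sum ranging over all excited arrays $A$ whose zero set is $S$. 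It then suffices to evaluate this inner sum for each fixed $S$ and match it with the corresponding summand of~\eqref{eq:SSYT-trace}.

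For the $q$-exponent alone, setting $t=1$ collapses~\eqref{eq:SSYT-trace} to Theorem~\ref{thm:skewSSYT}, which is already proved; hence for each fixed $S$ the sum of $q^{|\HG^{-1}(A)|}$ over the associated arrays equals $q^{a(S)}\prod_{u\in\overline{S}}(1-q^{h(u)})^{-1}$, where the base $q^{a(S)}$ absorbs the minimal fillings forced by the excited-array structure and each geometric factor records the free growth of one array entry. The entire additional content of Theorem~\ref{thm:SSYT-trace} is therefore the refinement of this identity by the trace variable $t$.

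To introduce $t$ I would track the trace through $\HG$ in the spirit of Gansner's analysis behind Theorem~\ref{thm:trace-RPP}: the trace of $\HG^{-1}(A)$ is governed by the entries of $A$ inside the Durfee square $\square^{\lambda}$, so refining the size generating function amounts to attaching $t$ only to the hook-weight contributions of the cells $u\in\square^{\lambda}$. A cell $u\in\overline{S}\setminus\square^{\lambda}$ then keeps its factor $(1-q^{h(u)})^{-1}$; a free cell $u\in\overline{S}\cap\square^{\lambda}$ acquires $(1-t\ts q^{h(u)})^{-1}$; and each cell forced to be nonzero inside the Durfee square contributes one guaranteed unit of trace, which I would collect into the prefactor $t^{c(S)}$ with $c(S)=|\supp(A_S)\cap\square^{\lambda}|$, leaving a residual geometric factor of the same form. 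Assembling these cellwise contributions with the base $q^{a(S)}$ reproduces exactly the $S$-summand of~\eqref{eq:SSYT-trace}, and summing over $\ED(\lambda/\mu)$ completes the argument.

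The main obstacle is proving that the trace statistic on excited arrays is precisely ``$c(S)$ plus the $t$-weighted free Durfee entries'', i.e.\ that Gansner's diagonal reading of $\HG$ survives the restriction to SSYT of skew shape and interacts correctly with the forced nonzero cells. This requires fusing Gansner's trace analysis with the structural description of excited arrays from Theorem~\ref{thm:bij}, whose proof itself rests on Greene's theorem and the fine behaviour of longest decreasing subsequences under RSK. Verifying that the Durfee-square data decouple cleanly into the prefactor $t^{c(S)}$ and independent geometric series --- rather than mixing with the cells outside $\square^{\lambda}$ or with the base $a(S)$ --- is the delicate technical step.
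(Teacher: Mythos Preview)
Your proposal is correct and follows essentially the same route as the paper: partition $\SSYT(\lambda/\mu)$ via Theorem~\ref{thm:bij} according to the excited diagram, then refine the $t=1$ identity using Gansner's Proposition~\ref{prop:HGtrace} (the $k=0$ case $\tr(\pi)=|A_0|$) to insert the trace variable cell by cell.

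One remark: you overestimate the difficulty of the final step. Proposition~\ref{prop:HGtrace} is a property of the Hillman--Grassl map on \emph{all} RPP of shape~$\lambda$, so it automatically applies to any $\pi=\HG^{-1}(A)$ with $A\in\mathcal{A}^*_S$; there is no need to re-verify that ``Gansner's diagonal reading survives the restriction'' or to invoke Greene's theorem again. Once $\tr(\pi)=\sum_{u\in\square^{\lambda}}A_u$ is known, the inner generating function factors over cells $u\in\overline{S}$ as a product of independent geometric series in $tq^{h(u)}$ or $q^{h(u)}$, with the forced cells of $\supp(A_S)$ starting at exponent~$1$. The prefactor $q^{a(S)}t^{c(S)}$ then drops out immediately, with no ``mixing'' possible. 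So what you flag as the delicate technical step is in fact a one-line computation once Theorem~\ref{thm:bij} and Proposition~\ref{prop:HGtrace} are in hand.
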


Let us emphasize that the proof Theorem~\ref{thm:SSYT-trace} requires
both the algebraic proof of Theorem~\ref{thm:skewSSYT} and the analysis
of the Hillman--Grassl correspondence.

In Section~\ref{sec:bounded_parts} we also consider the enumeration of
skew SSYTs with bounded size of the entries. For straight shapes the
number is given by the hook-content formula. It is natural to expect
an extension of this result through the NHLF. Using the established
bijections and their properties we derive compact positive formulas
for $s_{\lambda/\mu}(1,q,\ldots,q^M)$ as sums over excited diagrams,
in the cases when the skew shape is a border strip. For general skew
shapes it is not clear how to extend the classical hook-content
formula, this is discussed in Section~\ref{ss:finrem-GNW}.

\smallskip

\subsection{Comparison with other formulas.}

In Section~\ref{sec:compare} we provide a comprehensive overview of
the other formulas for $f^{\lambda/\mu}$ that are either already
present in the literature or could be deduced. We show that the NHLF is not a restatement of any of them, and in particular demonstrate how it differs in the number of summands and the terms themselves.

The classical formulas are the Jacobi--Trudi identity, which has negative terms, and the expansion of $f^{\lambda/\mu}$ via the Littlewood--Richardson rule as a sum over $f^{\nu}$ for $\nu \vdash n$.  Another formula is the Okounkov--Olshanski identity summing particular products over SSYTs of shape $\mu$. While it looks similar to the NHLF, it has more terms and the products are not over hook-lengths.

We outline another approach to formulas for $f^{\lambda/\mu}$. We
observe that the original proof of Naruse of the NHLF in~\cite{Strobl}
comes from a particular specialization of the formal variables in the
evaluation of equivariant Schubert structure constants (generalized
Littlewood--Richardson coefficients) corresponding to Grassmannian
permutations. Ikeda--Naruse and Naruse respectively give a formula for their evaluation
in~\cite{IkNa09} via the excited diagrams on one-hand and an iteration
of a Chevalley formula on the other hand, which gives the
correspondence with skew standard Young tableaux. Our algebraic proof
of Theorem~\ref{thm:skewSSYT} follows this approach.

Now, there are other expressions for these equivariant Schubert structure constants,
which via the above specialization would give enumerative formulas for $f^{\lambda/\mu}$.
First, the Knutson-Tao puzzles \cite{KT} give an enumerative formula as a sum
over puzzles of a product of weights corresponding to them. It is also different from the sum over excited diagrams, as shown in examples. Yet
another rule for the evaluation of these specific structure constants
is given by Thomas and Yong in~\cite{TY}, as a sum over certain
edge-labeled skew SYTs of products of weights (corresponding to the
edge label's paths under jeu-de-taquin). An example in
Section~\ref{sec:compare} illustrates that the terms in the formula are different from the terms in the NHLF.

\smallskip

\subsection{Paper outline.}
The rest of the paper is organized as follows. We begin with notation,
basic definitions and background results (Section~\ref{sec:notation}).
The definition of excited diagrams is given in Section~\ref{sec:excited_diagrams},
together with the original formula of Naruse and corollaries of the $q$-analogue. It also contains
the enumerative properties of excited diagrams and the correspondence
with flagged tableaux.  In Section~\ref{sec:algproof}, we give an
algebraic proof of the main Theorem~\ref{thm:skewSSYT}.
Section~\ref{sec:HGSSYT} described the Hillman--Grassl correspondence,
with various properties and an equivalent formulation using the
RSK correspondence in Corollary~\ref{cor:HGvsRSK}.

Section~\ref{sec:HGRPP} defines pleasant diagrams and proves
Theorem~\ref{thm:skewRPP} using the Hillman--Grassl correspondence,
and as a corollary gives a purely combinatorial proof of NHLF
(Theorem~\ref{thm:IN}). Then, in Section~\ref{sec:HGSSYT}, we show that the
Hillman--Grassl map is a bijection between skew SSYT of shape
$\lambda/\mu$ and certain integer arrays whose support is in
the complement of an excited diagram. Section~\ref{sec:bounded_parts} derives the formulas for $s_{\lambda/\mu}(1,q,\ldots,q^M)$ in the cases of border strips. Section~\ref{sec:compare}
compares NHLF and other formulas for $f^{\la/\mu}$.
We conclude with final remarks and open problems in Section~\ref{sec:finrem}.



\bigskip\section{Notation and definitions} \label{sec:notation}

\subsection{Young diagrams} \label{ss:not-yd}
Let $\lambda=(\lambda_1,\ldots,\lambda_r),
\mu=(\mu_1,\ldots,\mu_s)$ denote integer partitions of
length $\ell(\lambda)=r$ and $\ell(\mu)=s$. The {\em size} of the partition
is denoted by $|\lambda|$ and $\lambda'$
denotes the {\em conjugate partition} of $\lambda$. We use $[\lambda]$ to
denote the Young diagram of the partition $\lambda$. The \emph{hook length}
$h_{ij} = \la_i - i +\la_j' -j +1$ of a square $u=(i,j)\in [\la]$ is
the number of squares directly to the right and directly below~$u$
in~$[\la]$.  The {\em Durfee square} $\square^{\lambda}$ is the largest
square inside~$[\la]$; it is always of the form
$\{(i,j), 1\le i,j \le k\}$.

A {\em skew shape} is denoted by $\lambda/\mu$. For an integer $k$,
$1-\ell(\lambda)\leq k \leq \lambda_1-1$, let $\dd_k$ be the diagonal
$\{(i,j) \in \lambda/\mu \mid i-j = k\}$,
where $\mu_k=0$ if $k> \ell(\mu)$. For an integer $t$, $1\leq t
\leq \ell(\lambda)-1$ let $\dd_t(\mu)$ denote the diagonal $\dd_{\mu_t-t}$
where $\mu_t=0$ if $\ell(\mu)<t \leq \ell(\lambda)$.

Given the skew shape $\lambda/\mu$, let $P_{\lambda/\mu}$ be the poset
of cells $(i,j)$ of $[\lambda/\mu]$ partially ordered by component. This poset is {\em
  naturally labelled}, unless otherwise stated.


\subsection{Young tableaux} \label{ss:not-yt}
A {\em reverse plane partition} of skew shape
$\lambda/\mu$ is an array $\pi=(\pi_{ij})$ of nonnegative integers of shape $\lambda/\mu$
that is weakly increasing in rows and columns. We denote the set of
such plane partitions by $\RPP(\lambda/\mu)$.
 A {\em semistandard Young tableau} of shape $\lambda/\mu$ is a RPP
of shape $\lambda/\mu$ that is
strictly increasing in columns. We denote the set of such tableaux by
$\SSYT(\lambda/\mu)$. A {\em standard Young
  tableau} (SYT) of shape $\lambda/\mu$ is an array $T$ of shape
$\lambda/\mu$ with the numbers $1,\ldots,n$, where
$n=|\lambda/\mu|$, each 
appearing once, strictly increasing in rows and columns.  For example,
there are five SYT of shape $(32/1)$:
\[
\ytableausetup{smalltableaux}\ytableaushort{\none12,34}\ \quad
\ytableaushort{\none13,24}\ \quad  \ytableaushort{\none14,23}\ \quad
\ytableaushort{\none23,14}\ \quad \ytableaushort{\none24,13}
\]
 The {\em
  size} $|P|$ of an RPP $P$ or  $|T|$ of a tableau $T$ is the
sum of its entries. A {\em descent} of a  SYT $T$ is an index $i$
such that $i+1$ appears in a row below $i$. The {\em major index}
$\tmaj(T)$ is the sum $\sum i$ over all the descents of~$T$.

\subsection{Symmetric functions} \label{ss:not-sym}
Let $s_{\lambda/\mu}({\bf x})$ denote the  {\em skew Schur function}
of shape $\lambda/\mu$  in variables
${\bf x} = (x_0,x_1,x_2,\ldots)$. In particular,
\[
s_{\lambda/\mu}({\bf x}) \. = \. \sum_{T\in \SSYT(\lambda/\mu)} {\bf x}^T\., \ \  \qquad
s_{\lambda/\mu}(1,q,q^2,\ldots) \. = \. \sum_{T\in \SSYT(\lambda/\mu)} q^{|T|}
\.,
\]
where \ts ${\bf x}^T = x_0^{\#0s \text{ in } (T)}\ts x_1^{\#1s \text{ in } (T)}\ldots$ \ts.
The Jacobi--Trudi identity (see e.g.~\cite[\S 7.16]{EC2}) states that
\begin{equation} \label{eq:JTid}
s_{\lambda/\mu}({\bf x}) \, = \, \det\bigl[h_{\lambda_i-\mu_j -i+j}({\bf x})\bigr]_{i,j=1}^{n},
\end{equation}
where $h_k({\bf x}) = \sum_{i_1\leq i_2 \leq \cdots \leq i_k}
x_{i_1}x_{i_2}\cdots x_{i_k}$ is the $k$-th {\em complete symmetric
  function}. Recall also two specializations of $h_k({\bf x})$:
$$h_k(1^n) \. = \ts \binom{n+k-1}{k} \quad \text{and} \quad
h_k(1,q,q^2,\ldots) \, = \, \prod_{i=1}^k \.
\frac{1}{1-q^i}
$$
(see e.g.~\cite[Prop. 7.8.3]{EC2}), and the specialization of
$s_{\lambda/\mu}({\bf x})$ from Stanley's theory of {\em $(P,\omega)$-partitions} (see \cite[Thm.~3.15.7 and Prop.~7.19.11]{EC2}):
\begin{equation} \label{eq1:skewPpartitions}
s_{\lambda/\mu}(1,q,q^2,\ldots) \, = \, \frac{\sum_{T}
  q^{\tmaj(T)}}{(1-q)(1-q^2)\cdots (1-q^n)}\,,
\end{equation}
where the sum in the numerator of the RHS is over $T$ in $SYT(\lambda/\mu)$, $n=|\lambda/\mu|$ and
$\tmaj(T)$ is as
defined in Section~\ref{ss:not-yt}.

\subsection{Permutations}  We write permutations of $\{1,2,\ldots,n\}$
in {\em one-line notation}: $w=(w_1w_2\ldots w_n)$ where $w_i$ is the
image of $i$. A {\em descent} of $w$ is an index $i$ such that
$w_i>w_{i+1}$. The {\em major index} $\maj(w)$ is the sum $\sum i$ of
all the descents $i$ of $w$.

\subsection{Bijections} \label{ss:not-bij}
To avoid ambiguity, we use the word \emph{bijection} solely as
a way to say that map $\phi: X\to Y$ is one-to-one and onto.  We use the word
\emph{correspondence} to refer to an algorithm defining~$\phi$.
Thus, for example, the Hillman--Grassl correspondence $\Psi$
defines a bijection between certain sets of tableaux and arrays.

\bigskip\section{Excited diagrams}\label{sec:excited_diagrams}

\subsection{Definition and examples} \label{ss:excited-def}
Let $\lambda/\mu$ be a skew partition and $D$ be a subset of the Young
diagram of $\lambda$. A cell $u=(i,j) \in D$ is called {\em active} if
  $(i+1,j)$, $(i,j+1)$ and $(i+1,j+1)$ are all in
$[\lambda]\setminus D$.  Let $u$ be an
active cell of $D$, define $\alpha_u(D)$ to be the set obtained by
replacing $(i,j)$ in $D$ by $(i+1,j+1)$. We call this replacement an {\em excited move}. An {\em excited diagram} of
$\lambda/\mu$ is a subdiagram of $\lambda$ obtained from the Young
diagram of $\mu$
after a sequence of excited moves on active cells. Let
$\mathcal{E}(\lambda/\mu)$ be the set of
excited diagrams of~$\lambda/\mu$.



\begin{example}\label{ex:excited-def}
There are three excited diagrams for the shape $(2^31/1^2)$, see
Figure~\ref{fig1}. The hook-lengths of the cells of these diagrams
are $\{5,4\}, \{5,1\}$ and $\{2,1\}$ respectively and these are the
excluded hook-lengths. The NHLF states in this case:
\[
f^{(2^31/1^2)} \, = \, 5!\left(\frac{1}{3\cdot 3\cdot 2\cdot 1 \cdot 1} + \frac{1}{4\cdot
    3\cdot 3\cdot 2 \cdot 1} + \frac{1}{5\cdot 4\cdot 3\cdot 3\cdot 1}\right) \, = \, 9\ts.
\]

\begin{figure}[hbt]
\begin{center}
\includegraphics{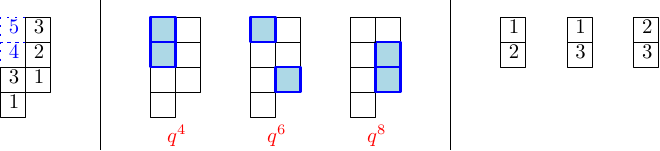}
\caption{
The hook-lengths of the skew shape $\lambda/\mu=(2^31/1^2)$, three
excited diagrams for $(2^31/1^2)$ and the corresponding flagged tableaux
in $\mathcal{F}(\mu,(3,3))$.}
\label{fig1}
\end{center}
\end{figure}

\nin
For the $q$-analogue, let
$$
a(D) \, := \, \sum_{(i,j) \in [\lambda]\setminus D} \. (\lambda'_j-i)
$$
be the sum of exponents of $q$ in the numerator of the RHS of~\eqref{eq:skewschur}.
We have $a(D_1) = 4$, $a(D_2)=6$ and $a(D_3)=8$,
where $D_1,D_2,D_3\in \ED(2^31/1^2)$ are the three excited diagrams
in the figure.

Now Theorem~\ref{thm:skewSSYT} gives
\begin{multline*}
s_{2^31/1^2}(1,q,q^2,\ldots) \, = \, \frac{q^4}{(1-q^3)^2(1-q^2)(1-q)^2} \. +\\
+\. \frac{q^6}{(1-q^4)(1-q^3)^2(1-q^2)(1-q)} \. + \. \frac{q^8}{(1-q^5)(1-q^4)(1-q^3)^2(1-q)}\..
\end{multline*}
Compare this with the expression \eqref{eq1:skewPpartitions} for the
same specialization
\[
s_{2^31/1^2}(1,q,q^2,\ldots) \, = \frac{q^6+2q^4+2q^3+q^5+2q^2+q}{(1-q)(1-q^2)(1-q^3)(1-q^4)(1-q^5)}.
\]
\end{example}

\smallskip

\begin{example} \label{ex:skewhook}
For the hook shape $(k,1^{d-1})$ we have that $f^{(k,1^{d-1})} =
\binom{k+d-2}{k-1}$. By symmetry, for the skew shape $\lambda/\mu$ with $\lambda = (k^d)$ and
$\mu=((k-1)^{d-1})$, we also have $f^{\lambda/\mu} = f^{(k,1^{d-1})}$. The
complements of excited diagrams of this shape are in bijection with
lattice paths $\gamma$ from the cell labeled $(d,1)$ to the cell $(1,k)$. Thus $|\ED(\lambda/\mu)|
= \binom{k+d-2}{k-1}$. Here is an example with $k=d=3$:
\begin{center}
\includegraphics{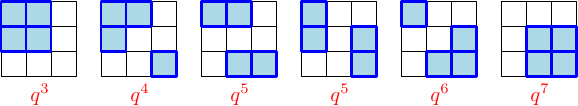}.
\end{center}
Moreover, since $h(i,j) = i+j-1$ for $(i,j)\in
[\lambda]$ then the NHLF, switching the LHS and RHS, states  in this case:
\begin{equation} \label{eq:specialcaseeq1}
\sum_{\gamma:\, (d,1) \to (1,k)} \prod_{(i,j) \in \gamma}
\frac{1}{i+j-1} = \binom{k+d-2}{k-1}\ts ,
\end{equation}
where $\gamma:\, (d,1) \to (1,k)$ means that $\gamma$ is a  NE lattice paths between the given cells.
Next we apply our first $q$-analogue to this shape. First, we have
that $\ts s_{k1^{d-1}} = s_{\lambda/\mu}$
\cite[Prop.~7.10.4]{EC2}. Next, by \cite[Cor. 7.21.3]{EC2} the principal specialization
of the Schur function $\ts s_{k1^{d-1}}$ equals
\[
s_{k1^{d-1}}(1,q,q^2,\ldots) \, = \, q^{\binom{d}{2}} \ts
\prod_{i=1}^{k+d-1} \. \frac{1}{1-q^i} \begin{bmatrix} k+d-2\\ k-1 \end{bmatrix}_q,
\]
where $\begin{bmatrix} n\\ k \end{bmatrix}_q$ is a $q$-binomial
coefficient. Second if the excited diagram $D \in \ED(\lambda/\mu)$
corresponds to path~$\gamma$ then one can show that $a(D) =
\binom{n}{2}+\area(\gamma)$ where $\area(\gamma)$ is the number of cells in the $d\times k$
rectangle South East of the path $\gamma$.
Putting this all together then Theorem~\ref{thm:skewSSYT} for shape $\lambda/\mu$ gives
\begin{equation} \label{eq:specialcaseeq2}
\left(\prod_{i=1}^{k+d-1}
  (1-q^i)\right) \sum_{\gamma: (d,1)  \to (1,k)} q^{\area(\gamma)}\prod_{(i,j)
  \in \ga} \frac{1}{1-q^{i+j-1}} \,=\, \begin{bmatrix} k+d-2\\ k-1 \end{bmatrix}_q\,.
\end{equation}
In \cite{MPP3}, we show that \eqref{eq:specialcaseeq1} and \eqref{eq:specialcaseeq2} are special cases
of the Racah and $q$-Racah formulas in~\cite{BGR}.
\end{example}

\medskip

\subsection{NHLF from its $q$-analogue.}
Next, before proving Theorem~\ref{thm:skewSSYT}, we first show that it is a {\em $q$-analogue} of \eqref{eq:Naruse}.
This argument is standard; we outline it for reader's convenience.

\begin{proposition} \label{cor:getnaruse}
Theorem~\ref{thm:skewSSYT} implies the NHLF \eqref{eq:Naruse}.
\end{proposition}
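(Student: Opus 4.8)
The plan is to recover \eqref{eq:Naruse} from \eqref{eq:skewschur} by the standard device of multiplying both sides by $(1-q)^n$, where $n=|\lambda/\mu|$, and letting $q\to 1$. The key auxiliary input is the $(P,\omega)$-partition specialization \eqref{eq1:skewPpartitions}, which rewrites the same principal specialization $s_{\lambda/\mu}(1,q,q^2,\ldots)$ in terms of standard Young tableaux and thereby produces $f^{\lambda/\mu}$ in the limit. I first note that both sides of \eqref{eq:skewschur} are rational functions in $q$ with a pole of order exactly $n$ at $q=1$: every excited diagram $S$ has $|S|=|\mu|$, so $|[\lambda]\setminus S|=n$, and each of its $n$ factors $1/(1-q^{h(i,j)})$ contributes one simple pole.

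For the left-hand side I use \eqref{eq1:skewPpartitions}. Since $1-q^k=(1-q)(1+q+\cdots+q^{k-1})$, the denominator factors as $(1-q)(1-q^2)\cdots(1-q^n)=(1-q)^n\prod_{k=1}^n(1+q+\cdots+q^{k-1})$, and the product of cofactors tends to $n!$ as $q\to 1$, while the numerator $\sum_T q^{\tmaj(T)}$ tends to $f^{\lambda/\mu}$. Hence $\lim_{q\to 1}(1-q)^n\, s_{\lambda/\mu}(1,q,q^2,\ldots)=f^{\lambda/\mu}/n!$.

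For the right-hand side I pass to the limit term by term over $S\in\ED(\lambda/\mu)$. For fixed $S$,
\[
(1-q)^n\prod_{(i,j)\in[\lambda]\setminus S}\frac{q^{\lambda'_j-i}}{1-q^{h(i,j)}} \, = \, \prod_{(i,j)\in[\lambda]\setminus S}\frac{q^{\lambda'_j-i}}{1+q+\cdots+q^{h(i,j)-1}}\,,
\]
and each factor tends to $1/h(i,j)$ as $q\to 1$, so the whole $S$-term tends to $\prod_{(i,j)\in[\lambda]\setminus S} 1/h(i,j)$. Summing over $S$ and equating with the left-hand limit gives $f^{\lambda/\mu}/n!=\sum_{S\in\ED(\lambda/\mu)}\prod_{(i,j)\in[\lambda]\setminus S} 1/h(i,j)$, which is precisely \eqref{eq:Naruse}.

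There is no genuine obstacle here; the only points needing care are the bookkeeping of the pole order (which is where the fact that $|[\lambda]\setminus S|=n$ for \emph{every} excited diagram is essential, so that each summand scales by exactly $(1-q)^{-n}$) and the observation that the numerators $q^{\lambda'_j-i}$ are inert in the limit, tending to $1$. Since each summand is a nonnegative rational function with the same pole order, interchanging the finite sum with the limit is immediate.
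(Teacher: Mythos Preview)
Your proof is correct and follows essentially the same approach as the paper: both use the $P$-partition identity \eqref{eq1:skewPpartitions} to reinterpret the left side, invoke the crucial fact that every excited diagram has size $|\mu|$ so that each summand has a pole of the same order $n$ at $q=1$, and then pass to the limit $q\to 1$. The only cosmetic difference is that the paper multiplies through by $\prod_{i=1}^n(1-q^i)$ rather than $(1-q)^n$, which absorbs the factor of $n!$ directly instead of tracking it as $\prod_{k=1}^n(1+q+\cdots+q^{k-1})\to n!$.
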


\begin{proof}
Multiplying
\eqref{eq1:skewPpartitions} by \ts $(1-q) \cdots (1-q^n)$ \ts and using
Theorem~\ref{thm:skewSSYT}, gives
\begin{equation} \label{eq2:skewPpartitions}
\sum_{T \in \SYT(\lambda/\mu)}
  q^{\tmaj(T)}  \,= \, \prod_{i=1}^n (1-q^i) \. \sum_{D\in \ED(\lambda/\mu)} \.
\prod_{(i,j)\in [\lambda]\setminus D} \frac{q^{\lambda'_j-i}}{1-q^{h(i,j)}}\..
\end{equation}
Since all excited diagrams $D\in \ED(\lambda/\mu)$ have size $|\mu|$
then by taking the limit $q\to 1$ in \eqref{eq2:skewPpartitions}, we obtain
the NHLF~\eqref{eq:Naruse}.
\end{proof}

Theorem~\ref{thm:skewRPP} is a different $q$-analogue of NHLF, as explained in Section~\ref{sec:HGRPP}.


\medskip \subsection{Flagged tableaux}\label{ss:excited-flagged}
Excited diagrams of $\lambda/\mu$ are also
equivalent to certain {\em flagged tableaux} of shape $\mu$
(see Proposition~\ref{prop:excited2flaggedtab} and \cite[\S 6]{VK}) and thus the number of
excited diagrams is given by a determinant (see Corollary~\ref{cor:GV}), a
polynomial in the parts of $\lambda$ and~$\mu$.

In this section we relate excited diagrams with {\em flagged
  tableaux}. The relation is based on a map by Kreiman~\cite[\S 6]{VK}
  (see also~\cite[\S 5]{KMY}).

We start by stating an important property of excited diagrams that
follows immediately from their construction. Given a set $D\subseteq
[\lambda]$ we say that $(i,j), (i+m,j+m) \in D\cap \dd_k$ for $m>0$ are {\em consecutive}
if there is no other element in $D$ on diagonal $\dd_k$ between them.

\begin{definition}[Interlacing property] \label{def:interlacing}
Let $D \subset [\lambda]$. If $(i,j)$ and $(i+m,j+m)$ are two
consecutive elements in $D \cap \dd_k$ then $D$ contains an element in
each diagonal $\dd_{k-1}$ and $\dd_{k+1}$ between columns $j$ and $j+m$.
\. Note that the excited diagrams in $\ED(\lambda/\mu)$ satisfy this property
by construction.
\end{definition}

Fix a sequence $\f=(\ssf_1,\ssf_2,\ldots,\ssf_{\ell(\mu)})$
of weakly increasing nonnegative integers.  Define $\mathcal{F}(\mu,\f)$ to be the set of
$T \in \SSYT(\mu)$, such that all entries $1\leq T_{ij}\leq \ssf_i$.
Such tableaux are called {\em flagged SSYT} and they were first
studied by Lascoux and Sch\"utzenberger~\cite{LS} and Wachs~\cite{W85}.  By the
Lindstr\"om--Gessel--Viennot lemma on non-intersecting paths (see e.g.~\cite[Thm.~7.16.1]{EC2}),
the size of $\mathcal{F}(\mu,\f)$ is given by a determinant:

\begin{proposition}[Gessel--Viennot \cite{GV}, Wachs \cite{W85}] \label{prop:flagged-det}
In the notation above, we have:
\[
|\mathcal{F}(\mu,\f)| \, = \,
\det\left[h_{\mu_i-i+j}(1^{\ssf_i})\right]_{i,j=1}^{\ell(\mu)}
\, = \, \det\left[
  \binom{\ssf_i+\mu_i-i+j-1}{\mu_i-i+j}\right]_{i,j=1}^{\ell(\mu)},
\]
where $h_k(x_1,x_2,\ldots)$ denotes the complete symmetric function.
\end{proposition}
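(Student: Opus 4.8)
The plan is to realize $\mathcal{F}(\mu,\f)$ as a set of non-intersecting lattice path families and apply the Lindström--Gessel--Viennot (LGV) lemma, exactly as in the cited work of Gessel--Viennot and Wachs. Write $\ell=\ell(\mu)$, fix a large integer $H$, and consider lattice paths in $\mathbb{Z}^2$ with unit North and East steps, with sources and sinks placed at
$$A_i=(i-\mu_i,\,H-\ssf_i+1),\qquad B_j=(j,\,H),\qquad 1\le i,j\le \ell.$$
A monotone path from $A_i$ to $B_j$ makes exactly $\mu_i-i+j$ East steps and $\ssf_i-1$ North steps, so the number of such paths equals the number of monotone paths in a rectangle of width $\mu_i-i+j$ and height $\ssf_i-1$, namely $\binom{\ssf_i+\mu_i-i+j-1}{\mu_i-i+j}$. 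By the specialization $h_k(1^n)=\binom{n+k-1}{k}$ recorded above, this count is $h_{\mu_i-i+j}(1^{\ssf_i})$; when $\mu_i-i+j<0$ there is no such path and both the binomial and $h_{<0}$ vanish, so the matrix entries agree in all cases.

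Next I would set up the bijection between $\mathcal{F}(\mu,\f)$ and non-intersecting families matching $A_i$ to $B_i$. For the identity matching the path from $A_i$ to $B_i$ has exactly $\mu_i$ East steps, and recording the heights of these East steps (relative to the base height of $A_i$) produces a weakly increasing sequence of $\mu_i$ values in $\{1,\ldots,\ssf_i\}$; reading these off as the row entries $T_{i,1}\le\cdots\le T_{i,\mu_i}$ encodes precisely the weak increase along row $i$ together with the flag bound $T_{ij}\le\ssf_i$. The horizontal shift $-i$ built into the sources is the standard Jacobi--Trudi offset, under which vertex-disjointness of the paths for rows $i$ and $i+1$ translates into the strict inequalities $T_{ij}<T_{i+1,j}$ down the columns. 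Hence non-intersecting families are in bijection with flagged semistandard tableaux, i.e. with $\mathcal{F}(\mu,\f)$.

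Finally I would invoke the LGV lemma: $\det\!\left[\,\#\{\text{paths }A_i\to B_j\}\,\right]_{i,j}$ equals the signed sum over permutations $\sigma$ of the number of path families realizing $A_i\to B_{\sigma(i)}$, and the sign-reversing involution on families that cross leaves only vertex-disjoint families. The crucial point is non-permutability: since $\mu$ is a partition and $\f$ is weakly increasing, the quantities $i-\mu_i$ are strictly increasing (so the sources are ordered left to right, at weakly decreasing heights), while the sinks are ordered left to right along $y=H$; this configuration forces every vertex-disjoint family to use the identity matching, which carries sign $+1$. Combining the three steps gives $\det[h_{\mu_i-i+j}(1^{\ssf_i})]=|\mathcal{F}(\mu,\f)|$, and the second determinant is then just the same specialization $h_k(1^n)=\binom{n+k-1}{k}$ applied entrywise.

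I expect the non-permutability and sign analysis to be the only genuine obstacle. The feature distinguishing this from the ordinary Jacobi--Trudi setup is that the source heights $H-\ssf_i+1$ depend on $i$; one must check that this $i$-dependent geometry still admits no order-reversing non-intersecting matching, so that the determinant collapses to a positive count. The weak monotonicity of the flag is exactly what keeps the sources sorted and makes this step go through.
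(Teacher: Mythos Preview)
Your proposal is correct and follows exactly the approach the paper indicates: the paper does not give a detailed proof of this proposition but simply states that ``by the Lindstr\"om--Gessel--Viennot lemma on non-intersecting paths'' the flagged tableaux are counted by the given determinant, citing Gessel--Viennot and Wachs for the details. Your write-up supplies precisely those details --- the path model, the entry count $h_{\mu_i-i+j}(1^{\ssf_i})=\binom{\ssf_i+\mu_i-i+j-1}{\mu_i-i+j}$, the bijection with flagged SSYT, and the non-permutability check relying on the weak monotonicity of~$\f$ --- so there is nothing to compare beyond noting that you have spelled out what the paper leaves to the references.
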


Given a skew shape $\lambda/\mu$, each row $i$ of $\mu$ is between
the rows $k_{i-1} < i \leq k_{i}$ of two corners of $\mu$. When a corner of $\mu$ is in row $k$, let $\ssf'_k$ be the last row of diagonal $\dd_{\mu_k-k}$ in $\lambda$. Lastly, let
$\f^{(\lambda/\mu)}$ be the vector\footnote{In \cite{KMY}, the vector $\f^{\lambda/\mu}$ is called a {\em flagging}.} $(\ssf_1,\ssf_2,\ldots,\ssf_{\ell(\mu)})$, $\ssf_i=\ssf'_{k_i}$ where $k_i$ is the row of the corner of $\mu$ at
or immediately after row~$i$ (see Figure~\ref{fig:flagged_tab}). Let
$\mathcal{F}(\lambda/\mu):=\mathcal{F}(\mu,\f^{(\lambda/\mu)})$.

Let $T_{\mu}$ be the tableaux of shape $\mu$ with entries $i$ in row
$i$. Note that $T_{\mu} \in \mathcal{F}(\lambda/\mu)$. We define an
analogue of an excited move for flagged tableaux. A cell $(x,y)$ of
$T$ in $\mathcal{F}(\lambda/\mu)$ is {\em active} if increasing
$T_{x,y}$ by $1$ results in a flag SSYT tableau $T'$ in
$\mathcal{F}(\lambda/\mu)$. We call this map $T\mapsto T'$ a {\em flagged
  move} and denote by $\alpha'_{x,y}(T)=T'$.

Next we show
that excited diagrams in $\ED(\lambda/\mu)$ are in bijection
with flagged tableaux in $\mathcal{F}(\lambda/\mu)$.

Given $D \in \ED(\lambda/\mu)$, we define $\varphi(D):=T$ as follows:
Each cell  $(x,y)$ of $[\mu]$
corresponds to a cell $(i_x,j_y)$ of $D$. We let $T$ be the tableau of
shape $\mu$ with $T_{x,y}=i_x$.  An example is given in
Figure~\ref{fig:flagged_tab}.

\begin{figure}[hbt]
\begin{center}
\includegraphics[scale=1.1]{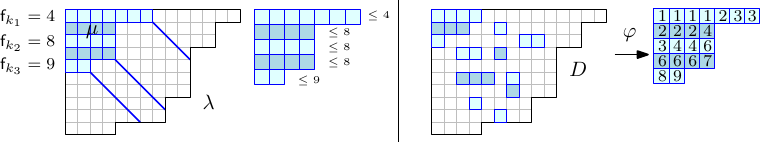}
\caption{Given a skew shape $\lambda/\mu$, for each corner $k$ of $\mu$ we
record the last row $\ssf_k$ of $\lambda$ from diagonal $\dd_{\mu_k-k}$. These
row numbers give the bound for the flagged tableaux of shape $\mu$ in $\mathcal{F}(\mu, \f^{(\lambda/\mu)})$.}
\label{fig:flagged_tab}
\end{center}
\end{figure}

\begin{proposition}
\label{prop:excited2flaggedtab}
We have $|\ED(\lambda/\mu)|=|\mathcal{F}(\lambda/\mu)|$ and the
map  $\varphi$ is a bijection between these two sets.
\end{proposition}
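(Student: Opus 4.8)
The plan is to track each cell of $[\mu]$ along its diagonal and to match excited moves with flagged moves. Since an excited move $(i,j)\to(i+1,j+1)$ keeps a cell on its diagonal $\dd_{i-j}$, one has $|D\cap\dd_k|=|[\mu]\cap\dd_k|$ for every $k$ and every $D\in\ED(\lambda/\mu)$; moreover a cell can be excited only when the next cell down its diagonal is vacant, so cells on a fixed diagonal move without ever changing their relative order. Hence there is a canonical order-preserving correspondence sending the $r$-th cell of $[\mu]$ on $\dd_d$ to the $r$-th cell of $D$ on $\dd_d$, and the image of $(x,y)\in[\mu]$ lies on $\dd_{x-y}$. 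Recording its row $i_x$, as $\varphi$ does, therefore also records its column $i_x-(x-y)$, so $T=\varphi(D)$ determines every image cell and hence $D$ itself. This already gives the injectivity of $\varphi$.

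The engine of the argument is a compatibility between the two kinds of moves. Suppose $u=(i,j)\in D$ corresponds to $(x,y)\in[\mu]$, so that $i=T_{x,y}$ and $j=i-(x-y)$, where $T=\varphi(D)$ is assumed already to lie in $\mathcal{F}(\lambda/\mu)$. The right neighbor $(i,j+1)$ sits on $\dd_{x-y-1}$, the diagonal of the image of $(x,y+1)$; using weak increase along the row, $(i,j+1)\notin D$ is equivalent to $T_{x,y}+1\le T_{x,y+1}$. The lower neighbor $(i+1,j)$ sits on $\dd_{x-y+1}$, the diagonal of the image of $(x+1,y)$; using strict increase down the column, $(i+1,j)\notin D$ is equivalent to $T_{x,y}+2\le T_{x+1,y}$. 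The diagonal neighbor $(i+1,j+1)$ is the next cell of $\dd_{x-y}$, and its vacancy is automatic once $T_{x,y}+1\le T_{x,y+1}$ holds. Finally the three cells all lie in $[\lambda]$ precisely when $T_{x,y}+1\le\ssf_x$, by the choice of the flagging $\f^{(\lambda/\mu)}$. These are exactly the conditions for $(x,y)$ to be active in $T$, and in that case $\varphi(\alpha_u(D))=\alpha'_{x,y}(T)$ follows directly from the definitions.

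With this intertwining and the base case $\varphi([\mu])=T_\mu$ in hand, an induction on the number of excited moves proves $\varphi(D)\in\mathcal{F}(\lambda/\mu)$ for all $D$, so that $\varphi$ is well defined: each excited move is carried to a flagged move, which keeps us inside $\mathcal{F}(\lambda/\mu)$ by definition. Surjectivity is the symmetric statement: $\mathcal{F}(\lambda/\mu)$ is the orbit of $T_\mu$ under flagged moves — a standard connectivity property of flagged semistandard tableaux under single-box increments, which can be organized by an induction on $\sum_{(x,y)\in[\mu]}(T_{x,y}-x)$ — and lifting such a sequence of moves through the intertwining produces a $D\in\ED(\lambda/\mu)$ with $\varphi(D)=T$. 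Together with injectivity, this shows $\varphi$ is a bijection, and $|\ED(\lambda/\mu)|=|\mathcal{F}(\lambda/\mu)|$ follows.

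I expect the move-compatibility step to be the main obstacle. The delicate point is to show that each local vacancy condition around $u$ translates into exactly one defining inequality of a flagged move with no mismatch: one must verify, via the interlacing property (Definition~\ref{def:interlacing}), that the positions $(i,j+1)$, $(i+1,j)$ and $(i+1,j+1)$ can only be occupied by the images of $(x,y+1)$, $(x+1,y)$ and $(x+1,y+1)$ respectively, ruling out a stray cell of $D$ that would make a vacancy condition hold without the corresponding inequality. A secondary technical point is the containment statement in the second step: because $\ssf_x$ is read off from the corner of $\mu$ at or just below row $x$ rather than from the diagonal $\dd_{x-y}$ of $(x,y)$ itself, confirming that $T_{x,y}+1\le\ssf_x$ is the precise condition for $(i+1,j+1)$ to remain in $[\lambda]$ requires a careful analysis of where the relevant diagonals leave $[\lambda]$ relative to the corners of $\mu$.
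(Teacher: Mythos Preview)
Your approach is essentially the paper's: both proceed by induction on the number of moves, with base case $\varphi([\mu])=T_\mu$ and your move-compatibility as the inductive step. The only organizational difference is that the paper packages surjectivity by writing down the inverse explicitly, $\vartheta(T)=\{(T_{x,y},\,T_{x,y}-x+y):(x,y)\in[\mu]\}$, and verifying by the same induction on flagged moves that $\vartheta(T)\in\ED(\lambda/\mu)$; this amounts to your lift-through-connectivity argument and uses the same connectivity of $\mathcal{F}(\lambda/\mu)$ that you invoke.

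You are right to worry about the flag-bound step, and as you state it the biconditional is not literally correct: $(i+1,j+1)\in[\lambda]$ says $i+1\le r_{x-y}$ (writing $r_k$ for the last row of $\dd_k$ in $[\lambda]$), whereas $\ssf_x=r_{k_x-\mu_{k_x}}$, and these diagonals generally differ. The repair is to split on whether $(x,y)$ is itself the corner $(k_x,\mu_{k_x})$ of $\mu$. If it is, then $\ssf_x=r_{x-y}$ and the two conditions coincide. If not, then $(x,y+1)\in[\mu]$, or else $y=\mu_x$ and $(x+1,y)\in[\mu]$; in either case the SSYT inequalities with that neighbor already force both directions. For instance, to get $(i+1,j+1)\in[\lambda]$ from the flagged-move hypotheses, use that the image of the neighbor lies in $[\lambda]$ together with the elementary fact $r_{k+1}\le r_k+1$ (which holds because $\lambda_i-i$ is strictly decreasing in $i$): from $(x,y+1)$ one gets $i+1\le T_{x,y+1}\le r_{x-y-1}\le r_{x-y}$, and from $(x+1,y)$ one gets $i+1\le T_{x+1,y}-1\le r_{x+1-y}-1\le r_{x-y}$. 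The paper is equally terse here, asserting ``by definition of the flags $\f^{(\lambda/\mu)}$, observe that $D$ is a subset of $[\lambda]$'' without further justification.
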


\begin{proof}
We need to prove that $\varphi$ is a
well defined map from $\ED(\lambda/\mu)$ to $\mathcal{F}(\mu,
\f^{(\lambda/\mu)})$.
First, let us show that $T=\varphi(D)$ is a SSYT by induction on the number of excited
moves of $D$. First, note that $\varphi([\mu])=T_{\mu}$ which is SSYT. Next, assume that for $D\in
\ED(\lambda/\mu)$, $T=\varphi(D)$ is a SSYT and
$D'=\alpha_{(i_x,j_y)}(D)$ for some active cell $(i_x,j_y)$ of $D$
corresponding to $(x,y)$ in $[\mu]$. Then $T'=\varphi(D')$ is
obtained from $T$ by adding $1$ to entry $T_{x,y}=i_x$ and leaving the
rest of entries unchanged. When $(x+1,y)
\in [\mu]$, since $(i_x+1,j_y)$ is not in $D$ then the cell of the
diagram corresponding to $(x+1,y)$ is in a row $>i_x+1$, therefore
$T'_{x,y}=i_x+1<T_{x+1,y}=T'_{x+1,y}$. Similarly, if $(x,y+1) \in [\mu]$, since
$(i_x,j_x+1)$ is not in $D$ then the cell of the diagram corresponding to
$(x,y+1)$ is in a row $>i_x$, therefore $T'_{x,y}=i_x+1\leq
T_{x,y+1}=T'_{x,y+1}$. Thus, $T' \in \SSYT(\la/\mu)$.

Next, let us show that $T$ is a flagged tableau in $\mathcal{F}(\mu,
\f^{(\lambda/\mu)})$. Given an excited diagram $D$, if cell
$(i_x,j_y)$ of $D$ is the cell corresponding to $(x,y)$ in $[\mu]$ then the row $i_x$ is
at most $\ssf_{k_x}$: the last row of diagonal $\dd_{\mu_{k_x}-k_x}$ where $k_x$ is the row of
the corner of $\mu$ on or immediately after row $x$. Note that the numbers $\ssf_{k_x}$ are weakly increasing as $x$ grows, since the diagonals move to the left and so intersect $\la$ at lower rows. Thus $T_{x,y}\leq
\ssf_{k_x}$, which proves the claim.

Finally, we prove that  $\varphi$ is a bijection by building its inverse.
Given $T \in \mathcal{F}(\mu, \f^{(\lambda/\mu)})$, let $D=\vartheta(T)$ be the set
$D=\{ (T_{x,y},y+T_{x,y}) \mid (x,y) \in [\mu]\}$.
Let us show  $\vartheta$ is a well defined map from $\mathcal{F}(\mu,
\f^{(\lambda/\mu)})$ to $\ED(\lambda/\mu)$. By definition of the flags $\f^{(\lambda/\mu)}$,
observe that $D$ is a subset of $[\lambda]$. We prove that $D$ is in $\ED(\lambda/\mu)$
by induction on the number of flagged moves $\alpha'_{x,y}(\cdot)$.  First, observe that
$\vartheta(T_{\mu})=[\mu]$ which is in $\ED(\lambda/\mu)$. Assume that
for $T \in \mathcal{F}(\lambda/\mu)$, $D=\vartheta(T)$ is in
$\ED(\lambda/\mu)$ and $T'=\alpha'_{x,y}(T)$ for some active cell
$(x,y)$ of $T$. Note that replacing $T_{x,y}$ by $T_{x,y}+1$
results in a flagged tableaux $T'$ in $\mathcal{F}(\lambda/\mu)$ is
equivalent to $(i_x,i_y)$ being an active cell of~$D$. Since
$\vartheta(T')=\alpha_{i_x,i_y}(D)$ and the latter is an excited
diagram, the result follows. By construction, we conclude that
$\vartheta = \varphi^{-1}$, as desired.  \end{proof}

By Proposition~\ref{prop:flagged-det}, we immediately have the
following corollary.

 \begin{corollary} \label{cor:GV}
\[
|\ED(\lambda/\mu)| = \det\left[ \binom{\ssf^{(\lambda/\mu)}_i+\mu_i-i+j-1}{\ssf^{(\lambda/\mu)}_i-1}\right]_{i,j=1}^{\ell(\mu)}.
\]
\end{corollary}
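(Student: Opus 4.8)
The plan is to combine the two results the excerpt has already established, treating Corollary~\ref{cor:GV} as an immediate consequence that requires no new combinatorial content. First I would invoke Proposition~\ref{prop:excited2flaggedtab}, which gives the bijection $\varphi\colon\ED(\lambda/\mu)\to\mathcal{F}(\mu,\f^{(\lambda/\mu)})$ and in particular the equality of cardinalities $|\ED(\lambda/\mu)|=|\mathcal{F}(\lambda/\mu)|$. This reduces the problem of counting excited diagrams to the purely enumerative problem of counting flagged SSYT with the specific flag vector $\f^{(\lambda/\mu)}$, which is exactly the object controlled by Proposition~\ref{prop:flagged-det}.

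Next I would apply Proposition~\ref{prop:flagged-det} with $\f=\f^{(\lambda/\mu)}$, whose $i$-th entry is $\ssf_i=\ssf^{(\lambda/\mu)}_i$. That proposition already evaluates $|\mathcal{F}(\mu,\f)|$ as the determinant $\det\bigl[\binom{\ssf_i+\mu_i-i+j-1}{\mu_i-i+j}\bigr]_{i,j=1}^{\ell(\mu)}$, via the Lindstr\"om--Gessel--Viennot lemma. The only remaining task is a cosmetic one: rewriting the lower index of the binomial coefficient. Using the symmetry $\binom{N}{k}=\binom{N}{N-k}$ with $N=\ssf_i+\mu_i-i+j-1$ and $k=\mu_i-i+j$, the complementary lower index is $N-k=\ssf_i-1$, so each entry $\binom{\ssf^{(\lambda/\mu)}_i+\mu_i-i+j-1}{\mu_i-i+j}$ equals $\binom{\ssf^{(\lambda/\mu)}_i+\mu_i-i+j-1}{\ssf^{(\lambda/\mu)}_i-1}$. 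Substituting this into the determinant yields exactly the stated formula.

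There is essentially no obstacle here, since both ingredients are proved earlier in the excerpt; the statement is a formal chaining of two results with a one-line binomial identity. The only point deserving a word of care is confirming that the two determinantal expressions agree entry-by-entry, which is legitimate because the row-permutation-invariant combination is a determinant and the symmetry is applied uniformly in each matrix entry without altering the matrix structure. Thus the proof is genuinely two sentences: apply Proposition~\ref{prop:excited2flaggedtab} to pass from excited diagrams to flagged tableaux, apply Proposition~\ref{prop:flagged-det} to evaluate the latter as a determinant, and rewrite the binomial via $\binom{N}{k}=\binom{N}{N-k}$.
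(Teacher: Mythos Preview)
Your proposal is correct and matches the paper's approach exactly: the paper derives Corollary~\ref{cor:GV} by stating ``By Proposition~\ref{prop:flagged-det}, we immediately have the following corollary,'' implicitly relying on Proposition~\ref{prop:excited2flaggedtab} for the bijection and then substituting $\f=\f^{(\lambda/\mu)}$ into the determinantal formula, with the binomial symmetry $\binom{N}{k}=\binom{N}{N-k}$ converting the lower index.
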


Let $\mathcal{K}(\lambda/\mu)$ be  the set of $T \in \SSYT(\mu)$ such that all entries
$t=T_{i,j}$ satisfy the inequalities
$t \leq \ell(\lambda)$ and $T_{i,j} + c(i,j) \leq \lambda_t$.

\begin{proposition}[Kreiman~\cite{VK}] \label{prop:excited2kreiman}
We have $|\ED(\lambda/\mu)|=|\mathcal{K}(\lambda/\mu)|$ and the map
$\varphi$ is a bijection between these two sets.
\end{proposition}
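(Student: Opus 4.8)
The plan is to reduce the statement to the bijection already established in Proposition~\ref{prop:excited2flaggedtab} by showing that the two subsets $\mathcal{F}(\lambda/\mu)$ and $\mathcal{K}(\lambda/\mu)$ of $\SSYT(\mu)$ in fact coincide. Since Proposition~\ref{prop:excited2flaggedtab} gives that $\varphi$ is a bijection $\ED(\lambda/\mu)\to\mathcal{F}(\lambda/\mu)$ with inverse $\vartheta$, the set equality $\mathcal{F}(\lambda/\mu)=\mathcal{K}(\lambda/\mu)$ would immediately yield that the same map $\varphi$ is a bijection onto $\mathcal{K}(\lambda/\mu)$, and in particular $|\ED(\lambda/\mu)|=|\mathcal{K}(\lambda/\mu)|$.

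First I would record the geometric meaning of the two defining conditions. For $(i,j)\in[\mu]$ with entry $t=T_{ij}$, the cell of $\vartheta(T)$ associated to $(i,j)$ is $(t,\,t+c(i,j))$ with $c(i,j)=j-i$, because excited moves preserve the content $j-i$ and only advance the row index to $t$. Thus the two inequalities $t\le\ell(\lambda)$ and $t+c(i,j)\le\lambda_t$ defining $\mathcal{K}(\lambda/\mu)$ say exactly that this associated cell lies in $[\lambda]$. I would also isolate the monotonicity facts used throughout: the sequence $\lambda_t-t$ is strictly decreasing in $t$ (it drops by at least $1$ each time $t$ increases by $1$, since $\lambda_{t+1}\le\lambda_t$), and for $T\in\SSYT(\mu)$ the entries weakly increase along rows and strictly increase down columns. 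Writing $m(c)$ for the largest $t$ with $\lambda_t-t\ge c$ (the last row of the diagonal of content $c$ in $\lambda$), the flag is $\ssf_i=m(\mu_i-k_i)$, where rows $i,i+1,\ldots,k_i$ have the common length $\mu_i=\mu_{k_i}$ and the relevant corner sits in row $k_i$.

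For the inclusion $\mathcal{K}(\lambda/\mu)\subseteq\mathcal{F}(\lambda/\mu)$ I would argue cellwise: the last cell $(i,\mu_i)$ of row $i$ satisfies its fitting inequality $\lambda_{T_{i,\mu_i}}-T_{i,\mu_i}\ge\mu_i-i$, so $T_{ij}\le T_{i,\mu_i}\le m(\mu_i-i)\le m(\mu_i-k_i)=\ssf_i$, using row-monotonicity and that $m$ is decreasing. The reverse inclusion $\mathcal{F}(\lambda/\mu)\subseteq\mathcal{K}(\lambda/\mu)$ is the substantive direction and where I expect the main obstacle: the flag only controls the corner cell $(k_i,\mu_i)$, giving $\lambda_t-t\ge\mu_i-k_i$ for $t=T_{k_i,\mu_i}$, whereas the fitting condition for the top cell $(i,\mu_i)$ of that column demands the stronger bound $\ge\mu_i-i$. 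The gap $k_i-i$ is closed precisely by column-strictness: along the column through the length-$\mu_i$ rows one has $T_{i,\mu_i}\le T_{k_i,\mu_i}-(k_i-i)$, and since $\lambda_s-s$ drops by at least one per unit step, $\lambda_{T_{i,\mu_i}}-T_{i,\mu_i}\ge(\mu_i-k_i)+(k_i-i)=\mu_i-i$. Hence the last cell of each row fits, and then the chain $\lambda_{T_{ij}}-T_{ij}\ge\lambda_{T_{i,\mu_i}}-T_{i,\mu_i}\ge\mu_i-i\ge j-i$ (again row-monotonicity plus $m$ decreasing), together with $T_{ij}\le\ssf_i\le\ell(\lambda)$, shows every cell fits, so $T\in\mathcal{K}(\lambda/\mu)$. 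Combining the two inclusions proves $\mathcal{F}(\lambda/\mu)=\mathcal{K}(\lambda/\mu)$ and hence the proposition; alternatively, one can rerun the induction in the proof of Proposition~\ref{prop:excited2flaggedtab} verbatim, replacing the flag bound by the equivalent condition that the excited cell stays inside $[\lambda]$.
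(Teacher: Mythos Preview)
Your argument is correct, and it is genuinely different from what the paper does. In the paper, Proposition~\ref{prop:excited2kreiman} is not proved at all---it is cited as a result of Kreiman, and the set equality $\mathcal{F}(\lambda/\mu)=\mathcal{K}(\lambda/\mu)$ (Corollary~\ref{cor:excited-kreiman}) is then \emph{deduced} from the two bijections $\varphi:\ED(\lambda/\mu)\to\mathcal{F}(\lambda/\mu)$ (Proposition~\ref{prop:excited2flaggedtab}) and $\varphi:\ED(\lambda/\mu)\to\mathcal{K}(\lambda/\mu)$ (Proposition~\ref{prop:excited2kreiman}) sharing the same map. You invert this logic: you prove the set equality $\mathcal{F}(\lambda/\mu)=\mathcal{K}(\lambda/\mu)$ directly and elementarily, and then combine it with Proposition~\ref{prop:excited2flaggedtab} to obtain Proposition~\ref{prop:excited2kreiman}. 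The paper's own Remark after Corollary~\ref{cor:excited-kreiman} explicitly flags this route as ``an alternative approach.''

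The substantive step---the inclusion $\mathcal{F}(\lambda/\mu)\subseteq\mathcal{K}(\lambda/\mu)$---is handled cleanly: the flag bound only directly controls the corner cell $(k_i,\mu_i)$, and you correctly bridge the gap $k_i-i$ to the top cell $(i,\mu_i)$ of that column by combining column-strictness of $T$ (which gives $T_{i,\mu_i}\le T_{k_i,\mu_i}-(k_i-i)$) with the fact that $\lambda_s-s$ decreases by at least one per unit increase in $s$. The remaining cells then follow by row-monotonicity. What your approach buys is self-containment: the paper relies on Kreiman's external argument, whereas your proof needs nothing beyond Proposition~\ref{prop:excited2flaggedtab} and the elementary monotonicity of $\lambda_s-s$.
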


Since the correspondences $\varphi$ from
Propositions~\ref{prop:excited2flaggedtab} and
\ref{prop:excited2kreiman} are the same then both sets of tableaux
are equal.

\begin{corollary} \label{cor:excited-kreiman}
We have $\mathcal{F}(\lambda/\mu)=\mathcal{K}(\lambda/\mu)$.
\end{corollary}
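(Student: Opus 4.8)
The plan is to leverage the observation that Propositions~\ref{prop:excited2flaggedtab} and~\ref{prop:excited2kreiman} both assert that the \emph{same} map $\varphi$ is a bijection, differing only in the target set they identify as its image. I would begin by emphasizing that $\varphi$ is a single, explicitly defined function: it sends each $D \in \ED(\lambda/\mu)$ to the tableau $\varphi(D)$ of shape $\mu$ whose entry at $(x,y)$ is the row index $i_x$ of the cell of $D$ corresponding to $(x,y) \in [\mu]$. This rule makes no reference to either flagging condition, so $\varphi$ has a single well-defined domain $\ED(\lambda/\mu)$ and a single well-defined image inside the tableaux of shape $\mu$.

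The key step is then to invoke the fact that the image of a function is uniquely determined. Since Proposition~\ref{prop:excited2flaggedtab} states that $\varphi$ is a bijection onto $\mathcal{F}(\lambda/\mu)$, its image is precisely $\mathcal{F}(\lambda/\mu)$; since Proposition~\ref{prop:excited2kreiman} states that the \emph{same} $\varphi$ is a bijection onto $\mathcal{K}(\lambda/\mu)$, its image is also precisely $\mathcal{K}(\lambda/\mu)$. Concatenating these two descriptions of the image yields
\[
\mathcal{F}(\lambda/\mu) \,=\, \varphi\bigl(\ED(\lambda/\mu)\bigr) \,=\, \mathcal{K}(\lambda/\mu),
\]
which is the desired equality of sets.

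The only point requiring genuine care---and the place I would focus the write-up---is verifying that the $\varphi$ appearing in Kreiman's Proposition~\ref{prop:excited2kreiman} is literally the map defined above, rather than some other bijection that merely matches the cardinality $|\ED(\lambda/\mu)|$. Once that identification is secured, no computation with the explicit flagging vector $\f^{(\lambda/\mu)}$ or with the content conditions $T_{i,j}+c(i,j)\le\lambda_t$ is needed: the equality is forced purely by the uniqueness of the image of a map, so there is no real obstacle beyond making this identification explicit.
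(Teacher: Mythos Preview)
Your proposal is correct and is essentially identical to the paper's own argument: the paper also deduces the corollary by noting that the correspondences $\varphi$ in Propositions~\ref{prop:excited2flaggedtab} and~\ref{prop:excited2kreiman} are the same map, hence both sets coincide with its image. Your added remark about verifying that Kreiman's $\varphi$ is literally the same map is exactly the point the paper relies on implicitly when it states Proposition~\ref{prop:excited2kreiman} using the notation~$\varphi$.
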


\begin{remark}{\rm
To clarify the unusual situation in this section, here we have
three equinumerous sets $\mathcal{K}(\lambda/\mu)$, $\mathcal{F}(\lambda/\mu)$ and
$\mathcal{E}(\lambda/\mu)$, all of which were previously defined in the literature.
The first two are in fact \emph{the same} sets, but defined somewhat differently;
essentially, the set of inequalities
in the definition of $\mathcal{K}(\lambda/\mu)$ has redundancies.
Since our goal is to prove Corollary~\ref{cor:GV}, we find
it easier and more instructive to use Kreiman's map~$\vp$ with a new
analysis (see below), to prove directly that
$|\mathcal{E}(\lambda/\mu)| = |\mathcal{F}(\lambda/\mu)|$.
An alternative approach would be to prove the equality
of sets $\mathcal{F}(\lambda/\mu)=\mathcal{K}(\lambda/\mu)$ first
(Corollary~\ref{cor:excited-kreiman}), which reduces the problem
to Kreiman's result (Proposition~\ref{prop:excited2kreiman}).
}\end{remark}

\bigskip\section{Algebraic proof of Theorem~\ref{thm:skewSSYT}} \label{sec:algproof}


\subsection{Preliminary results}
A skew shape
$\lambda/\mu$ with $\mu \subseteq \lambda \subseteq d \times (n-d)$ is
in correspondence with a pair of {\em Grassmannian permutations} $w\preceq v$
of $n$ both with descent at position $d$ and where $\preceq$ is the
strong Bruhat order. Recall that a permutation
$v=v_1v_2\cdots v_n$ is Grassmannian if it has a unique descent.
The permutation $v$ is obtained from
the diagram $\lambda$ by writing the numbers $1,\ldots,n$ along the
unit segments of the boundary of $\lambda$ starting at the bottom left
corner and ending at the top right of the enclosing $d \times (n-d)$
rectangle. The permutation $v$ is obtained by first reading the $d$
numbers on the vertical segments and then the $(n-d)$ numbers on the horizontal
segments. The permutation $w$ is obtained by the same procedure on
partition $\mu$ (see Figure~\ref{fig:skew_grass}).

\begin{figure}[hbt]
\begin{center}
\includegraphics[scale=0.8]{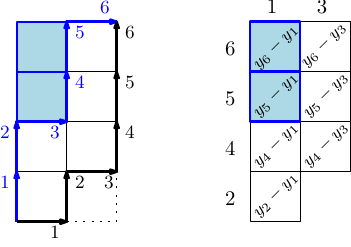}
\caption{The skew shape $2221/11$ corresponds to the Grassmannian
  permutations $v=245613$ and $w=124536$.}
\label{fig:skew_grass}
\end{center}
\end{figure}

Note that \. $\bigl(v(1),\ldots,v(n) \bigr) \ts = \ts
\bigl(\lambda_d+1, \lambda_{d-1} +2, \ldots \lambda_{1}+d, j_1,\ldots,j_{n-d}\bigr)$ \. and
\begin{align}
v(d+1-i)  \. = \. \lambda_i+d+1-i \tag{$\ast$}, \label{eq:vequal}
\end{align}
 where $\{j_1,\ldots,j_{n-d}\} = [n]\setminus \{\lambda_d+1, \lambda_{d-1} +2, \ldots, \lambda_{1}+d\}$ arranged in increasing order.  The numbers written up to the vertical segment on row $i$
are  $1,\ldots,\lambda_i+d-i$, of which $d-i$ are on the first
vertical segments, and the other $\lambda_i$ are on the first horizontal segments.
This gives
\begin{equation} \label{eq:setequal}
\bigl\{ v(1),\ldots,v(d-i),v(d+1),v(d+2),\ldots,v(d+\lambda_i)\bigr\}
\, = \, \bigl\{1,\ldots,\lambda_i+d-i\bigr\}\..\tag{$\ast\ast$}
\end{equation}

Let $[X_w]$ be the equivariant Schubert class corresponding to a permutation $w$
and let $[X_w]|_v$ be the multivariate polynomial with variables $y_1,\ldots,y_n$  corresponding to the image of the
class under a certain homomorphism $\iota_v$. We use a result from Ikeda and~Naruse~\cite{IkNa09} and Kreiman
\cite{VK} that follows from a formula by Billey for $[X_w]|_v$
when $v,w$ are Grassmannian permutations.

\begin{theorem}[Ikeda--Naruse \cite{IkNa09}, Kreiman \cite{VK};
  Billey \cite{Bil}]\label{thm:ikna1}
Let $w \preceq v$ be Grassmannian permutations whose unique descent is at
position $d$ with corresponding partitions $\mu \subseteq
\lambda\subseteq d\times (n-d)$. Then
$$
[X_w]\ts\bigl|_v \,\. = \.
\sum_{D \in \ED(\lambda/\mu)} \prod_{(i,j) \in D} \bigl(y_{v(d+j)} - y_{v(d-i+1)} \bigr).
$$
\end{theorem}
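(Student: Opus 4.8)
The plan is to derive the formula from Billey's localization formula \cite{Bil} and then recognize the resulting combinatorial sum as a sum over excited diagrams. Fix a reduced word $v = s_{c_1}\cdots s_{c_p}$, where $p=\ell(v)=|\lambda|$, and set $\rho_m = s_{c_1}\cdots s_{c_{m-1}}(\alpha_{c_m})$, with $\alpha_i = y_i - y_{i+1}$ the $i$-th simple root. Billey's formula states
\[
[X_w]\big|_v \,=\, \sum_{J}\ \prod_{m\in J} \rho_m,
\]
where $J$ runs over the subsets of $\{1,\dots,p\}$ such that $\prod_{m\in J} s_{c_m}$ (taken in increasing order of $m$) is a reduced word for $w$; necessarily $|J|=\ell(w)=|\mu|$ since $w$ is Grassmannian of shape $\mu$. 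Because the roots $\rho_m$ are fixed once the word for $v$ is chosen, the proof reduces to two independent tasks: attaching the correct root to each cell of $[\lambda]$, and identifying the admissible subwords $J$ with the excited diagrams of $\lambda/\mu$.

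For the first task I would take the canonical reduced word of the Grassmannian permutation $v$ read off the Young diagram $[\lambda]$ inside the $d\times(n-d)$ rectangle, so that the cells of $[\lambda]$ index the letters $c_1,\dots,c_p$. Using $(\ast)$ and $(\ast\ast)$, the inversions of $v$ are indexed by $[\lambda]$, the inversion at cell $(i,j)$ transposing the values $v(d-i+1)$ and $v(d+j)$. Since the positions satisfy $d-i+1 \le d < d+j$ while the values satisfy $v(d-i+1) > v(d+j)$, the root attached to cell $(i,j)$ is $y_{v(d+j)} - y_{v(d-i+1)}$, which is a positive root in the convention $\alpha_i = y_i - y_{i+1}$. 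Thus $\rho_m = y_{v(d+j)} - y_{v(d-i+1)}$ whenever $m$ is the position of cell $(i,j)$; this step is a direct unwinding of $(\ast)$ and $(\ast\ast)$ together with $v^{-1}(y_{v(d+j)}-y_{v(d-i+1)}) = y_{d+j}-y_{d-i+1} < 0$.

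For the second task, the subword filling the cells of $[\mu]\subseteq[\lambda]$ is a reduced word for $w$, and I claim the admissible subwords $J$ in Billey's sum are exactly those whose occupied cells form an excited diagram $D\in\ED(\lambda/\mu)$. Concretely, the elementary moves relating distinct reduced subwords for $w$ correspond cell by cell to the excited moves $(i,j)\mapsto(i+1,j+1)$ of Section~\ref{ss:excited-def}. This is Kreiman's analysis \cite{VK} (see also \cite{KMY}); alternatively one can route through the bijection $\varphi$ of Proposition~\ref{prop:excited2flaggedtab} with flagged tableaux, since the flag constraints encode precisely the admissibility of the subword. Granting the resulting bijection $D\leftrightarrow J_D$, the first task gives $\prod_{m\in J_D}\rho_m = \prod_{(i,j)\in D}\big(y_{v(d+j)} - y_{v(d-i+1)}\big)$, and summing over $D\in\ED(\lambda/\mu)$ yields the claimed formula.

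The main obstacle is the second task: proving that the reduced subwords of $v$ spelling $w$ are exactly the excited diagrams, with the word-level moves matching the excited moves. This is the heart of Kreiman's and Ikeda--Naruse's results, so I would either cite them directly or reprove the correspondence by induction on the number of excited moves --- the very induction already used for Proposition~\ref{prop:excited2flaggedtab} --- verifying at each step that an excited move carries an admissible subword to an admissible subword and conversely. By contrast, once the canonical word is fixed, the identification of each $\rho_m$ with $y_{v(d+j)} - y_{v(d-i+1)}$ is routine.
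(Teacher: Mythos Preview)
The paper does not give its own proof of this theorem: it is stated as a cited result, and Remark~\ref{rem:BilleyF} summarizes the original argument --- Billey's formula expresses $[X_w]|_v$ as a sum over reduced subwords of $w$ inside a fixed reduced word for~$v$, and because Grassmannian permutations are fully commutative these subwords are in bijection with excited diagrams (Ikeda--Naruse), while Kreiman obtains the same via Gr\"obner degenerations. Your proposal is precisely this Billey-plus-bijection approach, so it matches what the paper describes rather than offering an alternative.

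Your sketch is sound. The identification of the root at cell $(i,j)$ with $y_{v(d+j)}-y_{v(d-i+1)}$ is correct: using $(\ast)$ one has $v(d+1-i)=\lambda_i+d+1-i$, and from the boundary-walk description $v(d+j)=d+j-\lambda'_j$, so the root is positive exactly when $(i,j)\in[\lambda]$. The one genuinely nontrivial step is, as you say, the bijection between admissible subwords and excited diagrams; you are right that it can either be cited from \cite{IkNa09,VK} or reproved by the same induction-on-moves that drives Proposition~\ref{prop:excited2flaggedtab}. Either way there is nothing to add beyond what the paper itself indicates.
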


\begin{remark} \label{rem:BilleyF}
For general permutations $w\preceq v$ the polynomial
$[X_w]\ts\bigl|_v$ is a  {\em Kostant polynomial} $\sigma_w(v)$, see
\cite{KK,Bil,Tym}. Billey's
formula \cite[Appendix D.3]{AJS}\,\,\cite[Eq. (4.5)]{Bil}  expresses the latter as certain sums over reduced
subwords of $w$ from a fixed reduced word of $v$. Since in our context
 $w$ and $v$ are Grassmannian, the reduced subwords are
related only by commutations and no braid relations
(cf.~\cite{Ste}). This property allows the
authors in \cite[Thm. 1]{IkNa09} to find a bijection between the reduced subwords and excited
diagrams. The author in \cite[Prop. 2.2]{VK} uses the different method of {\em Gr\"{o}bner
  degenerations} to prove the result.
\end{remark}

The {\em factorial Schur functions} (see e.g.~\cite{MS}) are defined as
$$s_{\mu}^{(d)}({\bf x}\ts |\ts {\bf a})\, := \, \frac{ \det \bigl[ (x_j-a_1)\cdots(x_j-a_{\mu_i+d-i}) \bigr]_{i,j=1}^d }{\prod_{1\leq i< j \leq d} \, (x_i - x_j)},$$
where $x=(x_1,x_2,\ldots,x_d)$ and $a=(a_1,a_2,\ldots)$ is a sequence
of parameters.

\begin{theorem}[Theorem 2 in \cite{IkNa09}, attributed to Knutson-Tao \cite{KT},
  Lakshmibai--Raghavan--Sankaran] \label{thm:ikna2}
$$[X_w]\ts \bigl|_v \.=
\.(-1)^{\ell(w)}s_{\mu}^{(d)}\bigl(y_{v(1)},\ldots,y_{v(d)} \ts |\ts y_1,\ldots,y_{n-1}\bigr).$$
\end{theorem}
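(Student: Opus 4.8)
The plan is to prove the identity by the standard method of characterizing equivariant Schubert classes through their localizations at torus-fixed points. Recall that $H^*_T$ of the Grassmannian $\mathrm{Gr}(d,n)$ embeds, via the restriction homomorphisms $\iota_v$, into $\bigoplus_v \mathbb{Z}[y_1,\ldots,y_n]$, and (since $\mathrm{Gr}(d,n)$ is a Goresky--Kottwitz--MacPherson (GKM) space) the image is cut out by edge-divisibility conditions along the $T$-invariant curves. Within this model the class $[X_w]$ is uniquely determined by: (i) each $[X_w]\ts|_v$ is homogeneous of degree $\ell(w)=|\mu|$; (ii) $[X_w]\ts|_v=0$ unless $w\preceq v$; and (iii) the diagonal normalization $[X_w]\ts|_w=\prod_{\alpha\in\mathrm{Inv}(w)}\alpha$. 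Uniqueness follows by downward induction on $v$ in Bruhat order: the GKM edge conditions pin down $[X_w]\ts|_v$ from the already-determined values at larger $v$, with the base case supplied by (iii).

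First I would set $F_v:=(-1)^{\ell(w)}s_{\mu}^{(d)}\bigl(y_{v(1)},\ldots,y_{v(d)}\mid y_1,\ldots,y_{n-1}\bigr)$ and verify that this family satisfies (i)--(iii). The degree in (i) is immediate from the determinantal definition: the $(i,j)$ entry $(x_j-a_1)\cdots(x_j-a_{\mu_i+d-i})$ has degree $\mu_i+d-i$, so the determinant has degree $|\mu|+\binom{d}{2}$, and dividing by the Vandermonde $\prod_{i<j}(x_i-x_j)$ of degree $\binom{d}{2}$ leaves degree $|\mu|$; the substitution $x_j=y_{v(j)}$ preserves homogeneity in the $y$'s.

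The heart of the argument is (ii), the \emph{vanishing theorem} for factorial Schur functions (due to Okounkov, and to Molev--Sagan): $s_{\mu}^{(d)}$ evaluated at the $\lambda$-point vanishes unless $\mu\subseteq\lambda$. By \eqref{eq:vequal} the substitution $x_j=y_{v(j)}$ is precisely this evaluation, and for Grassmannian permutations with common descent $d$ one has $w\preceq v \iff \mu\subseteq\lambda$, so (ii) matches on the nose. For the normalization (iii) I would specialize to $v=w$ (equivalently $\lambda=\mu$): the matrix becomes triangular and the factorial Schur function collapses to a product of root differences $\prod_{(i,j)\in[\mu]}(a_{(\cdot)}-a_{(\cdot)})$, which must be identified with $\prod_{\alpha\in\mathrm{Inv}(w)}\alpha$; the prefactor $(-1)^{\ell(w)}$ is exactly what reconciles the orientation of these differences with the convention in the definition of $s_\mu^{(d)}$. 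Finally I would check the GKM edge-divisibility for the $F_v$, which follows from the behaviour of $s_{\mu}^{(d)}$ under transposing two of its evaluation arguments.

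The main obstacle I anticipate is bookkeeping rather than conceptual: aligning the index conventions so that the vanishing locus $\{\lambda:\mu\not\subseteq\lambda\}$ coincides exactly with $\{v:w\not\preceq v\}$ under \eqref{eq:vequal}, and pinning down both the sign $(-1)^{\ell(w)}$ and the precise root product in (iii). Once these conventions are fixed, uniqueness forces $F_v=[X_w]\ts|_v$ for every $v$, proving the theorem. Equivalently — and this is the streamlined way to present it — one may invoke Okounkov's characterization of factorial Schur functions by degree, vanishing, and normalization directly, and simply observe that the Knutson--Tao characterization of $[X_w]\ts|_v$ has the identical shape, so the two families must agree term by term.
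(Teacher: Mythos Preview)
The paper does not supply a proof of this theorem: it is quoted as an external result (Theorem~2 of \cite{IkNa09}, with attributions to Knutson--Tao \cite{KT} and Lakshmibai--Raghavan--Sankaran) and used as a black box to derive Corollary~\ref{cor:IkNa}. So there is no ``paper's own proof'' to compare against.

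That said, your outline is a correct sketch of the standard argument, and is essentially the one in the cited sources. Knutson--Tao characterize the localized Schubert class by exactly the three conditions you list (degree, upper-triangular vanishing in Bruhat order, and the diagonal normalization), and the identification with the factorial Schur evaluation rests on the Okounkov/Molev--Sagan vanishing theorem together with the straightforward degree and normalization checks. Your caveat about bookkeeping is apt: the sign $(-1)^{\ell(w)}$ and the precise form of the diagonal product depend on whether one writes the factorial Schur entries as $(x_j-a_p)$ or $(a_p-x_j)$, and on the ordering of the Vandermonde; the paper's conventions (see the definition of $s_\mu^{(d)}$ just above the theorem, and \eqref{eq:vequal}) are what force the $(-1)^{\ell(w)}$ to appear. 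One small point you could make more explicit is that the GKM edge-divisibility for the family $F_v$ is automatic once you know $F_v$ is a specialization of a polynomial symmetric in $x_1,\ldots,x_d$: swapping two adjacent $x$-variables leaves $s_\mu^{(d)}$ unchanged, so $F_v - F_{s_\alpha v}$ is divisible by the corresponding root.
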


\begin{corollary} \label{cor:IkNa}
Let $w\preceq v$ be Grassmannian permutations whose unique descent is at
position $d$ with corresponding partitions $\mu \subseteq \lambda
\subseteq d \times (n-d)$. Then
\begin{equation}
\label{factorial_excited2}
s_{\mu}^{(d)}\bigl(y_{v(1)},\ldots,y_{v(d)} \ts \bigl| \ts y_1,\ldots,y_{n-1}\bigr)
\. = \. \sum_{D \in \ED(\lambda/\mu)} \. \prod_{(i,j) \in D}
\bigl(y_{v(d-i+1)}-y_{v(d+j)}\bigr).
\end{equation}
\end{corollary}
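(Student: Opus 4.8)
The plan is to read off the identity directly by combining the two results that immediately precede it, both of which evaluate the equivariant restriction $[X_w]|_v$. Theorem~\ref{thm:ikna1} expresses $[X_w]|_v$ as a sum over excited diagrams, while Theorem~\ref{thm:ikna2} expresses it as a signed factorial Schur function. Equating the two right-hand sides yields at once
\[
(-1)^{\ell(w)}\ts s_{\mu}^{(d)}\bigl(y_{v(1)},\ldots,y_{v(d)} \ts |\ts y_1,\ldots,y_{n-1}\bigr) \. = \. \sum_{D \in \ED(\lambda/\mu)} \prod_{(i,j)\in D}\bigl(y_{v(d+j)}-y_{v(d-i+1)}\bigr),
\]
so all that remains is to move the global sign $(-1)^{\ell(w)}$ onto the right-hand side and absorb it into the individual factors of each product.

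For this I would record two elementary facts. First, since $w$ is the Grassmannian permutation built from the partition $\mu$ by the boundary-reading construction above, its Coxeter length equals its number of inversions, which is exactly $|\mu|$; thus $(-1)^{\ell(w)}=(-1)^{|\mu|}$. (One checks this on the running example: $w=124536$ has two inversions, matching $\mu=(1,1)$.) Second, every $D\in\ED(\lambda/\mu)$ is obtained from $[\mu]$ by excited moves, each of which sends a cell $(i,j)$ to $(i+1,j+1)$ and hence preserves cardinality, so $|D|=|\mu|$ and each product on the right has exactly $|\mu|$ binomial factors.

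Putting these together, I would multiply both sides of the display by $(-1)^{\ell(w)}=(-1)^{|\mu|}$ and distribute the sign as one factor of $(-1)$ per cell across the $|\mu|$ factors of each product; this reverses every difference $y_{v(d+j)}-y_{v(d-i+1)}$ into $y_{v(d-i+1)}-y_{v(d+j)}$ and produces precisely the asserted formula. I do not expect any genuine obstacle here: the whole argument is a single sign manipulation, and the only point needing care is the coincidence $\ell(w)=|\mu|=|D|$, which ensures that the parity of the number of reversed factors matches the parity carried by $(-1)^{\ell(w)}$.
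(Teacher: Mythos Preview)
Your argument is correct and matches the paper's proof essentially line for line: combine Theorems~\ref{thm:ikna1} and~\ref{thm:ikna2} to obtain the signed identity, then use $\ell(w)=|\mu|=|D|$ to absorb the global sign by reversing each of the $|\mu|$ linear factors. Your justification of $\ell(w)=|\mu|$ and $|D|=|\mu|$ is slightly more explicit than the paper's, but the route is identical.
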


\begin{proof}
Combining Theorem~\ref{thm:ikna1} and Theorem~\ref{thm:ikna2} we get
\begin{align}\label{factorial_excited}
(-1)^{\ell(w)}s_{\mu}^{(d)}(y_{v(1)},\ldots,y_{v(d)}\ts \bigl|\ts y_1,\ldots,y_{n-1})
\. = \.
\sum_{D \in \ED(\lambda/\mu)} \. \prod_{(i,j) \in D} \bigl(y_{v(d+j)} - y_{v(d-i+1)}\bigr).
\end{align}
Note that $\ell(w) = |\mu|$ and $\ell(v) = |\lambda|$, so we can
remove the $(-1)^{\ell(w)}$ on the left of~\eqref{factorial_excited}
by negating all linear terms on the right and get the desired result.
\end{proof}

\medskip \subsection{Proof of Theorem~\ref{thm:skewSSYT}}
First we use Corollary~\ref{cor:IkNa} to get an identity of rational
functions in $y=(y_1,y_2,\ldots,y_n)$ (Lemma~\ref{lemma:thm1}). Then
we evaluate this identity at  $y_p=q^{p-1}$ and use some identities of
symmetric functions to prove the theorem. Let
\[
H_{i,r}({\bf y}) \, := \, \begin{cases} \displaystyle \prod_{p=\mu_r+d+1-r}^{\lambda_i+d-i} \bigl(y_{\lambda_i+d+1-i}
    -y_p\bigr)^{-1}  &\text{ if $\mu_r+d-r \leq  \lambda_i+d-i$}\ts, \\
 {\hskip4.4cm 0}  & \, \text{otherwise\ts.} \end{cases}\ts.
\]
\begin{lemma} \label{lemma:thm1}
 \begin{equation} \label{thm1:lemm1}
\det\bigl[H_{i,j}({\bf y})\bigr]_{i,r=1}^d  \.= \. \sum_{D\in \ED(\lambda/\mu)} \prod_{(i,j) \in [\lambda]\setminus D}
\frac{1}{y_{v(d+1-i)}-y_{v(d+j)}}.
\end{equation}
\end{lemma}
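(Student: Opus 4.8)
The plan is to deduce \eqref{thm1:lemm1} from Corollary~\ref{cor:IkNa} together with the determinantal \emph{definition} of the factorial Schur function: I turn the sum over excited diagrams into a single closed product by a complementation trick, and then recognize $\det[H_{i,r}({\bf y})]$ as the factorial Schur numerator after clearing denominators row by row.

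First I would rewrite the right-hand side. Setting $b_{ij} := y_{v(d+1-i)} - y_{v(d+j)}$ for $(i,j)\in[\lambda]$ (all well defined since $\lambda\subseteq d\times(n-d)$) and $\Pi := \prod_{(i,j)\in[\lambda]} b_{ij}$, every excited diagram $D$ satisfies $\prod_{(i,j)\in[\lambda]\setminus D} b_{ij}^{-1} = \Pi^{-1}\prod_{(i,j)\in D} b_{ij}$. Summing over $D\in\ED(\lambda/\mu)$ and applying Corollary~\ref{cor:IkNa} shows that the RHS of \eqref{thm1:lemm1} equals $\Pi^{-1}\, s_\mu^{(d)}(y_{v(1)},\ldots,y_{v(d)}\,|\,y_1,\ldots,y_{n-1})$. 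So it remains to prove the determinant evaluation
\[
\det\bigl[H_{i,r}({\bf y})\bigr]_{i,r=1}^d \, = \, \frac{1}{\Pi}\,s_\mu^{(d)}\bigl(y_{v(1)},\ldots,y_{v(d)}\,\bigl|\,y_1,\ldots,y_{n-1}\bigr).
\]

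Next I would clear denominators one row at a time. By \eqref{eq:vequal} the factor $y_{\lambda_i+d+1-i}$ in $H_{i,r}$ is $y_{v(d+1-i)}$, so I multiply row $i$ by $R_i := \prod_{p=1}^{\lambda_i+d-i}(y_{v(d+1-i)} - y_p)$. The factors in $H_{i,r}$ cancel the tail of $R_i$, producing the \emph{uniform} identity $R_i\,H_{i,r} = \prod_{p=1}^{\mu_r+d-r}(y_{v(d+1-i)} - y_p)$ for all $i,r$. The key point here is that the ``otherwise'' case $H_{i,r}=0$ is consistent: there $\mu_r+d-r \ge \lambda_i+d+1-i = v(d+1-i)$ by \eqref{eq:vequal}, so the product on the right contains the vanishing factor at $p=v(d+1-i)$ and is itself zero, while $R_i$ never contains that factor. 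Thus the rescaled matrix is exactly the factorial Schur numerator matrix evaluated at the points $y_{v(d+1-i)}$; up to the sign $(-1)^{\binom{d}{2}}$ coming from the reversal $i\mapsto d+1-i$ of the evaluation points, its determinant equals $s_\mu^{(d)}(\ldots)\cdot\prod_{1\le i<j\le d}(y_{v(i)}-y_{v(j)})$ by the definition of $s_\mu^{(d)}$. Hence $\bigl(\prod_i R_i\bigr)\det[H_{i,r}] = (-1)^{\binom{d}{2}}\,s_\mu^{(d)}(\ldots)\prod_{i<j}(y_{v(i)}-y_{v(j)})$.

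Finally I would evaluate $\prod_i R_i$ using \eqref{eq:setequal}: the $y_p$ with $1\le p\le \lambda_i+d-i$ are exactly $\{y_{v(1)},\ldots,y_{v(d-i)}\}\cup\{y_{v(d+1)},\ldots,y_{v(d+\lambda_i)}\}$, so $R_i = \prod_{s=1}^{d-i}(y_{v(d+1-i)}-y_{v(s)})\cdot\prod_{j=1}^{\lambda_i} b_{ij}$. The second factors multiply to $\Pi$, while the first factors, after $i\mapsto d+1-i$, multiply to $(-1)^{\binom{d}{2}}\prod_{1\le i<j\le d}(y_{v(i)}-y_{v(j)})$. Therefore $\prod_i R_i = (-1)^{\binom{d}{2}}\,\Pi\,\prod_{i<j}(y_{v(i)}-y_{v(j)})$, and dividing the previous displayed identity by $\prod_i R_i$ gives the claim, the two Vandermonde products and the two signs $(-1)^{\binom{d}{2}}$ cancelling. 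I expect the main obstacle to be this clearing-of-denominators step: one must check that the piecewise ``$0$'' convention in the definition of $H_{i,r}$ coincides precisely with the vanishing of the corresponding factorial Schur entry, and track both factors $(-1)^{\binom{d}{2}}$ carefully so they cancel.
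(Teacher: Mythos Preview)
Your argument is correct and follows essentially the same route as the paper's proof: both use Corollary~\ref{cor:IkNa} to identify the right-hand side with $\Pi^{-1}s_\mu^{(d)}(y_{v(1)},\ldots,y_{v(d)}\mid y)$, and both then match this with $\det[H_{i,r}]$ via the determinantal definition of the factorial Schur function together with the set identity~\eqref{eq:setequal}. The only cosmetic difference is direction: the paper absorbs the product $\Pi$ and the Vandermonde into the determinant column by column and simplifies each entry down to $H_{i,r}$, whereas you rescale the rows of $\det[H_{i,r}]$ by $R_i$ and then identify $\prod_i R_i$ with $(-1)^{\binom{d}{2}}\Pi\cdot\text{Vandermonde}$. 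Your write-up is in fact slightly more careful than the paper's, since you explicitly verify that the piecewise ``$0$'' convention for $H_{i,r}$ agrees with the vanishing of the corresponding factorial Schur numerator entry, a point the paper leaves implicit.
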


\begin{proof}
Start with \eqref{factorial_excited2} and divide both sides by
\begin{equation} \label{eq:denom}
\prod_{(i,j) \in [\lambda] } \bigl(y_{v(d+1-i)} - y_{v(d+j)}\bigl) \,\, = \,
\prod_{i=1}^d \. \prod_{j=1}^{\lambda_i} \. \bigl(y_{v(d+1-i)} - y_{v(d+j)} \bigr)\.,
\end{equation}
to obtain
\begin{equation} \label{thm:eq1}
 \frac{ s_{\mu}^{(d)}(y_{v(1)},\ldots,y_{v(d)}|y_1,\ldots,y_{n-1}) }{
   \prod_{(i,j) \in [\lambda] } (y_{v(d+1-i)} - y_{v(d+j)}) } \, = \,
   \sum_{D \in \ED(\lambda/\mu)} \prod_{(i,j) \in [\lambda] \setminus D}
 \frac{1}{ y_{v(d+1-i)} - y_{v(d+j)} }.
\end{equation}

Denote the LHS of \eqref{thm:eq1} by $S_{\lambda,\mu}({\bf y})$. By the determinantal formula for
factorial Schur functions and by \eqref{eq:denom} we have
\begin{align*}
S_{\lambda,\mu}({\bf y})& \, = \, \frac{ \det \left[
   \prod_{p=1}^{\mu_r+d-r} (y_{v(d+1-i)} - y_p) \right]_{i,r=1}^d }
{ \prod_{i=1}^d  \prod_{k=i+1}^d (y_{v(d+1-i)} -y_{v(d+1-k)} ) }\cdot \frac{1}{\prod_{i=1}^d \prod_{j=1}^{\lambda_i} (y_{v(d+1-i)} - y_{v(d+j)} ) }\\
&= \, \det \left[ \frac{\prod_{p=1}^{\mu_r+d-r} (y_{v(d+1-i)} - y_p)
  }{\prod_{k=i+1}^d (y_{v(d+1-i)} -y_{v(d+1-k)} )
    \prod_{j=1}^{\lambda_i} (y_{v(d+1-i)} - y_{v(d+j)} )
  }\right]_{i,r=1}^d\..
\end{align*}
Using \eqref{eq:setequal} in the denominator of the matrix entry, we obtain:
\begin{align}
 S_{\lambda,\mu}({\bf y})& \. = \. \det \left[ \prod_{p=1}^{\mu_r+d-r}
     (y_{v(d+1-i)} - y_p)
     \prod_{p=1}^{\lambda_i+d-i} (y_{v(d+1-i)} - y_{p})^{-1}
   \right]_{i,r=1}^d. \label{eq:deteq2}
\end{align}
By~\eqref{eq:vequal}, we have $v(d+1-i) = \lambda_i + d+1-i$.  Therefore,
the matrix entry on the RHS of~\eqref{eq:deteq2} simplifies
to~$H_{i,r}({\bf y})$.
\begin{equation} \label{eq:deteq2b}
 S_{\lambda,\mu}({\bf y}) \. = \. \det[H_{i,r}({\bf y})]_{i,r=1}^d\..
\end{equation}
Combining \eqref{eq:deteq2b}   with \eqref{thm:eq1}  we obtain \eqref{thm1:lemm1}
as desired.
\end{proof}

\smallskip

Next, we evaluate $y_p = q^{p-1}$ for $p=1,\ldots,n$ in
\eqref{thm1:lemm1}. Since
\begin{equation}
(y_{v(d+1-i)} - y_{v(d+j)}) \. \Big|_{y_p=q^p}  \. = \. q^{\lambda_i+d+1-i} -q^{d-\lambda_j'+j} \. = \.
-q^{d-\lambda_j'+j} (1-q^{h(i,j)})\.,
\end{equation}
we obtain
\begin{equation} \label{eq:deteq3}
\det\bigl[H_{i,r}(1,q,q^2,\ldots,q^{n-1})\bigr]_{i,r=1}^d \, = \, (-1)^{|\lambda|-|\mu|}\sum_{D\in
  \ED(\lambda/\mu)} \. \prod_{(i,j)\in [\lambda]\setminus D} \. \frac{q^{-d+\lambda'_j-j}}{1-q^{h(i,j)}}\..
\end{equation}
We now simplify the matrix entry
$H_{i,r}(1,q,q^2,\ldots,q^{n-1})$. For $\nu=(\nu_1,\ldots,\nu_d)$, let
\[
g(\nu) \. := \. \sum_{i=1}^d \binom{\nu_i+d+1-i}{2}\..
\]
We then have:

\begin{proposition} \label{prop:thm1}
\[
H_{i,r}(1,q,q^2,\ldots,q^{n-1}) \, = \, q^{-\binom{\la_i+d+1-i}{2}+\binom{\mu_r+d+1-r}{2}} \ts h_{\lambda_i -i - \mu_r+r}(1,q,q^2,\ldots)\.,
\]
where $h_k({\bf x})$ denotes the $k$-th complete symmetric function.
\end{proposition}

\begin{proof}  We have:
\begin{align*}
H_{i,r}(1,q,q^2,\ldots,q^{n-1}) &\, = \,\prod_{p=\mu_r+d+1-r}^{\lambda_i+d-i} \.
\frac{1}{q^{\lambda_i+d+1-i} - q^p} \\
&\, = \, (-1)^{\lambda_i-i -\mu_r+r} \. q^{-\binom{\la_i+d+1-i}{2}+\binom{\mu_r+d+1-r}{2}} \.
 \prod_{p=1}^{\lambda_i-i - \mu_r+r}\. \frac{1}{1-q^p}\\
&\, = \, (-1)^{\lambda_i-i -\mu_r+r} \. q^{-\binom{\la_i+d+1-i}{2}+\binom{\mu_r+d+1-r}{2}}\. h_{\lambda_i -i - \mu_r+r}(1,q,q^2,\ldots)\ts,
\end{align*}
where the last identity follows by the principal specialization of the
complete symmetric function.
\end{proof}

Using Proposition~\ref{prop:thm1}, the LHS of \eqref{eq:deteq3}  becomes
\begin{equation}
\aligned
\det\bigl[ H_{i,r}(1,q,\ldots,q^{n-1})\bigr]_{i,r=1}^d & \, = \, (-1)^{|\lambda|-|\mu|} q^{-g(\lambda) + g(\mu)}
\det\left[ h_{\lambda_i -i - \mu_r+r}(1,q,q^2,\ldots) \right]_{i,r=1}^d\\
&\, = \, (-1)^{|\lambda|-|\mu|} q^{-g(\lambda) + g(\mu)}
s_{\lambda/\mu}(1,q,q^2,\ldots)\., \label{eq:JTdet}
\endaligned
\end{equation}
where the last equality follows by the Jacobi--Trudi identity for skew
Schur functions \eqref{eq:JTid}.  From here, rearranging powers of~$q$ and cancelling
signs, equation~\eqref{eq:deteq3} becomes
\begin{equation} \label{eq:deteq4}
s_{\lambda/\mu}(1,q,q^2,\ldots) \, = \, q^{g(\lambda)-g(\mu) }\sum_{D \in \ED(\lambda/\mu)} \prod_{(i,j) \in [\lambda] \setminus D} \frac{q^{-d+\lambda_j'-j} }{1-q^{h(i,j)} }\,.
\end{equation}
It remains to match the powers of $q$ in~\eqref{eq:deteq4} and~\eqref{eq:skewschur}.

\begin{proposition} \label{prop:sameq}
For an excited diagram $D\in \ED(\lambda/\mu)$ we
  have:
\[
g(\lambda)-g(\mu) + \sum_{(i,j) \in [\lambda]\setminus D}
(-d+\lambda'_j-j) \ = \, \sum_{(i,j)\in [\lambda]\setminus D} (\lambda'_j-i)\..
\]
\end{proposition}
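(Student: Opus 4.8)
The plan is to first simplify the claimed identity algebraically, and then to prove the resulting statement by an invariance argument under excited moves together with a single base-case computation. First I would cancel the common terms: since $\lambda'_j$ occurs with the same sign on both sides inside the sum over $[\lambda]\setminus D$, subtracting $\sum_{(i,j)\in[\lambda]\setminus D}\lambda'_j$ from both sides reduces the proposition to the equivalent statement
\[
g(\lambda)-g(\mu) \, = \, \sum_{(i,j)\in[\lambda]\setminus D}(d+j-i)\ts.
\]
The crucial observation is that the left-hand side is manifestly independent of $D$, so it suffices to show that the right-hand side is constant over $\ED(\lambda/\mu)$ and then to evaluate it at one convenient diagram.

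Next I would show that $\sum_{(i,j)\in[\lambda]\setminus D}(d+j-i)$ is invariant under excited moves. Recall that an excited move replaces a cell $(i,j)\in D$ by $(i+1,j+1)$, so in passing from $D$ to $\alpha_{(i,j)}(D)$ the complement $[\lambda]\setminus D$ gains the cell $(i,j)$ and loses the cell $(i+1,j+1)$. The contribution of the gained cell is $d+j-i$, while that of the lost cell is $d+(j+1)-(i+1)=d+j-i$, and these cancel exactly. Since every $D\in\ED(\lambda/\mu)$ is obtained from $[\mu]$ by a sequence of excited moves on active cells, the sum is the same for all excited diagrams.

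Finally I would evaluate at $D=[\mu]$, where $[\lambda]\setminus D=[\lambda/\mu]$, so the identity becomes $\sum_{(i,j)\in[\lambda/\mu]}(d+j-i)=g(\lambda)-g(\mu)$. Writing the left-hand side as the difference of the sums over $[\lambda]$ and $[\mu]$, it suffices to prove, for any partition $\nu$ with $\ell(\nu)\le d$, that
\[
\sum_{(i,j)\in[\nu]}(d+j-i) \, = \, g(\nu)-g(\emptyset)\ts,
\]
after which the constant $g(\emptyset)$ cancels in the difference between $\lambda$ and $\mu$. This last identity follows row by row from the elementary binomial computation $\binom{m+a}{2}-\binom{a}{2}=\binom{m}{2}+ma$ applied with $m=\nu_i$ and $a=d+1-i$, which matches $\sum_{j=1}^{\nu_i}(d-i+j)=\nu_i(d-i)+\binom{\nu_i+1}{2}$ with $\binom{\nu_i+d+1-i}{2}-\binom{d+1-i}{2}$.

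I expect no serious obstacle: once reduced to the $D$-independent form, the argument is essentially bookkeeping. The only genuinely clever point is recognizing that the discrepancy between the two $q$-exponents is governed by a quantity invariant under excited moves, which collapses the verification to the base diagram $[\mu]$ and a one-line binomial identity.
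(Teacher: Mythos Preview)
Your proof is correct and follows essentially the same route as the paper. Both arguments reduce the identity to the content-invariance of excited moves (your observation that $d+j-i$ is unchanged under $(i,j)\mapsto(i+1,j+1)$ is exactly the paper's remark that ``the cells of any excited diagram $D$ have the same multiset of content values''), and both then appeal to a closed form for $g$. The only cosmetic difference is that the paper asserts the identity $g(\nu)=d|\nu|+\sum_{(i,j)\in[\nu]}c(i,j)$ directly (tacitly up to the constant $g(\varnothing)$, which cancels in the difference), whereas you verify the equivalent base-case statement row by row via the binomial identity $\binom{m+a}{2}-\binom{a}{2}=\binom{m}{2}+ma$.
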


\begin{proof}
Note that \ts
 $g(\lambda)= \sum_{i=1}^d \sum_{p=1}^{\la_i+d-i} p = \sum_{i=1}^{d} i(d-i) + d|\lambda| + \sum_{(i,j) \in [\lambda] } c(i,j) $, \ts where $c(i,j)=j-i$. Therefore,
$$
g(\lambda)-g(\mu) - \sum_{(i,j) \in [\lambda]\setminus D} d \ = \, g(\lambda)- g(\mu) -d(|\lambda|-|D|)
\, = \, \sum_{(i,j) \in [\lambda]} c(i,j) -\sum_{(i,j) \in [\mu]} c(i,j)\..
$$
Finally, notice that the cells of any excited diagram $D$ have the
same multiset of content values, since every excited move is along a diagonal and the content of the moved cell $j-i$ remains constant. Thus the power of $q$ for each term becomes
$$
\sum_{(i,j) \in [\lambda]\setminus [\mu] } c(i,j) + \sum_{(i,j) \in [\lambda]\setminus D} (\lambda'_j-j) = \sum_{(i,j) \in [\lambda] \setminus D} \left(c(i,j) +\lambda'_j-j\right) \. = \.
\sum_{(i,j) \in [\lambda] \setminus D} \lambda'_j-i\.,
$$
as desired.
\end{proof}

Using Proposition~\ref{prop:sameq} on the RHS of \eqref{eq:deteq4}
yields \eqref{eq:skewschur} finishing the proof of Theorem~\ref{thm:skewSSYT}.

\bigskip\section{The Hillman--Grassl  and the RSK correspondences} \label{sec:HGSSYT}

\subsection{The Hillman--Grassl correspondence}
%
%

%

Recall the \ts {\em Hillman--Grassl correspondence}
which defines a map between RPP $\pi$ of
shape $\lambda$ and arrays $A$ of nonnegative integers of shape
$\lambda$ such that $|\pi| =\sum_{u\in [\lambda]} A_u h(u)$.
Let  $\mathcal{A}(\lambda)$ be the set of such arrays. The
{\em weight} $\omega(A)$ of $A$ is the sum $\omega(A) := \sum_{u\in
  \lambda} A_u h(u)$. We review this construction
and some of its properties (see~\cite[\S 7.22]{EC2} and~\cite[\S 4.2]{SSG}).
We denote by $\HG$ the Hillman--Grassl map $\HG:\pi \mapsto A$.

\begin{definition}[Hillman--Grassl map $\Phi$]
Given a reverse plane partition $\pi$ of shape $\lambda$, let $A$ be an
array of zeroes of shape $\lambda$. Next we find a path $\mathsf{p}$ of North
and East steps in $\pi$ as follows:
\begin{compactitem}
\item[(i)] Start $\mathsf{p}$ with the most South-Western nonzero entry in
  $\pi$. Let $c_s$ be the column of such an entry.
\item[(ii)] If $\mathsf{p}$ has reached $(i,j)$ and $\pi_{i,j} = \pi_{i-1,j}>0$ then
  $\mathsf{p}$ moves North to $(i-1,j)$, otherwise if $0<\pi_{i,j} <
  \pi_{i-1,j}$ then $\mathsf{p}$ moves East to $(i+1,j)$.
\item[(iii)] The path $\mathsf{p}$ terminates when the previous move is not possible
  in a cell at row $r_f$.
\end{compactitem}
Let $\pi'$ be obtained from $\pi$ by subtracting $1$ from every entry
in $\mathsf{p}$. Note that $\pi'$ is still a RPP. In the
array $A$ we add $1$ in position $A_{c_s,r_f}$ and obtain array $A'$.
We iterate these three steps until we reach a plane partition of
zeroes. We map $\pi$ to the final array $A$.
\end{definition}

\begin{theorem}[\cite{HG}]
The map $\HG:\RPP(\lambda) \to \mathcal{A}(\lambda)$ is a bijection.
\end{theorem}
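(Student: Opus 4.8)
The plan is to prove bijectivity by exhibiting an explicit inverse correspondence $\HG^{-1}\colon \mathcal{A}(\lambda)\to\RPP(\lambda)$ and verifying that $\HG^{-1}\circ\HG=\id$ and $\HG\circ\HG^{-1}=\id$. First I would record the basic well-definedness and termination facts: each iteration of $\HG$ subtracts $1$ from every cell of a nonempty North--East path, so $|\pi|$ strictly decreases (by the length of the path, which is at least $1$); hence after finitely many steps we reach the all-zero array, and $\HG(\pi)$ is a genuine nonnegative integer array of shape $\lambda$. The number of iterations equals $\sum_{u}A_u$, and tracing the path shows that the one recorded at position $(c_s,r_f)$ has length equal to the hook length $h(r_f,c_s)$; this yields the weight identity $|\pi|=\sum_{u\in[\lambda]}A_u\ts h(u)$ as a byproduct, though it is not needed for bijectivity itself.

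The next step is to pin down the \emph{order} in which $\HG$ records the units of $A$, since the inverse must replay these in reverse. The key structural observation is monotonicity of the starting columns: because a path starts at the most South-Western nonzero entry, in column $c_s$, all cells in columns $<c_s$ are zero and remain zero after the subtraction (the path only moves North and East). Consequently the successive recorded columns $c_s$ are weakly increasing throughout the run, and this, together with a secondary order among paths sharing a starting column, gives a canonical total order on the recorded cells. The inverse $\HG^{-1}$ therefore processes the nonzero entries of $A$ in the reverse of this order (decreasing $c_s$), performing $A_{c_s,r_f}$ reverse insertions for each position.

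I would then define the reverse elementary step and prove a \textbf{reversal lemma}. Given the current RPP $\pi'$ together with a recorded cell $(c_s,r_f)$, one inserts $1$ along the unique lattice path obtained by running the turning rules of $\HG$ backwards, anchored so that the inserted path terminates in row $r_f$ and begins in column $c_s$, comparing each entry against its Southern rather than its Northern neighbor. The reversal lemma asserts that this inserted path coincides, cell for cell, with the path that $\HG$ would remove; equivalently, inserting then removing (or removing then inserting) returns to the same tableau. Granting the lemma and the ordering of the previous paragraph, a straightforward induction on $\sum_u A_u$ gives $\HG^{-1}\circ\HG=\id$ and $\HG\circ\HG^{-1}=\id$, completing the proof.

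I expect the reversal lemma to be the main obstacle. The difficulty is that the forward path is chosen greedily using equalities and strict inequalities between vertically adjacent entries of $\pi$, and the subtraction along it alters precisely these comparisons; to invert, one must recover the turning points of the removed path from $\pi'$ alone, without knowing the path in advance. The heart of the argument is to show that among all North--East paths whose removal could have produced $\pi'$ from a larger RPP with the prescribed start column and end row, there is a unique one, the ``lowest'' such path, and that it is exactly the path reconstructed by the reversed rules. Carefully tracking how the relations $\pi_{i,j}=\pi_{i-1,j}$ and $\pi_{i,j}<\pi_{i-1,j}$ are created and destroyed by the unit subtraction, and confirming that the reversed comparisons reselect the identical cells, is the technical core; everything else is bookkeeping and induction.
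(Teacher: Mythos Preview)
The paper does not supply its own proof of this theorem: it is quoted as a classical result of Hillman and Grassl, and the paper merely describes the forward map~$\HG$, describes the candidate inverse~$\GH$ (your ``reverse elementary step'' run in a specific order), and records as a second cited theorem that $\GH=\HG^{-1}$. Your proposal is exactly this standard strategy, so in that sense it is aligned with what the paper (and the original source) do.

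Two points where your sketch is looser than the actual argument would require. First, you are right that the starting columns $c_s$ are weakly increasing, but you wave at ``a secondary order among paths sharing a starting column'' without saying what it is or proving it. In the standard proof one shows that among consecutive forward paths with the same start column the ending rows $r_f$ move weakly in one direction, and this is what forces the specific processing order the paper writes down for~$\GH$: nonzero cells of $A$ are taken with larger column first, and within a column with smaller row first. You would need to state and prove that monotonicity of~$r_f$; it is not hard, but it is not automatic from the column argument. Second, your description of the reverse step (``anchored so that the inserted path terminates in row $r_f$ and begins in column $c_s$'') presumes you already know the inserted path lands in the correct column; in the actual inverse one only specifies the starting row (namely $r_f$) and the reversed turning rules, and must then \emph{prove} that the path ends in column $c_s$. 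That is part of your reversal lemma, but it deserves to be isolated as a separate claim rather than folded into the anchoring.

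With those two refinements your outline becomes the standard proof; the reversal lemma is indeed where the work is, and your diagnosis of why (the subtractions flip exactly the equality/inequality tests that govern the turns) is correct.
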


Note that if $A=\HG(\pi)$ then $|\pi| = \omega(A)$ so as a corollary
we obtain~\eqref{eq:RPP-prod}.  Let us now
describe the inverse $\GH:A\mapsto \pi$ of the Hillman--Grassl map.

\begin{definition}[Inverse Hillman--Grassl map $\HG^{-1}$]
Given an array $A$ of nonnegative integers of shape $\lambda$, let
$\pi$ be the RPP of shape $\lambda$ of all zeroes. Next, we order the
nonzero entries of $A$, counting multiplicities, with the order $(i,j)<(i',j')$ if $j>j'$ or $j=j'$ and $i<i'$
(i.e. $(i,j)$ is right of $(i',j')$ or higher in the same
column). Next, for each entry $(r_s,c_j)$ of $A$ in this order
$(i_1,j_1),\ldots, (i_m,j_m)$ we build a reverse path $\mathsf{q}$ of South and
West steps in $\pi$ starting at row $r_s$ and ending in column $c_f$
as follows:
\begin{compactitem}
\item[(i)] Start $\mathsf{q}$ with the most Eastern entry of $\pi$ in row
  $r_s$.
\item[(ii)] If $\mathsf{q}$ has reached $(i,j)$ and $\pi_{i,j}=\pi_{i+1,j}$
  then $\mathsf{q}$ moves South to $(i-1,j)$, otherwise $\mathsf{q}$ moves West to
  $(i+1,j)$.
\item[(iii)] Path $\mathsf{q}$ ends when it reaches the Southern entry of $\pi$
  in column $c_f$.
\end{compactitem}
Step (iii) is actually attained (see e.g.~\cite[Lemma~4.2.4]{SSG}).
Let $\pi'$ be obtained from $\pi$ by adding $1$ from every entry in~$\mathsf{q}$.
Note that $\pi'$ is still a RPP. In the array $A$ we subtract $1$
in position $A_{c_f,r_s}$ and obtain array $A'$. We iterate this
process following the order of the nonzero entries of $A$ until we
reach an array of zeroes. We map $A$ to the final RPP $\pi$.
Note that $\omega(A)=|\pi|$.
\end{definition}

\begin{theorem}[\cite{HG}]
We have $\GH = \HG^{-1}$.
\end{theorem}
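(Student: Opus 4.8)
The plan is to prove the one-sided identity $\GH \circ \HG = \id_{\RPP(\lambda)}$. Since the preceding theorem already establishes that $\HG \colon \RPP(\lambda)\to \mathcal{A}(\lambda)$ is a bijection, a left inverse of $\HG$ must coincide with its two-sided inverse, so this suffices to conclude $\GH = \HG^{-1}$. Both maps are defined by iterating an elementary operation: one forward step of $\HG$ peels a single monotone lattice path $\mathsf{p}$ off $\pi$ (subtracting $1$ along it) and records one unit in the array cell determined by the start column $c_s$ and finish row $r_f$; one backward step of $\GH$ reads a unit from the array, builds a reverse path $\mathsf{q}$, and adds $1$ along it. Thus the whole identity reduces to two claims: a local \emph{reversibility} statement for a single matched pair of steps, and an \emph{order-compatibility} statement guaranteeing that the backward steps undo the forward steps in exactly the reverse order.

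For reversibility, reading the North/East steps of $\HG$ and the South/West steps of $\GH$ as the evident unit lattice moves, I would show that the forward path $\mathsf{p}$ extracted from $\pi$ and the backward path $\mathsf{q}$ built on $\pi' = \pi - \mathbf{1}_{\mathsf{p}}$ with the matching endpoint data $(c_s,r_f)$ visit exactly the same set of cells, so that $\pi' + \mathbf{1}_{\mathsf{q}} = \pi$. The argument is entirely local. Along any maximal vertical run of $\mathsf{p}$ the entries passed through were equal (this is precisely the forward rule that triggers a North step) and remain equal after subtracting $1$, which is exactly the equality that triggers a South step of $\GH$; at each corner, the strict forward inequality that forced an East step becomes, after the subtraction, precisely the inequality that forces the corresponding West turn. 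Checking the four local configurations (straight vertical, straight horizontal, and the two corner types), together with the fact that $\mathsf{p}$ is weakly monotone and terminates, forces $\mathsf{q}$ to retrace $\mathsf{p}$. That step (iii) of $\GH$ is actually attained is already cited as~\cite[Lemma~4.2.4]{SSG}.

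The main obstacle is order compatibility. Letting $A = \HG(\pi)$, I must show that the sequence of array cells produced by the successive forward steps, when read in reverse, is exactly the order in which $\GH$ processes the nonzero entries of $A$ (rightmost column first, top-to-bottom within a column, counting multiplicities). Equivalently: the path peeled \emph{last} by $\HG$ is the path reinserted \emph{first} by $\GH$, and after that reinsertion the residual array equals $\HG(\pi_1)$, where $\pi_1$ is $\pi$ with the last forward peel restored. The delicate point is that the forward ``most South-Western start'' rule and the backward ``rightmost-column, topmost'' rule are genuinely dual: the successive forward paths form a non-crossing/nested family, and peeling them from the bottom-left is inverted precisely by reinserting them from the top-right. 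I would formalize this by induction on $\omega(A) = |\pi|$, invoking the reversibility lemma to strip off one matched pair of steps at a time, and verifying at each stage both that the intermediate arrays stay of the form $\HG(\text{RPP})$ and that the partially rebuilt arrays stay weakly increasing. Establishing that the two orders match exactly—so that no forward path is reinserted out of turn—is the crux; once it is in place, reversibility handles each individual step and the induction closes.
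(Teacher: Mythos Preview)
The paper does not supply its own proof of this theorem: it is stated with a bare citation to~\cite{HG} (see also~\cite[\S4.2]{SSG}), so there is no in-paper argument to compare against. Your outline is the classical one from those references: show that a single $\GH$-step undoes the last $\HG$-step (the local reversibility lemma), and that the order in which $\HG$ records cells is precisely the reverse of the order in which $\GH$ consumes them, then induct on $|\pi|$.

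Your sketch of the reversibility lemma is accurate in spirit; the textbook version (e.g.~\cite[Lemma~4.2.3]{SSG}) is exactly the four-case local check you describe. For order compatibility, the precise statement you need is that if $\HG$ records cells $(r_1,c_1),(r_2,c_2),\ldots$ in succession, then $c_1\le c_2\le\cdots$ and, whenever $c_k=c_{k+1}$, one has $r_k\ge r_{k+1}$; this is~\cite[Lemma~4.2.2]{SSG}. Your phrasing in terms of ``non-crossing/nested'' paths is suggestive but not quite the right invariant---what one actually proves is a comparison between two \emph{consecutive} peeled paths (the later one lies weakly northeast of the earlier one at every shared column), from which the monotonicity of the recorded cells follows. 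Once you replace the nesting language with this pairwise comparison, the induction closes exactly as you say.
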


By abuse of notation, if $\pi$ is a skew RPP of shape $\lambda/\mu$,
we define $\HG(\pi)$ to be $\HG(\hat{\pi})$ where $\hat{\pi}$ is
the RPP of shape $\lambda$ with zeroes in $\mu$ and agreeing with
$\pi$ in $\lambda/\mu$:
\begin{center}
\includegraphics{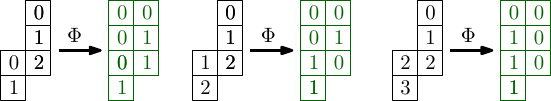}
\end{center}
Recall that unlike for straight shapes,  the enumeration of SSYT and RPP
of skew shape are not equivalent. Therefore, the image
$\HG(\SSYT(\lambda/\mu))$ is a strict subset of
$\HG(\RPP(\lambda/\mu))$. In Section~\ref{sec:HGSSYT} we characterize
the SSYT case in terms of excited diagrams, and in Section~\ref{sec:HGRPP} we
characterize the RPP case in terms of new diagrams called {\em pleasant
diagrams}. Both characterizations require a few properties of $\HG$
that we review next.

\medskip
\subsection{The Hillman--Grassl correspondence and Greene's theorem}

In this section we review key properties of the
Hillman--Grassl correspondence related to the {\em RSK
  correspondence} \cite[\S 7.11]{EC2}. We denote the RSK correspondence by \ts $\RSK:M\mapsto
(P,Q)$, where $M$ is a matrix with nonnegative integer entries and
$I(\RSK(M)):=P$, $R(\RSK(M)):=Q$ are SSYT of the same shape called the {\em insertion} and {\em
  recording} tableau, respectively.

Given a reverse plane partition $\pi$ and an integer $k$ with
$1-\ell(\lambda)\leq k \leq \lambda_1-1$, a $k$-diagonal is the sequence
of entries $(\pi_{ij})$ with $i-j=k$. Each $k$-diagonal of $\pi$ is
nonincreasing and so we denote it by a partition $\nu^{(k)}$. The {\em
  $k$-trace} of $\pi$ denoted by $\tr_k(\pi)$ is the sum of the parts
of $\nu^{(k)}$. Note that the $0$-trace of $\pi$ is the standard trace
$\tr(\pi)=\sum_i \pi_{i,i}$.

Given the Young diagram of $\lambda$ and an integer $k$ with
$1-\ell(\lambda) \leq k \leq \lambda_1-1$, let $\square_k^\lambda$ be the largest $i
\times (i+k)$ rectangle that fits inside the Young diagram starting at
$(1,1)$. For $k=0$, the rectangle $\square_0^{\lambda}=\sq^\la$ is the (usual) Durfee
square of~$\lambda$.
Given an array $A$ of shape $\lambda$, let $A_k$ be the subarray of
 $A$ consisting of the cells inside $\square^\lambda_k$ and $|A_k|$ be the sum of its entries.
Also, given a rectangular array~$B$, let
$\hf{B}$ and $\vf{B}$ denote the arrays $B$ flipped vertically and horizontally,
respectively. Here vertical flip means that the bottom row become the top row,
and horizontal means that the rightmost column becomes the leftmost column.

In the construction $\HG^{-1}$, entry~$1$ in position $(i,j)$ adds $1$ to
the $k$-trace if and only if $(i,j) \in \square^\lambda_k$. This
observation implies the following result.

\begin{proposition}[Gansner, Thm.~3.2 in~\cite{G}] \label{prop:HGtrace}
Let $A=\HG(\pi)$ then for $k$ with
$1-\ell(\lambda)\leq k \leq \lambda_1-1$ we have
\[
\tr_k(\pi) = |A_k|.
\]
\end{proposition}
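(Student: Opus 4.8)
The plan is to run the inverse map $\GH=\HG^{-1}$ and track how each elementary insertion changes the $k$-traces. Starting from $A$, the RPP $\pi=\GH(A)$ is built from the all-zero array by processing the nonzero entries of $A$ one unit at a time, and processing one unit at an array cell $u$ amounts to adding $1$ to every cell of a single reverse lattice path $\mathsf{q}$ in $\pi$. Since $\tr_k$ is additive in the entries of $\pi$, it suffices to establish the single-unit statement quoted just before the proposition: inserting a unit at $u$ raises $\tr_k(\pi)$ by exactly $1$ when $u\in\square_k^\lambda$ and leaves it unchanged otherwise. Summing over all units then gives $\tr_k(\pi)=\sum_{u\in\square_k^\lambda}A_u=|A_k|$, which is the claim; the base case of the zero array is trivial.

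The heart of the single-unit statement is a diagonal-counting argument. Adding the path $\mathsf{q}$ increases $\tr_k$ by the number of cells of $\mathsf{q}$ lying on the diagonal $\dd_k$. Now $\mathsf{q}$ consists only of South and West steps, and each such step changes the diagonal index $i-j$ by exactly $+1$ (South: $(i,j)\mapsto(i+1,j)$; West: $(i,j)\mapsto(i,j-1)$). Hence $\mathsf{q}$ is strictly monotone across diagonals and meets each diagonal at most once; it meets $\dd_k$ exactly once precisely when $k$ lies in the closed interval (possibly a single point) bounded by the diagonal indices of the two endpoints of $\mathsf{q}$. Crucially, those endpoints are determined by the shape alone and not by the current values of $\pi$: by construction $\mathsf{q}$ begins at the most Eastern cell of its starting row and terminates at the most Southern cell of its ending column, i.e. at $(r_s,\lambda_{r_s})$ and $(\lambda'_{c_f},c_f)$ for the indices $r_s,c_f$ determined by $u$. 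Thus the set of $k$ for which $\tr_k$ increases is exactly the interval of diagonals swept between these two cells.

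It remains to match this diagonal interval with membership in the Durfee rectangles. Here I would translate the endpoint-diagonal inequalities into the row- and column-extents of $\square_k^\lambda$: writing $\square_k^\lambda=[1,m]\times[1,m+k]$ with $m$ the largest index such that the $m\times(m+k)$ rectangle fits in $[\lambda]$, one checks that the two bounds on $k$ coming from the endpoints of $\mathsf{q}$ are equivalent, respectively, to the row-bound $p\le m$ and the column-bound $q\le m+k$ defining $\square_k^\lambda$, using the monotonicity of $\lambda_i-i$ and of $\lambda'_j-j$ together with the conjugate description of the rectangle's width. This is the one step that must be done with care about the transpose built into the indexing of $A$ (the forward map records a path started in column $c_s$ and ended in row $r_f$ at the transposed cell), and it is where the proposition's ``if and only if $u\in\square_k^\lambda$'' is actually pinned down.

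I expect the main obstacle to be precisely this last identification: keeping the several conventions consistent — the direction of the $\HG^{-1}$ paths, the transpose in the indexing of $A$, and the conjugate-partition description of $\square_k^\lambda$ — so that the swept interval of diagonals matches the Durfee rectangle exactly rather than up to a reflection. Once the endpoints are recognized as shape-data, the monotone-diagonal property makes the ``$0$ or $1$'' dichotomy automatic, and the remaining work is bookkeeping with $\lambda$ and $\lambda'$.
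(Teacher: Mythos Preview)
Your proposal is correct and is exactly the approach the paper indicates: the paper's entire argument is the single sentence preceding the proposition (``In the construction $\HG^{-1}$, entry~$1$ in position $(i,j)$ adds $1$ to the $k$-trace if and only if $(i,j)\in\square_k^\lambda$''), and you have supplied the details of that observation via the diagonal-monotonicity of the reverse path and the shape-determined endpoints. The only thing to watch, as you already flagged, is the paper's somewhat inconsistent indexing conventions (e.g.\ the transpose in $A_{c_s,r_f}$ and the sign in the definition of $\square_k^\lambda$ versus $\dd_k$), but the mathematics is sound.
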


As a corollary, when $k=0$, Proposition~\ref{prop:HGtrace} gives
Gansner's formula~\eqref{eq:traceeq} for the generating series
for $\RPP(\lambda)$ by size and trace.  Indeed, the generating function
for the arrays is a product over cells $(i,j)\in [\la]$ of terms
which contain~$t$ in the numerator if only if $(i,j) \in \square^\lambda$.  We refer
to~\cite{G} for the details.

\smallskip

Let us note that not only is the $k$-trace determined by
Proposition~\ref{prop:HGtrace} but also the parts of
$\nu^{(k)}$. This next result states that the partition $\nu^{(k)}$ and its
conjugate are determined by nondecreasing and nonincreasing chains
in the rectangle~$A_k$.

Given an $m\times n$ array $M=(m_{ij})$ of nonnegative integers, an {\em
  ascending chain} of length $s$ of $M$ is a sequence
  $\mathfrak{c}:=\left((i_1,j_1),(i_2,j_2),\ldots,(i_s,j_s)\right)$ where  $m\geq i_1
\geq  \cdots \geq
i_s \geq 1$ and $1\leq j_1 \leq  \cdots \leq  j_s \leq n$  where
$(i,j)$ appears in $\mathfrak{c}$ at most $m_{ij}$ times. A
{\em descending chain} of length $s$ is a sequence $\mathfrak{d}:=
\left((i_1,j_1),(i_2,j_2),\ldots,(i_s,j_s)\right)$ where $1\leq i_1 < \cdots <
i_s \leq m$ and $1\leq j_1 < \cdots < j_s \leq n$ where $(i,j)$
appears in $\mathfrak{d}$ only if $m_{ij}\neq 0$.

Let $ac_1(M)$ and $dc_1(M)$ be
the length of the longest ascending and descending chains in $M$
respectively. In general for $t\geq 1$, let $ac_t(M)$
be the maximum combined length of $t$ ascending chains where the
combined number of times $(i,j)$ appears is $m_{ij}$. We define
$dc_t(M)$ analogously for descending chains.

\begin{theorem}[Part~(i) by Hillman--Grassl~\cite{HG}, part~(ii) by
  Gansner~\cite{G}]  \label{thm:HGdiag}
Let $\pi \in \RPP(\lambda)$ and let $1-\ell(\lambda)\leq k \leq \lambda_1-1$.  Denote by $\nu=\nu^{(k)}$
the partition whose parts are the entries on the $k$-diagonal of~$\pi$, and let $A=\HG(\pi)$.  Then, for all $t\geq 1$ we
have:
\begin{itemize}
\item[{\rm (i)}]  $ac_t(A_k) = \nu_1+\nu_2+\cdots+\nu_t$,
\item[{\rm (ii)}] $dc_t(A_k)=\nu'_1+\nu'_2+\cdots + \nu'_t$.
\end{itemize}
\end{theorem}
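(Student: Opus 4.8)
The plan is to reduce both parts to a single statement about the $\RSK$ correspondence together with Greene's theorem, and to isolate the one genuinely new ingredient, which is the diagonal structure of the Hillman--Grassl map due to Gansner. Concretely, the key claim is that
\[
\shape\bigl(\RSK(\hf{A_k})\bigr) \, = \, \nu^{(k)},
\]
where $\hf{A_k}$ is the vertical flip of the rectangular subarray $A_k$ sitting inside $\square^\lambda_k$. This refines Proposition~\ref{prop:HGtrace}, which records only the total size $|\nu^{(k)}| = \tr_k(\pi) = |A_k|$ (the $t\to\infty$ case of part~(i)); here I need the whole partition $\nu^{(k)}$, not merely its sum.

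First I would establish the key claim. The mechanism is that the Hillman--Grassl correspondence can be executed diagonal by diagonal: the paths $\mathsf{p}$ built in the algorithm interact with the $k$-diagonal of $\pi$ in a controlled way, so that processing the nonzero entries of $A_k$ in the canonical Hillman--Grassl order reproduces, on the $k$-diagonal, exactly the Knuth insertion underlying $\RSK$. Tracking how each subtracted path lowers the diagonal entries and how the corresponding cell is recorded in $A_k$, one checks that the nonincreasing sequence $\nu^{(k)}$ of diagonal entries is precisely the common shape of the insertion and recording tableaux of $\hf{A_k}$. The vertical flip enters because the $k$-diagonal of $\pi$ decreases as the row index grows, whereas $\RSK$ reads weakly increasing data. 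This is Gansner's result, and it is the main obstacle, since it is the only step not formally supplied by Greene's theorem or by bookkeeping.

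Granting the key claim, both parts follow at once. Applying Greene's theorem to the nonnegative integer array $\hf{A_k}$, and writing $\rho := \shape(\RSK(\hf{A_k})) = \nu^{(k)}$, gives $\rho_1+\cdots+\rho_t$ as the maximum combined length of $t$ weakly increasing chains and $\rho'_1+\cdots+\rho'_t$ as the maximum combined length of $t$ strictly decreasing chains. It remains to undo the flip. A weakly increasing chain of $\hf{A_k}$, with both coordinates weakly increasing and each cell $(i,j)$ used at most $m_{ij}$ times, corresponds under the vertical flip exactly to an ascending chain of $A_k$ as defined above, namely rows weakly decreasing and columns weakly increasing; this yields part~(i). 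A strictly decreasing chain of $\hf{A_k}$ has one coordinate strictly increasing and the other strictly decreasing, and since such a chain may be read in either direction, after the flip it becomes a descending chain of $A_k$, with rows and columns both strictly increasing and each cell used only where the entry is nonzero; this yields part~(ii). Thus $ac_t(A_k) = \nu_1+\cdots+\nu_t$ and $dc_t(A_k) = \nu'_1+\cdots+\nu'_t$, with part~(i) being the classical Hillman--Grassl statement (originally the case $t=1$) and part~(ii) its Gansner refinement.
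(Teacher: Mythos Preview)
The paper does not supply its own proof of this theorem; it is quoted with attribution to~\cite{HG} and~\cite{G}. What the paper does is present Corollary~\ref{cor:HGvsRSK}, the statement that $\shape\bigl(\RSK(\hf{A_k})\bigr)=\nu^{(k)}$, as a \emph{consequence} of Theorem~\ref{thm:HGdiag}, obtained by running Greene's theorem backwards. Your proposal inverts this logical flow: you take the RSK--shape statement as the primitive fact and deduce both~(i) and~(ii) from it via Greene's theorem. The two directions are equivalent, and your route is in fact closer to how Gansner actually argues in~\cite{G}. Your dictionary between ascending/descending chains of $A_k$ and weakly increasing/strictly decreasing subsequences of $\hf{A_k}$ under the vertical flip is correct.

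The one genuine caveat is that your sketch of the key claim is exactly where all the content lies, and it remains a sketch. Saying that the Hillman--Grassl paths ``interact with the $k$-diagonal in a controlled way'' so that ``one checks'' the identification with Knuth insertion \emph{is} the theorem. If you intend a self-contained proof rather than a reduction to Gansner's paper, you would need to make that identification precise: show that each HG path meets the $k$-diagonal in a single cell, and that the resulting sequence of decrements on the diagonal, together with the row/column data recorded in $A_k$, literally reproduces RSK bumping on the diagonal word. As a proof strategy and as a reduction to the right lemma, though, your proposal is sound.
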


\begin{remark} {\rm
This result is the analogue of \emph{Greene's theorem} for the RSK
correspondence~$\RSK$, see e.g.~\cite[Thm.~A.1.1.1]{EC2}. In fact, we have the following explicit connection with RSK.

}
\end{remark}

\begin{corollary} \label{cor:HGvsRSK}
Let $\pi$ be in $\RPP(\lambda)$, $A=\HG(\pi)$, and let~$k$ be an
integer $1-\ell(\lambda)\leq k \leq \lambda_1-1$.  Denote by
$\nu^{(k)}$ is the partition obtained from the $k$-diagonal of~$\pi$.
Then the shape of the tableaux in $\RSK(\hf{A_k})$
and $\RSK(\vf{A_k})$ is equal to~$\nu^{(k)}$.
\end{corollary}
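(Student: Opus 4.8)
The plan is to derive Corollary~\ref{cor:HGvsRSK} directly from Theorem~\ref{thm:HGdiag} by invoking Greene's theorem for the RSK correspondence. First I would recall what Greene's theorem says: for a nonnegative integer matrix $M$ with $\RSK(M)=(P,Q)$ of shape $\kappa$, the sum $\kappa_1+\cdots+\kappa_t$ equals the maximum combined length of $t$ \emph{weakly increasing} chains in $M$, while $\kappa_1'+\cdots+\kappa_t'$ equals the maximum combined length of $t$ \emph{strictly decreasing} chains (with the appropriate multiplicity conventions). This is the statement in \cite[Thm.~A.1.1.1]{EC2}. Thus the entire shape $\kappa$ of $\RSK(M)$ is determined by the two families of numbers $ac_t(M)$ and $dc_t(M)$, since a partition is determined by the partial sums of its parts (equivalently, by the partial sums of the conjugate parts).

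The key step is to match the chain conventions. The ascending and descending chains $ac_t$, $dc_t$ defined in the excerpt just before Theorem~\ref{thm:HGdiag} are phrased with the row index going one way and the column index the other, which is exactly the flip built into $\hf{A_k}$ and $\vf{A_k}$. Concretely, I would check that an ascending chain of $A_k$ (rows weakly decreasing, columns weakly increasing, with multiplicities bounded by the entries) corresponds under the vertical flip $\hf{\cdot}$ to a weakly increasing chain in the Greene sense, so that $ac_t(A_k)$ equals the Greene quantity computing $\nu_1+\cdots+\nu_t$ for the tableau shape of $\RSK(\hf{A_k})$. Similarly, a descending chain of $A_k$ (both indices strictly increasing, entries nonzero) corresponds under the horizontal flip $\vf{\cdot}$ to a strictly decreasing chain, so $dc_t(A_k)$ computes the conjugate partial sums $\nu_1'+\cdots+\nu_t'$ of the shape of $\RSK(\vf{A_k})$.

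Putting these together: by Theorem~\ref{thm:HGdiag}(i), $ac_t(A_k)=\nu^{(k)}_1+\cdots+\nu^{(k)}_t$ for all $t$, and by Greene's theorem the left side equals the partial sums of the shape $\kappa$ of $\RSK(\hf{A_k})$; since partial sums of parts determine a partition, $\kappa=\nu^{(k)}$. Symmetrically, Theorem~\ref{thm:HGdiag}(ii) together with the conjugate form of Greene's theorem forces the shape of $\RSK(\vf{A_k})$ to have conjugate equal to $(\nu^{(k)})'$, hence to equal $\nu^{(k)}$ as well. This yields exactly the claimed statement that both $\RSK(\hf{A_k})$ and $\RSK(\vf{A_k})$ have shape $\nu^{(k)}$.

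I expect the only real obstacle to be bookkeeping the flips and the strict-versus-weak and multiplicity conventions so that the chain definitions in the excerpt line up precisely with the hypotheses of Greene's theorem. In particular, ascending chains here allow repeated cells up to multiplicity $m_{ij}$ (matching the ``weakly increasing'' count in RSK) whereas descending chains forbid repetition and only require $m_{ij}\neq 0$ (matching ``strictly decreasing''), and one must confirm the two flips $\hf{\cdot}$ and $\vf{\cdot}$ are the correct ones to turn ``rows decrease'' into the standard orientation for each of the two chain types. Once the conventions are verified, the corollary is an immediate consequence, which is presumably why the authors state it as a corollary rather than a theorem.
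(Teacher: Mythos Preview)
Your approach is correct and is exactly what the paper intends: Corollary~\ref{cor:HGvsRSK} is stated without proof, immediately after the remark that Theorem~\ref{thm:HGdiag} is the Hillman--Grassl analogue of Greene's theorem, so the implicit argument is precisely to combine Theorem~\ref{thm:HGdiag} with Greene's theorem for~$\RSK$ after matching the chain conventions via the flips. Your bookkeeping is right: ascending chains in $A_k$ become weakly increasing chains in $\hf{A_k}$ (giving the row-sum side of Greene's theorem and hence the shape of $\RSK(\hf{A_k})$ via part~(i)), while descending chains become strictly decreasing chains in $\vf{A_k}$ (giving the column-sum side and hence the shape of $\RSK(\vf{A_k})$ via part~(ii)).
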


\begin{example}
Let $\la= (4,4,3,1)$ and $\pi\in \RPP(\la)$ be as below.  Then we have:
$$\pi=
\begin{ytableau}
*(green) 0& *(yellow)1&3&4\\
1& *(green) 3&*(yellow) 5&6\\
3&6& *(green) 7\\
3
\end{ytableau} \qquad
A=\HG(\pi) = \begin{ytableau}
0&2&1&1\\ 1&1&1&2 \\ 2&1&1\\
0
\end{ytableau}
$$
Note that $\nu^{(0)}=(7,3)$ and
indeed $\ell(\nu^{(0)})=2=dc_1(A_0)$.  For example, take
$\mathfrak{d}=\{(2,2),(3,3)\}$.  Similarly, $\nu^{(1)}=(5,1)$,
$\ell(\nu^{(1)})=2=dc_1(A_1)$.  Applying RSK to~$\vf{A_1}$
and~$\vf{A_0}$ we get tableaux of shape $\nu^{(1)}$ and
$\nu^{(0)}$, respectively:
$$
I(\RSK(\vf{A_1})) = I(\RSK \begin{ytableau}
1&2&0\\ 1&1&1
\end{ytableau}) = \begin{ytableau}
1&1&2&2&3\\
2
\end{ytableau}\,,\qquad \ \
I(\RSK(\vf{A_0})) = I(\RSK \begin{ytableau}
1&2&0\\ 1&1&1 \\ 1&1&2
\end{ytableau}) =
\begin{ytableau}
1& 1& 1& 2& 2&3& 3\\
2&2&3
\end{ytableau}.
$$
\end{example}


\bigskip
\section{Hillman--Grassl map on skew RPP} \label{sec:HGRPP}

\nin
In this section we show that the Hillman--Grassl map is a
bijection between RPP of skew shape and arrays of nonnegative
integers with support on certain diagrams related to excited diagrams.

\subsection{Pleasant diagrams} \label{sec:pleasant}

We identify any diagram $S$ (set of boxes in $[\lambda]$) with its corresponding $0$-$1$ indicator array, i.e. array of shape $\lambda$ and support $S$.

\begin{definition}[Pleasant diagrams] \label{def:agog}
 A diagram $S\subset [\lambda]$ is a {\em \pleasant~diagram} of $\lambda/\mu$ if for all
 integers $k$ with $1-\ell(\lambda)\leq k \leq \lambda_1-1$, the subarray
 $S_k := S\cap \square^{\lambda}_k$ has no descending chain bigger than
 the length $s_k$ of the diagonal $\dd_k$ of $\lambda/\mu$, i.e. for every $k$ we have $dc_1(S_k) \leq s_k$. We denote the set of
 \pleasant~diagrams of $\lambda/\mu$ by $\PD(\lambda/\mu)$.
\end{definition}


\begin{example} \label{ex:pleasant12}
The skew shape $(2^2/1)$ has $12$ \pleasant~diagrams of which two are
complements of excited diagrams (the first in each row):
\begin{center}
\includegraphics{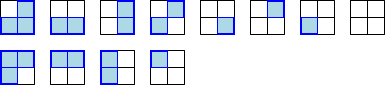}.
\end{center}
These are diagrams $S$ of $[2^2]$ where $S \cap \square^{\lambda}_{-1}, S\cap \square^{\lambda}_0$ and
$S\cap \square^\lambda_1$ have no descending chain bigger than
$s_{k}=|\dd_k|=1$ for $k$ in $\{-1,0,1\}$.
\end{example}

\begin{theorem} \label{thm:rpp_pleasant}
A RPP $\pi$ of shape $\lambda$ has support in a skew shape $\lambda/\mu$ if and only if
the support of $\HG(\pi)$ is a pleasant diagram in
$\PD(\lambda/\mu)$. In particular
\begin{equation} \label{eq:rpppleasant}
\sum_{\pi \in \RPP(\lambda/\mu)} q^{|\pi|} = \sum_{S \in
  \PD(\lambda/\mu)} \left[ \prod_{u\in S} \frac{q^{h(u)}}{1-q^{h(u)}} \right].
\end{equation}
\end{theorem}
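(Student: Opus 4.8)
The plan is to prove the biconditional characterization, from which the generating-function identity \eqref{eq:rpppleasant} follows immediately: once we know $\HG$ restricts to a bijection between $\RPP(\la/\mu)$ and the arrays in $\cA(\la)$ whose support is a pleasant diagram, we sum $q^{|\pi|}=q^{\omega(A)}=q^{\sum_{u\in S}A_u h(u)}$ over all such arrays. Since an array with prescribed support $S$ and positive entries contributes $\prod_{u\in S} \sum_{a\ge 1} q^{a\ts h(u)} = \prod_{u\in S}\frac{q^{h(u)}}{1-q^{h(u)}}$, and since the support ranges over all of $\PD(\la/\mu)$, the right-hand side of \eqref{eq:rpppleasant} drops out. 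So the whole content is in the support characterization.

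First I would recall that a skew RPP $\pi$ of shape $\la/\mu$ is, by our convention, the RPP $\hat\pi$ of shape $\la$ that is zero on $[\mu]$ and agrees with $\pi$ elsewhere. The key structural fact is that $\pi$ has its support inside $\la/\mu$ exactly when, for each diagonal $\dd_k$ of $\la$, the entries of $\pi$ on that diagonal are zero outside the portion lying in $\la/\mu$; because $\pi$ is weakly increasing along diagonals (being weakly increasing in rows and columns), the number of \emph{positive} entries on the $k$-diagonal is at most $s_k=|\dd_k\cap(\la/\mu)|$, and conversely this count controls where the support sits. I would phrase this as: $\supp(\pi)\subseteq \la/\mu$ if and only if for every $k$, the partition $\nu^{(k)}$ recording the $k$-diagonal of $\pi$ has at most $s_k$ nonzero parts, i.e. $\ell(\nu^{(k)})\le s_k$.

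Next I would translate this condition across $\HG$ using Theorem~\ref{thm:HGdiag}(ii) (Gansner's diagonal version of Greene's theorem). That result says, for $A=\HG(\pi)$, the conjugate parts of $\nu^{(k)}$ are read off from longest descending chains in the subarray $A_k=A\cap\square^\la_k$; in particular $\nu'_1=dc_1(A_k)$, which equals $\ell(\nu^{(k)})$, the number of parts of $\nu^{(k)}$. Therefore the condition $\ell(\nu^{(k)})\le s_k$ for all $k$ becomes precisely $dc_1(A_k)\le s_k$ for all $k$, which is exactly the defining condition for $\supp(A)$ to be a pleasant diagram in $\PD(\la/\mu)$ (Definition~\ref{def:agog}). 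Since $dc_1$ of a subarray depends only on which cells are nonzero, this condition sees only $\supp(A)$, which is why pleasantness is a property of the support alone.

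The main obstacle, and the step I would write out most carefully, is the equivalence $\supp(\pi)\subseteq\la/\mu \iff \ell(\nu^{(k)})\le s_k$ for all $k$. The forward direction is immediate from counting positive diagonal entries. The reverse direction requires an argument that a diagonal-by-diagonal bound on the number of nonzero entries, combined with the monotonicity of $\pi$, forces the zeros to occupy exactly the staircase-shaped region $[\mu]$ rather than some other configuration; here one uses that the zero entries on each diagonal must be the top-left (smallest) ones by monotonicity, and that across adjacent diagonals these zero-regions nest to form a Young diagram $[\mu]$ with $\mu'_j$ determined by the diagonal counts. I expect this combinatorial bookkeeping, matching the diagonal lengths $s_k$ to the shape $\mu$, to be the delicate part; the rest is a direct invocation of Gansner's theorem and a routine generating-function summation.
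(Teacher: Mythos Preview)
Your approach is essentially identical to the paper's: both observe that $\supp(\pi)\subseteq\la/\mu$ is equivalent to $\ell(\nu^{(k)})\le s_k$ for every $k$, then invoke Theorem~\ref{thm:HGdiag}(ii) to rewrite $\ell(\nu^{(k)})=(\nu^{(k)})'_1=dc_1(A_k)$, which is exactly Definition~\ref{def:agog}, and finally derive~\eqref{eq:rpppleasant} from the weight-preserving property of~$\HG$. The step you flag as ``delicate'' is in fact immediate and need not be belabored: on each diagonal $\dd_k$ the zero entries of an RPP are a contiguous block at the top-left end (by monotonicity in rows and columns), and the cells of $[\mu]$ on $\dd_k$ are precisely that top-left block of size $|\dd_k|-s_k$; hence $\ell(\nu^{(k)})\le s_k$ is equivalent to $\pi|_{[\mu]\cap\dd_k}=0$, and combining over $k$ gives $\pi|_{[\mu]}=0$ with no further nesting argument required.
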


\begin{proof}
By Theorem~\ref{thm:HGdiag}, a RPP $\pi$ of shape $\lambda$ has
support in the skew shape $\lambda/\mu$ if and only if $A=\HG(\pi)$
satisfies
\[
dc_1(A_k)=\nu'_1 \leq s_k,
\]
for $1-\ell(\lambda) \leq k \leq
\lambda_1-1$, where $\nu=\nu^{(k)}$. In other words, $\pi$ has support
in the skew shape $\lambda/\mu$ if and
only if the support $S\subseteq [\lambda]$ of $A$ is in
$\PD(\lambda/\mu)$. Thus, the Hillman--Grassl map is a
bijection between $\RPP(\lambda/\mu)$ and arrays of nonnegative
integers of shape $\lambda$ with support in a \pleasant~diagram $S \in
\PD(\lambda/\mu)$. This proves the first
claim. Equation~\eqref{eq:rpppleasant} follows since $|\pi|=\omega(\HG(\pi))$.
\end{proof}

\begin{remark}The proof of Theorem~\ref{thm:rpp_pleasant} gives an alternative description for
pleasant diagrams $\PD(\lambda/\mu)$ as the supports of $0$-$1$ arrays $A$ of shape
$\lambda$ such that $\HG^{-1}(A)$ is in $\RPP(\lambda/\mu)$.
\end{remark}

We also give a generalization of the trace generating function~\eqref{eq:traceeq}
for these reverse plane partitions.

\begin{proof}[Proof of Theorem~\ref{thm:RPP-trace}]
Given a pleasant diagram $S\in \PD(\lambda/\mu)$, let $\mathcal{B}_S$
be the collection of arrays of shape $\lambda$ with support in $S$. Given a RPP
$\pi$, let $A=\HG(\pi)$.
By Theorem~\ref{thm:rpp_pleasant} $\pi$ has shape $\lambda/\mu$
if and only if $A$ has support in a pleasant diagram $S$ in
$\PD(\lambda/\mu)$. Thus
\begin{equation}\label{eq1:tracerpp}
\sum_{\pi \in \RPP(\lambda/\mu)} \. q^{|\pi|}\ts t^{\tr(\pi)} \, \, = \, \sum_{S\in
  \PD(\lambda/\mu)} \, \sum_{\pi \in \HG^{-1}(\mathcal{B}_S)} \.
q^{|\pi|}\ts t^{\tr(\pi)}\,,
\end{equation}
where for each $S\in \PD(\lambda/\mu)$ we have
\begin{equation} \label{eq2:tracerpp}
\sum_{\pi \in \HG^{-1}(\mathcal{B}_S)} \. q^{|\pi|} \, = \,
\prod_{u\in S} \frac{q^{h(u)}}{1-q^{h(u)}}\,.
\end{equation}
Next, by Proposition~\ref{prop:HGtrace} for
$k=0$, the trace $\tr(\pi)$ equals $|A_0|$, the sum of the entries of
$A$ in the Durfee square $\square^{\lambda}$ of $\lambda$. Therefore,
we refine \eqref{eq2:tracerpp} to keep track of the trace of the RPP
and obtain
\begin{equation} \label{eq3:tracerpp}
\sum_{\pi \in \HG^{-1}(\mathcal{B}_S)} \. q^{|\pi|}\ts t^{\tr(\pi)}
\, = \, \prod_{u\in S\cap \square^\lambda}
  \frac{t\ts q^{h(u)}}{1-t\ts q^{h(u)}}\, \prod_{u\in S \setminus
    \square^\lambda} \. \frac{q^{h(u)}}{1-q^{h(u)}}\,.
\end{equation}
Combining \eqref{eq1:tracerpp} and \eqref{eq3:tracerpp} gives the
desired result.
\end{proof}

\medskip
\subsection{Combinatorial proof of NHLF \eqref{eq:Naruse}: relation between pleasant and excited diagrams. }
Theorem~\ref{thm:skewSSYT} relates SSYT of skew shape with excited
diagrams and Theorem~\ref{thm:rpp_pleasant}  relates RPP of skew shape with
pleasant diagrams. Since SSYT are RPP then we expect a relation
between pleasant and excited diagrams of a fixed skew shape
$\lambda/\mu$. The first main result of this subsection characterizes the
pleasant diagrams of maximal size in terms of excited diagrams. The
second main result characterizes all pleasant diagrams.

The key towards these results is  a more graphical characterization of
pleasant diagrams as described in the proof of
Lemma~\ref{lem:pleasant_is_subset_of_excited}. It makes the
relationship with excited diagrams more apparent and also allows for a more intuitive description for both kinds of diagrams.

\begin{theorem} \label{thm:when_excited_pleasant}
A \pleasant~diagram $S\in \PD(\lambda/\mu)$ has size $|S|\leq
|\lambda/\mu|$ and has maximal size $|S| = |\lambda/\mu|$ if
and only if the complement $[\lambda]\setminus S$ is an excited
diagram in $\ED(\lambda/\mu)$.
\end{theorem}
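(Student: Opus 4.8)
The plan is to establish both the size bound and the equality characterization by \emph{sandwiching} $S$ between a pleasant diagram and the complement of an excited diagram. Concretely, I would prove two statements: first, that the complement $[\lambda]\setminus D$ of any excited diagram $D\in\ED(\lambda/\mu)$ is itself a pleasant diagram of size exactly $|\lambda/\mu|$; and second, that every pleasant diagram $S\in\PD(\lambda/\mu)$ is contained in $[\lambda]\setminus D$ for some $D\in\ED(\lambda/\mu)$. Granting these, the bound $|S|\le |[\lambda]\setminus D| = |\lambda|-|\mu| = |\lambda/\mu|$ is immediate, and when equality holds the containment $S\subseteq[\lambda]\setminus D$ together with $|S|=|[\lambda]\setminus D|$ forces $S=[\lambda]\setminus D$, so that $[\lambda]\setminus S = D\in\ED(\lambda/\mu)$. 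The converse direction (that such a complement achieves the maximum) is exactly the first statement.

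For the first statement I would use that every excited move $(i,j)\mapsto(i+1,j+1)$ preserves the content $j-i$, as recorded in the proof of Proposition~\ref{prop:sameq}; hence $D$ has the same number of cells on each diagonal $\dd_c$ as $[\mu]$, and consequently $[\lambda]\setminus D$ has exactly $s_c$ cells on each such diagonal and total size $\sum_c s_c = |\lambda/\mu|$. To see that $[\lambda]\setminus D$ is pleasant, I would bound the length of the longest descending chain of $[\lambda]\setminus D$ inside each rectangle $\square^\lambda_k$ by $s_k$. The key geometric input is the interlacing property (Definition~\ref{def:interlacing}): because the cells of $D$ interlace along consecutive diagonals, any descending chain of $[\lambda]\setminus D$ inside $\square^\lambda_k$ must weave around the cells of $D$ and can meet the relevant diagonals only in a controlled way, so its length cannot exceed the number $s_k$ of skew cells on $\dd_k$. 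Making this count precise, by identifying exactly which diagonals a maximal descending chain in $\square^\lambda_k\setminus D$ can use, is where the careful bookkeeping lies.

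For the second statement I would use the graphical reformulation of pleasantness developed for Lemma~\ref{lem:pleasant_is_subset_of_excited}. The idea is to read off the conditions $dc_1(S_k)\le s_k$ diagonal by diagonal: since the cells of $S$ lying on a single diagonal $i-j=c$ inside a rectangle $\square^\lambda_k$ automatically form a descending chain, pleasantness forbids $S$ from accumulating too many cells along any such configuration. I would then construct an excited diagram $D\subseteq [\lambda]\setminus S$ explicitly, placing on each diagonal the number of cells of $\mu$ on that diagonal into positions that the chain bounds force to avoid $S$, and verifying that the resulting placement satisfies the interlacing property and hence lies in $\ED(\lambda/\mu)$, for instance via the flagged-tableau description of Proposition~\ref{prop:excited2flaggedtab}.

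The hard part will be the containment of the second statement, specifically the matching between the Durfee-type rectangles $\square^\lambda_k$ with their bounds $s_k$ and the diagonal/interlacing structure of excited diagrams: one must argue not merely a counting inequality on each diagonal but that the forbidden descending chains in all the $\square^\lambda_k$ \emph{simultaneously} leave room, in the correct interlaced pattern, for a genuine excited diagram to sit inside $[\lambda]\setminus S$. Translating the purely chain-theoretic hypotheses into this concrete positional statement, which is precisely what the graphical characterization in the proof of Lemma~\ref{lem:pleasant_is_subset_of_excited} is designed to accomplish, is the main obstacle; once it is in place, both the size bound and the equality characterization follow as above.
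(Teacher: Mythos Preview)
Your approach is correct but takes a different route from the paper. You sandwich $S$ inside $[\lambda]\setminus D$ via Lemma~\ref{lem:pleasant_is_subset_of_excited}, and deduce both the bound $|S|\le|\lambda/\mu|$ and the equality case from this single containment together with Lemma~\ref{lemma:excitedispleasant}. The paper instead proves the bound and the equality case by separate, more elementary arguments (Lemmas~\ref{lemma:boundsizepleasant} and~\ref{lemma:bigpleasantisexcited}), neither of which requires the shadow-line machinery of Lemma~\ref{lem:pleasant_is_subset_of_excited}. For the bound, the paper simply observes that the cells of $S$ on a fixed diagonal $\dd_k$ themselves form a descending chain in $S_k$, so $|S\cap\dd_k|\le s_k$; summing over $k$ gives $|S|\le|\lambda/\mu|$ in one line. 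For the equality case, the paper argues directly that if $|S\cap\dd_k|=s_k$ for all $k$ but $\overline S$ fails the interlacing property between some $\dd_k$ and $\dd_{k\pm1}$, one can splice the extra run of cells on the adjacent diagonal into the full chain on $\dd_k$ to produce a descending chain of length $s_k+1$, a contradiction.

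What your approach buys is that, once Lemma~\ref{lem:pleasant_is_subset_of_excited} is in hand, you have essentially proved the full characterization Theorem~\ref{conj:charpleasant} (pleasant diagrams are exactly subsets of complements of excited diagrams), of which Theorem~\ref{thm:when_excited_pleasant} is an immediate corollary. The paper's approach buys a self-contained and very short proof of Theorem~\ref{thm:when_excited_pleasant} that does not depend on constructing an excited diagram avoiding $S$; in the paper, Lemma~\ref{lem:pleasant_is_subset_of_excited} is proved separately and only combined with Theorem~\ref{thm:when_excited_pleasant} afterwards to obtain Theorem~\ref{conj:charpleasant}. In short: your route is valid and in some sense more conceptual, but it front-loads the hard shadow-line construction, whereas the paper's direct diagonal-counting arguments get Theorem~\ref{thm:when_excited_pleasant} almost for free.
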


By combining this theorem with Theorem~\ref{thm:rpp_pleasant} we
derive again the NHLF. In contrast with the
derivation of this formula in
Proposition~\ref{cor:getnaruse} (our first proof of the NHLF), this derivation is entirely combinatorial.
\begin{proof}[Second proof of the NHLF \eqref{eq:Naruse}]
By Stanley's theory of {\em $P$-partitions},
\cite[Thm.~3.15.7]{EC2}
\begin{equation} \label{eq1:RPPskewPpartitions}
\sum_{\pi \in \RPP(\lambda/\mu)} q^{|\pi|} \, = \, \frac{\sum_{w \in \mathcal{L}(P_{\lambda/\mu})}
 \, q^{\maj(w)}}{\prod_{i=1}^n (1-q^i)}\,,
\end{equation}
where $n=|\lambda/\mu|$ and the sum in the numerator of the RHS is over linear extensions
$w$ of the poset $P_{\lambda/\mu}$ with a {\em natural labelling}. Multiplying
\eqref{eq1:RPPskewPpartitions} by $(1-q)\cdots (1-q^n)$, and using
Theorem~\ref{thm:skewRPP} gives
\begin{equation} \label{eq2:RPPskewPpartitions}
\sum_{w \in \mathcal{L}(P_{\lambda/\mu})}   q^{\maj(w)} \, = \,
\left( \prod_{i=1}^n (1-q^i) \right) \sum_{S\in \PD(\lambda/\mu)} \.
\prod_{u\in S} \. \frac{q^{h(u)}}{1-q^{h(u)}}\..
\end{equation}
By Theorem~\ref{thm:when_excited_pleasant}, pleasant diagrams $S \in
\PD(\lambda/\mu)$ have size $|S|\leq n$, with the equality here exactly
when $\overline{S}\in \ED(\lambda/\mu)$.  Thus, letting \ts $q \to 1$ \ts
in~\eqref{eq2:RPPskewPpartitions} gives $f^{\lambda/\mu}$ on the LHS.
On the RHS, we obtain the sum of products
$$\prod_{u\in \overline{S}} \. \frac{1}{h(u)}\,,
$$
over all excited
diagrams $S \in \ED(\lambda/\mu)$. This implies the NHLF~\eqref{eq:Naruse}.
\end{proof}

\begin{lemma}  \label{lem:pleasant_is_subset_of_excited}
Let \ts $S\in \PD(\lambda/\mu)$. Then there is an excited diagram $D \in
\ED(\lambda/\mu)$, such that \ts $S \subseteq
[\lambda]\setminus D$.
\end{lemma}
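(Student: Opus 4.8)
The plan is to produce an excited diagram $D\in\ED(\lambda/\mu)$ disjoint from $S$, so that $S\subseteq[\lambda]\setminus D$. The starting observation is that every excited move preserves the content $j-i$ of the cell it moves, so each $D\in\ED(\lambda/\mu)$ meets every diagonal in the same number of cells as $[\mu]$ does, and its complement $[\lambda]\setminus D$ meets that diagonal in exactly $s_k$ cells, where $s_k=|\dd_k|$. Thus finding $D$ with $S\cap D=\emptyset$ is the same as choosing, diagonal by diagonal, how far to slide the cells of $[\mu]$ toward the south-east so that none of them lands on a cell of $S$, subject to the interlacing constraint of Definition~\ref{def:interlacing}. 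Equivalently, by Proposition~\ref{prop:excited2flaggedtab} it is the same as exhibiting a flagged tableau $T\in\mathcal{F}(\lambda/\mu)$ whose associated cells avoid $S$; I would phrase the whole argument in whichever of these two languages is most convenient.

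Next I would set up the graphical picture that the statement advertises. Reading an excited diagram through its complement, $[\lambda]\setminus D$ is obtained from the skew shape $\lambda/\mu$ by flexing its diagonals toward the north-west inside $[\lambda]$, and it is naturally a family of non-crossing north-east lattice paths, one strand for each unit of the maximal diagonal length $\max_k s_k$; each path crosses a given content diagonal at most once, and the family meets each diagonal in $s_k$ cells in total. In this picture $S\subseteq[\lambda]\setminus D$ means exactly that $S$ can be covered by such a non-crossing path family. On the other side, I would translate the \pleasant{} hypothesis, via Theorem~\ref{thm:HGdiag} and Definition~\ref{def:agog}, into the statement that in every rectangle $\square^{\lambda}_k$ the longest descending chain of $S$ has length at most $s_k$.

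The core step is then to build the covering path family from these chain bounds. I would do this constructively, sweeping the content diagonals from one end to the other and pushing each cell of $[\mu]$ to the first south-east slot that avoids $S$ and is compatible with the slots already chosen on the neighbouring diagonals, so that interlacing, hence the non-crossing property, is maintained. The feasibility of this greedy choice is where the \pleasant{} condition must be used: if at some cell every admissible slot were occupied by a cell of $S$, then reading off one blocking cell per diagonal produces a descending chain of $S$ that lies inside a single rectangle $\square^{\lambda}_k$ and has length strictly greater than $s_k$, contradicting $dc_1(S_k)\le s_k$. I expect this feasibility argument to be the main obstacle, for two reasons: one must identify the correct rectangle $\square^{\lambda}_k$ whose skew-length $s_k$ bounds the extracted chain (the naive match of a chain to the diagonals of its own cells is not the right one, as small examples already show), and one must run the sweep so that the south-east slides respect the interlacing across all diagonals simultaneously, i.e.\ so that the output is a genuine excited diagram rather than an unconstrained per-diagonal selection. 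An extremal variant, choosing $D\in\ED(\lambda/\mu)$ minimizing $|D\cap S|$ and using a single excited or reverse move to decrease this quantity whenever $D\cap S\neq\emptyset$, would reduce the same obstacle to showing that a profitable move always exists, again by exhibiting an over-long descending chain when it does not.
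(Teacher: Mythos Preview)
Your proposal is a reasonable outline, but the core step you flag as ``the main obstacle'' is genuinely unresolved, and neither of your two suggested routes is easy to complete as written. In the greedy sweep, the difficulty you name is real: when a slot is blocked, the chain of blocking cells you read off lies along consecutive content diagonals, but the \pleasant{} condition bounds descending chains inside \emph{rectangles} $\square^{\lambda}_k$, not along diagonals, and matching a blocking chain to a single rectangle of the right skew-length $s_k$ is exactly the delicate point. In the extremal variant, a cell $(i,j)\in D\cap S$ need not be active in either direction (its neighbours may themselves lie in $D$), and even when a move is available the target cell may again lie in $S$; producing an over-long descending chain from a stuck configuration requires essentially the same structural lemma you are trying to avoid.

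The paper takes a different and cleaner route that sidesteps both difficulties. Rather than building $D$ cell by cell, it applies Viennot's \emph{shadow line} construction directly to $S$: the resulting lines $L_1,\ldots,L_m$ are automatically a non-crossing NE path family covering $S$, and the number of lines crossing a given diagonal $\dd_k$ equals $dc_1(S_k)$, which the \pleasant{} hypothesis bounds by $s_k$. One then augments $S$ to $S^*$ by filling in all cells on the shadow lines (this does not increase any $dc_1$), and performs reverse ladder moves on the path family until the complement $D^*=[\lambda]\setminus S^*$ becomes a Young diagram $[\nu]$. Since reverse ladder moves preserve the diagonal crossing counts, $\nu$ satisfies $|\dd_k(\lambda/\nu)|\le s_k$ for all $k$, hence $\mu\subseteq\nu$; restricting $D^*\in\ED(\lambda/\nu)$ to the cells coming from $[\mu]$ gives the desired $D\in\ED(\lambda/\mu)$ with $S\subseteq S^*\subseteq[\lambda]\setminus D$. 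The shadow-line construction is what buys you the non-crossing cover for free, so that the only content-matching needed is the elementary diagonal count $s'_k\le s_k$, rather than the rectangle-versus-diagonal bookkeeping your greedy approach would require.
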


\begin{proof}
Given a pleasant diagram $S$, we use Viennot's {\em shadow lines}
construction \cite{XV77} to obtain a family of nonintersecting paths on
$[\lambda]$. That is, we imagine there is a light source at the $(1,1)$ corner of $[\lambda]$ and the
elements  of $S$ cast shadows along the axes with vertical and horizontal segments. The boundary of the resulting shadow
forms the first shadow line $L_1$. If lines $L_1,L_2,\ldots,L_{i-1}$ have
already been defined we
define $L_i$ inductively as follows: remove the elements of $S$ contained in any of the $i-1$
lines and set $L_i$ to be the shadow line of the remaining elements of $S$. We iterate
this until no element of $S$ remains in the shadows. Let
$L_1,L_2,\ldots,L_m$ be the shadow lines produced. Note that these
lines form $m$ nonintersecting paths in $[\lambda]$ that go from
bottom south-west cells of columns to rightmost north-east cells of rows of the diagram.

By construction, the {\em peaks} (i.e. top corners) of the shadow lines $L_i$ are
elements of $S$ while other cells of $L_i$ might be in $[\lambda]\setminus
S$.

Next, we augment $S$ to obtain $S^*$ by adding all the cells of lines
$L_1,\ldots,L_m$ that are not in $S$. Note that $S^*$ is also a pleasant diagram in $\PD(\lambda/\mu)$ since
the added cells of the lines $L_1,\ldots,L_m$ do not yield longer
decreasing chains than those in $S$. Moreover, no two cells from a decreasing chain can be part of the same shadow line, and there is at least one decreasing subsequence with cells in all lines, as can be constructed by induction.  In particular, the number of shadow lines
intersecting each diagonal $\dd_k$ (i.e. intersecting the rectangle $\square^{\lambda}_k$) is at most $s_k$. Denote this
number by $s'_k$.

Next, we claim that $S^*$ is the complement of an excited diagram $D^*
\in \ED(\lambda/\nu)$ for some partition $\nu$. To see this we do
 moves on the noncrossing paths (shadow lines) that are analogous to reverse excited moves, as follows. If
the lines contain $(i,j), (i+1,j), (i,j+1)$ but not $(i+1,j+1)$, then
notice that the first three boxes lie on one path $L_t$. In this path we replace $(i,j)$  with $(i+1,j+1)$ to obtain path~$L'_t$. We do the same
replacement in $S^*$:
\begin{center}
\includegraphics{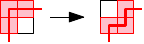}
\end{center}
Following Kreiman \cite[\S 5]{VK} we call this move a {\em reverse
  ladder move}. By doing reverse ladder moves
iteratively on $S^*$ we obtain the complement of some Young diagram
$[\nu]\subseteq [\lambda]$.

\begin{figure}[hbt]
\begin{center}
\includegraphics{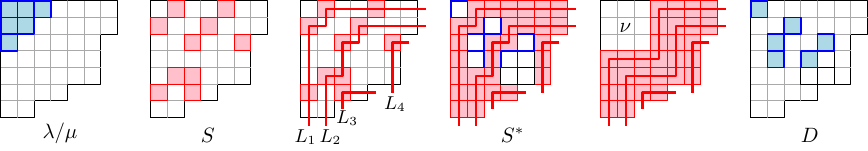}
\caption{Example of the construction in Lemma~\ref{lem:pleasant_is_subset_of_excited}. From left to
  right: a shape $\lambda/\mu$, a
  pleasant diagram $S\in \PD(\lambda/\mu)$, the shadow lines
  associated to $S$, the augmented pleasant diagram $S^*$ that is a
  complement of an excited diagram $D^*$ in $\ED(\lambda/\nu)$ for some
  $\nu$, $\mu \subseteq \nu \subseteq \lambda$. In general, $D^*$
  contains all $D\in \ED(\lambda/\mu)$ with $S\subseteq
  [\lambda]\setminus D$.}
\label{fig:shadow}
\end{center}
\end{figure}

Next, we show that $\mu \subseteq \nu$. Reverse ladder moves do not change the number $s'_k$ of
shadow lines intersecting each diagonal, thus $s'_k$ is also the
length of the diagonal $\dd_k$ of $\lambda/\nu$. Since $s'_k \leq
s_k$, the length of the diagonal $\dd_k$ of $\lambda/\mu$, then $\mu
\subseteq \nu$ as desired.

Finally, we have \ts $D^* = [\lambda] \setminus S^*$ is in $\ED(\lambda/\nu)$, since the reverse ladder move is the reverse excited move on the corresponding diagram. Since $D^*$ is obtained my moving the cells of $[\nu]$ we can consider the cells of $D^*$ which correspond to the cells of $[\mu]\subseteq [\nu]$, denote the collection of these cells as $D$. Then $D \in \ED(\lambda/\mu)$, and we have:
$$S \subseteq S^* = [\lambda] \setminus D^* \subseteq [\lambda] \setminus D$$
 and the statement follows.
\end{proof}

We prove Theorem~\ref{thm:when_excited_pleasant} via three lemmas.

\begin{lemma} \label{lemma:excitedispleasant}
For all \ts $D\in \ED(\lambda/\mu)$, we have \ts $[\lambda]\setminus D \in \PD(\lambda/\mu)$.
\end{lemma}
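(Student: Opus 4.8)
The plan is to prove the equivalent inequality $dc_1\bigl(\overline{D}\cap\square^{\lambda}_k\bigr)\le s_k$ for every admissible $k$, where $\overline{D}=[\lambda]\setminus D$; by Definition~\ref{def:agog} this is exactly the assertion $\overline D\in\PD(\lambda/\mu)$. First I would reduce to the case $k\ge 0$ by conjugation: transposing $\lambda/\mu$ to $\lambda'/\mu'$ sends $\square^{\lambda}_k$ to $\square^{\lambda'}_{-k}$, carries excited diagrams to excited diagrams and descending chains to descending chains, and preserves diagonal lengths, so the two cases are symmetric. For $k\ge 0$ the rectangle $\square^{\lambda}_k$ is $a\times(a+k)$ with $a=|\dd_k\cap[\lambda]|$; a short check using the maximality of the rectangle shows that \emph{all} content-$k$ cells $(i,i+k)$ of $[\lambda]$ lie inside $\square^{\lambda}_k$ and that there are exactly $a$ of them. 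Since every excited diagram has the same multiset of contents as $[\mu]$ (Proposition~\ref{prop:sameq}), $\overline D$ has precisely $s_k=a-|\dd_k\cap[\mu]|$ cells of content $k$, all inside $\square^{\lambda}_k$.

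Next I would settle the base case $D=[\mu]$, where $\overline D=[\lambda/\mu]$. Given a descending chain $(i_1,j_1)\prec\cdots\prec(i_s,j_s)$ in $[\lambda/\mu]\cap\square^{\lambda}_k$, I map $(i_t,j_t)\mapsto (m_t,m_t+k)$ with $m_t:=\max(i_t,\,j_t-k)$, i.e.\ I slide each cell onto the content-$k$ diagonal along its row or its column, whichever reaches the diagonal. Because $i_t$ and $j_t-k$ are both strictly increasing in $t$, so is $m_t$, hence the images are distinct content-$k$ cells lying in $\square^{\lambda}_k$. The key point is that each image avoids $[\mu]$: if $j_t-i_t\ge k$ then $(i_t,j_t)\notin[\mu]$ forces $\mu'_{j_t}<i_t\le j_t-k$, so $(j_t-k,j_t)\notin[\mu]$; and if $j_t-i_t\le k$ then $i_t+k\ge j_t>\mu_{i_t}$, so $(i_t,i_t+k)\notin[\mu]$. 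Thus the chain injects into the $s_k$ content-$k$ cells of $\lambda/\mu$ inside $\square^{\lambda}_k$, giving $s\le s_k$.

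Finally I would pass from $[\mu]$ to an arbitrary $D$ by induction on the number of excited moves. An excited move $D\mapsto\alpha_{(i,j)}(D)$ acts on complements as the reverse ladder move that deletes $(i+1,j+1)$ from $\overline D$ and inserts $(i,j)$, while the companion cells $(i,j+1)$ and $(i+1,j)$ (which are witnesses for the active move, hence already in $\overline D$) are unchanged. I claim this operation never increases $dc_1(\,\cdot\,\cap\square^{\lambda}_k)$: starting from an optimal descending chain of the new complement, if it avoids $(i,j)$ it already lies in the old $\overline D$, and if it uses $(i,j)$ I reroute it through $(i+1,j+1)$ when the successor of $(i,j)$ is strictly southeast of $(i+1,j+1)$, and otherwise through whichever of the companions $(i,j+1)$, $(i+1,j)$ shares the blocking coordinate, in each case producing a chain of the same length inside $\square^{\lambda}_k$ that lies in the old $\overline D$. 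Combined with the base case this yields $dc_1(\overline D\cap\square^{\lambda}_k)\le dc_1([\lambda/\mu]\cap\square^{\lambda}_k)\le s_k$, as desired. I expect the main obstacle to be this monotonicity step: verifying that the local rerouting through the companion cells is always available and never leaves the rectangle requires a careful case analysis of how the moved cell sits inside an optimal chain.
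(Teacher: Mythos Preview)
Your proposal is correct and complete in outline; the edge cases you flag in the monotonicity step can indeed all be handled. The key observation you are missing explicitly is that the active cell $(i,j)$ can never be the corner $(a,a+k)$ of $\square^\lambda_k$, since the excited move requires $(i+1,j+1)\in[\lambda]$ while maximality of $\square^\lambda_k$ forces $(a+1,a+k+1)\notin[\lambda]$; once this is noted, at least one of $(i+1,j+1),(i,j+1),(i+1,j)$ lies in $\square^\lambda_k$, and when a successor $(i',j')$ exists its membership in $\square^\lambda_k$ guarantees the rerouted cell stays inside the rectangle. With this, your induction goes through.

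The paper's proof takes a different, more structural route. Rather than induct on moves with local rerouting, it decomposes $[\lambda/\mu]$ into \emph{shadow lines} (nonintersecting NE lattice paths) and observes two facts: exactly $s_k$ such lines cross $\square^\lambda_k$, and an excited move is a \emph{ladder move} on one line which cannot change which rectangles that line crosses. Hence for any $D\in\ED(\lambda/\mu)$ the complement $\overline D$ is a union of $s_k$ paths through $\square^\lambda_k$, and since a descending chain can meet each path at most once, $dc_1\le s_k$ follows immediately. This approach avoids all boundary case analysis and, more importantly, the shadow-line decomposition is reused elsewhere (Lemma~\ref{lem:pleasant_is_subset_of_excited}, Theorem~\ref{thm:num_pleasant}). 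Your direct argument is more elementary and self-contained, but the paper's method gives a uniform picture of $\overline D$ that pays dividends later.
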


\begin{proof}

Let $D_0 = \mu$, i.e.~the excited diagram which corresponds to the
original skew shape $\lambda/\mu$. Following the shadow line
construction from the proof of
Lemma~\ref{lem:pleasant_is_subset_of_excited}, we construct the shadow
lines for the diagram  $P_0=[\lambda/\mu]$. These lines trace out the
{\em rim-hook tableaux}: let $L_1$ be the outer boundary of $[\mu]$ inside $[\lambda]$, then $L_2$ is the outer boundary of what remains after removing $L_1$, etc. If the skew shape becomes disconnected then there are separate lines for each connected segment.

\begin{center}
\includegraphics{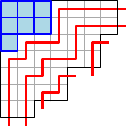}
\end{center}

Since a diagonal of length $\ell$ has exactly $\ell$ shadow lines
crossing it, we have for each rectangle $D_k$ there are exactly $s_k$
lines $L_i$ crossing $\dd_k$ and hence also crossing $D_k$. An excited move corresponds to a ladder move on some line (see the proof of Lemma~\ref{lem:pleasant_is_subset_of_excited}), which makes an inner corner of a line into an outer corner. These moves cannot affect the endpoints of a line, so if a line does not cross a rectangle $D_k$ initially then it will never cross it after any number of excited moves. Moreover, any diagonal $\dd_k$ will be crossed by the same set of lines formed originally in $P_0$. Hence the complement of any excited diagram is a collection of shadow lines, which were obtained from the original ones by ladder moves. Then the number of shadow lines crossing $D_k$ is always $s_k$. Finally, since no decreasing sequence can have more than one box on a given shadow line (i.e., a SW to NE lattice path), we have the longest decreasing subsequence in $D_k$ will have length at most $s_k$ -- the number of shadow lines there. Therefore, the excited diagram satisfies Definition~\ref{def:agog}. \end{proof}

By Lemma~\ref{lemma:excitedispleasant}, the complements of excited diagrams in
$\ED(\lambda/\mu)$ give pleasant diagrams of size
$|\lambda/\mu|$. Next, we show that there are no pleasant diagrams of
larger size.

\begin{lemma} \label{lemma:boundsizepleasant}
For all \ts $S\in \PD(\lambda/\mu)$, we have \ts $|S|\leq |\lambda/\mu|$.
\end{lemma}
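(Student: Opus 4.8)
The plan is to deduce this bound directly from Lemma~\ref{lem:pleasant_is_subset_of_excited}, which does all the heavy lifting: it guarantees that every pleasant diagram $S\in\PD(\la/\mu)$ is contained in the complement of some excited diagram $D\in\ED(\la/\mu)$. Granting that containment, the size bound is purely a counting matter, so I expect the present lemma to be essentially a one-line corollary.

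First I would recall that every excited diagram $D\in\ED(\la/\mu)$ is obtained from $[\mu]$ by a sequence of excited moves, each of which replaces a single cell $(i,j)$ by $(i+1,j+1)$ and hence preserves the number of cells. Therefore $|D|=|\mu|$ for every $D\in\ED(\la/\mu)$, and consequently
$$
|[\la]\setminus D| \;=\; |\la|-|\mu| \;=\; |\la/\mu|.
$$

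Then, combining this with the inclusion $S\subseteq[\la]\setminus D$ supplied by Lemma~\ref{lem:pleasant_is_subset_of_excited}, I obtain
$$
|S| \;\le\; |[\la]\setminus D| \;=\; |\la|-|\mu| \;=\; |\la/\mu|,
$$
which is exactly the claimed inequality.

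Since the genuinely nontrivial step---producing the enveloping excited diagram via Viennot's shadow-line construction and reverse ladder moves---has already been carried out in Lemma~\ref{lem:pleasant_is_subset_of_excited}, there is no remaining obstacle here. The only point worth double-checking is that the shadow-line argument indeed outputs a diagram $D$ lying in $\ED(\la/\mu)$, rather than merely in $\ED(\la/\nu)$ for some larger $\nu$ with $\mu\subseteq\nu$; but this is precisely the content of the final paragraph of that lemma's proof, where the cells of $D^*$ corresponding to $[\mu]\subseteq[\nu]$ are selected to form the desired $D\in\ED(\la/\mu)$.
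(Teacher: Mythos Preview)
Your proof is correct, but it takes a different route from the paper's. The paper argues directly from Definition~\ref{def:agog}: on each diagonal $\dd_k$, the cells of $S\cap\dd_k$ themselves form a descending chain in $S_k$, so $|S\cap\dd_k|\le s_k$; summing over~$k$ gives $|S|\le\sum_k s_k=|\lambda/\mu|$. Your argument instead invokes Lemma~\ref{lem:pleasant_is_subset_of_excited} as a black box and then counts. Both are valid and there is no circularity, since Lemma~\ref{lem:pleasant_is_subset_of_excited} is proved independently of the present lemma. The trade-off is that the paper's argument is entirely elementary (straight from the definition, no shadow lines needed) and, more importantly, its intermediate inequality $|S\cap\dd_k|\le s_k$ is explicitly reused in the very next proof (Lemma~\ref{lemma:bigpleasantisexcited}), where the equality case $|S\cap\dd_k|=s_k$ for every~$k$ is the starting point. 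Your approach gives the size bound more quickly but does not supply that per-diagonal information.
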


\begin{proof}
For each diagonal $\dd_k$ of $\lambda/\mu$, any elements of $S \cap
\dd_k$ form a descending chain in $S_k$. Thus, by definition of
pleasant diagrams $|S \cap \dd_k|\leq s_k$ where
$s_k=\left|[\lambda/\mu]\cap \dd_k\right|$ is the length of diagonal $\dd_k$
in~$\lambda/\mu$. Therefore,
\begin{align*}
|S| &\, = \, \sum_{k=1-\ell(\lambda)}^{\lambda_1-1} \. |S\cap \dd_k| \, \, \leq \,  \sum_{k=1-\ell(\lambda)}^{\lambda_1-1} \. s_k \,  = \, |\lambda/\mu|\.,
\end{align*}
as desired.
\end{proof}

The next result shows that the only pleasant diagrams of size
$|\lambda/\mu|$ are complements of excited diagrams.

\begin{lemma} \label{lemma:bigpleasantisexcited}
For all \ts $S\in \PD(\lambda/\mu)$ \ts with \ts $|S| = |\lambda/\mu|$, we have \ts $[\lambda]\setminus S \in \ED(\lambda/\mu)$.
\end{lemma}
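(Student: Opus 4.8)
The plan is to deduce this immediately from Lemma~\ref{lem:pleasant_is_subset_of_excited} by a cardinality argument, so that the substantive work has already been done. First I would apply Lemma~\ref{lem:pleasant_is_subset_of_excited} to the given pleasant diagram $S\in \PD(\lambda/\mu)$ to produce an excited diagram $D\in \ED(\lambda/\mu)$ with $S\subseteq [\lambda]\setminus D$. The only extra ingredient needed is that every excited diagram has exactly $|\mu|$ cells: it is obtained from $[\mu]$ by a sequence of excited moves, each of which replaces one cell $(i,j)$ by the single cell $(i+1,j+1)$ and therefore preserves cardinality. Hence $|[\lambda]\setminus D| = |\lambda| - |\mu| = |\lambda/\mu|$.

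Now I would combine this with the hypothesis $|S| = |\lambda/\mu|$. Since $S\subseteq [\lambda]\setminus D$ and both sets have the same (finite) cardinality $|\lambda/\mu|$, the inclusion is forced to be an equality, so $S = [\lambda]\setminus D$. Taking complements in $[\lambda]$ then yields $[\lambda]\setminus S = D \in \ED(\lambda/\mu)$, which is exactly the assertion of the lemma. Together with Lemma~\ref{lemma:excitedispleasant} (complements of excited diagrams are pleasant of size $|\lambda/\mu|$) and Lemma~\ref{lemma:boundsizepleasant} (no pleasant diagram is larger), this completes the proof of Theorem~\ref{thm:when_excited_pleasant}.

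I do not expect any genuine obstacle at this stage: the entire difficulty is concentrated in Lemma~\ref{lem:pleasant_is_subset_of_excited}, whose proof runs through Viennot's shadow-line construction and reverse ladder moves, and once that containment $S\subseteq[\lambda]\setminus D$ is in hand the maximal case reduces to the trivial observation that a subset of a finite set of equal size must be the whole set. The one point deserving a moment's care is to check that the diagram $D$ furnished by Lemma~\ref{lem:pleasant_is_subset_of_excited} lies in $\ED(\lambda/\mu)$ itself, and not merely in $\ED(\lambda/\nu)$ for some larger $\nu\supseteq \mu$; but this membership is already part of the conclusion of that lemma, so no further argument is required.
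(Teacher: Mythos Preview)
Your proof is correct and takes a genuinely different route from the paper's own argument. The paper proves Lemma~\ref{lemma:bigpleasantisexcited} directly, \emph{without} invoking Lemma~\ref{lem:pleasant_is_subset_of_excited}: it first observes from the proof of Lemma~\ref{lemma:boundsizepleasant} that a maximal pleasant diagram $S$ must have exactly $s_k$ cells on each diagonal $\dd_k$, and then argues by contradiction that if $\overline{S}=[\lambda]\setminus S$ failed the interlacing property of Definition~\ref{def:interlacing}, one could reroute a descending chain along an adjacent diagonal to produce a chain of length $s_k+1$ inside $\square^\lambda_k$, violating pleasantness.

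Your approach is more economical: since Lemma~\ref{lem:pleasant_is_subset_of_excited} is already proved (independently, via shadow lines and reverse ladder moves) and its conclusion places $D$ in $\ED(\lambda/\mu)$ rather than merely in some $\ED(\lambda/\nu)$, the cardinality squeeze $S\subseteq[\lambda]\setminus D$ with $|S|=|[\lambda]\setminus D|$ immediately forces equality. The paper's direct argument, by contrast, is self-contained and exposes the combinatorial mechanism (the interlacing violation yielding an overlong chain) that underlies the result; it also implicitly relies on the interlacing characterization of excited diagrams, whereas your route sidesteps that issue entirely. Either proof is acceptable, and there is no circularity in yours since Lemma~\ref{lem:pleasant_is_subset_of_excited} does not depend on Lemma~\ref{lemma:bigpleasantisexcited} or on Theorem~\ref{thm:when_excited_pleasant}.
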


\begin{proof}
By the argument in the proof of Lemma~\ref{lemma:boundsizepleasant},
if  $S \in \PD(\lambda/\mu)$ has size $|S| = |\lambda/\mu|$ then
for each integer $k$ with $1-\ell(\lambda)\leq k \leq \lambda_1$ we have $|S\cap \dd_k| = |\dd_k|=s_k$.

Suppose $\overline{S} = [\lambda]\setminus S$ is not an excited
diagram. This means that there are two cells $a=(i,j),b=(i+m,j+m) \in \overline{S}$ on
some diagonal $\dd_k$ with no other cell of $\dd_k$ in $\overline{S}$ between them, that violate the interlacing property
(Definition~\ref{def:interlacing}). This means that
there are no other cells in $\overline{S}$ between cells
$a$ and $b$ in either diagonal $\dd_{k+1}$ or diagonal
$\dd_{k-1}$. Without loss of generality assume that this occurs in
diagonal $\dd_{k-1}$. This means that all the $m$ cells in $\dd_{k-1}$
between cells $a$ and $b$ are in $S$. Let $\mathfrak{d}$ be the
descending chain in $S$ of all the $s_k$ cells in $S\cap \dd_k$
including the $m-1$ cells in $\dd_k$ between $a$ and $b$. Let
$\mathfrak{d}'$ be the descending chain consisting of the cells in
$S\cap \dd_k$ before cell $a$, followed by the $m$ cells in $S \cap
\dd_{k-1}$ between cell $a$ and $b$, and the cells in $S\cap \dd_k$
after cell $b$ (see Figure~\ref{fig:plesant_is_excited}). However
$|\mathfrak{d}'| = s_k+1$ which contradicts the requirement that all
descending chains in $S\cap \square^{\lambda}_k$ have length $\leq s_k$.
\end{proof}

\begin{figure}[hbt]
\begin{center}
\includegraphics{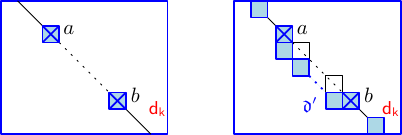}
\caption{Two consecutive cells $a$ and $b$ in $\overline{S}$
  that violate the interlacing property of excited diagrams.}
\label{fig:plesant_is_excited}
\end{center}
\end{figure}

\begin{proof}[Proof of Theorem~\ref{thm:when_excited_pleasant}]
The result follows by combining Lemmas~\ref{lemma:excitedispleasant} and
\ref{lemma:bigpleasantisexcited}.
\end{proof}

\begin{theorem} \label{conj:charpleasant}
A diagram $S\subset [\lambda]$ is a pleasant diagram in
$\PD(\lambda/\mu)$ if and only if $S \subseteq
[\lambda]\backslash D$ for some excited diagram $D\in \ED(\lambda/\mu)$.
\end{theorem}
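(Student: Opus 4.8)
The plan is to prove the two implications separately; each one repackages a lemma already established in this section, and the only additional ingredient is an elementary monotonicity observation about the defining condition for pleasant diagrams.

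The forward implication is immediate. If $S \in \PD(\lambda/\mu)$, then Lemma~\ref{lem:pleasant_is_subset_of_excited} produces an excited diagram $D \in \ED(\lambda/\mu)$ with $S \subseteq [\lambda]\setminus D$, which is exactly what is claimed. So no new argument is needed in this direction, and I would simply cite that lemma.

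For the reverse implication, I would start from a containment $S \subseteq [\lambda]\setminus D$ with $D \in \ED(\lambda/\mu)$. By Lemma~\ref{lemma:excitedispleasant} the complement $[\lambda]\setminus D$ is itself a pleasant diagram, so it suffices to check that the pleasant property is hereditary, i.e.\ that every subdiagram of a pleasant diagram is again pleasant. This is the (very short) step where the actual verification happens: for each $k$ one has $S_k = S\cap\square^\lambda_k \subseteq ([\lambda]\setminus D)\cap\square^\lambda_k$, and any descending chain in the smaller array $S_k$ is in particular a descending chain in the larger one, so deleting cells can only shorten the longest descending chain. Hence $dc_1(S_k)\le dc_1\bigl(([\lambda]\setminus D)_k\bigr)\le s_k$ for every admissible $k$, and $S$ satisfies Definition~\ref{def:agog}; therefore $S\in\PD(\lambda/\mu)$.

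I do not expect a genuine obstacle here: the substance of the statement lives entirely in Lemmas~\ref{lem:pleasant_is_subset_of_excited} and~\ref{lemma:excitedispleasant}, and the present theorem combines them using nothing more than the monotonicity of $dc_1$ under passage to subdiagrams. If anything, the only point to state carefully is that the inequality $dc_1(S_k)\le s_k$ becomes \emph{easier} to satisfy as $S$ shrinks, so no subtlety arises from the intersection with the rectangles $\square^\lambda_k$.
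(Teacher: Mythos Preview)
Your proposal is correct and follows essentially the same approach as the paper: the forward direction cites Lemma~\ref{lem:pleasant_is_subset_of_excited}, and the reverse direction combines Lemma~\ref{lemma:excitedispleasant} with the hereditary property of pleasant diagrams (subsets of pleasant diagrams are pleasant). The paper packages the reverse direction as a separate Lemma~\ref{lem:charpleasantlem1}, but the content is identical to your argument; if anything, your route is slightly more direct since you invoke Lemma~\ref{lemma:excitedispleasant} immediately rather than passing through Theorem~\ref{thm:when_excited_pleasant}.
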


We need a new lemma to prove this result.

\begin{lemma} \label{lem:charpleasantlem1}
Given an excited diagram $D$ in $\ED(\lambda/\mu)$ then $S \subseteq
[\lambda]\backslash D$ is a pleasant diagram in $\PD(\lambda/\mu)$.
\end{lemma}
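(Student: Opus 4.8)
The plan is to reduce Lemma~\ref{lem:charpleasantlem1} to the already established Lemma~\ref{lemma:excitedispleasant}, which asserts that the full complement $[\lambda]\setminus D$ of an excited diagram $D\in\ED(\lambda/\mu)$ is itself a pleasant diagram, together with the elementary observation that pleasantness is monotone under passing to subdiagrams. Indeed, since we are given $S\subseteq[\lambda]\setminus D$ and we know $[\lambda]\setminus D\in\PD(\lambda/\mu)$, it suffices to show that any subset of a pleasant diagram is again pleasant.

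First I would record the monotonicity precisely. Suppose $S\subseteq T\subseteq[\lambda]$ with $T\in\PD(\lambda/\mu)$. For every $k$ with $1-\ell(\lambda)\leq k\leq\lambda_1-1$, restriction to the rectangle $\square^{\lambda}_k$ preserves inclusion, so $S_k=S\cap\square^{\lambda}_k\subseteq T\cap\square^{\lambda}_k=T_k$. By the definition of a descending chain, any descending chain of $S_k$ uses only cells belonging to $S_k$, hence only cells belonging to $T_k$, and is therefore also a descending chain of $T_k$. Consequently $dc_1(S_k)\leq dc_1(T_k)\leq s_k$, where the final inequality holds because $T$ is pleasant. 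As this bound holds for every admissible $k$, Definition~\ref{def:agog} gives $S\in\PD(\lambda/\mu)$.

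I would then conclude by applying this with $T=[\lambda]\setminus D$: Lemma~\ref{lemma:excitedispleasant} supplies $[\lambda]\setminus D\in\PD(\lambda/\mu)$, the hypothesis gives $S\subseteq[\lambda]\setminus D$, and the monotonicity step yields $S\in\PD(\lambda/\mu)$, as desired. There is no genuine obstacle here, since all the substance is carried by Lemma~\ref{lemma:excitedispleasant}; the only new input is the trivial remark that shrinking a diagram cannot lengthen its longest descending chains. The single point worth stating carefully is that the statistic $dc_1$ depends only on the support of the indicator array, so passing to a subset of cells can never create new chains, which is exactly what makes the inheritance immediate. Together with Lemma~\ref{lem:pleasant_is_subset_of_excited}, this lemma furnishes both inclusions needed for Theorem~\ref{conj:charpleasant}.
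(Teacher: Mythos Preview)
Your argument is correct and essentially identical to the paper's: both reduce to the fact that $[\lambda]\setminus D$ is pleasant (the paper cites Theorem~\ref{thm:when_excited_pleasant}, but the relevant content is exactly Lemma~\ref{lemma:excitedispleasant}) and then invoke the monotonicity of pleasantness under taking subsets. You spell out the monotonicity step more carefully than the paper, which simply asserts it in one clause, but the underlying reasoning is the same.
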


\begin{proof}
Theorem~\ref{thm:when_excited_pleasant} characterizes maximal pleasant
diagrams in  $\PD(\lambda/\mu)$ as complements of excited diagrams in $\ED(\lambda/\mu)$. Since subsets of pleasant
diagrams are also pleasant diagrams, then all subsets $S$ of $[\lambda]\setminus D$ for $D\in \ED(\lambda/\mu)$ are pleasant diagrams.
\end{proof}

\begin{proof}[Proof of Theorem~\ref{conj:charpleasant}]
The theorem follows from Lemma~\ref{lem:charpleasantlem1} and Lemma~\ref{lem:pleasant_is_subset_of_excited}.
\end{proof}

\medskip \subsection{Enumeration of pleasant diagrams}

Next we give two formulas for the number of pleasant
diagrams of $\lambda/\mu$ as sums of excited diagrams. Both formulas are corollaries of the proof of
Lemma~\ref{lem:pleasant_is_subset_of_excited}. Given a pleasant
diagram $S$, let $\shpeaks(D)$ be the number of peaks of the shadow lines $L_1,\ldots,L_m$ obtained
from the pleasant diagram $[\lambda]\setminus {D}$.

\begin{proposition}\label{prop:pleasant-excited-sum}
\[
|\PD(\lambda/\mu)| = \sum_{\nu, \mu \subseteq \nu \subseteq \lambda} \sum_{D\in
  \ED(\lambda/\nu)} 2^{|\lambda/\nu|-\shpeaks(D)}.
\]
\end{proposition}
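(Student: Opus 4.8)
The plan is to set up a fiber partition of $\PD(\lambda/\mu)$ indexed by pairs $(\nu,D)$ with $\mu\subseteq\nu\subseteq\lambda$ and $D\in\ED(\lambda/\nu)$, and to show that each fiber has exactly $2^{|\lambda/\nu|-\shpeaks(D)}$ elements. Recall from the proof of Lemma~\ref{lem:pleasant_is_subset_of_excited} that every $S\in\PD(\lambda/\mu)$ determines a family of nonintersecting shadow lines $L_1,\ldots,L_m$ whose union $S^*:=\bigcup_i L_i$ equals $[\lambda]\setminus D^*$ for an excited diagram $D^*\in\ED(\lambda/\nu)$ with $\mu\subseteq\nu\subseteq\lambda$; moreover the peaks $P$ of the lines satisfy $P\subseteq S\subseteq S^*$. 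Since an excited diagram $D^*$ determines $\nu$ uniquely (it lies in $\ED(\lambda/\nu)$ only for the $\nu$ obtained by pushing its cells back along their diagonals via reverse excited moves), the assignment $S\mapsto(\nu,D^*)$ is well defined, and I would use it to cut $\PD(\lambda/\mu)$ into fibers.

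First I would prove the key invariance statement: for a fixed maximal pleasant diagram $S^*=[\lambda]\setminus D$ with $D\in\ED(\lambda/\nu)$, shadow lines $L_1,\ldots,L_m$, and peak set $P$, every set $S$ with $P\subseteq S\subseteq S^*$ has shadow lines exactly $L_1,\ldots,L_m$, so that $S$ maps back to $(\nu,D)$. The mechanism is that any non-peak cell of a line $L_i$ lies in the down-right shadow of one of the peaks of $L_i$, so adding or deleting such a cell changes neither the shadow region nor its boundary; running Viennot's iterative construction and using that the $L_i$ are disjoint, one peels off $L_1$, then $L_2$, and so on, recovering the same lines regardless of which non-peak cells are present. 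This is the step I expect to be the main obstacle, since it requires a careful reading of the shadow-line construction to confirm that including interior line-cells never creates a new peak, nor merges or splits lines.

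Granting the invariance statement, the fiber over $(\nu,D)$ is precisely $\{S : P\subseteq S\subseteq S^*\}$: every such $S$ maps to $(\nu,D)$ by invariance, and conversely any $S$ mapping to $(\nu,D)$ satisfies $P\subseteq S\subseteq S^*$ directly from the shadow-line construction. Each such $S$ is genuinely a pleasant diagram of $\lambda/\mu$, since it is a subset of $S^*\in\PD(\lambda/\nu)$ and $\PD(\lambda/\nu)\subseteq\PD(\lambda/\mu)$ because $\mu\subseteq\nu$ forces every diagonal of $\lambda/\nu$ to be no longer than the corresponding diagonal of $\lambda/\mu$ (alternatively invoke Lemma~\ref{lem:charpleasantlem1}). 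The number of choices for $S$ is the number of subsets of the non-peak cells of $S^*$, namely $2^{|S^*|-|P|}$. By Theorem~\ref{thm:when_excited_pleasant} the maximal pleasant diagram satisfies $|S^*|=|\lambda/\nu|$, and by definition $|P|=\shpeaks(D)$, so the fiber has size $2^{|\lambda/\nu|-\shpeaks(D)}$.

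Finally I would assemble the count. The fibers are disjoint and exhaust $\PD(\lambda/\mu)$ because each $S$ determines $S^*$, hence $D^*$ and $\nu$, uniquely; and every pair $(\nu,D)$ with $\mu\subseteq\nu\subseteq\lambda$ and $D\in\ED(\lambda/\nu)$ actually occurs, since $S=S^*=[\lambda]\setminus D$ itself lies in the corresponding fiber (here one uses that the complement of an excited diagram is literally a union of shadow lines, as observed in the proof of Lemma~\ref{lemma:excitedispleasant}). Summing the fiber sizes over all admissible $(\nu,D)$ then yields $|\PD(\lambda/\mu)|=\sum_{\nu:\,\mu\subseteq\nu\subseteq\lambda}\sum_{D\in\ED(\lambda/\nu)}2^{|\lambda/\nu|-\shpeaks(D)}$, as claimed.
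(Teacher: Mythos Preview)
Your proposal is correct and follows essentially the same route as the paper: both build the surjection $\varrho_1\colon S\mapsto D^*$ via the shadow-line construction from Lemma~\ref{lem:pleasant_is_subset_of_excited} and count each fiber as $2^{|\lambda/\nu|-\shpeaks(D^*)}$. You are simply more explicit about the invariance step (that peaks alone determine the lines, so non-peak cells may be freely toggled), which the paper compresses to the single clause ``these peaks uniquely determine the lines.''
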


\begin{example}
The skew shape $(2^2/1)$ has $12$ pleasant diagrams (see
Example~\ref{ex:pleasant12}). The possible $\nu$ containing $\mu=(1)$
are $(1), (1^2), (2), (2,1), (2,2)$ and their corresponding excited
diagrams with peaks (in pink) are the following:
\begin{center}
\includegraphics{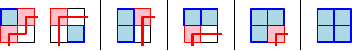}.
\end{center}
We can see that $12=2^1+2^2+2^1+2^1+2^0+2^0$.
\end{example}

\begin{proof}[Proof of Proposition~\ref{prop:pleasant-excited-sum}]
As in the proof of Lemma~\ref{lem:pleasant_is_subset_of_excited},
from the shadow lines $L_1,L_2,\ldots,L_m$ of a pleasant diagram $S\in
\PD(\lambda/\mu)$ we obtain an excited diagram $D^* \in
\ED(\lambda/\nu)$ for $\mu\subseteq \nu$ such that $S\subseteq
[\lambda]\setminus D^*$. The peaks of these lines are elements in~$S$,
and these peaks uniquely determine the lines. The other cells in the
lines, $|\lambda/\nu|-\shpeaks(D^*)$ many, may or may not be in~$S$.

Therefore, we obtain a surjection
\[
\varrho_1:  \. \PD(\lambda/\mu)\to \bigcup_{\nu, \mu\subseteq \nu\subseteq
  \lambda} \ED(\lambda/\nu)\.,  \qquad \varrho_1: \. S\mapsto D^*\.,
\]
 such that \ts $|\varrho_1^{-1}(D^*)|=2^{|\lambda/\nu|-\shpeaks(D^*)}$.
 This implies the result (see Figure~\ref{fig:exrhos}).
\end{proof}

\begin{figure}[hbt]
\begin{center}
\includegraphics{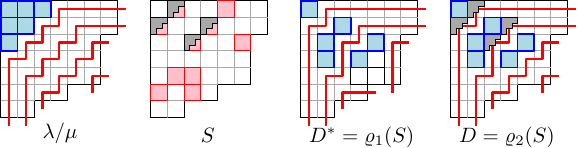}
\end{center}
\caption{Example of the maps $\varrho_1$ and $\varrho_2$ on a pleasant
  diagram $S$.}
\label{fig:exrhos}
\end{figure}

For the second formula we need to define a similar peak statistic $\expeaks(D)$ for each excited diagram
$D\in\ED(\lambda/\mu)$.  For an excited diagram $D$ we associate a subset of
$[\lambda]\setminus D$ called {\em excited peaks} and denote it by
$\Lambda(D)$ in the following way. For $[\mu] \in
\ED(\lambda/\mu)$ the set of excited peaks is
$\Lambda([\mu])=\varnothing$. If $D$
is an excited diagram with active cell $u=(i,j)$ then the excited peaks of
$\alpha_u(D)$ are
\[
\Lambda(\alpha_u(D)) = \left(\Lambda(D)- \{(i,j+1),(i+1,j)\}\right) \cup \{u\}.
\]
That is, the excited peaks of $\alpha_u(D)$ are obtained from those of
$D$ by adding $(i,j)$ and removing $(i,j+1)$ and $(i+1,j)$ if any of
the two are
in $\Lambda(D)$:
\begin{center}
\includegraphics{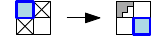}.
\end{center}
Finally, let $\expeaks(D):=|\Lambda(D)|$ be
the number of excited peaks of $D$.

\begin{theorem} \label{thm:num_pleasant}
For a skew shape $\lambda/\mu$ we have
\[
|\PD(\lambda/\mu)| = \sum_{D\in \ED(\lambda/\mu)} 2^{|\lambda/\mu|-\expeaks(D)},
\]
where $\expeaks(D)$ is the number of excited peaks of the excited diagram $D$.
\end{theorem}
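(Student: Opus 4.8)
The plan is to organize $\PD(\lambda/\mu)$ into fibers indexed by excited diagrams and to count each fiber. By Theorem~\ref{conj:charpleasant}, every pleasant diagram $S$ satisfies $S\subseteq[\lambda]\setminus D$ for some $D\in\ED(\lambda/\mu)$, and conversely every subset of such a complement is pleasant by Lemma~\ref{lem:charpleasantlem1}. I would first reduce the theorem to the following claim: each $S\in\PD(\lambda/\mu)$ satisfies
\[
\Lambda(D)\subseteq S\subseteq [\lambda]\setminus D
\]
for a \emph{unique} excited diagram $D\in\ED(\lambda/\mu)$. Granting this, $\PD(\lambda/\mu)$ is the disjoint union over $D\in\ED(\lambda/\mu)$ of the sets $F_D=\{S:\Lambda(D)\subseteq S\subseteq[\lambda]\setminus D\}$. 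Since $|[\lambda]\setminus D|=|\lambda/\mu|$ and $\Lambda(D)\subseteq[\lambda]\setminus D$ (which follows from the excited-peak recursion), we get $|F_D|=2^{|\lambda/\mu|-\expeaks(D)}$, and summing over $D$ yields the stated formula.

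For existence I would build the assigning map $\varrho_2\colon S\mapsto D$ from the shadow-line construction of Lemma~\ref{lem:pleasant_is_subset_of_excited}. Running Viennot's shadow lines on $S$ produces noncrossing paths whose peaks lie in $S$, and which after reverse ladder moves give an excited diagram $D^*\in\ED(\lambda/\nu)$ with $\mu\subseteq\nu$ and $S\subseteq[\lambda]\setminus D^*$ (this is exactly the map $\varrho_1$ of Proposition~\ref{prop:pleasant-excited-sum}). Tracking the cells of $D^*$ that descend from $[\mu]\subseteq[\nu]$ then gives $D=\varrho_2(S)\in\ED(\lambda/\mu)$ with $[\lambda]\setminus D^*\subseteq[\lambda]\setminus D$, so $S\subseteq[\lambda]\setminus D$. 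The key step is to show $\Lambda(D)\subseteq S$: I would prove, by induction on the number of excited moves defining $D$, that the excited peaks $\Lambda(D)$ are precisely the shadow-line peaks of $S$ that are \emph{created} by the ladder moves carrying $[\mu]$ to $D$, matching the recursion $\Lambda(\alpha_u(D))=(\Lambda(D)\setminus\{(i,j+1),(i+1,j)\})\cup\{u\}$ cell-by-cell against the effect of a single ladder move on the paths. Since all shadow-line peaks lie in $S$, this forces $\Lambda(D)\subseteq S$.

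For uniqueness I would establish that distinct excited diagrams are separated by their peaks: if $D_1\neq D_2$ in $\ED(\lambda/\mu)$, then $\Lambda(D_1)\cap D_2\neq\varnothing$ or $\Lambda(D_2)\cap D_1\neq\varnothing$. Indeed, if both $D_a$ satisfied the sandwich $\Lambda(D_a)\subseteq S\subseteq[\lambda]\setminus D_a$, then $\Lambda(D_1)\subseteq S$ together with $S\cap D_2=\varnothing$ would give $\Lambda(D_1)\cap D_2=\varnothing$, and symmetrically $\Lambda(D_2)\cap D_1=\varnothing$, contradicting separation. To prove separation I would compare $D_1$ and $D_2$ along the excited poset, taking a cell where they first differ under a common sequence of excited moves and checking that the excited peak created at that move on one side lands on a cell occupied by the other diagram. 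As a consistency check, this partition must agree with Proposition~\ref{prop:pleasant-excited-sum}, which amounts to the local identity $\sum_{D^*\mapsto D}2^{|\lambda/\nu|-\shpeaks(D^*)}=2^{|\lambda/\mu|-\expeaks(D)}$, the sum over excited diagrams $D^*$ restricting to a fixed $D$.

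The hard part will be the geometric identification of $\Lambda(D)$ in the existence step. Note that the excited peaks are \emph{not} the shadow-line peaks of the maximal pleasant diagram $[\lambda]\setminus D$: those are already nonempty for $D=[\mu]$, whereas $\Lambda([\mu])=\varnothing$, so the excited peaks record only the corners produced by the excited moves taking $[\mu]$ to $D$. Pinning down this correspondence, and in particular handling the passage from the shape $\lambda/\nu$ output by the shadow lines back to the target shape $\lambda/\mu$ when tracking the $[\mu]$-cells, is where the recursion for $\Lambda$ must be matched carefully against the ladder moves, and is the main obstacle.
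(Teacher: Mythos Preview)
Your overall architecture is the paper's: build the surjection $\varrho_2\colon\PD(\lambda/\mu)\to\ED(\lambda/\mu)$ from the shadow-line construction of Lemma~\ref{lem:pleasant_is_subset_of_excited}, then show that the fiber over $D$ is exactly $\{S:\Lambda(D)\subseteq S\subseteq[\lambda]\setminus D\}$, which has size $2^{|\lambda/\mu|-\expeaks(D)}$. The paper packages this as Lemma~\ref{lem:num_pleasant} and deduces the theorem in one line. Your identification of the hard step---matching the excited-peak recursion against the ladder moves to see $\Lambda(D)\subseteq S$---is exactly what the paper does: it observes that the peaks of the shadow lines of $[\lambda]\setminus D$ are precisely $\Lambda(D)$ together with the original peaks of $[\lambda/\mu]$, the excited peaks being created one at a time by ladder moves.

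Where you diverge is in the uniqueness step. You propose proving a separation property (if $D_1\neq D_2$ then $\Lambda(D_1)\cap D_2\neq\varnothing$ or $\Lambda(D_2)\cap D_1\neq\varnothing$) by comparing $D_1,D_2$ along the excited poset. This is plausible but unnecessary, and your sketch (``take a cell where they first differ under a common sequence of excited moves'') is delicate because two excited diagrams need not lie on a common chain of moves. The paper sidesteps this entirely: since $\varrho_2$ is already a \emph{function}, its fibers are automatically disjoint, so you only need the two-sided inclusion $\varrho_2^{-1}(D)=F_D$. The non-obvious direction, $F_D\subseteq\varrho_2^{-1}(D)$, follows from the single fact that \emph{the set $\Lambda(D)$ determines $D$}: once $\Lambda(D)\subseteq S\subseteq[\lambda]\setminus D$, the shadow lines of $S$ are forced to have their ladder-created peaks exactly at $\Lambda(D)$, and those peaks reconstruct $D$. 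That determinacy statement is immediate from the recursive definition of $\Lambda$ (each excited move records itself as a peak at the vacated cell), and it replaces your separation argument wholesale. So keep $\varrho_2$ as the organizing map and drop the separate uniqueness proof.
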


We prove Theorem~\ref{thm:num_pleasant} via the following Lemma. Given
a set $\mathcal{S}$, let $2^{\mathcal{S}}$ denote the subsets of $\mathcal{S}$.

\begin{lemma} \label{lem:num_pleasant}
We have $\PD(\lambda/\mu) = \bigcup_{D\in \ED(\lambda/\mu)} \Lambda(D) \times
2^{[\lambda]\setminus (D\cup \Lambda(D))}$.
\end{lemma}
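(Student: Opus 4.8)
The plan is to read the right-hand side as the family of blocks
$\mathcal{B}_D := \{S\subseteq[\lambda] : \Lambda(D)\subseteq S\subseteq[\lambda]\setminus D\}$, one for each $D\in\ED(\lambda/\mu)$, and to prove that these blocks \emph{partition} $\PD(\lambda/\mu)$ (the disjointness is exactly what lets Theorem~\ref{thm:num_pleasant} follow by summing $|\mathcal{B}_D|=2^{|\lambda/\mu|-\expeaks(D)}$). One inclusion is immediate: if $\Lambda(D)\subseteq S\subseteq[\lambda]\setminus D$ then in particular $S\subseteq[\lambda]\setminus D$, so $S$ is a pleasant diagram by Lemma~\ref{lem:charpleasantlem1}. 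Hence $\bigcup_D\mathcal{B}_D\subseteq\PD(\lambda/\mu)$, and what remains is to show that every pleasant $S$ lies in \emph{exactly one} block.

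To locate that block I would attach to each pleasant $S$ the excited diagram $D(S)$ defined as the \emph{minimal} element, in the excited partial order on $\ED(\lambda/\mu)$, among all $D$ with $D\cap S=\varnothing$ (the map $\varrho_2$ of Figure~\ref{fig:exrhos}). This set of ``$S$-avoiding'' excited diagrams is nonempty by Lemma~\ref{lem:pleasant_is_subset_of_excited}, and I expect it to be closed under the meet of the excited poset: the meet places each travelling cell at the north-westmost of its two positions along its diagonal, and that position is occupied by one of the two diagrams, hence avoids $S$. Granting this, $D(S)$ is well defined and $S\subseteq[\lambda]\setminus D(S)$ holds by construction, so membership $S\in\mathcal{B}_{D(S)}$ reduces to the single assertion $\Lambda(D(S))\subseteq S$.

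The heart of the argument is the identification of the recursively defined excited peaks with honest geometric peaks. Following Lemma~\ref{lemma:excitedispleasant}, the complement $[\lambda]\setminus D$ of any excited diagram is a disjoint union of shadow (rim-hook) lines obtained from the rim hooks of $[\lambda/\mu]$ by ladder moves, and I would prove by induction on the number of excited moves that $\Lambda(D)$ is exactly the set of top corners of these lines. The inductive step is a local check: an excited move at an active cell $u=(i,j)$ is a single ladder move on one line, which creates a top corner at $(i,j)$ and destroys any top corners previously at $(i,j+1)$ and $(i+1,j)$ -- precisely the update $\Lambda(\alpha_u(D))=(\Lambda(D)\setminus\{(i,j+1),(i+1,j)\})\cup\{(i,j)\}$; the base case $\Lambda([\mu])=\varnothing$ matches the fact that rim-hook lines have no top corners. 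With this identification, $\Lambda(D(S))\subseteq S$ should follow from minimality: a top corner $p\in\Lambda(D(S))\setminus S$ would let one lower the corresponding line at $p$ (a reverse ladder move, or, when the responsible cell lies several steps to the south-east, a direct comparison with the avoiding diagram having that cell pulled back to $p$), producing an $S$-avoiding excited diagram strictly below $D(S)$ and contradicting minimality.

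For uniqueness and disjointness I would show that any $D$ with $S\in\mathcal{B}_D$ must equal $D(S)$: such $D$ satisfies $D\cap S=\varnothing$, hence $D\ge D(S)$, and if $D>D(S)$ then on the first diagonal where $D$ overtakes $D(S)$ the north-west neighbour of the $D$-cell is a $D(S)$-cell (so it avoids $S$) and is a top corner of a line of $[\lambda]\setminus D$, i.e.\ an excited peak of $D$ lying outside $S$, contradicting $\Lambda(D)\subseteq S$. I expect the main obstacle to be precisely the peak identification of the third paragraph, because the excited-peak recursion carries cross-diagonal cancellations (a corner created at $(i,j)$ can annihilate corners at $(i,j+1)$ and $(i+1,j)$): one must verify both that a single ladder move reproduces these cancellations on the shadow lines, and that the resulting corner set depends only on $D$ and not on the order of the excited moves, so that the minimality argument for $\Lambda(D(S))\subseteq S$ -- including the case where the responsible cell is more than one step away from its peak -- goes through.
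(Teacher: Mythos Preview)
Your order-theoretic route via the minimal $S$-avoiding excited diagram is a reasonable alternative to the paper's shadow-line surjection~$\varrho_2$, but the identification you place at its heart is false, and the error is already in the base case. You assert that rim-hook lines have no top corners, so that $\Lambda([\mu])=\varnothing$ matches. Take $\lambda=(3,3,3)$ and $\mu=(2,1)$: the first rim hook of $[\lambda/\mu]$ is the border strip through $(3,1),(3,2),(2,2),(2,3),(1,3)$, and this path has a genuine top corner at $(2,2)$, yet $\Lambda([\mu])=\varnothing$. In general each inner corner of $\mu$ produces a top corner in the initial rim-hook lines, and these \emph{original} peaks can survive ladder moves (with $\mu=(3,1)$ inside $\lambda=(5^5)$, say, the move $(2,1)\to(3,2)$ kills the original peak at $(2,2)$ but leaves the one at $(1,4)$ intact). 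The paper's proof is careful on exactly this point: it separates the peaks of the lines of $[\lambda]\setminus D$ into excited peaks (the elements of $\Lambda(D)$, created by ladder moves) and original peaks inherited from the rim hooks of $[\lambda/\mu]$; only the former are forced into~$S$, the latter are free, and the fact that $\Lambda(D)$ alone determines $D$ is what yields disjointness of the blocks.

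The gap propagates to both remaining steps. In the minimality argument for $\Lambda(D(S))\subseteq S$, lowering the line at an excited peak $p=(i,j)$ by a single reverse ladder move requires $(i{+}1,j{+}1)\in D(S)$, which can fail (e.g.\ $\mu=(1)$, $\lambda=(3^3)$, $D(S)=\{(3,3)\}$, $p=(1,1)$); your fallback of pulling the responsible cell all the way back to $p$ needs a proof that the resulting flagged tableau is still semistandard --- that the entries west and north of the pulled-back cell have not overtaken $i$ in the meantime --- and that verification is precisely where one must use what distinguishes excited peaks from original ones. In the uniqueness paragraph you infer that the overtaking witness lies in $\Lambda(D)$ from ``top corner $=$ excited peak''; once that equation is abandoned you can no longer rule out that the witness is merely an original peak, which places no constraint on~$S$.
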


\begin{proof}
As in the proof of Lemma~\ref{lem:pleasant_is_subset_of_excited}, from the shadow lines $L_1,L_2,\ldots,L_m$ of a pleasant diagram $S\in
\PD(\lambda/\mu)$ we obtain an excited diagram $D^* \in
\ED(\lambda/\nu)$ for $\mu\subseteq \nu$ such that $S\subseteq
[\lambda]\setminus D^*$. If we restrict $D^*$ to the cells coming from
$[\mu]$ we obtain an excited diagram $D\in \ED(\lambda/\mu)$.
Setting $\varrho_2(S)= D$ defines a new surjection \.
$\varrho_2:\ts \PD(\lambda/\mu)\to \ED(\lambda/\mu)$ (see Figure~\ref{fig:exrhos}).
It remains to prove that
$$
\varrho_2^{-1}(D) \, = \, \Lambda(D)\times 2^{[\lambda]\setminus (D\cup \Lambda(D))}\..
$$
First, the excited peaks are
peaks of the shadow lines $L'_1,L'_2,\ldots,L'_k$  of
$[\lambda]\setminus D$ obtained by a {\em ladder move}:
\vspace{-0.125in}
\begin{center}
\includegraphics{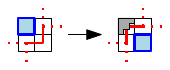}
\end{center}
\vspace{-0.1in}
Thus the peaks of the shadow lines $\{L'_i\}$ are either excited peaks or
original peaks of the shadow lines of $[\lambda/\mu]$. Second, note
that the
excited peaks $\Lambda(D)$ determine uniquely the excited diagram
$D$. Thus the non-excited peaks of the shadow lines and the other
cells of the lines $\{L'_i\}$, those in $[\lambda]\setminus (D\cup \Lambda(D))$, may
or may not be in $S$. This proves the claim for $\varrho_2^{-1}(D)$.
\end{proof}

\begin{proof}[Proof of Theorem~\ref{thm:num_pleasant}]
By Lemma~\ref{lem:num_pleasant} and since $|[\lambda]\setminus (D\cup
\Lambda(D))|=|\lambda/\mu|-\expeaks(D)$ then
\begin{align*}
|\PD(\lambda/\mu)| = \sum_{D\in \ED(\lambda/\mu)} 2^{|\lambda/\mu|-\expeaks(D)},
\end{align*}
as desired.
\end{proof}

\begin{example}
The skew shape $(2^2/1)$ has $12$ pleasant diagrams (see Example~\ref{ex:pleasant12}) and
$2$ excited diagrams, with sets of excited peaks $\varnothing$ and $\{(1,1)\}$,
respectively. Indeed, we have $|\PD(2^2/1) = 2^3+2^2=12$. A more
complicated example is shown in Figure~\ref{fig:excited_peaks}.
The number of pleasant diagrams in this case is
$|\PD(4^3/2)|=2^{10}+2\cdot 2^9 + 3\cdot 2^8 =2816$.
\end{example}

\begin{figure}[hbt]
\begin{center}
\includegraphics[scale=0.8]{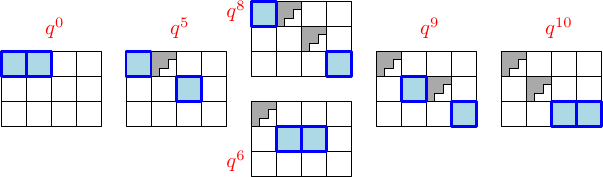}
\end{center}
\caption{The six excited diagrams $D$ for $\lambda/\mu=(4^3/2)$,
their corresponding excited peaks (in gray), and weights $a'(D)$,
defined as sums of hook-lengths of these peaks.}
\label{fig:excited_peaks}
\end{figure}

\medskip \subsection{Excited diagrams and skew RPP}\label{ss:pleasant-skewRPP-excited}

In Section~\ref{sec:pleasant} we expressed the generating function of skew RPP
using pleasant diagrams. In this section we use Lemma~\ref{lem:num_pleasant}  to give
an expression for this generating series in terms of excited diagrams.

\begin{corollary} \label{cor:skewRPP}  We have:
\[
\sum_{\pi \in \RPP(\lambda/\mu)} q^{|\pi|} \, = \, \sum_{D\in
  \ED(\lambda/\mu)} q^{a'(D)} \prod_{u\in [\lambda]\setminus D}\frac{1}{1-q^{h(u)}}\.,
\]
where \ts $a'(D):=\sum_{u \in \Lambda(D)} h(u)$.
\end{corollary}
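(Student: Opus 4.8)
The plan is to rewrite the pleasant-diagram sum of Theorem~\ref{thm:skewRPP} by grouping pleasant diagrams according to the excited diagram they determine, using the decomposition of $\PD(\lambda/\mu)$ furnished by Lemma~\ref{lem:num_pleasant}. That lemma says every $S\in\PD(\lambda/\mu)$ is uniquely of the form $S=\Lambda(D)\cup T$ for a single $D\in\ED(\lambda/\mu)$ and some $T\subseteq [\lambda]\setminus(D\cup\Lambda(D))$, with the choice of $T$ entirely free. So first I would substitute this parametrization into the right-hand side of~\eqref{eq:skew-RPP}, turning the single sum over $\PD(\lambda/\mu)$ into a double sum over $D\in\ED(\lambda/\mu)$ and over subsets $T$ of the ``free'' cells $[\lambda]\setminus(D\cup\Lambda(D))$.

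Next I would factor the inner product. Since $\Lambda(D)$ and $T$ are disjoint, $\prod_{u\in S}\frac{q^{h(u)}}{1-q^{h(u)}}$ splits as $\bigl(\prod_{u\in\Lambda(D)}\frac{q^{h(u)}}{1-q^{h(u)}}\bigr)\bigl(\prod_{u\in T}\frac{q^{h(u)}}{1-q^{h(u)}}\bigr)$, and the $\Lambda(D)$-factor, being independent of $T$, pulls out of the inner sum. The sum over all $T$ then factorizes over the free cells:
\[
\sum_{T\subseteq [\lambda]\setminus(D\cup\Lambda(D))}\ \prod_{u\in T}\frac{q^{h(u)}}{1-q^{h(u)}} \, = \, \prod_{u\in [\lambda]\setminus(D\cup\Lambda(D))}\Bigl(1+\frac{q^{h(u)}}{1-q^{h(u)}}\Bigr) \, = \, \prod_{u\in [\lambda]\setminus(D\cup\Lambda(D))}\frac{1}{1-q^{h(u)}}\,,
\]
using the elementary identity $1+\frac{q^{h}}{1-q^{h}}=\frac{1}{1-q^{h}}$.

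Finally I would recombine the two factors over the complement of $D$. Because $\Lambda(D)\subseteq [\lambda]\setminus D$ and the free cells are exactly $([\lambda]\setminus D)\setminus\Lambda(D)$, multiplying $\prod_{u\in\Lambda(D)}\frac{q^{h(u)}}{1-q^{h(u)}}$ by the product above clears the denominators $(1-q^{h(u)})$ on the excited peaks and leaves $q^{\sum_{u\in\Lambda(D)}h(u)}\prod_{u\in [\lambda]\setminus D}\frac{1}{1-q^{h(u)}}$. Recognizing $\sum_{u\in\Lambda(D)}h(u)=a'(D)$ yields precisely each summand of the claimed identity, and summing over $D\in\ED(\lambda/\mu)$ completes the argument.

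I do not expect a genuine obstacle here: the substantive content is already packaged into Lemma~\ref{lem:num_pleasant}, and what remains is bookkeeping together with one geometric-series identity. The only point that warrants care is verifying that the union in Lemma~\ref{lem:num_pleasant} is disjoint, i.e.\ that the surjection $\varrho_2$ assigns each pleasant diagram to a \emph{single} excited diagram with the stated fiber $\Lambda(D)\times 2^{[\lambda]\setminus(D\cup\Lambda(D))}$, so that no pleasant diagram is double-counted when the sum is regrouped; this is exactly what was established in the proof of that lemma.
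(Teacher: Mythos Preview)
Your proposal is correct and follows essentially the same route as the paper: start from Theorem~\ref{thm:skewRPP}, regroup the pleasant-diagram sum over the fibers $\varrho_2^{-1}(D)=\Lambda(D)\times 2^{[\lambda]\setminus(D\cup\Lambda(D))}$ given by Lemma~\ref{lem:num_pleasant}, and collapse the inner sum via $1+\frac{q^{h}}{1-q^{h}}=\frac{1}{1-q^{h}}$. If anything, you spell out the geometric-series step and the recombination over $[\lambda]\setminus D$ more explicitly than the paper does.
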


\begin{example}
The shape $\lambda/\mu=(4^3/2)$ has six excited diagrams. See
Figure~\ref{fig:excited_peaks} for the corresponding statistic
$a'(D)$ of each of these diagrams.
\end{example}

\begin{example}
Following Example~\ref{ex:skewhook}, take the \emph{inverted hook shape} \ts
$(k^d/(k-1)^{d-1}$ and apply Corollary~\ref{cor:skewRPP}. Using Stanley's theory of
$P$-partitions, we obtain:
\begin{equation} \label{eq:skewhookRPP}
  \prod_{i=1}^{k+d-1} \frac{1}{1-q^{i}}\,
{\Bigg [}\sum_{S\in \binom{[k+d-2]}{k-1}}  q^{\maj(S)} {\Bigg ]}  \, = \, \sum_{\ga:\, (d,1) \to (1,k)} q^{a'(\gamma)} \, \prod_{(i,j)
  \in \ga} \frac{1}{1-q^{i+j-1}}\,,
\end{equation}
where
$$
\maj(S) \. = \. \sum_{i\not\in S, i+1 \in S} (i+1) \qquad \text{and} \qquad
a'(\gamma)\. = \. \sum_{(i,j) \text{ peak of } \gamma} \. (i+j-1)\,.$$
The
$q$-analogue of the binomial coefficients in the RHS of
\eqref{eq:skewhookRPP} appears to be new.
\end{example}

\begin{proof}[Proof of Corollary~\ref{cor:skewRPP}]
By Theorem~\ref{thm:rpp_pleasant}, we have:
\[
\sum_{\pi \in \RPP(\lambda/\mu)} q^{|\pi|} \, = \, \sum_{S \in
  \PD(\lambda/\mu)} \. \prod_{u \in S} \. \frac{q^{h(u)}}{1-q^{h(u)}}\..
\]
Using Lemma~\ref{lem:num_pleasant} and the surjection $\vartheta_2$ in
its proof, we can rewrite the RHS above as
a sum over excited diagrams.  We have:
\begin{align*}
\sum_{S \in
  \PD(\lambda/\mu)} \. \prod_{u \in S} \.
  \frac{q^{h(u)}}{1-q^{h(u)}} &= \sum_{D\in \ED(\lambda/\mu)} \. \sum_{S \in \varrho_2^{-1}(D)} \.
\prod_{u \in S} \.   \frac{q^{h(u)}}{1-q^{h(u)}} \\
&= \, \sum_{D\in \ED(\lambda/\mu)}  \. \prod_{u\in \Lambda(D)} \.   q^{a'(D)} \,
\prod_{u\in [\lambda]\setminus D} \. \frac{1}{1-q^{h(u)}}\.,
\end{align*}
as desired.
\end{proof}

This result also implies the NHLF~\eqref{eq:Naruse}.

\begin{proof}[Third proof of the NHLF~\eqref{eq:Naruse}]
By Stanley's theory of {$P$-partitions},
\cite[Thm.~3.15.7]{EC2} we obtain
\eqref{eq1:RPPskewPpartitions}. Multiplying this equation by
$\prod_{i=1}^n (1-q^i)$ where $n=|\lambda/\mu|$ and using
Corollary~\ref{cor:skewRPP} gives
\[
\sum_{w \in \mathcal{L}(P_{\lambda/\mu})}   q^{\maj(w)} =
\prod_{i=1}^n (1-q^i) \sum_{D\in \ED(\lambda/\mu)}
q^{a'(D)}\. \prod_{u\in [\lambda]\setminus D}\frac{1}{1-q^{h(u)}}\.,
\]
Taking the limit $q\to 1$ in the equation above gives the NHLF~\eqref{eq:Naruse}.
\end{proof}

\begin{corollary}\label{cor:trace-RPP-excited}
We have:
\[
\sum_{\pi \in \RPP(\lambda/\mu)} \. q^{|\pi|}\ts t^{\tr(\pi)} \, = \,
\sum_{D\in
  \ED(\lambda/\mu)} q^{a'(D)}\ts t^{c(D)}\, \prod_{u \in \overline{D} \cap \square^{\lambda}}
\frac{1}{1-t\ts q^{h(u)}} \, \prod_{u\in \overline{D} \setminus
  \square^{\lambda}} \frac{1}{1-q^{h(u)}}\,,
\]
where \ts $\overline{D}=[\lambda]\setminus D$,
\ts $a'(D)=\sum_{u \in \Lambda(D)} h(u)$ \ts and \ts
$c(D)=|\Lambda(D)\cap \square^{\lambda}|$.
\end{corollary}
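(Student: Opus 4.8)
The plan is to mimic the proof of Corollary~\ref{cor:skewRPP}, but to carry the trace variable $t$ through the computation, starting from the pleasant-diagram trace formula of Theorem~\ref{thm:RPP-trace} rather than from the plain formula of Theorem~\ref{thm:rpp_pleasant}. Concretely, I would begin with
\[
\sum_{\pi \in \RPP(\lambda/\mu)} q^{|\pi|}\, t^{\tr(\pi)} \, = \, \sum_{S \in \PD(\lambda/\mu)} \prod_{u\in S \cap \square^\lambda} \frac{t\, q^{h(u)}}{1-t\, q^{h(u)}} \prod_{u\in S \setminus \square^\lambda} \frac{q^{h(u)}}{1-q^{h(u)}}\,,
\]
and then regroup the sum over pleasant diagrams according to the surjection $\varrho_2\colon \PD(\lambda/\mu)\to\ED(\lambda/\mu)$ from Lemma~\ref{lem:num_pleasant}. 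For each fixed $D\in\ED(\lambda/\mu)$ the fibre is the product set $\varrho_2^{-1}(D)=\Lambda(D)\times 2^{[\lambda]\setminus(D\cup\Lambda(D))}$, so every $S$ in the fibre is the disjoint union of the mandatory peak cells $\Lambda(D)$ with an arbitrary subset $T$ of the optional cells $[\lambda]\setminus(D\cup\Lambda(D))$.

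Next I would evaluate the inner sum over a fibre. Since the weight is multiplicative over cells and $\Lambda(D)$ is disjoint from the optional cells, the fibre sum factors as the fixed peak contribution times $\sum_{T}\prod_{u\in T}(\cdots)$, and I would split each product according to membership in the Durfee square $\square^\lambda$. The optional cells contribute $\sum_{T\subseteq X}\prod_{u\in T} x_u=\prod_{u\in X}(1+x_u)$, with $x_u=\tfrac{t\, q^{h(u)}}{1-t\, q^{h(u)}}$ inside $\square^\lambda$ and $x_u=\tfrac{q^{h(u)}}{1-q^{h(u)}}$ outside; using $1+\tfrac{t\, q^{h(u)}}{1-t\, q^{h(u)}}=\tfrac{1}{1-t\, q^{h(u)}}$ and its $t$-free analogue, these collapse to $\prod\tfrac{1}{1-t\, q^{h(u)}}$ and $\prod\tfrac{1}{1-q^{h(u)}}$ over the optional cells inside and outside $\square^\lambda$. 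For the mandatory peaks I would pull out the numerators, writing $\tfrac{t\, q^{h(u)}}{1-t\, q^{h(u)}}=t\, q^{h(u)}\cdot\tfrac{1}{1-t\, q^{h(u)}}$ inside $\square^\lambda$ and $\tfrac{q^{h(u)}}{1-q^{h(u)}}=q^{h(u)}\cdot\tfrac{1}{1-q^{h(u)}}$ outside.

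Collecting exponents then produces exactly the desired monomial: the power of $t$ is $|\Lambda(D)\cap\square^\lambda|=c(D)$, and the power of $q$ is $\sum_{u\in\Lambda(D)\cap\square^\lambda}h(u)+\sum_{u\in\Lambda(D)\setminus\square^\lambda}h(u)=\sum_{u\in\Lambda(D)}h(u)=a'(D)$, both directly from the definitions in the statement. Finally, since $\Lambda(D)$ and $[\lambda]\setminus(D\cup\Lambda(D))$ are disjoint with union $\overline{D}=[\lambda]\setminus D$, the remaining geometric factors coming from the peaks and from the optional cells recombine into $\prod_{u\in\overline{D}\cap\square^\lambda}\tfrac{1}{1-t\, q^{h(u)}}\prod_{u\in\overline{D}\setminus\square^\lambda}\tfrac{1}{1-q^{h(u)}}$, and summing over $D\in\ED(\lambda/\mu)$ yields the claim.

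The argument is routine; the only point requiring care is the bookkeeping of the four classes of cells (peak versus optional, inside versus outside $\square^\lambda$) and the verification that $\Lambda(D)\subseteq\overline{D}$, so that the two families of geometric factors do not overlap and reassemble cleanly into $\overline{D}$. In effect, Corollary~\ref{cor:trace-RPP-excited} is the common refinement of Corollary~\ref{cor:skewRPP}, which tracks only $q$ while passing from pleasant to excited diagrams, and Theorem~\ref{thm:RPP-trace}, which tracks $t$ but stays on the pleasant side.
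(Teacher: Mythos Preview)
Your proposal is correct and matches the paper's approach exactly: the paper's proof simply says ``The proof follows verbatim to those of Theorems~\ref{thm:RPP-trace},~\ref{thm:SSYT-trace} and Corollary~\ref{cor:skewRPP}.\ The details are straightforward,'' and what you have written is precisely those straightforward details---starting from the pleasant-diagram trace formula, regrouping via $\varrho_2$ and Lemma~\ref{lem:num_pleasant}, and factoring each fibre sum with the $(1+x_u)$ identity, just as in the proof of Corollary~\ref{cor:skewRPP}.
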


\begin{proof}
The proof follows verbatim to those of
Theorems~\ref{thm:RPP-trace},~\ref{thm:SSYT-trace} and
Corollary~\ref{cor:skewRPP}.  The details are straightforward.
\end{proof}

\bigskip
\section{Hillman--Grassl map on skew SSYT}  \label{sec:HGSSYT}

\nin
In this section we show that the Hillman--Grassl map is a
bijection between SSYT of skew shape and certain arrays of nonnegative
integers with support in the complement of excited diagrams and
some forced nonzero entries.
First, we
describe these arrays and state the main result.
Note that in contrast with the previous section,
the argument is not entirely bijective and requires
Theorem~\ref{thm:skewSSYT} (see also $\S$\ref{ss:compare-nhlf}).

\medskip
\subsection{Excited arrays} \label{sec:excited_arrays}

We fix the skew shape $\lambda/\mu$. Recall that for $1\leq t \leq
\ell(\lambda)-1$, $\dd_t(\mu)$ denotes the diagonal $\{(i,j) \in
\lambda/\mu \mid i-j =
\mu_t-t\}$, where $\mu_t=0$ if $\ell(\mu)<t\leq \ell(\lambda)$. Thus each row of $\mu$
is in correspondence with a diagonal $\dd_t(\mu)$.
See
Figure~\ref{fig:skew_diag}: Left.

Let $A_{\mu}$ be the array of shape $\lambda$ with ones in each
diagonal $\dd_t(\mu)$ and zeros elsewhere. For $[\mu] \in \ED(\lambda/\mu)$, each active cell $u=(i,j)$ of
$[\mu]$ satisfies $(A_{\mu})_{i+1,j}=0$ and $(A_{\mu})_{i+1,j+1}=1$.

For each active cell $u$ of $[\mu]$, $\alpha_u(D_\mu)$ gives another
excited diagram in $\ED(\lambda/\mu)$. We do an analogous action:
\begin{equation} \label{eq:excited-array-move}
\beta_u:\,\,\raisebox{-6pt}{\includegraphics{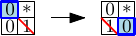}}
\end{equation}
 on $A_{\mu}$ to obtain a $0$-$1$ array associated to
 $\alpha_u(D_\mu)$. Concretely if $A$ is a $0$-$1$ array of shape
 $\lambda$ and $u=(i,j)$ is a cell such that $A_{i+1,j}=0$ and
 $A_{i+1,j+1}=1$ then $\beta_u(A)$ is the $0$-$1$ array $B$ of shape
 $\lambda$ with $B_{i+1,j+1}=0$, $B_{i+1,j}=1$ and $B_v=A_v$ for $v
 \neq \{(i+1,j),(i+1,j+1)\}$. Next, we define {\em excited arrays} by
 repeatedly applying $\beta_u(\cdot)$ on active cells $u$ starting from $A_{\mu}$.

\begin{definition}[excited arrays] \label{def:hookarray}
For an excited diagram $D$ in $\ED(\lambda/\mu)$ obtained
from $[\mu]$ by a sequence of excited moves $D=\alpha_{u_k} \circ
\alpha_{u_{k-1}} \circ \cdots \circ \alpha_{u_1}(\mu)$, then we let
$A_D = \beta_{u_k} \circ \beta_{u_{k-1}} \circ \cdots \circ
\beta_{u_1}(A_{\mu})$ provided the operations $\beta_u$ are well defined.
So each excited diagram $D$ is associated to a
$0$-$1$ array $A_D$ (see Figure~\ref{fig:skew_diag}).
\end{definition}

Next we show that the procedure for obtaining the arrays $A_D$ is well
defined; meaning that  at each
stage, the conditions to apply $\beta_u(\cdot)$ are met.

\begin{proposition} \label{prop:welldefined}
Let $A_D$ be the excited array of $D\in \ED(\lambda/\mu)$
and $u=(i,j)$ be an active cell of~$D$.  Then
$(A_D)_{i+1,j+1}=1$ and $(A_D)_{i+1,j}=0$.
\end{proposition}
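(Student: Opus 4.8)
The plan is to prove the statement by induction on the number $k$ of excited moves in a chosen sequence $[\mu]=D_0,D_1,\dots,D_k=D$ with $D_m=\alpha_{u_m}(D_{m-1})$, establishing at \emph{every} stage that each active cell $w=(a,b)$ of the current diagram satisfies $(A_{D_m})_{a+1,b+1}=1$ and $(A_{D_m})_{a+1,b}=0$; this is exactly the assertion that the $\beta$-operations in Definition~\ref{def:hookarray} never fail. Writing $v=(i_0,j_0)=u_k$ for the cell carrying the last move, the inductive hypothesis applied to the active cell $v$ of $D_{k-1}$ gives $(A_{D_{k-1}})_{i_0+1,j_0+1}=1$ and $(A_{D_{k-1}})_{i_0+1,j_0}=0$, so $\beta_v$ is legitimately applicable and $A_D=\beta_v(A_{D_{k-1}})$ is again a $0$-$1$ array supported in $[\lambda]\setminus D$.

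For the base case $D=[\mu]$: if $u=(i,j)$ is active in $[\mu]$, then $(i,j+1)\notin[\mu]$ forces $j=\mu_i$, and $(i+1,j)\notin[\mu]$ forces $\mu_{i+1}<\mu_i$, so $u$ is the lowest box of $[\mu]$ on its diagonal. Hence the diagonal through $u$ is the one, $\dd_i(\mu)$, recorded by $A_\mu$ for row $i$; since $(i+1,j+1)$ shares this diagonal with $u$ and lies in $[\lambda]\setminus[\mu]$ by activity, it carries a $1$, i.e.\ $(A_\mu)_{i+1,j+1}=1$. The cell $(i+1,j)$ lies on the neighbouring diagonal, and using that $t\mapsto\mu_t-t$ is strictly decreasing together with $\mu_{i+1}<\mu_i$ one checks this diagonal is none of the $\dd_t(\mu)$, so $(A_\mu)_{i+1,j}=0$.

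For the inductive step I would fix an active cell $u=(i,j)$ of $D$ and compare the relevant entries of $A_D$ with those of $A_{D_{k-1}}$, recalling that $\beta_v$ altered only the two cells $P=(i_0+1,j_0+1)$ (reset to $0$, as $P$ now lies in $D$) and $Q=(i_0+1,j_0)$ (set to $1$). If $\{(i+1,j+1),(i+1,j)\}\cap\{P,Q\}=\varnothing$, both conditions are inherited from $D_{k-1}$ once one checks that the move affects neither the two relevant entries nor the activity of $u$. The only cells whose $A$-window meets $\{P,Q\}$ are $u=(i_0,j_0-1)$, for which $(i+1,j+1)=Q$ so the required value $1$ is exactly the entry just created, and $u=(i_0,j_0+1)$, for which $(i+1,j)=P$ so the required value $0$ is the entry just cleared; in each case it remains to verify the \emph{other} entry, and separately to treat the cells that become active only after $v$ has vacated its position (such as $(i_0-1,j_0)$). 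These local checks are governed by the interlacing property (Definition~\ref{def:interlacing}), which constrains the occupancy of $D$ on the diagonal of $v$ and its two neighbours.

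I expect the main obstacle to be exactly this local analysis around the moved cell $v$: one must show that flipping the pair $(P,Q)$ is compatible with the two conditions at \emph{all} active cells of $D$, including the newly activated ones, and rule out the configurations that would make $(A_D)_{i+1,j+1}=0$ or $(A_D)_{i+1,j}=1$. The interlacing structure of excited diagrams is what excludes these bad configurations; the same structure (namely that two move sequences reaching the same $D$ differ only by commutations of moves on non-interacting cells, whose $\beta$-operations act on disjoint entries) also shows that $A_D$ depends only on $D$, so that the association $D\mapsto A_D$ is genuinely well defined.
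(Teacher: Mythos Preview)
Your proposal is correct and follows essentially the same inductive approach as the paper, with the same base case and a local case analysis around the last-moved cell $v$ for the inductive step. The paper organizes the inductive step by whether the active cell of $D'$ was already active in $D$ (equivalent to your split, since a cell active in both necessarily has its $A$-window disjoint from $\{P,Q\}$), and then dispatches the possibly newly-active positions $\{(i_0,j_0-1),\,(i_0-1,j_0),\,(i_0+1,j_0+1)\}$ by direct pictorial verification rather than by invoking the interlacing property.
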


\begin{proof}
We prove this by induction on the number of excited moves. If
$D=[\mu]$ and $u\in [\mu]$ is an active cell then $u=(t,\mu_t)$
is the last cell of a row of $\mu$ with $\mu_{t+1}<\mu_t$. This
implies that
$(t+1,\mu_t+1) \in \dd_t(\mu)$ and $(t+1,\mu_t)\not\in \dd_{t+1}(\mu)$ and
so $(A_\mu)_{t+1,\mu_t+1}=1$ and $(A_{\mu})_{t+1,\mu_t}=0$.

Assume the result holds for $D \in \ED(\lambda/\mu)$. If
$D'=\alpha_{(i,j)}(D)$ then $A_{D'}=\beta_{(i,j)}(A_D)$ is well
defined since $(A_D)_{i+1,j+1}=1$ and $(A_D)_{i+1,j}=0$. Let $v=(i',j')$
be an active cell of $D'$. If $v'=(i',j')$ is also an active cell
of~$D$, then the excited move $\beta_u(\cdot)$ did not alter the values
at $(i'+1,j'+1)$ and $(i'+1,j')$. In this case
$(A_{D'})_{i'+1,j'+1}=(A_D)_{i+1,j+1}=1$ and
$(A_{D'})_{i'+1,j'}=(A_{D})_{i'+1,j'}=0$. If $v'$ is not an active
square of $D$ then $u$ is one of $\{(i',j+1),(i'+1,j'),(i'-1,j'-1)\}$
(note that $u\neq (i'+1,j'+1)$ since the corresponding flagged tableau
would not be semistandard). In each of these three cases we see
that $(A_{D'})_{i'+1,j'+1}=1$ and $(A_{D'})_{i'+1,j'}=0$:
\begin{center}
\includegraphics[width=0.9\textwidth]{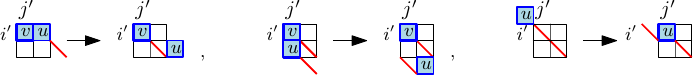}
\end{center}
This completes the proof.
\end{proof}

The support of excited arrays can be divided into {\em broken diagonals}

\begin{definition}[Broken diagonals] \label{rem1:broken_diag}
To each excited diagram $D\in
\ED(\lambda/\mu)$ we associate {\em broken diagonals} that come from
$\dd_t(\mu)$ for $1\leq t \leq \ell(\lambda)-1$, that are described as follows.
The diagram $[\mu] \in \ED(\lambda/\mu)$ is associated to
$\dd_1(\mu),\ldots,\dd_{\ell(\lambda)-1}(\mu)$. Then iteratively, if $D$
is an excited diagram with broken diagonals
$\dd_1(D),\ldots,\dd_{\ell-1}(D)$ and $D'=\alpha_{(i,j)}(D)$ then $(i+1,j+1)$
is in some $\dd_t(D)$. We let $\dd_r(D')=\dd_r(D)$ if $r\neq
t$ and $\dd_t(D')=\dd_t(D) \setminus \{(i+1,j+1)\} \cup
\{(i+1,j)\}$ (See Figure~\ref{fig:skew_diag}). Note that the broken diagonals $\dd_t(D)$ give precisely the
support of the excited arrays $A_D$.
\end{definition}

\begin{remark} \label{rem2:broken_diag}
 Each broken diagonal $\dd_t(D)$ is a sequence of diagonal
  segments from $\dd_t(\mu)$ broken by horizontal segments coming from row
  $\mu_t$. We call these segments {\em excited segments}. In particular if $(a,b)\in \dd_t(D)$ with $a,b>1$ then
  either $(a-1,b-1) \in \dd_t(D)$ or $(a-1,b-1)\in D$.
\end{remark}

\begin{remark} \label{rem:minT}
Let $T_0$ be the {\em minimal} SSYT of shape $\lambda/\mu$, i.e.~the tableau whose
with $i$-th column $(0,1,\ldots,\lambda'_i-\mu'_i)$.
We then have $\HG(T_0)=A_{\mu}$.
\end{remark}

\begin{figure}
\includegraphics[scale=1]{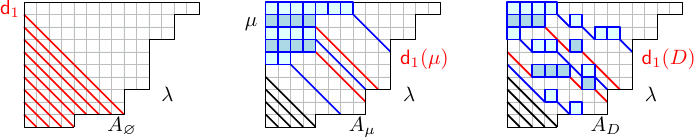}
\caption{The diagonals $\dd_1(\mu),\ldots,
  \dd_{\ell(\lambda)-1}(\mu)$, the support of $A_{\mu}$
  represented by diagonals, and the support of array $A_D$
  associated to an excited diagram~$D$.}
 \label{fig:skew_diag}
\end{figure}


\begin{definition} \label{def:set_excited_arrays}
For $D\in \ED(\lambda/\mu)$, let $\mathcal{A}^*_D$ be the set of arrays $A$ of nonnegative integers
of shape $\lambda$  with support contained in
$[\lambda]\setminus D$, and nonzero entries $A_{i,j}>0$ if $(A_D)_{i,j}=1$,
where $A_D$ is $0$-$1$ excited array corresponding to $D$.
\end{definition}

We are now ready to state the main result of this section.

\begin{theorem} \label{thm:bij}
The $($restricted$)$ Hillman--Grassl map $\HG$ is a bijection:
$$\HG\.{}: \ \SSYT(\lambda/\mu) \, \longrightarrow  \bigcup_{D \in \ED(\lambda/\mu)} \. \mathcal{A}^*_D\..
$$
\end{theorem}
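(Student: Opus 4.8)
The plan is to deduce the bijection from three ingredients: the automatic injectivity of $\HG$, a \emph{direct} combinatorial proof that $\HG$ sends $\SSYT(\lambda/\mu)$ \emph{into} the target $\bigsqcup_{D}\mathcal{A}^*_D$, and a generating-function count—resting on Theorem~\ref{thm:skewSSYT}—that promotes this injection to a bijection. Since $\HG$ is a bijection on all of $\RPP(\lambda)$, its restriction to $\SSYT(\lambda/\mu)$ is injective and preserves the grading $|\pi|=\omega(\HG(\pi))$. So it is enough to prove that the image lands in $\bigsqcup_D\mathcal{A}^*_D$ and that the two sides have the same (degreewise finite) generating function; an injection between degreewise equinumerous sets is then forced to be onto.

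The heart of the argument, and what I expect to be the main obstacle, is the forward containment. For a skew SSYT $\pi$ we know $\pi\in\RPP(\lambda/\mu)$, so by Theorem~\ref{thm:rpp_pleasant} and Theorem~\ref{conj:charpleasant} the support of $A:=\HG(\pi)$ is a pleasant diagram lying in $[\lambda]\setminus D$ for excited diagrams $D$. The task is to pin down the correct $D$ and to show $A$ is \emph{strictly positive} on the entire support $\supp(A_D)$ of the excited array. The engine is column-strictness: in an SSYT one has $\pi_{i,j}<\pi_{i+1,j}\le\pi_{i+1,j+1}$, so every diagonal of $\pi$ is strictly increasing and hence each diagonal partition $\nu^{(k)}$ has pairwise distinct parts. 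I would feed this distinctness into Gansner's refinement of Greene's theorem (Theorem~\ref{thm:HGdiag}) and its RSK form (Corollary~\ref{cor:HGvsRSK}): requiring the parts of the shape $\nu^{(k)}$ of $A_k$ to be distinct forces the optimal chains in each diagonal rectangle $A_k$ to occupy full excited segments, i.e.\ forces $A$ to be nonzero along the broken diagonals $\dd_t(D)$ (Remarks~\ref{rem1:broken_diag}--\ref{rem2:broken_diag}). Identifying $D$ from this chain data and verifying the nonvanishing cell by cell—tracking how the broken diagonals of consecutive $\square^{\lambda}_k$ interlock as $k$ varies—is where the real difficulty lies, precisely because simple $2\times 2$ examples show the converse implication fails, so no purely local diagonal criterion can characterize the image and one genuinely needs the global chain analysis.

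Two bookkeeping lemmas then feed the count. First, the sets $\mathcal{A}^*_D$ are pairwise disjoint, because $D$ can be recovered from any $A\in\mathcal{A}^*_D$: the constraint $\supp(A_D)\subseteq\supp(A)\subseteq[\lambda]\setminus D$, combined with Proposition~\ref{prop:welldefined}, pins down $D$ (concretely, if $D'=\alpha_{(i,j)}(D)$ then $(i+1,j+1)\in D'$ while $(A_D)_{i+1,j+1}=1$, so no array can meet the support constraints of both $D$ and $D'$). Second, I would prove the exponent identity $\sum_{u\in\supp(A_D)}h(u)=a(D)$, with $a(D)=\sum_{(i,j)\in[\lambda]\setminus D}(\lambda'_j-i)$, by induction on excited moves: an excited move $\alpha_u$ at $u=(i,j)$ changes $a(D)$ by $\lambda'_j-\lambda'_{j+1}+1$, while the array move $\beta_u$ changes $\omega(A_D)=\sum_{u\in\supp(A_D)}h(u)$ by $h(i+1,j)-h(i+1,j+1)=\lambda'_j-\lambda'_{j+1}+1$ as well; the base case $\omega(A_\mu)=a([\mu])$ follows from Remark~\ref{rem:minT} applied to the minimal SSYT $T_0$.

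With these in place I would finish by generating functions. Weighting a single $\mathcal{A}^*_D$ by $\omega$ gives $\prod_{u\in\supp(A_D)}\frac{q^{h(u)}}{1-q^{h(u)}}\prod_{u\in[\lambda]\setminus D,\,u\notin\supp(A_D)}\frac{1}{1-q^{h(u)}}=q^{a(D)}\prod_{u\in[\lambda]\setminus D}\frac{1}{1-q^{h(u)}}$ by the exponent identity, and summing over $D$ (using disjointness) yields exactly the right-hand side of \eqref{eq:skewschur}, which equals $s_{\lambda/\mu}(1,q,q^2,\ldots)=\sum_{\pi\in\SSYT(\lambda/\mu)}q^{|\pi|}$ by Theorem~\ref{thm:skewSSYT}. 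Thus $\HG$ is a grading-preserving injection from $\SSYT(\lambda/\mu)$ into $\bigsqcup_D\mathcal{A}^*_D$ between sets with identical generating functions, so it is onto in every degree, which completes the proof.
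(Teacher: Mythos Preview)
Your overall architecture matches the paper's exactly: establish one direction of containment combinatorially, prove the exponent identity $a(D)=\omega(A_D)$ by induction on excited moves, and then invoke Theorem~\ref{thm:skewSSYT} to close the injection to a bijection via generating functions. Your treatment of the exponent identity and the generating-function count is correct and essentially identical to Proposition~\ref{prop:samestats} and equation~\eqref{eq:qdiff}.

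The genuine difference is the \emph{direction} of the combinatorial containment. You propose to show $\HG\bigl(\SSYT(\lambda/\mu)\bigr)\subseteq\bigcup_D\mathcal{A}^*_D$, arguing that strict diagonals in~$\pi$ force, via Theorem~\ref{thm:HGdiag}, nonzero entries of $A=\HG(\pi)$ along the broken diagonals of some specific~$D$. The paper goes the other way: it proves $\HG^{-1}\bigl(\bigcup_D\mathcal{A}^*_D\bigr)\subseteq\SSYT(\lambda/\mu)$ by showing that every array in $\mathcal{A}^*_D$ is sent by $\HG^{-1}$ to a column-strict RPP (Lemmas~\ref{lemma:skewsupp} and~\ref{lemma:colstrict}). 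That direction is what actually gets carried out in detail, and it is far from soft: it requires a careful analysis of the first column of insertion tableaux (Lemma~\ref{lem:firstcolP}) and separate arguments in two cases depending on how adjacent diagonals of $\lambda$ end, tracking how RSK insertion of an extra row or column interacts with the broken-diagonal structure.

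Your direction is plausible, but the step you flag as ``where the real difficulty lies'' is genuinely the whole problem and is not substantiated: knowing that each $\nu^{(k)}$ has distinct parts constrains the chain structure of each rectangle $A_k$ separately, but it is not clear how to synthesize these constraints across all $k$ into (i)~a single excited diagram~$D$ and (ii)~pointwise nonvanishing on \emph{every} cell of $\supp(A_D)$. The paper's choice of direction avoids having to identify~$D$ from~$A$ at all. One smaller point: your disjointness argument only treats $D$ and $D'=\alpha_{(i,j)}(D)$ differing by a single move; you would still need to argue (say, by taking the ``highest'' $D$ with $\supp(A)\subseteq[\lambda]\setminus D$) that no two unrelated excited diagrams can share an array.
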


We postpone the proof until later in this section.  Let us first present
the applications of this result.
Note first that since $\HG(\cdot)$ is weight preserving, Theorem~\ref{thm:bij}
implies an alternative description of the statistic $a(D)=\sum_{u
  \in \overline{D}} (\lambda'_j-i)$ from \eqref{eq:skewschur} in terms of sums of
hook-lengths of the support of $A_D$ (i.e. the weight
$\omega(A_D)$).

\begin{corollary}
For a skew shape $\lambda/\mu$, we have:
\[
s_{\lambda/\mu}(1,q,q^2,\ldots) \, = \, \sum_{D\in \ED(\lambda/\mu)}
q^{\omega(A_D)} \. \prod_{u \in [\lambda]\setminus D} \.
\frac{1}{1-q^{h(u)}}\,.
\]
In particular for all $D\in \ED(\lambda/\mu)$ we have $a(D)= \omega(A_D)$.
\end{corollary}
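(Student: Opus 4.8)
The plan is to read the generating function directly off the bijection of Theorem~\ref{thm:bij}, and to extract the pointwise identity $a(D)=\omega(A_D)$ afterwards. Since $\HG$ is weight-preserving, $|\pi|=\omega(\HG(\pi))$, and since Theorem~\ref{thm:bij} maps $\SSYT(\lambda/\mu)$ bijectively onto $\bigcup_{D\in\ED(\lambda/\mu)}\mathcal{A}^*_D$, I would first note that the sets $\mathcal{A}^*_D$ are pairwise disjoint (the broken diagonals of $A_D$, and hence $D$ itself, are recovered from any array in the union, which is implicit in the construction behind Theorem~\ref{thm:bij}), so that
\[
s_{\lambda/\mu}(1,q,q^2,\ldots)=\sum_{\pi\in\SSYT(\lambda/\mu)}q^{|\pi|}=\sum_{D\in\ED(\lambda/\mu)}\ \sum_{A\in\mathcal{A}^*_D}q^{\omega(A)}.
\]

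The second step evaluates the inner sum for a fixed $D$. By Definitions~\ref{def:hookarray} and~\ref{def:set_excited_arrays}, an array $A\in\mathcal{A}^*_D$ is supported in $[\lambda]\setminus D$, is strictly positive on $\supp(A_D)$, and is otherwise an arbitrary nonnegative integer array. Because $\omega(A)=\sum_u A_u\,h(u)$ factors over cells, independent geometric series give
\[
\sum_{A\in\mathcal{A}^*_D}q^{\omega(A)}=\prod_{u\in\supp(A_D)}\frac{q^{h(u)}}{1-q^{h(u)}}\prod_{u\in([\lambda]\setminus D)\setminus\supp(A_D)}\frac{1}{1-q^{h(u)}}=q^{\omega(A_D)}\prod_{u\in[\lambda]\setminus D}\frac{1}{1-q^{h(u)}},
\]
where the last equality uses $\omega(A_D)=\sum_{u\in\supp(A_D)}h(u)$, valid since $A_D$ is a $0$-$1$ array. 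Summing over $D$ proves the displayed formula.

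For the final assertion, comparison with Theorem~\ref{thm:skewSSYT} (whose right-hand side is $\sum_D q^{a(D)}\prod_{u\in[\lambda]\setminus D}(1-q^{h(u)})^{-1}$) shows that the two sums, with $a(D)$ and with $\omega(A_D)$ in the exponents, are equal. The main obstacle is that this does not by itself force the summands to match termwise, so I would instead prove $a(D)=\omega(A_D)$ directly, by induction on the number of excited moves carrying $[\mu]$ to $D$. For the base case $D=[\mu]$, Remark~\ref{rem:minT} gives $\HG(T_0)=A_\mu$, hence $\omega(A_\mu)=|T_0|$, and a column-by-column count gives $|T_0|=\sum_j\binom{\lambda'_j-\mu'_j}{2}=a([\mu])$. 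For the inductive step, if $D'=\alpha_{(i,j)}(D)$ then $A_{D'}=\beta_{(i,j)}(A_D)$ deletes $(i+1,j+1)$ from and adjoins $(i+1,j)$ to the support, while $[\lambda]\setminus D'$ gains $(i,j)$ and loses $(i+1,j+1)$; a one-line hook computation shows that both $\omega(A_{\bullet})$ and $a(\bullet)$ increase by exactly $\lambda'_j-\lambda'_{j+1}+1$. Since $A_D$ depends only on $D$ through its broken diagonals (Definition~\ref{rem1:broken_diag}), this increment is path-independent, so the induction closes and gives $a(D)=\omega(A_D)$ for every $D\in\ED(\lambda/\mu)$, reconfirming the formula.
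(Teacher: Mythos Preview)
Your proof is correct and follows essentially the same route as the paper: the displayed formula is read off from Theorem~\ref{thm:bij} and weight preservation of $\HG$, and the identity $a(D)=\omega(A_D)$ is established by induction on excited moves with the same inductive increment $\lambda'_j-\lambda'_{j+1}+1$ (this is exactly the paper's Proposition~\ref{prop:samestats}, which in the paper's logic is proved \emph{before} and used \emph{inside} the proof of Theorem~\ref{thm:bij}). One genuine, if minor, difference: your base case $a([\mu])=\omega(A_\mu)$ via $\HG(T_0)=A_\mu$ and the column sum $|T_0|=\sum_j\binom{\lambda'_j-\mu'_j}{2}$ is cleaner than the paper's, which instead runs a separate induction on $|\mu|$ with hook-difference computations along diagonals; your route avoids that auxiliary induction entirely. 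Your explicit remark on disjointness of the $\mathcal{A}^*_D$ is also a point the paper leaves implicit.
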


\begin{example}
For $\lambda/\mu = (4^32/31)$, we have $|\ED(4^32/31)|=7$, see Figure~\ref{fig2}. By the corollary,
\begin{multline*}
 s_{4^32/31}(1,q,q^2,\ldots) \, = \, \frac{q^8}{[5]^2[4][3]^2[2]^3[1]^2} \. + \.
                                 \frac{q^9}{[6][5]^2[3]^2[2]^3[1]^2}  \. + \.
                                 \frac{q^9}{[5]^2[4]^2[3]^2[2]^2[1]^2} \. + \\
 + \frac{q^{10}}{[6][5]^2[4][3]^2[2]^2[1]^2}  \. + \.
  \frac{q^{11}}{[6][5]^2[4]^2[3][2]^2[1]^2} \. + \.
  \frac{q^{12}}{[6]^2[5]^2[4][3][2]^2[1]^2} \. + \. \frac{q^{13}}{[7][6]^2[5][4][3][2]^2[1]^2}\,,
\end{multline*}
where here and only here we use $[m]:=1-q^m$.
\end{example}

\begin{figure}
\begin{center}
\includegraphics[scale=0.8]{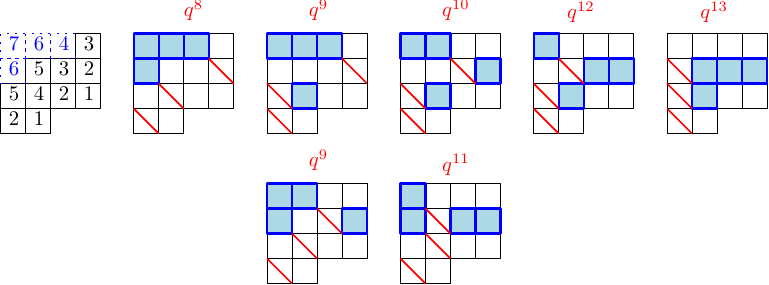}
\end{center}
\caption{The excited diagrams $D$ for $(4^32/31)$, their
  respective excited-arrays $A_D$ (the broken diagonals correspond
  to the $1$s in $A_D$) and weights $q^{\omega(A_D)}=q^{a(D)}$ where
  $\omega(A_D)$ is the sum of hook-lengths of the support of $A_D$ and
  $a(D)=\sum_{u\in \overline{D}} (\lambda'_j-i)$.}
\label{fig2}
\end{figure}

Since by Theorem~\ref{thm:skewRPP} we understand the image of the
Hillman--Grassl map on SSYT of skew shape then we are able to
give a generalization of the trace generating function
\eqref{eq:traceeq} for these SSYT.

\begin{proof}[Proof of Theorem~\ref{thm:SSYT-trace}]
By Theorem~\ref{thm:bij} a tableau $T$ has shape $\lambda/\mu$ if and only if
$A:=\HG(T)$ is in $\mathcal{A}_D^*$ for some excited diagram $D \in
\ED(\lambda/\mu)$. Thus,
\begin{equation} \label{eq1:traceSSYT}
\sum_{T \in \SSYT(\lambda/\mu)} \. q^{|T|}\ts t^{\tr(T)} \, = \, \sum_{D \in
  \ED(\lambda/\mu)} \, \sum_{T \in \HG^{-1}(\mathcal{A}_D^*)} \. q^{|T|}\ts t^{\tr(T)}\,,
\end{equation}
where for each $D\in \ED(\lambda/\mu)$ we have:
\begin{equation} \label{eq2:traceSSYT}
\sum_{T \in \HG^{-1}(\mathcal{A}_D^*)} \. q^{|T|} \, = \,
q^{\omega(A_D)} \. \prod_{u\in \overline{D}} \. \frac{1}{1-q^{h(u)}}\,.
\end{equation}
Next, by Proposition~\ref{prop:HGtrace} for
$k=0$, the trace $\tr(\pi)$ equals $|A_0|$, the sum of the entries of
$A$ in the Durfee square $\square^{\lambda}$ of $\lambda$. Therefore,
we refine \eqref{eq2:traceSSYT} to keep track of the trace of the SSYT
and obtain
\begin{equation} \label{eq3:traceSSYT}
\sum_{T \in \HG^{-1}(\mathcal{A}_D^*)} \. q^{|T|}\ts t^{\tr(T)}
\, = \, q^{\omega(A_D)}\ts t^{c(D)}\, \prod_{u \in \overline{D} \cap \square^{\lambda}}
\frac{1}{1-t\ts q^{h(u)}} \, \prod_{u\in \overline{D} \setminus
  \square^{\lambda}} \frac{1}{1-q^{h(u)}},
\end{equation}
where $c(D)=|\supp(A_D)\cap \square^{\lambda}|$ and $\omega(A_D)=a(D)$.
Combining \eqref{eq1:traceSSYT} and \eqref{eq3:traceSSYT} gives the
result.
\end{proof}



\noindent {\bf Proof of Theorem~\ref{thm:bij}:}  First we use Theorem~\ref{thm:rpp_pleasant} to show that $\HG^{-1}(\bigcup_{U\in
  \ED(\lambda/\mu)} \mathcal{A}^*_D)$ consists of RPP of skew
shape $\lambda/\mu$ (Lemma~\ref{lemma:skewsupp}). Then we show that these RPP are
also column-strict (Lemma~\ref{lemma:colstrict}). These two results
and the fact that $\HG^{-1}$ is injective imply that
\[
\HG^{-1}: \bigcup_{U\in
  \ED(\lambda/\mu)} \mathcal{A}^*_D  \hookrightarrow \SSYT(\lambda/\mu)\..
\]
In addition, since $\HG$ is weight preserving, we have:
\begin{equation} \label{eq:qdiff}
s_{\lambda/\mu}(1,q,q^2,\ldots) - F(q)
\in \mathbb{N}[[q]]\.,
\end{equation}
where
$$F(q) \, := \, \sum_{D \in \ED(\lambda/\mu)} q^{\omega(A_D)} \prod_{u\in
  [\lambda]\setminus D} \frac{1}{1-q^{h(u)}}\,.$$

 By Theorem~\ref{thm:skewSSYT}
and the equality $a(D)=\omega(A_D)$ (Proposition~\ref{prop:samestats}),
it follows that the difference in~\eqref{eq:qdiff} is zero. Therefore, the restricted map $\HG$ is a
bijection between tableaux in $\SSYT(\lambda/\mu)$ and arrays in $\bigcup_{U\in
  \ED(\lambda/\mu)} \mathcal{A}^*_D$, as desired. \qed

\medskip \subsection{$\HG^{-1}(\mathcal{A}^*_D)$ are RPP of skew
  shape} \label{sec:excited_is_pleasant}

Given an excited diagram $D \in
\ED(\lambda/\mu)$, let $\mathcal{A}_D$ be the set of arrays of
nonnegative integers of shape $\lambda$ with support in
$[\lambda]\setminus D$. Note that the set of excited arrays $\mathcal{A}^*_D$
from Definition~\ref{def:set_excited_arrays} is contained in $\mathcal{A}_D$.
We show that the RPP in $\HG^{-1}(\mathcal{A}_D)$ have support
contained in $\lambda/\mu$ and therefore so do the RPP in $\HG^{-1}(\mathcal{A}^*_D)$.

\begin{lemma} \label{lemma:skewsupp}
For each excited diagram $D\in
\ED(\lambda/\mu)$, the reverse plane partitions in
$\HG^{-1}(\mathcal{A}^*_D)$ have support contained in $\lambda/\mu$.
\end{lemma}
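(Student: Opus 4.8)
The plan is to deduce this as an essentially immediate consequence of the pleasant-diagram machinery already developed, namely the characterization of pleasant diagrams in Lemma~\ref{lem:charpleasantlem1} together with the bijective dictionary of Theorem~\ref{thm:rpp_pleasant}. Since $\mathcal{A}^*_D \subseteq \mathcal{A}_D$ by Definitions~\ref{def:hookarray} and~\ref{def:set_excited_arrays} (the extra nonzero-entry conditions defining $\mathcal{A}^*_D$ only restrict the set further), it suffices to prove the stronger statement that every RPP in $\HG^{-1}(\mathcal{A}_D)$ has support contained in $\lambda/\mu$; the claim for $\mathcal{A}^*_D$ then follows at once.

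First I would fix an array $A \in \mathcal{A}_D$ and set $\pi := \HG^{-1}(A)$, so that $A = \HG(\pi)$ and hence $\supp(A) = \supp(\HG(\pi))$. By definition of $\mathcal{A}_D$, the support of $A$ is contained in $[\lambda]\setminus D$, where $D \in \ED(\lambda/\mu)$. Next I would invoke Lemma~\ref{lem:charpleasantlem1}, which guarantees that any subset of $[\lambda]\setminus D$ with $D \in \ED(\lambda/\mu)$ is a pleasant diagram in $\PD(\lambda/\mu)$. Applying this with $S = \supp(A)$ shows that $\supp(\HG(\pi)) \in \PD(\lambda/\mu)$.

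The final step is to apply Theorem~\ref{thm:rpp_pleasant}, whose forward direction states that if $\supp(\HG(\pi))$ is a pleasant diagram in $\PD(\lambda/\mu)$, then the RPP $\pi$ of shape $\lambda$ has support contained in the skew shape $\lambda/\mu$. This yields exactly the desired conclusion. Because $\mathcal{A}^*_D \subseteq \mathcal{A}_D$, the same statement holds a fortiori for the reverse plane partitions in $\HG^{-1}(\mathcal{A}^*_D)$, completing the argument.

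The proof is short precisely because all the substantive work has been carried out beforehand: the equivalence in Theorem~\ref{thm:rpp_pleasant} rests on Gansner's diagonal refinement of Greene's theorem (Theorem~\ref{thm:HGdiag}), and Lemma~\ref{lem:charpleasantlem1} rests on the shadow-line analysis of Lemma~\ref{lem:pleasant_is_subset_of_excited}. Consequently I do not anticipate a genuine obstacle here; the only point requiring a moment of care is confirming the inclusion $\mathcal{A}^*_D \subseteq \mathcal{A}_D$ so that passing to the larger family is legitimate, and this is immediate from the definitions. In effect, the lemma is a bookkeeping corollary that transfers the pleasant-diagram characterization from supports of arrays to the skew support of the associated RPP.
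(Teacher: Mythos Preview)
Your proof is correct and follows essentially the same route as the paper's own argument. The only cosmetic difference is that the paper cites Lemma~\ref{lemma:excitedispleasant} (that $[\lambda]\setminus D$ itself is pleasant) together with the implicit fact that subsets of pleasant diagrams are pleasant, whereas you cite the packaged version Lemma~\ref{lem:charpleasantlem1}; the logical content is identical.
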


\begin{proof}
By Lemma~\ref{lemma:excitedispleasant}, the support of arrays in
$\mathcal{A}_D$ are pleasant diagrams in $\PD(\lambda/\mu)$. So by
Theorem~\ref{thm:rpp_pleasant} it follows that
$\HG^{-1}(\mathcal{A}_D) \subseteq \RPP(\lambda/\mu)$. Since
$\mathcal{A}^*_D \subseteq \mathcal{A}_D$, the result follows.
\end{proof}

\medskip
\subsection{$\HG^{-1}(\mathcal{A}^*_D)$ are column strict skew RPP}

\begin{lemma} \label{lemma:colstrict}
For each excited diagram $D\in
\ED(\lambda/\mu)$, the reverse plane partitions in
$\HG^{-1}(\mathcal{A}^*_D)$ are column strict skew RPPs of shape $\lambda/\mu$.
\end{lemma}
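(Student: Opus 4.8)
The plan is to fix an excited diagram $D\in\ED(\lambda/\mu)$, an array $A\in\mathcal{A}^*_D$, and set $\pi=\GH(A)=\HG^{-1}(A)$. By Lemma~\ref{lemma:skewsupp} we already know $\pi\in\RPP(\lambda/\mu)$, so the rows and columns of $\pi$ are weakly increasing and its support lies in $\lambda/\mu$; it remains only to upgrade weak to strict on columns, i.e.\ to prove $\pi_{i,j}<\pi_{i+1,j}$ for every pair $(i,j),(i+1,j)\in\lambda/\mu$. First I would record the elementary fact that along each diagonal $\dd_k$ the entries of an RPP are weakly increasing in the down-right direction, so that the $k$-diagonal of $\pi$, read from top-left to bottom-right, is exactly the reverse of the partition $\nu^{(k)}$. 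Consequently $\pi$ is recovered entry-by-entry from the family $\{\nu^{(k)}\}_k$, and column-strictness becomes equivalent to a family of inequalities comparing the value at a given position of $\nu^{(k)}$ with the value at the position directly below it on $\nu^{(k+1)}$.

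Next I would feed this through the Greene-type statement of Theorem~\ref{thm:HGdiag}: each $\nu^{(k)}$ and its conjugate are computed by the longest (multiple) ascending and descending chains of the subarray $A_k=A\cap\square^\lambda_k$, so that $dc_1(A_k)=\ell(\nu^{(k)})$ is the number of positive entries of $\pi$ on $\dd_k$, while the finer statistics $ac_t$ and $dc_t$ pin down the individual parts. The whole point of working with $A\in\mathcal{A}^*_D$ rather than with an arbitrary array of skew support is the extra constraint that $A$ is \emph{strictly positive} on all of $\supp(A_D)$, i.e.\ on the broken diagonals $\dd_t(D)$ of Definition~\ref{rem1:broken_diag}. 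Using the excited-segment description of Remark~\ref{rem2:broken_diag}, in which each $\dd_t(D)$ is a chain of diagonal segments separated by single horizontal breaks, I would show that these forced positive entries force precisely the chain lengths in the rectangles $\square^\lambda_k$ needed to make each of the adjacent-diagonal inequalities above strict.

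To organize the bookkeeping I would argue by induction on the number of excited moves carrying $[\mu]$ to $D$. The base case $D=[\mu]$ is anchored by Remark~\ref{rem:minT}: there $A_{[\mu]}=A_\mu$ and $\GH(A_\mu)=T_0$ is the minimal SSYT, whose columns are strict and whose unit column-steps are marked exactly by $\supp(A_\mu)$, so one checks that every $A\in\mathcal{A}^*_{[\mu]}$ still has $\GH(A)$ column-strict because keeping $A$ positive on $\supp(A_\mu)$ reproduces a strict increase at each such step. For the inductive step I would use the array move $\beta_u$ of~\eqref{eq:excited-array-move}, which slides the forced $1$ from $(i+1,j+1)$ to $(i+1,j)$; Proposition~\ref{prop:welldefined} guarantees it is well defined, and the interlacing property of $D$ (Definition~\ref{def:interlacing}) keeps the broken diagonals separated, so the reverse Hillman--Grassl paths attached to distinct forced entries occupy distinct column-steps and no column can acquire two equal entries.

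The step I expect to be the main obstacle is exactly the adjacent-diagonal comparison: Theorem~\ref{thm:HGdiag} controls each diagonal $\dd_k$ in isolation, whereas strictness of a column links $\dd_k$ to $\dd_{k+1}$, so a single invocation of Greene's theorem is never enough. The delicate point is to prove that a forced positive entry of $A$ lying on $\dd_t(D)$ just below a horizontal break contributes one full unit to the jump between the relevant positions of $\nu^{(k)}$ and $\nu^{(k+1)}$, and that such contributions coming from different broken diagonals never cancel. Equivalently, via Corollary~\ref{cor:HGvsRSK} one can phrase this as tracking how the shape of $\RSK(\vf{A_k})$ changes as $k$ increases by one, and verifying that inserting the forced cell enlarges the appropriate part; carrying out this reverse-path and RSK accounting carefully is where the real work lies.
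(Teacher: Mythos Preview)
Your reduction to adjacent-diagonal inequalities and the invocation of Theorem~\ref{thm:HGdiag}/Corollary~\ref{cor:HGvsRSK} is exactly the right opening, and it matches the paper's setup. The paper likewise fixes two adjacent diagonals, writes the column-strictness condition as $\nu^2_i<\nu^1_i$ (when the diagonals end in the same column) or $\nu^2_{i+1}<\nu^1_i$ (same row), and then attacks each case via RSK on the rectangular subarrays.

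The genuine gap is the induction on excited moves. It does not buy you a reduction. In the base case $D=[\mu]$ you assert that ``keeping $A$ positive on $\supp(A_\mu)$ reproduces a strict increase at each such step,'' but this \emph{is} the lemma for $D=[\mu]$: you must show that \emph{every} array supported in $[\lambda/\mu]$ and positive on the diagonals $\dd_t(\mu)$ maps under $\GH$ to something column-strict, and that already requires the full RSK/chain analysis. Remark~\ref{rem:minT} only handles the single array $A_\mu$ itself, not the whole family $\mathcal{A}^*_{[\mu]}$. The inductive step is worse: the move $\beta_u$ acts on the indicator array $A_D$, not on an arbitrary $A\in\mathcal{A}^*_{D'}$, and there is no natural way to push a general $A'\in\mathcal{A}^*_{D'}$ back to some $A\in\mathcal{A}^*_D$ so that $\GH(A')$ and $\GH(A)$ differ in a controlled way. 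The non-forced entries of $A'$ can be anything on $[\lambda]\setminus D'$, and those do not track along excited moves.

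What the paper does instead is abandon the induction and prove the RSK inequalities directly. The key technical input is Lemma~\ref{lem:firstcolP}: for any rectangular subarray $M$ of $A$ with NW corner $(1,1)$, the first column of the insertion tableau $I(\RSK(\hf{M}))$ is exactly $(1,\ldots,h)$, where $h$ is its height. This is proved by showing, via the broken-diagonal structure (Remark~\ref{rem2:broken_diag}), that any longest descending chain in $M$ can be slid to start in the lowest possible row. With this in hand, Case~1 is handled by tracking how inserting one more row into the recording tableau adds an entry $m$ to every row (a claim proved by induction on the number of \emph{columns} of the subarray, not on excited moves), and Case~2 by a bump-path monotonicity argument (Lemma~\ref{lemma:insertionSW}) showing the insertion path moves strictly left. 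So the ``RSK accounting'' you flagged as the obstacle is indeed the entire content; the excited-move induction should be dropped in favor of this direct analysis of the insertion tableaux.
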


Let $\pi$ be the reverse plane partition
$\HG^{-1}(A)$ for $A\in \mathcal{A}^*_D$ and $D\in
\ED(\lambda/\mu)$. By Lemma~\ref{lemma:skewsupp}, we know that $\pi$ has support
in the skew shape $\lambda/\mu$. We show that $\pi$ has strictly increasing
columns by  comparing any two adjacent entries from the same column of $\pi$ . Consider the two
adjacent diagonals  of $\pi$ to which the corresponding entries belong
and let $\nu^1$ and $\nu^2$ be the partitions obtained by reading
these diagonals bottom to top. There are two cases depending
on whether the diagonals end in the same column or in the same row of
$\lambda/\mu$;
\begin{compactitem}
\item[\textbf{Case 1:}] If the diagonals end in the same column,
 then  it suffices to show that $\nu^2_i < \nu^1_i$ for all $i$.
\item[\textbf{Case 2:}] If the diagonals end in the same row,
then  it suffices to show that $\nu^2_{i+1} < \nu^1_i$ for all $i$.
\end{compactitem}
\begin{center}
\includegraphics{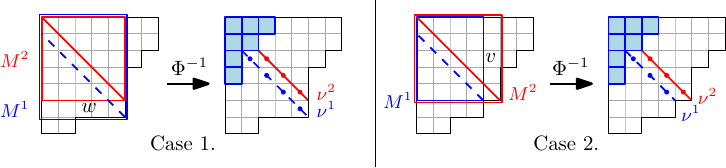}
\end{center}

Before we treat these cases we prove the following Lemma needed for both.

\begin{lemma} \label{lem:firstcolP}
Let $M$ be a rectangular array coming from $A\in \mathcal{A}^*_D$ with
NW corner $(1,1)$.
Then the first column of $P=I(\RSK(\hf{M}))$ is \ts $(1,\ldots,h)$,
where~$h$ is the height of~$P$.
\end{lemma}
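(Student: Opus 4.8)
The plan is to translate the statement about the first column of $P$ into a statement about strictly decreasing chains in $M$ via the RSK correspondence, and then to realize the extremal such chain explicitly from the broken-diagonal structure of excited arrays. First I would fix the conventions used in the worked examples of this section: read $\hf{M}$ as a biword sorted by row and then by column, and insert the word of its column indices, so that the entries of $P$ are column indices of $\hf{M}$. By Greene's theorem (Theorem~\ref{thm:HGdiag}) together with Corollary~\ref{cor:HGvsRSK}, the height $h$ of $P$ equals $dc_1(M)$, the length of a longest descending chain in $M$. I would then invoke the standard first-column characterization for RSK: the $r$-th entry from the top of the first column of $P$ is the least value $m$ such that the inserted word admits a strictly decreasing subsequence of length $r$ with all entries $\le m$. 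In particular the first column is a strictly increasing sequence of positive integers.

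The key observation is that these two facts reduce the lemma to a purely combinatorial chain condition. Since the first column consists of $h$ strictly increasing integers that are all $\ge 1$, its entries are automatically $(1,2,\ldots,h)$ as soon as its bottom entry is $\le h$; and by the characterization above, the bottom entry is $\le h$ precisely when there exists a strictly decreasing subsequence of length $h$ all of whose values are $\le h$. Because $h$ distinct positive integers bounded by $h$ must be exactly $\{1,\ldots,h\}$, this is equivalent to exhibiting a longest descending chain of $M$ that occupies the columns $1,2,\ldots,h$, one cell per column, in strictly increasing rows.

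To produce such a chain I would use the hypothesis $A\in\mathcal A^*_D$. The forced nonzero cells of $A$ are exactly the support of the $0$-$1$ excited array $A_D$, which by the broken-diagonal decomposition of Definition~\ref{rem1:broken_diag} and Remark~\ref{rem2:broken_diag} is the disjoint union of the broken diagonals $\dd_t(D)$; each such broken diagonal is a union of NE-running diagonal segments, hence is itself a descending chain, and these entries are genuinely positive by Proposition~\ref{prop:welldefined}. The plan is to intersect the broken diagonals with the rectangle $M$, pick from the relevant ones their extreme (left-most) occupied cells, and verify that these cells assemble into a single descending chain running through consecutive columns. The length count is matched as in Lemmas~\ref{lemma:excitedispleasant} and~\ref{lemma:boundsizepleasant}: exactly $s_k$ broken diagonals cross the rectangle, while the pleasant property forces $dc_1(M)\le s_k$, so the chain built from them is of maximal length $h$.

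The step I expect to be the main obstacle is the bookkeeping in this last construction: showing that the selected cells land in consecutive columns with no skipped column and with strictly increasing rows. Ruling out a gap is exactly where I would invoke the interlacing property (Definition~\ref{def:interlacing}) of excited diagrams, which forces a cell of $[\lambda]\setminus D$ on each intermediate diagonal; and the fact that a break in $\dd_t(D)$ only shifts a cell horizontally by one within row $\mu_t$ (Remark~\ref{rem2:broken_diag}) is what keeps successive broken diagonals column-adjacent rather than separated. Once this left-packed maximal chain is produced, the RSK reduction of the first two paragraphs closes the argument; I would take particular care with the degenerate configurations, reading the column indexing of $\hf{M}$ against the position of the left-most occupied column so that the chain's values are correctly identified with $\{1,\ldots,h\}$.
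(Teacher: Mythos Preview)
Your first-column characterization (via restricting $P$ to entries $\le m$) is correct, and under your reading convention it reduces the lemma to: there is a descending chain of length $h$ in $M$ occupying columns $1,\ldots,h$. But this target is false. Take $\lambda=(2,2)$, $\mu=(1)$, $D=[\mu]$, and $A=A_D\in\mathcal A^*_D$: the only nonzero cell of $M=A$ is $(2,2)$, so $h=dc_1(M)=1$, yet column~$1$ is entirely zero and no length-$1$ chain lives there. The structural reason is that the excited-array move $\beta_u$ in~\eqref{eq:excited-array-move} replaces $(i{+}1,j{+}1)$ by $(i{+}1,j)$, shifting a cell of the broken diagonal leftward \emph{within its row}. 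Consequently each broken diagonal (Remark~\ref{rem2:broken_diag}) occupies a block of consecutive \emph{rows} reaching the bottom edge of the rectangle, while its column set may have gaps and need not touch column~$1$. Your ``leftmost cells land in consecutive columns'' step fails exactly for this reason, and the interlacing property of excited diagrams does not repair it.

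The paper's argument works along the other axis. Using the symmetry $\RSK(N^T)=(Q,P)$ it reinterprets the first column of $P$ as recording the stages at which the height of an insertion process grows, indexed by rows of $M$ counted from the bottom. A single broken diagonal of length $\ge h-1$ spanning the bottom $h-1$ rows of $M$ then forces the first $h-1$ entries of the column to be $1,\ldots,h-1$. The last entry requires a separate, genuinely new step: assuming every longest chain begins above the $h$-th bottom row, the paper takes such a chain with minimal starting row, crosses it with a nearby broken diagonal, and splices the two to produce a longest chain starting one row lower --- a contradiction. Even after switching from columns to rows, your plan has no analogue of this minimality-and-crossing argument, so it would remain incomplete.
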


\begin{proof}
We will use the symmetry of the RSK correspondence. Recall that $\RSK(N)=(P,Q)$
for some rectangular array $N$ then $\RSK(N^{T})=(Q,P)$ so that~$P$
is the recording tableaux by doing the RSK on $N$ row by row, bottom
to top. Thus the first column of $P$ gives the row numbers of
$N$ where the height of the insertion tableaux increased by
one.

Let $R$ be the rectangular shape of $M$. By Greene's theorem,
$h$ is equal to the length of the longest decreasing
subsequence in~$M$. By Lemma~\ref{lemma:excitedispleasant}, $h$ is
at most the length of longest diagonal of~$R/\mu$.

Note that $M$ contains a broken diagonal of
length at least $h-1$ since either the longest diagonal of length $h$ in
$R/\mu$ ends in a vertical step of $\mu$, in which case $M$ has a
broken diagonal of the same length, or the longest diagonal ends in a
horizontal step of $\mu$ in which case $M$ has a broken diagonal of
length $h-1$.

Let $\dd$ be such a broken diagonal. Since a broken diagonal is a
decreasing subsequence that spans consecutive rows, then $\dd$ spans
the lower $h-1$ rows of $M$. This guarantees that the first column of $P$
is $1,2,\ldots,h-1,c$, where $c\geq h$ is the row where we first get a
decreasing subsequence of length $h$.

Assume there is a longest decreasing subsequence $\mathfrak{d}$ of
length $h$ whose first cell $x=(i_1,j_1)$ is in a row $c=i_1>h$
(counting rows bottom to top), and take both $i_1$ and $j_1$ to be
minimal.

Either $x$ is inside or outside of $[\mu]$. If $x$ is outside then there is a
diagonal that ends in row $i_1-1$ to the left of $x$, which results in
a broken diagonal of length $i_1-1 \geq h$ in the excited
diagram. Hence, there is a decreasing subsequence of length $h$ starting
at a lower row than the row $i_1$ of  $x$, leading to a  contradiction. See Figure~\ref{fig:pf1stcol}
: Left.

When $x$ is inside of $[\mu]$ then there is an excited cell below $x$ in the same diagonal.
There must be a broken diagonal $\dd'$  that
reaches at least row $i_1-1$ below or to the left of $x$. At row $i_1$, the sequence
$\mathfrak{d}$ is above $\dd'$ and the last entry of
$\mathfrak{d}$ is below $\dd'$, as otherwise $\mathfrak{d}$ would
be shorter than $\dd'$. Thus the sequence $\mathfrak{d}$
and the broken diagonal $\dd'$ cross. Consider the first crossing
tracing top down. Let $a$ be the last cell of $\mathfrak{d}$ before
this crossing and let $b$ be the cell of $\mathfrak{d}$ on or after the
crossing. Note that below $a$ in the same column there is either a
nonzero from $\dd'$ or a zero from the excited horizontal
segment of $\dd'$. In either case, $a$ is higher than the lowest cell of
$\dd'$ to the left of $b$. Define $\mathfrak{d}'$ to be the
sequence consisting of the segment of $\dd'$ from row $i_1-1$ up until
the crossing followed by the segment of $\mathfrak{d}$ from cell $b$
onwards (see Figure~\ref{fig:pf1stcol}: Right). Note that $\mathfrak{d}'$ is a decreasing sequence of
$R$ that starts at row $i_1-1$ and column $\leq j_1$ and has length $h$
since $\mathfrak{d}$ includes a nonzero element from the row below the
row $a$.  This contradicts the
minimality of $x$.

\begin{figure}
\begin{center}
\includegraphics{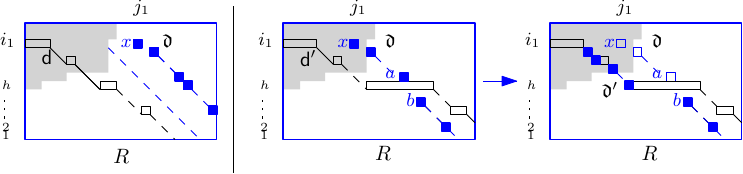}
\caption{Two cases to consider in the proof of
  Lemma~\ref{lem:firstcolP} depending on whether cell $x=(i_1,j_1)$ is
  outside or inside of $[\mu]$.}
\label{fig:pf1stcol}
\end{center}
\end{figure}

In summary, we conclude that $c=h$, and the first column of $P$ is $(1,\ldots,h)$, as desired.
This finishes the proof.  \end{proof}

\smallskip

\nin
{\bf Column strictness in Case 1.} \.
By Corollary~\ref{cor:HGvsRSK} we have $\nu^1=\shape(P^1)$ and
$\nu^2=\shape(P^2)$ where $P^1=I(\RSK(M^1))$, $P^2=I(\RSK(M^2))$, and the rectangular array $M^1=\vf{A_{t}}$ is
obtained from the rectangular array  $M^2=\vf{A_{t+1}}$ by adding a row $w$ at the
end. Thus $\nu^1$ is the shape of the insertion tableau obtained by row
inserting $w$ (from left to right) in the insertion tableau $I(\RSK(M^2))$ of shape~$\nu^2$.

\begin{proposition}
In Case 1 we have $\nu^2_i < \nu^1_i$ for $1\leq i \leq \min\{\ell(\nu^1),\ell(\nu^2)\}$.
\end{proposition}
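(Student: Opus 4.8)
The plan is to view $P^1$ as the tableau produced by row-inserting the single weakly increasing row $w$ into $P^2$, and to pin down exactly which rows of the shape are enlarged. Because $w$ is weakly increasing, the standard Pieri behavior of RSK row insertion shows that $\nu^1/\nu^2$ is a horizontal strip; in particular $\nu^1\supseteq\nu^2$ and $\nu^1_i\ge\nu^2_i$ for all $i$, and since $w$ is nonempty we have $\ell(\nu^1)\ge\ell(\nu^2)$, so $\min\{\ell(\nu^1),\ell(\nu^2)\}=\ell(\nu^2)$. The entire content of the proposition is thus to upgrade each inequality $\nu^1_i\ge\nu^2_i$ to a strict one for $1\le i\le\ell(\nu^2)$, i.e.\ to show that inserting $w$ deposits at least one new box in every one of the rows $1,2,\dots,\ell(\nu^2)$.

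Next I would assemble the two structural facts that make this forced. First, by Lemma~\ref{lem:firstcolP} (and its evident analogue for the flip used here), the first column of the relevant insertion tableau is the staircase $(1,2,\dots,\ell(\nu^2))$, and likewise the first column associated to $\nu^1$ is $(1,2,\dots,\ell(\nu^1))$; this rigidity of the first column is precisely what will prevent a bumping path from terminating prematurely in a high row. Second, since $M^1=\vf{A_t}$ comes from an excited array $A_D\in\mathcal A^*_D$, the added bottom row $w$ is far from arbitrary: it meets the broken diagonals of $A_D$ and therefore carries forced nonzero entries in controlled columns, and (as in the proof of Lemma~\ref{lem:firstcolP}) $A_t$ contains a broken diagonal of length $\ell(\nu^1)-1$ running up from the bottom row, which by Corollary~\ref{cor:HGvsRSK} pins $\ell(\nu^1)$ and supplies a long decreasing chain passing through $w$.

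With these in hand I would track the successive bumping paths. Insert the letters of $w$ from left to right; as $w$ is weakly increasing the termination rows are weakly decreasing, $r_1\ge r_2\ge\cdots$. The two things to prove are: (i) the first inserted letter bumps all the way down to row $\ell(\nu^2)$, so that $r_1=\ell(\nu^2)$ --- here one follows the bumping path and uses the staircase first column together with the forced broken-diagonal entries to guarantee a genuine bump, rather than a terminal append, at each row above the bottom; and (ii) the remaining letters of $w$ fill the rows above without leaving a gap, so that the weakly decreasing sequence $r_1\ge r_2\ge\cdots$ runs through every value in $\{1,2,\dots,\ell(\nu^2)\}$. Together (i) and (ii) place a new box in each of the first $\ell(\nu^2)$ rows, which is exactly $\nu^2_i<\nu^1_i$.

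The crux is (i)--(ii), and I expect it to be the only real difficulty. For a generic extra row the statement is false --- inserting a single large letter enlarges only the top row --- so the argument must genuinely use the broken-diagonal structure of $A_D$ to force the cascade to reach the bottom row and to cover every intermediate row. Concretely I anticipate a position-by-position comparison of the bumping path against the excited segments of $A_D$, in the same spirit as, and building on, the crossing-of-chains argument already used to prove Lemma~\ref{lem:firstcolP}; the same mechanism should then handle the shifted inequality $\nu^2_{i+1}<\nu^1_i$ of Case 2.
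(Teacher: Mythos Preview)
Your framework is sound: the horizontal-strip observation and the reformulation as ``every row $1,\dots,\ell(\nu^2)$ receives at least one new box'' are both correct, and it is true that the termination rows $r_1\ge r_2\ge\cdots$ of the successive bumping paths are weakly decreasing. The gap is that (i) and (ii) are genuinely hard to prove along the direct route you outline. A ``position-by-position comparison of the bumping path against the excited segments of $A_D$'' is not a mechanism: the bumping path lives inside $P^2$, and $P^2$ encodes all of $M^2$, not just the broken-diagonal structure. Lemma~\ref{lem:firstcolP} tells you only the first column of $P^2$; nothing you have assembled controls the interior of $P^2$ well enough to force the first letter of $w$ to bump through every row, or to force the subsequent letters to cover the remaining rows without gaps. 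For a generic tableau with staircase first column this fails, so the argument must use more of the excited-array structure than you have invoked.

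The paper avoids this obstacle by a symmetry trick absent from your plan. Instead of studying the row insertion of $w$ into $P^2$, it passes via RSK symmetry to the recording tableau $Q=R(\RSK(M^1))$, which equals the \emph{insertion} tableau for $A_t$ read column by column from right to left. The target statement then becomes: every row of $Q$ contains the letter $m$ (the height of $M^1$). This is proved by induction on the number of columns of $A_t$, in fact establishing the stronger claim that row $i$ of $Q$ contains at least one entry from each of $m,m-1,\dots,m-h+i$. The inductive step inserts a single new column of $A_t$ into a tableau already controlled by the inductive hypothesis; the excited-array structure and Lemma~\ref{lem:firstcolP} enter only at the moment the height increases. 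Adding one column at a time to $Q$ is far more tractable than analyzing the insertion of the entire row $w$ into $P^2$, and this column-by-column induction is the missing idea in your approach.
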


\begin{proof}

Let $P=I(\RSK(M^1))$ and $Q=R(\RSK(M^1))$ (in this case, $M^1$ is being read
top to bottom,  left to right, i.e. row by row starting from the right,
from the original array $A_{t}$ before the flip). Let $m$ be the height of $M^1$. The strict
inequality is equivalent to the fact that the insertion of the last
row in $P$ results in an extension of every row, i.e. every row of the
recording tableau $Q$ has at least one entry equal to~$m$.
We will prove the last statement. Note that by the symmetry of the
RSK correspondence, we have $Q$ is the insertion tableaux
for $A_t$ when read column by column from right to left.

\smallskip

\nin
{\bf Claim:} \ts \emph{Let $h$ be the height of $Q$, i.e. the longest decreasing
subsequence of $M^1$. Then row $i$ of $Q$ contains at least one entry
from each of $m,\ldots,m-h+i$. }

\smallskip

Note that $h$ is equal to the length of the longest broken diagonal or one more than that.
We prove the claim by induction on the number of columns in $M^1$,
i.e. in $A_t$. Let $M^1 = [u^1, u^2, \ldots , u^r]$, where $u^i$ is
its $i$-th column. In terms of the excited array $A_t$, we have  that $u^r$ is the first column of $A_t$ and $u^1$ -- the last. Suppose that the claim is true for $A_t$ restricted to its first $r-1$ columns, which is still an excited array by definition, and let $u=u^1$ be its last column. Let $Q$ be the insertion tableaux of $[u^2,\ldots,u^r]$ read column by column, i.e. $Q = u^2 \leftarrow u^3 \leftarrow \cdots$, where $\leftarrow$ indicates the insertion of the corresponding sequence. Then let $Q'$ be the insertion tableaux corresponding to $A_t$, so $Q' = u \leftarrow  \reading(Q)$ by Knuth equivalence. The reading word is obtained from $Q$ by reading it row by row from the bottom to the top, each row read left to right.

First, suppose that $u$ does not increase the length of the longest decreasing subsequence, so $Q'$ has also height $h$. Let $a \in [m-j+i,m]$ be a number present in row $i$ of $Q$. When it is inserted in $Q'$ it will first be added to row 1, where there could be other entries equal to $a$ already present. The first such entry $a$ will be bumped by something $\leq a-1$ coming from inserting row $i-1$ of $Q$ into $Q'$. This had to happen in $Q$ since $a$ reached row $i$. From then on the same numbers will bump each other as in the original insertion which created $Q$. Thus an entry equal to $a$ will reach row $i$ after the $i-1$ bumps. Since the height of $Q'$ is unchanged, the claim holds as it pertains only to the original entries $a$ from $Q$ which again occupy the corresponding rows.

Next, suppose that $u$ increases the length of the longest decreasing
subsequence to $h+1$.  Then the longest broken diagonal in
$A_t$ has length at least~$h$.  Also, column~$u$ must have an
element equal to $m$, i.e.~a nonzero entry in $A_t$'s lower right
corner. Moreover, we claim that the longest decreasing subsequence has
to occupy the consecutive rows of $A_t$ from $m-h$ to $m$, and thus the
longest decreasing  subsequences in $u,\reading(Q)$  are
$m,m-1,\ldots,m-h$. This is shown within the proof of
Lemma~\ref{lem:firstcolP}. From there on, in $u \leftarrow
\reading(Q)$ we have element $m$ from $u$ bumped by something $ \leq
m-1$ from the last row of $Q$. Afterwards, the bumps happen similarly
to the previous case and the numbers from $Q$ reach their corresponding
rows, so the $m$ from $u$ reaches eventually one row below, i.e. row
$h+1$. The entry $m-h$ from the longest decreasing sequence is
inserted from the first row of $Q$   and  is, therefore, in row 1 of~$Q'$, so by iteration~$Q'$ has the desired structure. This ends the proof of the claim and thus the Proposition.
 \end{proof}

\smallskip

\nin
{\bf Column strictness in Case~2.} \.
By Corollary~\ref{cor:HGvsRSK} we have  $\nu^1=\shape(P^1)$ where $P^1=I(\RSK({M^1}))$ and
$\nu^2=\shape(P^2)$ where $P^2=I(\RSK({M^2}))$ and the rectangular array $M^2=\hf{A_{t+1}}$ is
obtained from the rectangular array $M^1=\hf{A_t}$ by adding a column $v$ at the
end (we read column by column SW to NE). Thus $\nu^2$ is the shape of the insertion tableau obtained by row
inserting $v$ (from top to bottom) in the insertion tableau $I(\RSK(M^1))$ of shape $\nu^1$.

\begin{proposition} \label{prop:case2}
For Case 2 we have $\nu^2_{i+1} < \nu^1_{i}$ for $1\leq i \leq \min\{\ell(\nu^1),\ell(\nu^2)-1\}$.
\end{proposition}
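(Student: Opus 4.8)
The plan is to mirror the argument just given for Case~1, with the roles of rows and columns interchanged. By Corollary~\ref{cor:HGvsRSK} we have $\nu^1 = \shape(P^1)$ and $\nu^2 = \shape(P^2)$, and since $M^2 = [\,M^1 \mid v\,]$ is read column by column with the extra column $v$ read last, $P^2$ is obtained from $P^1$ by row-inserting the single column $v$. Inserting one column extends the shape by a horizontal strip (a Pieri-type step), so $\nu^1 \subseteq \nu^2$ and $\nu^2_1 \ge \nu^1_1 \ge \nu^2_2 \ge \nu^1_2 \ge \cdots$; in particular the weak bound $\nu^2_{i+1}\le \nu^1_i$ holds for free. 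The whole content of the Proposition is therefore the strictness: that no cell of the horizontal strip $\nu^2/\nu^1$ lands in row $i+1$ at a column as far right as the outer corner $\nu^1_i$ of row $i$ of $\nu^1$.

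For the strictness I would set up an inductive Claim dual to the one used for Case~1. There the strict inequality was recast as ``every row of the recording tableau $Q$ carries the maximal label $m$'', and proved by an induction peeling the columns of $M^1$, split according to whether the peeled column increases the longest descending chain. Here, dually, I would recast $\nu^2_{i+1} < \nu^1_i$ as a statement about the columns occupied by the newly recorded (last-label) cells of the recording tableau of $M^2$, and prove it by an induction peeling the rows of $M^1$, split according to whether appending $v$ increases the longest ascending chain of $M^1$ (the case $\nu^2_1 > \nu^1_1$). In the subcase where the longest ascending chain is unchanged, the bumping routes of the entries of $v$ move the cells of $\nu^1$ rightward in a controlled manner and the inequality is inherited from the inductive hypothesis. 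In the subcase where it grows, I would invoke the first-column description of Lemma~\ref{lem:firstcolP} together with the broken-diagonal structure of the excited array (Remark~\ref{rem2:broken_diag}): the forced nonzero entries provide a long ascending chain occupying consecutive rows, which locates the newly created cell and forbids it from reaching column $\nu^1_i$ in row $i+1$.

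The main obstacle is this second subcase, where one must follow the bumping precisely enough to pin down the position of the new cell; this is the exact analogue of the chain-crossing argument inside the proof of Lemma~\ref{lem:firstcolP}, where one finds the first crossing of a bumping route with a broken diagonal and splices the two pieces to manufacture the required chain. I expect this is also where the hypothesis $A \in \mathcal{A}^*_D$ (rather than merely $A \in \mathcal{A}_D$) is indispensable: the forced nonzeros along the broken diagonals are precisely what force strictness, and indeed the statement fails for general skew RPP, so any correct proof must use them. Once the Claim is established for all $i$ with $1\le i \le \min\{\ell(\nu^1),\ell(\nu^2)-1\}$, combining it with the weak bound from the horizontal strip yields $\nu^2_{i+1} < \nu^1_i$, which is the column-strictness asserted in Case~2.
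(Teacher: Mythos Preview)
What you have written is a plan, not a proof: you say ``I would set up an inductive Claim dual to the one used for Case~1'' and then flag ``the main obstacle is this second subcase'' without resolving it. That gap is real, and the paper does not fill it by the route you propose.

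The paper does \emph{not} argue Case~2 by dualizing Case~1. Instead it isolates a structural property of the insertion tableau $P=I(\RSK(\hf{M}))$ itself, stated as Lemma~\ref{lemma:insertionSW}: writing $P_i(k)$ for the number of entries $\le k$ in row $i$ of $P$, one has (i) $P_i(k)>0 \Rightarrow P_i(k)<P_{i-1}(k-1)$, and (ii) if $k>i$ occurs in row $i$ then $P_i(k-1)>0$. This lemma is proved by following a single RSK bump (using Lemma~\ref{lem:firstcolP} to control the first column), and it implies that every insertion path in $P$ moves \emph{strictly} to the left. The Proposition then drops out in one line: when the column $v$ is inserted into $P^1$, an element sitting at column $\nu^1_i+1$ of row $i$ cannot be bumped into row $i+1$ without the path moving straight down, so at most $\nu^1_{i-1}-\nu^1_i-1$ elements from row $i$ can be pushed to row $i+1$, giving $\nu^2_{i+1}\le \nu^1_i-1$.

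Your ``mirror Case~1'' idea runs into the asymmetry between the two cases. In Case~1 the target inequality $\nu^2_i<\nu^1_i$ is exactly ``every row of the recording tableau carries the maximal label $m$'', and that is what the inductive Claim proves. In Case~2 the target $\nu^2_{i+1}<\nu^1_i$ is \emph{not} the transposed statement ``every column carries the last label''; it is a constraint on \emph{where} the new horizontal-strip cells land relative to the outer corners of $\nu^1$. There is no clean recording-tableau reformulation to induct on, which is why the paper abandons the Case~1 template and instead proves the $P_i(k)$ lemma, which is a statement about $P^1$ alone (valid at every stage of the insertion) rather than about the transition $P^1\to P^2$. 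Your instinct that the forced nonzeros of $\mathcal{A}^*_D$ are essential is correct --- they enter via Lemma~\ref{lem:firstcolP}, which is invoked in the proof of Lemma~\ref{lemma:insertionSW} --- but the mechanism is this strictly-left-moving property of bumping paths, not a chain-splicing argument of the kind you sketch.
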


We prove a stronger statement that requires some notation. Let $P$ be the insertion tableau of shape $\nu$ where $M=\hf{B}$ for
some rectangular array $B$ of $A \in \mathcal{A}^*_D$  with NW corner
$(1,1)$. for a positive integer $k$, let
$P_i(k)$ be the number of entries in row $i$ of $P$ which are
$\leq k$.

\begin{lemma} \label{lemma:insertionSW}
With $P$ and $P_i(k)$ as defined above, for $k>1$ we have
\begin{compactitem}
\item[(i)] If $P_i(k)>0$ then $P_i(k) < P_{i-1}(k-1)$,
\item[(ii)] If $k$ is in row $i$ of $P$ where $k>i$ then $P_i(k-1)>0$.
\end{compactitem}
\end{lemma}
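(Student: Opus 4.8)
The plan is to treat the two parts separately, dispatching (ii) quickly from the first-column structure already available and reserving the real work for (i). Note first that the tableau $P=I(\RSK(M))$ with $M=\hf{B}$ is exactly an instance of the tableau in Lemma~\ref{lem:firstcolP}: since $\hf{\hf{B}}=B$, taking ``$M$'' there to be $B$ shows that the first column of $P$ is $(1,2,\ldots,h)$, where $h$ is the height of $P$. Hence row $i$ of $P$ begins with the entry $i$, and by semistandardness every entry of row $i$ is at least $i$. For part (ii), if $k$ lies in row $i$ with $k>i$, then $i\le k-1$, so the leftmost entry $P_{i,1}=i$ is counted by $P_i(k-1)$; thus $P_i(k-1)\ge 1>0$, which is exactly the claim.

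For part (i) I would first extract the weak inequality from semistandardness alone. Let $\theta^{(k)}$ be the subdiagram of cells of $P$ whose entry is at most $k$, so that its row lengths are $\theta^{(k)}_i=P_i(k)$. Because the cells of $P$ equal to $k$ form a horizontal strip (columns of $P$ are strictly increasing), $\theta^{(k)}/\theta^{(k-1)}$ is a horizontal strip, and the interlacing of a horizontal strip gives $\theta^{(k)}_i\le \theta^{(k-1)}_{i-1}$, i.e. $P_i(k)\le P_{i-1}(k-1)$. It then remains only to upgrade this to a strict inequality whenever $P_i(k)>0$. This is the crux, and it cannot follow from semistandardness alone: a general semistandard tableau with first column $(1,\ldots,h)$ can have $P_i(k)=P_{i-1}(k-1)$ (two consecutive rows of equal length, the upper one saturated below $k$), so strictness must be forced by the fact that $B$ is an excited array.

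To obtain strictness I would run an induction on the number of columns of $B$, building $P$ by the incremental row-insertion of the columns of $M=\hf{B}$ one at a time, in the style of Lemma~\ref{lem:firstcolP} and of the Case~1 proposition. The base case of a single column, $(1,\ldots,h)$, is immediate. In the inductive step I would insert the next column $u$, whose support is constrained by the broken-diagonal description of excited arrays (Definition~\ref{rem1:broken_diag} and Remark~\ref{rem2:broken_diag}): its forced nonzero entries lie on excited segments that span consecutive rows. Tracking the bumping path of this insertion against the first-column normalization $(1,\ldots,h)$—which pins down exactly where each value $k$ may appear—I would show that the update to the counts $P_i(k)$ preserves the strict gap in the relevant positions. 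Equivalently, and perhaps more cleanly, I would argue contrapositively through Corollary~\ref{cor:HGvsRSK} and Greene's theorem: the value-$\le k$ part of $P$ is the insertion tableau of a sub-array of $B$, which one checks is again an excited array, so Lemma~\ref{lem:firstcolP} applies to it and normalizes its first column; an equality $P_i(k)=P_{i-1}(k-1)$ would then force two consecutive rows of $\theta^{(k)}$ to be tightly packed, which I would convert into a descending chain in the truncated array that is one longer than the longest diagonal of the relevant shape, contradicting the bound $dc_1\le s_k$ satisfied by excited arrays via Lemma~\ref{lemma:excitedispleasant}. This is the same contradiction engine already used inside Lemma~\ref{lem:firstcolP}.

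The main obstacle is precisely ruling out the equality case, and making the bumping-path analysis rigorous is the technical heart of the argument: I would need to show that a forced nonzero entry sitting on a broken diagonal always produces the extra cell of value $\le k-1$ in row $i-1$ demanded by the strict inequality. I expect this to require the same case split on whether the critical cell lies inside or outside $[\mu]$ that appears in the proof of Lemma~\ref{lem:firstcolP}, together with a small lemma verifying that restricting an excited array to the cells contributing entries $\le k$ again yields an excited array to which the first-column normalization may be applied. Once these ingredients are in place, part (i) follows, and Proposition~\ref{prop:case2} is then a direct consequence.
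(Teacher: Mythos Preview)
Your treatment of part~(ii) is correct and is exactly the paper's argument: Lemma~\ref{lem:firstcolP} puts the entry $i$ in position $(i,1)$, so whenever $k>i$ the leftmost entry of row~$i$ already witnesses $P_i(k-1)>0$.

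For part~(i), your first plan---induction over the insertion process, showing that a single bump preserves the strict inequality---is precisely what the paper does, but you have stopped just short of the actual computation, and that computation is the whole proof. The paper does not induct on columns of~$B$; it inducts on single insertions: when an element $j$ is inserted into row~$r$ (either from outside or bumped from row~$r-1$), one has $j\ge r$ by Lemma~\ref{lem:firstcolP}. If $j$ lands at the end, only $P_r(\cdot)$ increases and there is nothing to check. If $j$ bumps $j_1>j$, then $P'_r(j_1-1)=P_r(j_1-1)+1$ while $P'_r(j_1)=P_r(j_1)$, and after $j_1$ enters row~$r{+}1$ one has $P'_{r+1}(j_1)=P_{r+1}(j_1)+1$. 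The only inequality at risk is $P'_{r+1}(j_1)<P'_r(j_1-1)$. If row $r{+}1$ of $P$ was nonempty, the inductive hypothesis gives $P_{r+1}(j_1)<P_r(j_1-1)$, and adding~$1$ to both sides preserves strictness. If row $r{+}1$ was empty, then $j_1=r{+}1$ and $j=r$; since $r$ was already present in row~$r$ (first column), one gets $P'_r(r)\ge 2>1=P'_{r+1}(r{+}1)$. That short case split is the missing ``technical heart'' you allude to, and it requires nothing beyond Lemma~\ref{lem:firstcolP}.

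Your second, ``cleaner'' route---restricting $B$ to the cells contributing entries $\le k$ and claiming this is again an excited array to which Lemma~\ref{lem:firstcolP} applies---is not in the paper and is not obviously true. The entries of $P$ with value $\le k$ are governed by the bottom $k$ rows of $M=\hf{B}$, i.e.\ a horizontal slab of~$B$, not by any structure that is manifestly an excited array for a smaller skew shape; you would have to prove a closure lemma that the paper never needs. I would abandon this branch and simply carry out the element-by-element bumping analysis above, which is short and self-contained.
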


\begin{proof}[Proof of Proposition~\ref{prop:case2}]
We first show that Lemma~\ref{lemma:insertionSW} implies that the
{\em insertion path} of the RSK map of $M$ moves {\em strictly} to the left.
To see this, let~$P$ be
the resulting tableaux obtained at some stage of the insertion when
$j$ is inserted in row $1$ and bumps $j_1>j$ to row $2$.
Then $j$ is inserted at position
$P_1(j_1-1)$ in row 1 and $j_1$ is inserted at position
$P_2(j_1)>0$ in row 2. By  Condition~(i),
$P_2(j_1)<P_1(j_1-1)$. Iterating this argument as elements get bumped
in lower rows implies the claim.

Next, note that a bumped element at position $\nu^1_2+1$ from row 1 of $P^1$ cannot be
added to row 2 as otherwise the insertion path would move strictly
down, violating Condition (i). Thus the only elements from row 1 of $P^1$ that can be added to
row 2 in $P^2$ are those in
positions $>\nu^1_2+1$. And so there are no more than $\nu^1_1-\nu^1_2-1$  such elements
implying that $\nu^2_2\leq \nu^1_1-1$. Iterating
this argument in the other rows implies the result.
\end{proof}

\begin{proof}[Proof of Lemma~\ref{lemma:insertionSW}]
Note that Condition~(ii) for $k=i+1$ follows by
Lemma~\ref{lem:firstcolP}. Note that the statement of the lemma holds for any step of the insertion, since it applies for $P$ as a recording tableaux. Since $P_i(k)$ are increasing for $k$ with
$i$ fixed then Condition~(ii) holds. We claim that after each single insertion
of~$\RSK$, Condition~(i) still holds. We prove this when
inserting an element $j$ in row~$r$. Iterating this argument as elements
get bumped in lower rows implies the claim.

Assume $P$ verifies Condition~(i) and we insert $j$ in row
$r$ of $P$ to obtain a tableaux $P'$ of shape $\nu'$. By
Lemma~\ref{lem:firstcolP} we have $j\geq r$.
If $j$ is added to
the end of the row then  Condition~(i) still
holds for $P'$ since $P'_r(j)>P_r(j)$.
If $j$ bumps $j_1$ in row $r$ then $j_1>j$ and
\begin{equation} \label{eq:newNu1}
P'_r(j) \. = \. P'_r(j_1-1)= P_r(j_1-1)+1\., \qquad
P'_r(j_1)\. = \. P_r(j_1)
\end{equation}
and all
other $P'_r(i)=P_r(i)$ remain the same.

Next, we insert $j_1$ in row $r+1$ of~$P$. Regardless of whether $j_1$
is added to the end of the row or bumps another element to row~$r+2$,
we have
$$
P'_{r+1}(j) \. = \. P_{r+1}(j)\., \qquad P'_{r+1}(j_1) \. = \. P_{r+1}(j_1)+1\.,
$$
and all other $P'_{r+1}(i)=P_{r+1}(i)$ remain the same. Since $P'_r(b) \geq P_r(b)$
for all~$b$, we need to verify Condition~(i) only when $P'_{r+1}$ increased
with respect to~$P_{r+1}$.

%


By Lemma~\ref{lem:firstcolP}, we have either row $r+1$ of $P$ is nonempty
and thus $P_{r+1}(j_1)>0$, or else we must have $j_1=r+1$. In the first case
Condition~(i) applies to $P$ and we have $P_{r+1}(j_1)<P_r(j_1-1)$.
By~\eqref{eq:newNu1}, we have
\[
P'_{r+1}(j_1) = P_{r+1}(j_1)+1<P_r(j_1-1)+1 = P'_r(j_1-1).
\]
Finally, suppose $j_1 = r+1$.  Since $j \geq r$, we then have $j=r$, and~$r$ must have
been present in row $r$ in $P$ by Lemma~\ref{lem:firstcolP}. Thus $P_r(r) \geq 1$  and
$$
P'_r(r) \. \geq \. 2 \. >  \. P'_{r+1} \. = \. 1\..
$$
%
%
%
%
Therefore, Condition~(i) is verified for rows~$r$ and $r+1$ of~$P'$,
as desired.
\end{proof}

\medskip \subsection{Equality between $a(D)$ and $\omega(A_D)$}

\begin{proposition} \label{prop:samestats}
For all excited diagrams $D \in  \ED(\lambda/\mu)$,
$a(D)= \sum_{(i,j) \in \overline{D}} (\lambda'_j-i)$ equals $\omega(A_D)$.
\end{proposition}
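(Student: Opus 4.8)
The plan is to prove the identity by induction on the number of excited moves needed to build $D$ from $[\mu]$, showing that the difference $a(D) - \omega(A_D)$ is invariant under a single excited move. First I would fix the bookkeeping: $a(D)$ is a sum over the complement $\overline{D} = [\lambda]\setminus D$ of the content-type terms $\lambda'_j - i$, while $\omega(A_D)=\sum_u (A_D)_u\, h(u)$ is the sum of hook-lengths over the support of the $0$-$1$ excited array $A_D$. The key observation is that under one excited move both quantities change by exactly the same amount, so their difference is a genuine invariant of the induction.

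Concretely, for an excited move $D' = \alpha_{(i,j)}(D)$ the cell $(i,j)$ is replaced by $(i+1,j+1)$, so $\overline{D'}$ is obtained from $\overline{D}$ by adding $(i,j)$ and deleting $(i+1,j+1)$; hence $a(D')-a(D) = (\lambda'_j - i) - (\lambda'_{j+1}-(i+1)) = \lambda'_j-\lambda'_{j+1}+1$. On the array side, by Definition~\ref{def:hookarray} and Proposition~\ref{prop:welldefined} the move $\beta_{(i,j)}$ removes $(i+1,j+1)$ from the support of $A_D$ and inserts $(i+1,j)$, so $\omega(A_{D'})-\omega(A_D) = h(i+1,j)-h(i+1,j+1)$; using $h(a,b)=\lambda_a - a + \lambda'_b - b + 1$ the two $\lambda_{i+1}-(i+1)$ terms cancel and this difference is again $(\lambda'_j - j)-(\lambda'_{j+1}-(j+1)) = \lambda'_j - \lambda'_{j+1}+1$. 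The matching of these two increments is the crux of the argument, and I expect the careful tracking of which cell enters and which leaves each support to be the one place where an error could creep in.

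For the base case $D = [\mu]$ I would invoke Remark~\ref{rem:minT}, which gives $A_\mu = \HG(T_0)$ for the minimal SSYT $T_0$ of shape $\lambda/\mu$; since $\HG$ is weight preserving, $\omega(A_\mu) = |T_0| = \sum_{(i,j)\in\lambda/\mu}(i-\mu'_j-1)$, the last equality holding because the entry at $(i,j)$ of $T_0$ is $i-\mu'_j-1$. Comparing with $a([\mu]) = \sum_{(i,j)\in\lambda/\mu}(\lambda'_j-i)$, the difference is $\sum_{(i,j)\in\lambda/\mu}(\lambda'_j+\mu'_j+1-2i)$, and summing column by column over $i=\mu'_j+1,\dots,\lambda'_j$ makes each column contribute $(\lambda'_j-\mu'_j)(\lambda'_j+\mu'_j+1)-2\sum_{i=\mu'_j+1}^{\lambda'_j} i = 0$. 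Thus $a([\mu]) = \omega(A_\mu)$, and combined with the invariance of $a(D)-\omega(A_D)$ under excited moves established above, the induction yields $a(D)=\omega(A_D)$ for every $D\in\ED(\lambda/\mu)$.
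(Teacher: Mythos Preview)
Your proof is correct, and the inductive step matches the paper's second lemma exactly: both track the effect of a single excited move on $a(\cdot)$ and on $\omega(A_{\cdot})$ and observe that each changes by $\lambda'_j-\lambda'_{j+1}+1$.

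Where you genuinely diverge from the paper is the base case $D=[\mu]$. The paper proves $a([\mu])=\omega(A_\mu)$ by a second induction, this time on $|\mu|$ with $\lambda$ fixed: it starts from $\mu=\varnothing$ (where both sides equal $b(\lambda)$), adds one cell $(a,b)$ to $\mu$ at a time, and computes the change in $\omega(A_\mu)$ as a telescoping difference of hook-lengths along the two adjacent diagonals $\dd_b$ and $\dd'_b$, which after horizontal/vertical cancellations yields $\lambda'_b-a$. Your base case instead invokes Remark~\ref{rem:minT} ($A_\mu=\HG(T_0)$) together with the weight-preserving property of $\HG$ to identify $\omega(A_\mu)$ with $|T_0|=\sum_{(i,j)\in\lambda/\mu}(i-\mu'_j-1)$, and then checks $a([\mu])=|T_0|$ by a direct column-by-column sum. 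This is shorter and avoids the diagonal hook cancellation entirely; the trade-off is that it relies on Remark~\ref{rem:minT}, which the paper states without proof, whereas the paper's argument for the base case is self-contained.
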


First, we show that for the Young diagram of $\mu$ both statistics
$a(\cdot)$ and $\omega(\cdot)$ agree.

\begin{lemma}
For a Young diagram $[\mu] \in \ED(\lambda/\mu)$ we have $a([\mu]) = \omega(A_{\mu})$.
\end{lemma}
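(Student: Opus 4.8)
The plan is to avoid computing $\omega(A_\mu)$ head-on from hook-lengths and instead route everything through the minimal SSYT. By Remark~\ref{rem:minT} we have $\HG(T_0)=A_\mu$, where $T_0$ is the minimal SSYT of shape $\lambda/\mu$. Since the Hillman--Grassl map is weight preserving, i.e.~$|\pi|=\omega(\HG(\pi))$ for every $\pi\in\RPP(\lambda)$, this yields $\omega(A_\mu)=|T_0|$ at once. Hence it suffices to prove $a([\mu])=|T_0|$, and I would do this by evaluating both quantities as the \emph{same} explicit sum over the columns of $\lambda/\mu$, namely $\sum_j \binom{\lambda'_j-\mu'_j}{2}$.

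First I would compute $a([\mu])$. By definition $a([\mu])=\sum_{(i,j)\in\lambda/\mu}(\lambda'_j-i)$. Fixing a column $j$, its cells in $\lambda/\mu$ are exactly those with $\mu'_j< i\le \lambda'_j$, so the inner sum $\sum_{i=\mu'_j+1}^{\lambda'_j}(\lambda'_j-i)$ runs over the values $0,1,\ldots,\lambda'_j-\mu'_j-1$ and equals $\binom{\lambda'_j-\mu'_j}{2}$. Summing over $j$ gives $a([\mu])=\sum_j\binom{\lambda'_j-\mu'_j}{2}$. Next I would compute $|T_0|$ in parallel: by the description in Remark~\ref{rem:minT} the $j$-th column of $T_0$ carries precisely the entries $0,1,\ldots,\lambda'_j-\mu'_j-1$, whose sum is again $\binom{\lambda'_j-\mu'_j}{2}$, so $|T_0|=\sum_j\binom{\lambda'_j-\mu'_j}{2}=a([\mu])$. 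Combining with $\omega(A_\mu)=|T_0|$ finishes the argument.

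There is no genuine obstacle here; the whole content is choosing the right bookkeeping. The one temptation to resist is computing $\omega(A_\mu)$ directly as $\sum_t\sum_{u\in\dd_t(\mu)}h(u)$, which entangles the broken-diagonal structure of the support of $A_\mu$ with the hook-lengths and would require a messy cancellation; the detour through $T_0$ together with weight-preservation of $\HG$ sidesteps this entirely. The only point requiring care is to confirm that both column sums are taken over the identical index range $\mu'_j< i\le \lambda'_j$, which is exactly the set of cells of column $j$ lying in $\lambda/\mu$, so that the two evaluations genuinely produce the same binomial term in each column.
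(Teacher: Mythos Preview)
Your proof is correct and takes a genuinely different route from the paper's. The paper argues by induction on $|\mu|$: it verifies the base case $\mu=\varnothing$ by computing both sides as $b(\lambda)$, and then, passing from $\mu$ to $\nu=\mu\cup\{(a,b)\}$, it computes $a([\mu])-a([\nu])=\lambda'_b-a$ and matches this against $\omega(A_\mu)-\omega(A_\nu)$ via a telescoping of hook-lengths along the two adjacent diagonals $\dd_b$ and $\dd'_b$. Your argument bypasses the induction entirely by invoking Remark~\ref{rem:minT} at full strength: since $\HG(T_0)=A_\mu$ and $\HG$ is weight-preserving, $\omega(A_\mu)=|T_0|$, and then both $|T_0|$ and $a([\mu])$ are computed column by column as the same sum $\sum_j\binom{\lambda'_j-\mu'_j}{2}$. (Indeed, in column $j$ the summands $\lambda'_j-i$ and the entries $i-\mu'_j-1$ of $T_0$ run over the same set $\{0,1,\ldots,\lambda'_j-\mu'_j-1\}$ in opposite orders.) This is shorter and more conceptual; the trade-off is that it relies on Remark~\ref{rem:minT}, which the paper states without proof, whereas the paper's inductive proof can be read as self-contained since its base case can be done ``directly'' and the inductive step manipulates hook sums explicitly rather than appealing to the bijection.
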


\begin{proof}
We proceed by induction on
$|\mu|$ with $\lambda$ fixed. When $\mu=\varnothing$ we have both
$$
a(D_{\varnothing}) \, = \, \sum_{(i,j) \in \lambda} (\lambda'_j-i) \, = \,
\sum_i \binom{\lambda'_i}{2}=\sum_{i}(i-1) \lambda_i = b(\lambda)\..
$$
Now, either directly or by Remark~\ref{rem:minT} for $\mu=\varnothing$,
\[
\omega(A_{\varnothing})=\sum_{(i,j)\in \lambda, i>j} h(i,j) = b(\lambda).
\]
Let $\nu$ be obtained from $\mu$ by adding a cell at position $(a,b)$.
Then
\[
a([\mu]) - a(D_{\nu}) \. = \. \lambda'_b -a\ts.
\]
Next, the array $A_{\nu}$ is obtained from $A_{\mu}$ by moving
the ones in diagonal $\dd_b=\{(i,j) \mid i-j = \mu_b-b\}$ to
diagonal $\dd'_b = \{(i,j) \mid i-j = \mu_b+1-b\}$ and leaving the
rest unchanged. Thus
\begin{equation} \label{eq:diagdiff}
\omega(A_{\mu}) - \omega(A_{\nu}) \. = \. \sum_{u \in \dd_b} h(u) -
\sum_{u \in \dd'_b} h(u)\ts.
\end{equation}
Since $h(i,j) = \lambda_i+\lambda'_j -i-j+1$, then $h(i,j)-h(i,j+1)$
cancels $\lambda_i-i+1$ and $h(i,j)-h(i+1,j)$ cancels the terms
$\lambda'_j-j$.  So by doing horizontal
and vertical cancellations on diagonals $\dd_k$ and $\dd'_k$ in \eqref{eq:diagdiff}
(see Figure~\ref{fig:same_stats}, Left)  we
conclude that either
\[
\sum_{u \in \dd_b} h(u) -
\sum_{u \in \dd'_b} h(u) = \lambda'_b-b - (\lambda'_c-c)
\]
if the diagonals $\dd'_b$ and $\dd_b$ have the same length,  or
\[
\sum_{u \in \dd_b} h(u) -
\sum_{u \in \dd'_b} h(u) = \lambda'_b-b + (\lambda_r-r+1).
\]
otherwise. In both these cases $\lambda'_c-c+b$ and $r-\lambda_r+b-1$ are
equal to~$a$. Thus,
\[
\omega(A_{\mu}) - \omega(A_{\nu})  = \lambda'_b-a = a([\mu]) - a(D_{\nu}).
\]
Then by induction it follows that $\omega(A_{\nu})=a(D_{\nu})$.
\end{proof}

\begin{lemma}
Let $D'\in \ED(\lambda/\mu)$ be obtained from $D \in
\ED(\lambda/\mu)$ with one excited move.
Then $a(D')-a(D) = \omega(A_{D'})-\omega(A_D)$.
\end{lemma}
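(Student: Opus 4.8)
The plan is to compute both differences $a(D')-a(D)$ and $\omega(A_{D'})-\omega(A_D)$ explicitly and observe that they coincide. Write the excited move as $D' = \alpha_{(i,j)}(D)$ for an active cell $u=(i,j)$ of $D$; by definition this replaces $(i,j)$ by $(i+1,j+1)$, so that the complements satisfy $\overline{D'} = \bigl(\overline{D}\setminus\{(i+1,j+1)\}\bigr)\cup\{(i,j)\}$, since $(i+1,j+1)$ was in $\overline{D}$ (the move being allowed) and $(i,j)$ now leaves $D$.

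First I would compute the left-hand side. Because $a(D) = \sum_{(p,q)\in\overline{D}}(\lambda'_q-p)$ is a sum over the complement, passing from $\overline{D}$ to $\overline{D'}$ merely deletes the summand for $(i+1,j+1)$ and inserts the summand for $(i,j)$, which gives
\[
a(D')-a(D) \, = \, (\lambda'_j-i) - \bigl(\lambda'_{j+1}-(i+1)\bigr) \, = \, \lambda'_j-\lambda'_{j+1}+1.
\]
Next I would compute the right-hand side using the array move $\beta_{(i,j)}$. By Proposition~\ref{prop:welldefined}, since $(i,j)$ is active in $D$ we have $(A_D)_{i+1,j+1}=1$ and $(A_D)_{i+1,j}=0$, so $A_{D'}=\beta_{(i,j)}(A_D)$ is well defined and its support is obtained from that of $A_D$ by deleting the cell $(i+1,j+1)$ and inserting the cell $(i+1,j)$, with all other entries unchanged. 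As both arrays are $0$-$1$, the weight $\omega$ is simply the sum of hook-lengths over the support, so
\[
\omega(A_{D'})-\omega(A_D) \, = \, h(i+1,j) - h(i+1,j+1).
\]
Using $h(a,b)=\lambda_a+\lambda'_b-a-b+1$, the terms $\lambda_{i+1}$ and $-(i+1)$ cancel and I obtain $h(i+1,j)-h(i+1,j+1)=\lambda'_j-\lambda'_{j+1}+1$, matching the left-hand side and finishing the argument.

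There is no serious obstacle here: the statement reduces to two short bookkeeping computations, once one records precisely which single cell changes in $\overline{D}$ and which single cell changes in $\supp(A_D)$. The one point requiring care, and the step I would single out, is the appeal to Proposition~\ref{prop:welldefined}: one must note that the array move $\beta_{(i,j)}$ relocates the cell $(i+1,j+1)$ to $(i+1,j)$, which is a \emph{different} pair of cells than the pair $\{(i,j),(i+1,j+1)\}$ affected by the diagram move $\alpha_{(i,j)}$. Despite acting on different cells, the two moves contribute the identical increment $\lambda'_j-\lambda'_{j+1}+1$ to their respective statistics, which is exactly the content of the lemma.
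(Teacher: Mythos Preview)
Your proof is correct and follows essentially the same route as the paper: both compute $a(D')-a(D)=(\lambda'_j-i)-(\lambda'_{j+1}-i-1)=\lambda'_j-\lambda'_{j+1}+1$ and $\omega(A_{D'})-\omega(A_D)=h(i+1,j)-h(i+1,j+1)=\lambda'_j-\lambda'_{j+1}+1$ directly. Your explicit invocation of Proposition~\ref{prop:welldefined} to justify the change in $\supp(A_D)$ is a nice bit of extra care that the paper leaves implicit.
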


\begin{proof}
Suppose $D'$ is obtained from $D$ by replacing $(i,j)$ by $(i+1,j+1)$ then
\[
a(D')-a(D) = \lambda'_j-i- (\lambda'_{j+1}-i-1)=\lambda'_j-\lambda'_{j+1}+1,
\]
and since $h_{(s,t)} = \lambda_s -s +\lambda'_t-t+1$ then
\[
\omega(A_{D'})-\omega(A_D) = h_{(i+1,j)}-h_{(i+1,j+1)}=\lambda'_j-\lambda'_{j+1}+1.
\]
We illustrate these differences in Figure~\ref{fig:same_stats}: Right.
\end{proof}

\begin{figure}
\begin{center}
\includegraphics{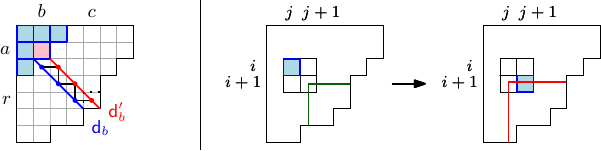}
\caption{The equality of statistics $a(D)$ and $\omega(A_D)$.}
\label{fig:same_stats}
\end{center}
\end{figure}

\bigskip

\section{Skew SSYT with bounded parts} \label{sec:bounded_parts}

Here we consider the generating function of skew SSYT with entries in
$[M]$. The analogous question for straight-shape SSYTs is answered by
Stanley's elegant hook-content formula \cite[\S 7.21]{EC2}.

\begin{theorem}[hook-content formula \cite{St71}]
\[
s_{\lambda}(1,q,q^2,\ldots,q^{M-1}) = q^{b(\lambda)} \prod_{u \in
  [\lambda]} \frac{1-q^{M+c(u)}}{1-q^{h(u)}},
\]
where $c(u)=j-i$ is the {\em content} of the square $u=(i,j)$.
\end{theorem}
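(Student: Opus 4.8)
The plan is to prove the hook-content formula by specializing the classical bialternant (Weyl) formula for Schur polynomials in finitely many variables and evaluating the resulting determinants as Vandermonde products. We may assume $\ell(\lambda)\le M$: otherwise $s_\lambda(1,q,\dots,q^{M-1})=0$ (a Schur polynomial in fewer than $\ell(\lambda)$ variables vanishes), while on the right the cell $(M+1,1)$ has content $-M$, so the factor $1-q^{M+c(u)}=1-q^{0}=0$ kills the product. Writing $x_i=q^{i-1}$ and setting $\ell_j:=\lambda_j+M-j$, I would first record
\[
s_\lambda(1,q,\dots,q^{M-1})
= \frac{\det\!\bigl[(q^{i-1})^{\lambda_j+M-j}\bigr]_{i,j=1}^{M}}
       {\det\!\bigl[(q^{i-1})^{M-j}\bigr]_{i,j=1}^{M}}
= \frac{\det\!\bigl[(q^{\ell_j})^{\,i-1}\bigr]_{i,j=1}^{M}}
       {\det\!\bigl[(q^{M-j})^{\,i-1}\bigr]_{i,j=1}^{M}}
= \frac{\prod_{1\le i<k\le M}\bigl(q^{\ell_k}-q^{\ell_i}\bigr)}
       {\prod_{1\le i<k\le M}\bigl(q^{M-k}-q^{M-i}\bigr)},
\]
the last equality being the Vandermonde evaluation applied to the two geometric progressions.

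Next I would factor each difference as $q^{a}-q^{b}=q^{b}(1-q^{a-b})$, pulling out the smaller exponent so that every $1-q^{(\cdot)}$ factor carries a positive exponent (for $i<k$ we have $\ell_i>\ell_k$ and $M-i>M-k$). Collecting the monomial prefactors, the total power of $q$ is $\sum_{i<k}\bigl(\ell_k-(M-k)\bigr)=\sum_{i<k}\lambda_k=\sum_k(k-1)\lambda_k=b(\lambda)$, which matches the prefactor $q^{b(\lambda)}$ on the right. It then remains to establish the rational-function identity
\[
\frac{\prod_{1\le i<k\le M}\bigl(1-q^{\ell_i-\ell_k}\bigr)}
     {\prod_{1\le i<k\le M}\bigl(1-q^{k-i}\bigr)}
\;=\;\prod_{u\in[\lambda]}\frac{1-q^{M+c(u)}}{1-q^{h(u)}}.
\]

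The crux, and the step I expect to be the main obstacle, is the combinatorial lemma expressing the hook multiset in terms of the beta-numbers $\ell_i$: for each row $i$,
\[
\{\,h(i,j)\;:\;1\le j\le\lambda_i\,\}
\;=\;\{1,2,\dots,\ell_i\}\setminus\{\,\ell_i-\ell_k\;:\;i<k\le M\,\}.
\]
I would prove this via the boundary-lattice-path (Maya/abacus) description of $[\lambda]$: the beta-numbers record the positions of the vertical unit steps along the boundary, and the hook of $(i,j)$ is the gap between the vertical step coding row $i$ and the horizontal step below column $j$, so that $\ell_i-h(i,j)$ runs exactly over $\{0,\dots,\ell_i-1\}\setminus\{\ell_k:k>i\}$; a cardinality count ($\ell_i-(M-i)=\lambda_i$) confirms both sides have the right size. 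Granting this lemma, taking the product of the $(1-q^a)$ over both sides and then over all rows gives
\[
\prod_{1\le i<k\le M}\bigl(1-q^{\ell_i-\ell_k}\bigr)
\;=\;\frac{\prod_{i=1}^{M}\prod_{a=1}^{\ell_i}(1-q^a)}{\prod_{u\in[\lambda]}(1-q^{h(u)})}.
\]

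Finally I would apply the same lemma to the empty partition, whose beta-numbers are $M-i$ and whose hook set is empty, to obtain $\prod_{1\le i<k\le M}(1-q^{k-i})=\prod_{i=1}^{M}\prod_{a=1}^{M-i}(1-q^a)$. Dividing the previous display by this one cancels the common factors $\prod_{a=1}^{M-i}(1-q^a)$ in each row, leaving
\[
\frac{\prod_{1\le i<k\le M}\bigl(1-q^{\ell_i-\ell_k}\bigr)}
     {\prod_{1\le i<k\le M}\bigl(1-q^{k-i}\bigr)}
\;=\;\frac{\prod_{i=1}^{M}\prod_{b=1}^{\lambda_i}\bigl(1-q^{\,M-i+b}\bigr)}
          {\prod_{u\in[\lambda]}(1-q^{h(u)})}
\;=\;\prod_{u\in[\lambda]}\frac{1-q^{M+c(u)}}{1-q^{h(u)}},
\]
where the last step uses $M-i+b=M+c(i,b)$ for the cell $(i,b)$. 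This is precisely the identity needed, completing the proof. The only genuine content is the beta-number hook lemma; the remaining steps are bookkeeping of exponents and signs, which I would not grind through.
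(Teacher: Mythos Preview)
The paper does not prove this statement: it is Stanley's classical hook--content formula, quoted from~\cite{St71} as background at the start of Section~\ref{sec:bounded_parts}, so there is no proof in the paper to compare against.

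Your argument is correct and is essentially the standard textbook derivation. The bialternant specialization and Vandermonde evaluation are routine, and your extraction of the $q^{b(\lambda)}$ prefactor via $\sum_{i<k}(\ell_k-(M-k))=\sum_k(k-1)\lambda_k$ is right. The one substantive ingredient is the beta-number hook lemma
\[
\{h(i,j):1\le j\le\lambda_i\}\;=\;\{1,\dots,\ell_i\}\setminus\{\ell_i-\ell_k:i<k\le M\},
\]
which is classical (it goes back to Frame--Robinson--Thrall and underlies most determinantal proofs of the ordinary HLF); your boundary-path justification is the standard one. The remaining telescoping of $\prod_{a=1}^{\ell_i}(1-q^a)\big/\prod_{a=1}^{M-i}(1-q^a)=\prod_{b=1}^{\lambda_i}(1-q^{M-i+b})$ and the identification $M-i+b=M+c(i,b)$ are correct. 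The $\ell(\lambda)>M$ edge case is handled properly: the cell $(M{+}1,1)$ has content $-M$, so the right side vanishes along with the left.
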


In this section we discuss whether there is a hook-content formula for
skew shapes in terms of excited diagrams. We are able to write such
formulas for {\em border strips} but our approach does not extend to
general shapes.  We start by considering the
case of the inverted hook $\lambda/\mu =k^d/(k-1)^{d-1}$, then we
look at the case of {\em border strips} and we end by briefly
discussing the case of general skew shapes.

\subsection{Inverted hooks}

We start studying the border strip $\lambda/\mu = (k^d)/(k-1)^{d-1}$; an inverted hook.
From Example~\ref{ex:skewhook} the complements of excited diagrams of
this shape correspond to lattice paths $\gamma$ from $(d,1)$ to
$(1,k)$.

\begin{proposition} \label{prop:hookcontrect}
\begin{equation} \label{eq:hookcontentrect}
s_{\lambda/\mu}(1,q,\ldots,q^{M-1}) \,= \,\sum_{\ga:(d,1) \to (1,k)}
q^{a(\gamma)} \cdot h_{M-d+1}(1,q^{h(u_1)}, q^{h(u_2)},\ldots, q^{h(u_{k+d-1})})\ts,
\end{equation}
where $u_1,u_2,\ldots,u_{k+d-1}$ are the cells in the path $\ga$.
\end{proposition}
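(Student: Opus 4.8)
The plan is to deduce~\eqref{eq:hookcontentrect} from the bijection of Theorem~\ref{thm:bij} together with the trace description of the Hillman--Grassl map (Proposition~\ref{prop:HGtrace}), by translating the bound ``all entries $\leq M-1$'' into a single \emph{linear} bound on the array $A=\HG(T)$. Recall from Example~\ref{ex:skewhook} that for the inverted hook the excited diagrams $D\in\ED(\lambda/\mu)$ correspond to NE lattice paths $\gamma\colon(d,1)\to(1,k)$ via $\gamma=[\lambda]\setminus D$, and that $u_1,\ldots,u_{k+d-1}$ are precisely the cells of this path. By Theorem~\ref{thm:bij}, $\HG$ is a weight--preserving bijection from $\SSYT(\lambda/\mu)$ onto $\bigsqcup_{D}\mathcal{A}^*_D$; writing $A_u=(A_D)_u+e_u$ with $e_u\geq 0$ and using $\omega(A_D)=a(D)$ (Proposition~\ref{prop:samestats}), every $A\in\mathcal{A}^*_D$ has weight $\omega(A)=a(\gamma)+\sum_{u\in\gamma}e_u\,h(u)$.

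First I would establish the key lemma: for $A\in\mathcal{A}^*_D$ the largest entry of $T=\HG^{-1}(A)$ equals $|A|=\sum_{u\in\gamma}A_u$. The idea is that in an SSYT of an inverted hook the global maximum is the corner entry $T_{d,k}$, and $(d,k)$ is the \emph{unique} cell of $\lambda/\mu$ on the diagonal $\dd_{d-k}$; hence the partition of the $(d-k)$-diagonal of $\widehat{T}$ has a single nonzero part, giving $\tr_{d-k}(\widehat{T})=T_{d,k}$. Because $\lambda=(k^d)$ \emph{is} the full $d\times k$ rectangle, the largest rectangle anchored at $(1,1)$ reaching the diagonal $\dd_{d-k}$ is all of $[\lambda]$, i.e.\ $\square^\lambda_{d-k}=[\lambda]$, so $A_{d-k}=A$ and Proposition~\ref{prop:HGtrace} yields $T_{d,k}=\tr_{d-k}(\widehat{T})=|A_{d-k}|=|A|$. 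This is the step I expect to demand the most care: it is exactly where the rectangular shape of $\lambda$ enters, and it is the reason the method does not extend to general skew shapes.

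Applying this lemma to the minimal tableau $T_0$ (for which every $e_u=0$ and $\max T_0=d-1$ by Remark~\ref{rem:minT}) shows that $|\supp(A_D)|=|A_\mu|=d-1$, since $A_\mu$ is a $0$-$1$ array and support size is preserved by the moves $\beta_u$. Consequently $|A|=(d-1)+\sum_{u\in\gamma}e_u$, and the requirement that all entries of $T$ are at most $M-1$ becomes simply the linear condition $\sum_{u\in\gamma}e_u\leq M-d$.

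Finally I would sum $q^{\omega(A)}$ over the fibers of $\HG$. Splitting $\SSYT(\lambda/\mu)$ according to the excited diagram $D\leftrightarrow\gamma$ gives
\begin{equation*}
s_{\lambda/\mu}(1,q,\ldots,q^{M-1}) \,=\, \sum_{\gamma\colon(d,1)\to(1,k)} q^{a(\gamma)}\!\!\sum_{\substack{e\in\mathbb{Z}_{\geq 0}^{\,\gamma}\\ \sum_{u}e_u\,\leq\, M-d}}\ \prod_{u\in\gamma} q^{e_u h(u)}\,.
\end{equation*}
The inner sum is, by the identity $h_m(1,y_1,\ldots,y_N)=\sum_{|a|\leq m}y^a$, the complete homogeneous symmetric function $h_{M-d}\bigl(1,q^{h(u_1)},\ldots,q^{h(u_{k+d-1})}\bigr)$, whose degree is precisely $M-1$ minus the $d-1$ forced entries; this is the factor appearing on the right-hand side of~\eqref{eq:hookcontentrect}, which would complete the proof. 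The only genuinely nonroutine ingredient is the key lemma of the second paragraph; once $\max T=|A|$ is in hand the rest is bookkeeping of hook weights and a standard generating-function identity.
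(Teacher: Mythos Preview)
Your approach is essentially the paper's: apply Theorem~\ref{thm:bij}, show that the maximal entry of $T$ equals the total $|A|$ of the array, count the $d-1$ forced nonzero cells in $A_D$, and recognize the resulting constrained sum as a complete homogeneous symmetric function. The one genuine variation is in the key lemma $\max T=|A|$: the paper argues directly that in $\HG^{-1}$ every strip of $1$'s added to an RPP supported on the inverted hook must pass through the corner $(d,k)$, whereas you invoke Gansner's trace identity (Proposition~\ref{prop:HGtrace}) together with the observation that for the rectangle $\lambda=k^d$ one has $\square^\lambda_{d-k}=[\lambda]$. Both routes are short; yours has the virtue of reducing the step to a result already established in the paper rather than a separate ad hoc check of the Hillman--Grassl dynamics.

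One index to flag: your computation yields $\sum_u e_u\le M-d$ and hence the factor $h_{M-d}\bigl(1,q^{h(u_1)},\ldots,q^{h(u_{k+d-1})}\bigr)$, but you then assert that this ``is the factor appearing on the right-hand side of~\eqref{eq:hookcontentrect}'', which displays $h_{M-d+1}$. The paper's own proof takes the bound ``entries $\le M$'' rather than $\le M-1$ (the latter is what the zero-indexed convention and the specialization $s_{\lambda/\mu}(1,q,\ldots,q^{M-1})$ dictate), and that is where the extra $+1$ comes from. Your bookkeeping is the internally consistent one; just do not claim the two subscripts agree when they visibly differ.
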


\begin{proof}
By Theorem~\ref{thm:bij}, the image of  a SSYT $T$ of this shape
via Hillman--Grassl is an array $A$ with support on a lattice path
$\ga:(d,1)\to (1,k)$  (i.e. the complement
o an excited diagram) with certain forced nonzero entries. These nonzero
entries are exactly on cells of vertical steps, including outer
corners but not inner corners and excluding $(1,k)$ (see Example~\ref{ex:hkct}).

The maximal entry in $A$ is in the cell $(d,1)$ and the maximal entry
in $T$ is in the cell $(d,k)$. We claim that the latter entry is the
sum of all the entries in the initial array. To see this note that in the steps of the
inverse Hillman--Grassl map $A\mapsto T$, every strip of $1$s added to
the RPP of support in $\lambda/\mu$ starts from a cell in row $d$ and passes through cell $(d,k)$.

Let $\gamma=(u_1,\ldots, u_{k+d-1})$ be a lattice path from $(d,1)$ to
$(1,k)$ along the squares with (top) corners at positions
$(\gamma^1_i,\gamma^2_i)$, so that the cells of the path  are
$u_1=(d,1), u_2=(d-1,1)\ldots (\gamma^1_1,\gamma^1_1)
(\gamma^1_1+1,\gamma^2_1) \ldots (\gamma^1_2,\gamma^2_1)
\ldots... $. The designated nonzero cells on $\gamma$ are the ones
located below these corners:  $(d,1)\ldots (\gamma^1_1-1,1)$ etc. We
notice that we have exactly $d-1$ such entries. If the values in $A$
of the entries in the path are $\alpha_1,\ldots,\alpha_{k+d-1}$, the
maximal entry in $T$ will be $\alpha_1+\cdots+\alpha_{k+d-1}$. The
total weight of the resulting SSYT is then $\prod_{u_i \in \gamma} q^{
  h(u_i)\alpha_i}$. The total contribution of the path $\ga$ over all possible such values $\alpha$ is then
\begin{align*}\label{eq:inverted_hook_paths}
\sum_{\alpha_1+\cdots+\alpha_{k+d-1} \leq M} \prod_{u_i \in \gamma}
  q^{ h(u_i)\alpha_i} \, & = \, \left(\prod_{u \in \ga, \text{ nonzero}} q^{h(u)}\right)  \times h_{M-d+1}(1,q^{h(u_1)}, q^{h(u_2)},\ldots, q^{h(u_{k+d-1})}) \\
&= \, q^{a(\gamma)} \cdot h_{M-d+1}(1,q^{h(u_1)}, q^{h(u_2)},\ldots, q^{h(u_{k+d-1})})\ts,
\end{align*}
as desired.
\end{proof}

\begin{example} \label{ex:hkct}
For the reverse hook shape $(3^3/2^2)$, the six paths (complements of excited
diagrams) with their corresponding nonzero entries of the arrays are
the following:
\begin{center}
\includegraphics{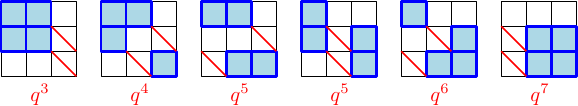}.
\end{center}
Thus, in this case  \eqref{eq:hookcontentrect} gives
\[
s_{3^3/2^2}(1,q,\ldots,q^{M-1}) \, = \, q^3
h_{M-2}(1,q^3,q^2,q^1,q^2,q^3) \. + \. \cdots \. + \. q^7h_{M-2}(1,q^3,q^4,q^5,q^4,q^3)\ts.
\]
Note that in contrast with the principal specialization of $h_k$, the
specializations in \eqref{eq:hookcontentrect} do not necessarily have nice
product formulas. For instance, when $M=3$ the first term in the RHS
above gives
\[
h_1(1,q^3,q^2,q^1,q^2,q^3) \. = \. 1+q^1+2q^2+2q^3 \. = \. (q+1)(2q^2+1)\ts.
\]
\end{example}

\begin{remark}
When we evaluate $q=1$ in \eqref{eq:hookcontentrect}, the hook lengths
involved in the evaluation of the complete symmetric function become
$1$ and so each path $\gamma$ contributes $h_{M-b+1}(1^{a+b}) =
\binom{M+a}{a+b-1}$. Summing this equal contribution over all paths gives
\[
s_{\lambda/\mu}(1^M) \, = \,\sum_{\ga:(d,1)\to(1,k)} \binom{M+k}{k+d-1}
\, = \,\binom{k+d-2}{k-1} \binom{M+k}{k+d-1}\ts.
\]
Since $s_{\lambda/\mu} = s_{k1^{d-1}}$, this is precisely what the
hook-content formula gives for $s_{k1^{d-1}}(1^M)$.
\end{remark}

\subsection{Border strips}

A border strip is a (connected) skew shape $\lambda/\mu$ containing no
$2\times 2$ box.  The inverted hook is an example of a border
strip. Similarly to inverted hooks, complements of excited diagrams of  border
strips correspond to lattice paths $\gamma$ from $(\lambda'_1,1)$ to
$(1,\lambda_1)$ that stay inside $\lambda$.

To state the result, we nee some notation. Let $\lambda/\mu$ be a border strip with corners (these time we
consider the outer corners of $\lambda$) at positions $(x_i,y_i)$
(starting from the bottom left) and divide the diagram $\lambda$ with
the lines $x=x_i$ and $y=y_j$ into rectangular regions $R_{ij}$. A
lattice path $\ga:(\lambda'_1,1) \to (1,\lambda_1)$ inside $\lambda$
may intersect some of these rectangles. Let
$\gamma=\gamma^1,\gamma^2,\ldots$ be the subpaths of $\gamma$, where each $\gamma^\ell$ belongs to a unique
rectangle $R_{i_\ell j_\ell}$. We denote by $g^{\ell}$ a sequence of
nonnegative integers in the cells of $\gamma^{\ell}$, and by
$|g^{\ell}|$ the sum of these entries.

\begin{proposition}
For a border strip $\lambda/\mu$ we have that
\begin{equation} \label{eq:hookcontent_strip}
s_{\lambda/\mu}(1,q,\ldots,q^{M-1}) = \sum_{\gamma: (\lambda'_1,1) \to
(1,\lambda_1), \gamma \subseteq [\lambda]}  q^{a(\gamma)} \sum_{  g^1, g^2, \ldots: \\ \sum_{\ell: i_\ell \geq i, j_\ell \leq i} |g^\ell| + b_\ell \leq M} q^{\sum_{u \in \gamma} g_u h(u)}.
\end{equation}
\end{proposition}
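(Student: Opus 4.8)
The plan is to follow the proof of Proposition~\ref{prop:hookcontrect} for the inverted hook, replacing the single corner by the several outer corners of the border strip. By Theorem~\ref{thm:bij}, the Hillman--Grassl map restricts to a bijection $\HG\colon \SSYT(\lambda/\mu)\to\bigcup_{D\in\ED(\lambda/\mu)}\mathcal{A}^*_D$. For a border strip the complements $[\lambda]\setminus D$ are exactly the lattice paths $\gamma\colon(\lambda'_1,1)\to(1,\lambda_1)$ staying inside $[\lambda]$, and, as in the inverted-hook case, each $\mathcal{A}^*_D$ consists of the nonnegative arrays supported on $\gamma$ whose entries are forced to be positive on the cells lying at the bottom of a vertical step. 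Writing each array entry as $A_u=[\,u\text{ forced}\,]+g_u$ with $g_u\ge 0$, the weight-preserving property of $\HG$ gives $q^{|T|}=q^{\omega(A)}=q^{a(\gamma)}\,q^{\sum_{u\in\gamma}g_u h(u)}$, where $q^{a(\gamma)}$ collects the forced $+1$'s exactly as in~\eqref{eq:hookcontentrect}, and the free variables $g_u$ are organized by the subpaths $\gamma^\ell$ into the data $g^1,g^2,\dots$ of the statement.

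The heart of the proof is to identify, for $T=\GH(A)$, the values of $T$ at the outer corners $(x_i,y_i)$ of $\lambda$, which are the maximal entries of $T$ in the partial order on $\lambda/\mu$. The key claim is that the value of $T$ at the $i$-th corner equals $\sum_{\ell}\bigl(|g^\ell|+b_\ell\bigr)$, the sum being over those subpaths $\gamma^\ell$ whose rectangle $R_{i_\ell j_\ell}$ straddles that corner (i.e.\ $i_\ell\ge i$ and $j_\ell\le i$), where $b_\ell$ is the number of forced cells of $\gamma^\ell$. This generalizes the inverted-hook observation that every strip of $1$'s added by the inverse map passes through the unique corner $(d,k)$: here a strip produced by $\GH$ from a nonzero entry in $\gamma^\ell$ travels North and West and sweeps through the $i$-th corner precisely when $R_{i_\ell j_\ell}$ lies weakly South--West of it, and bypasses that corner otherwise. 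Proving this passage statement --- by tracing the South--West paths of $\GH$ through the zig-zag of the strip and recording which corners each path crosses --- is the main obstacle; it is exactly here that the no-$2\times2$-box hypothesis is used, guaranteeing that $\gamma$ meets each rectangle $R_{ij}$ in a single connected subpath and that the corners are unambiguous single cells.

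Granting the corner-value identity, all entries of $T$ are bounded as required by the specialization $s_{\lambda/\mu}(1,q,\dots,q^{M-1})$ if and only if each corner value is bounded, since the corners dominate every cell of $\lambda/\mu$; translating this bound through the identity yields precisely the family of inequalities $\sum_{\ell:\,i_\ell\ge i,\,j_\ell\le i}\bigl(|g^\ell|+b_\ell\bigr)\le M$ that index the inner sum of~\eqref{eq:hookcontent_strip}. Summing $q^{a(\gamma)}\,q^{\sum_{u\in\gamma}g_u h(u)}$ first over all admissible fillings $(g^1,g^2,\dots)$ subject to these constraints, and then over all lattice paths $\gamma$, produces exactly the right-hand side of~\eqref{eq:hookcontent_strip}. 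Unlike the inverted-hook case, the inner sum does not collapse to a single complete homogeneous symmetric function, because the several corner constraints are coupled; accordingly it is left as the displayed constrained sum. The remaining verifications --- that the prefactor is $q^{a(\gamma)}$ and that the straddling indices match --- are routine once the passage statement is established.
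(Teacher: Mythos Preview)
Your overall architecture matches the paper: use Theorem~\ref{thm:bij} to identify $\SSYT(\lambda/\mu)$ with excited arrays supported on a single lattice path~$\gamma$, peel off the forced $+1$'s on the vertical steps to produce the prefactor $q^{a(\gamma)}$, and then translate the bound on the maximal entries of $T$ into the system of inequalities on the $|g^\ell|$. The difference lies entirely in how you justify the key corner-value identity.

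You propose to establish that the $i$-th corner value equals $\sum_{\ell:\,j_\ell\le i\le i_\ell}(|g^\ell|+b_\ell)$ by tracing the reverse Hillman--Grassl paths of $\GH$ through the border strip and recording which outer corners each path crosses. This is the direct generalization of the inverted-hook argument, and you are right that it is the only nontrivial step. (Small slip: the $\GH$ paths run South and West, not North and West.) The paper sidesteps this path-tracing completely by invoking Theorem~\ref{thm:HGdiag}\,(i), the Greene-type statement for Hillman--Grassl: for the diagonal $\dd_k$ through the $i$-th corner, the corner value is $\nu_1^{(k)}=ac_1(A_k)$, and since the nonzero entries of $A$ all lie on the single NE path~$\gamma$, they constitute one ascending chain in~$\square_k^\lambda$, so $ac_1(A_k)$ is simply the total of all entries of $A$ inside that rectangle. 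This gives the corner value in one line, with no analysis of how individual $\GH$ strips move. Your approach would ultimately reprove this special case of Theorem~\ref{thm:HGdiag}\,(i) by hand; the paper's route is shorter because the needed tool is already available.

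Everything after the corner-value identity---the equivalence between bounding all entries and bounding the corner values, the coupling of the constraints across corners, and the reason the inner sum does not factor into a single $h_k$---you have exactly as in the paper.
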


\begin{proof}
Let $T$ be a SSYT of shape $\lambda/\mu$
with entries $\leq M$ and $A=\HG(T)$. By Theorem~\ref{thm:bij}, the
support of $A$
is on a path $\gamma:(\lambda'_1,1) \to (1,\lambda_1)$ inside
of $\lambda$.

By the analogue of Greene's theorem
for $\HG$ (Theorem~\ref{thm:HGdiag} (i)), the maximal entry in $T$ in
each rectangle is the sum of the entries in $A$ within that rectangle,
since the nonzero entries lie on $\gamma$ and so form a single
increasing sequence. Moreover, the forced nonzero entries are on the
vertical steps of~$\gamma$. As in the case of inverted hooks, the
bound $M$ is again involved in the total sum over the path segments in
each rectangle. However, in a border strip the rectangles overlap, and
so would the sums over the path elements.

We divide $\gamma=\gamma^1,\gamma^2,\ldots$ into subpaths, where each $\gamma^\ell$ belongs to a unique
rectangle $R_{i_\ell j_\ell}$. We must have that by monotonicity of $\ga$ $i_1
\leq i_2 \leq \cdots$ and $j_1 \leq j_2 \leq \cdots$, and by
connectivity of $\ga$ that $i_\ell \geq i_{\ell+1} -1$ and $j_\ell \geq
j_{\ell+1}-1$. The entries in $A$ along $\gamma^{\ell}$ are the
sequence $g^\ell$, with sum $M_\ell + b_\ell$, for some
$M_\ell$. By the properties of the Hilman--Grassl bijection, we need to
have forced nonzero elements on the vertical steps of $\gamma$. We can
subtract $1$ from them and consider nonnegative elements summing up to
$M_\ell$. Again by the properties of the bijection, each rectangle
$R_{ii}$ in $A$ has to contain a longest increasing subsequence of
total sum at most $M$ in order to have the entry in the corner of $T$
to be at most $M$. In this case there is only one longest increasing
subsequence in $A$, which is the path $\ga$ itself.
Thus, we have
$$\sum_{\ell: \. i_\ell \geq i \geq j_\ell} \. (M_\ell + b_\ell) \,\leq \, M.
$$
We conclude:
$$s_{\lambda/\mu}(1,q,\ldots,q^{M-1}) \,= \,
\sum_{\gamma} \. q^{\sum_{u \in \gamma, \text{ vertical step }} h(u)} \,\.
\sum \. q^{\sum_{u \in \gamma} \ts g_u \ts h(u)}\ts,
$$
where the last sum is over  all \ts $g^1, g^2, \ldots$ \ts s.t.
$\sum_{\ell: \. i_\ell \geq i \ge j_\ell } |g^\ell| + b_\ell \. \leq \. M$.
Now observe that the sum of the hooks of the forced nonzero entries in $\gamma$ is~$a(\gamma)$,
which implies the result.
\end{proof}

\begin{remark}
The sums over the sequences $g^i$ in formula~\eqref{eq:hookcontent_strip} cannot be simplified any further, since the restrictions are not over independent pieces. However, one can think of the inequalities as a simple linear program with coefficients 0 or 1, and the entries in the sequence $g^1,\ldots,g^\ell$ as integral points in a polytope defined by these inequalities.
\end{remark}

\begin{corollary}
\[
s_{\lambda/\mu}(1^M) \,= \, \sum_{\gamma} \sum_{M_1,\ldots: \. \sum_{\ell:
    i_\ell \geq i \geq j_\ell } M_\ell + b_\ell \leq M} \. \prod_{\ell}
\binom{M_\ell + a_\ell+b_\ell-2}{M_\ell}\ts.
\]
\end{corollary}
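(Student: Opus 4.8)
The plan is to derive this corollary simply as the $q\to 1$ specialization of the border-strip formula~\eqref{eq:hookcontent_strip}. First I would set $q=1$ throughout that identity. The prefactor $q^{a(\gamma)}$ becomes $1$, and inside the inner sum the monomial $q^{\sum_{u\in\gamma} g_u h(u)}$ becomes $1$ as well. Hence for each admissible path $\gamma\colon (\lambda'_1,1)\to(1,\lambda_1)$ inside $[\lambda]$, the inner sum degenerates into a pure enumeration: it counts the tuples $(g^1,g^2,\ldots)$ of nonnegative integer fillings of the cells of the subpaths $\gamma^\ell$ subject to the constraints $\sum_{\ell:\, i_\ell\ge i\ge j_\ell}|g^\ell|+b_\ell\le M$ for every corner index~$i$.

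Next I would regroup this count according to the per-subpath sums. Writing $M_\ell := |g^\ell|$, I would fix the vector $(M_1,M_2,\ldots)$ and count the fillings realizing these prescribed sums. The subpaths $\gamma^\ell$ partition the cells of $\gamma$ into pairwise disjoint blocks, since each cell of $[\lambda]$ lies in a unique rectangle $R_{i_\ell j_\ell}$ of the decomposition used in~\eqref{eq:hookcontent_strip}; consequently the fillings on different blocks are chosen independently and the count factors as a product over~$\ell$. Each subpath $\gamma^\ell$ is a monotone path through $R_{i_\ell j_\ell}$ and therefore occupies exactly $a_\ell+b_\ell-1$ cells, so by the standard stars-and-bars count the number of nonnegative integer sequences on $\gamma^\ell$ with total $M_\ell$ equals $\binom{M_\ell+a_\ell+b_\ell-2}{a_\ell+b_\ell-2}=\binom{M_\ell+a_\ell+b_\ell-2}{M_\ell}$.

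Finally I would sum the product $\prod_\ell \binom{M_\ell+a_\ell+b_\ell-2}{M_\ell}$ over all vectors $(M_1,M_2,\ldots)$ satisfying $\sum_{\ell:\, i_\ell\ge i\ge j_\ell} M_\ell+b_\ell\le M$ for each $i$, which is exactly the constraint of~\eqref{eq:hookcontent_strip} after the substitution $|g^\ell|=M_\ell$, and then over all paths $\gamma$. This produces the asserted formula. The only steps needing genuine care are the two geometric facts invoked above: that the subpaths partition the cells of $\gamma$, which underlies the factorization into a product, and that a monotone subpath through $R_{i_\ell j_\ell}$ has precisely $a_\ell+b_\ell-1$ cells; both are immediate from the rectangular decomposition in the proof of~\eqref{eq:hookcontent_strip}, so I expect this to be the main (if mild) obstacle. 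As a sanity check, in the single-rectangle inverted-hook case this recovers $\sum_{M_1\le M-d+1}\binom{M_1+k+d-2}{M_1}=\binom{M+k}{k+d-1}$ per path by the hockey-stick identity, matching the earlier remark.
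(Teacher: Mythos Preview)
Your proposal is correct and follows essentially the same argument as the paper: set $q=1$ in~\eqref{eq:hookcontent_strip}, group the fillings by the per-subpath totals $M_\ell=|g^\ell|$, and count each factor by stars-and-bars using that $\gamma^\ell$ has $a_\ell+b_\ell-1$ cells. Your added justification that the subpaths partition the cells of~$\gamma$ (hence the product factorization) and your inverted-hook sanity check are welcome elaborations, but the route is the same.
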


\begin{proof}
We evaluate $q=1$ in \eqref{eq:hookcontent_strip}. If the sum of
entries in $g^\ell$ is $M_\ell$, and the path $\gamma^\ell$ has length
$a_\ell+b_\ell-1$, we have that the number of ways of choosing such
entries is $\binom{M_\ell + a_\ell+b_\ell-2}{M_\ell}$, and so the
result follows.
\end{proof}

\subsection{General skew shapes}
 In the general case, complements of excited diagrams correspond to tuples of
non-intersecting lattice paths (see proof of Lemma~\ref{lem:pleasant_is_subset_of_excited}).
In \cite{MPP2} we use a non-intersecting lattice path approach to
\emph{upgrade} the NHLF and Theorem~\ref{thm:skewSSYT} from border strips to
general skew shapes. However, this approach does not apply for SSYT of
bounded parts. This is because Proposition~\ref{eq:hookcontent_strip} shows that the bound $M$ on the entries of a SSYT $T$ of border
strip shape is encoded via Hillman--Grassl as restricted sums on an
array with support on lattice path $\ga$. Two intersecting paths with restricted sums of elements can be divided into two other intersecting paths with different total sums of elements, which may not satisfy the same restriction. In other words, the usual involution on intersections, that cancels the intersecting paths contribution from the Lindstr\"om--Gessel--Viennot determinant, cannot be applied here as we cannot restrict to the same subset of paths.

\bigskip

\section{Other formulas for the number of standard Young tableaux}\label{sec:compare}

\nin
In this section we give a quick review of several competing formulas for computing $f^{\la/\mu}$.

\subsection{The Jacobi--Trudi identity} \label{ss:compare-jt}
This classical formula (see e.g.~\cite[\S 7.16]{EC2}), allowing an efficient computation
of these numbers.  It generalizes to all Schur functions and thus gives
a natural $q$-analogue for SSYT.  On the negative side, this formula is not
\emph{positive}, nor does it give a $q$-analogue for RPP.

\subsection{The Littlewood--Richardson coefficients}\label{ss:compare-lr}
Equally celebrated is the positive (subtraction-free) formula
\[
f^{\lambda/\mu} \, = \, \sum_{\nu\vdash |\la/\mu|} \, c_{\mu,\nu}^{\lambda} \. f^{\nu}\.,
\]
where $c_{\mu,\nu}^{\lambda}$ are the {\em Littlewood--Richardson {\rm (LR-)}
coefficients}.
This formula has a natural $q$-analogue for SSYT, but not for RPP.
When LR-coefficients are defined appropriately, this $q$-analogue
does have a bijective proof by a combination of the
Hillman--Grassl bijection and the jeu-de-taquin map;
we omit the details (cf.~\cite{White}).

On the negative side, the LR-coefficients are notoriously hard to compute
both theoretically and practically (see~\cite{Nar}), which makes this
formula difficult to use in many applications.

\subsection{The Okounkov--Olshanski formula} \label{ss:compare-oo}
The following curious formula is of somewhat different nature.
It is also positive, which might not be immediately obvious.

Denote by $\RT(\mu,\ell)$ the set of {\em reverse semistandard tableaux}~$T$
of shape~$\mu$, which are arrays of positive integers of shape~$\mu$,
weakly decreasing in the rows and strictly decreasing in the columns,
and with entries between $1$ and~$\ell$.  The \emph{Okounkov--Olshanski
formula} \eqref{eq:OO} given in~\cite{OO} states:
\begin{equation*} \label{eq:OO} \tag{OOF}
f^{\lambda/\mu} \, = \, \frac{|\lambda/\mu|!}{\prod_{u\in [\lambda]} h(u)} \,
\sum_{T \in \RT(\mu,\ell(\lambda))} \, \prod_{u\in [\mu]} (\lambda_{T(u)} -
c(u))\.,
\end{equation*}
where $c(u)=j-i$ is the content of $u=(i,j)$.  The conditions on tableaux~$T$
imply that the numerators here non-negative.

\begin{example} \label{ex:OOF}
For $\lambda/\mu = (2^31/1^2)$, the reverse semistandard tableaux of shape~$(1^2)$ with
entries $\{1,2,3,4\}$ are
$$
\young(2,1)\,,  \quad \young(3,1)\,, \quad
\young(3,2)\,, \quad \young(4,1)\,, \quad \young(4,2)\,, \quad \young(4,3);
$$
and the contents are $c(0,0)=0$ and $c(1,0)=-1$. Thus, the Okounkov--Olshanski
formula gives:
\[f^{(2^31/1^2)} \, = \,
\frac{5!}{5\cdot 4\cdot 3\cdot 3\cdot 2\cdot 1\cdot 1} \bigl( 2\cdot 3 +
2\cdot 3 + 2\cdot 3 + 1\cdot 3 + 1\cdot 3 + 1\cdot 3 \bigr) \, = \. 9
\]
(cf.~Example~\ref{ex:excited-def}).
Note that the \eqref{eq:OO} is asymmetric. For example, for
$\lambda'/\mu'=(43/2)$, there are two reverse tableaux of shape $(2)$ with
entries $\{1,2\}$.
\end{example}

It is illustrative to compare the NHLF and the OOF for the shape
$\lambda/(1)$ since $f^{\lambda/(1)} = f^{\lambda}$. The
excited diagrams $\ED(\lambda/(1))$ consist of single boxes of the
diagonal $\dd_0$ of $\lambda$, thus the NHLF gives
\[
f^{\lambda/(1)} = \frac{(|\lambda|-1)!}{\prod_{u\in [\lambda]} h(u)}
\,\,\left[\sum_{i} h(i,i)\right].
\]
On the other hand, the reverse tableau $\RT((1),\ell(\lambda))$ are of the form
$T= \young(i)$ for $1\leq i\leq \ell(\lambda)$. For each of these
tableaux $T$ we have $\lambda_{T(1,1)}=\lambda_i$ and $c(1,1)=0$, thus the \eqref{eq:OO} gives
\[
f^{\lambda/(1)} = \frac{(|\lambda|-1)!}{\prod_{u\in [\lambda]} h(u)}
\,\,\left[\sum_{i=1}^{\ell(\lambda)} \lambda_i \right].
\]
Note that in both cases $\sum_{i} h(i,i) = \sum_i \lambda_i =
|\lambda|$, confirming that $f^{\lambda/(1)} = f^{\lambda}$, however
the summands involved in both formulas are different in number and kind.

Chen and Stanley~\cite{CS} found a SSYT \ts $q$-analogue of the \eqref{eq:OO}.
Their proof is algebraic; they also give a bijective proof for shapes $\lambda/(1)$.
It would be very interesting to find a bijective proof of the formula and its
$q$-analogue in full generality.  Note that again, there is no RPP $q$-analogue in
this case. On the positive side, the sizes $|\RT(\mu,\ell)|$ are easy to compute
as the number of bounded SSYT of the (rectangle) complement
shape~$\overline\mu$; we omit the details.

\subsection{Formulas from rules for equivariant Schubert structure constants}

In this section we sketch how there is a formula for $f^{\lambda/\mu}$
for every rule of equivariant Schubert structure constants, a
generalization of the Littlewood--Richardson coefficients.

The {\em equivariant Schubert structure constants}
$C_{\mu,\nu}^{\lambda}({\bf y}):=C_{\mu,\nu}^{\lambda}(y_1,\ldots,y_n)$ are polynomials in
$\mathbb{Z}[y_1,\ldots,y_n]$ of degree $|\mu|+|\nu| - |\lambda|$
defined by the multiplication of {\em equivariant Schubert classes}
$\sigma_{\mu}$ and $\sigma_{\nu}$ in the $T$-equivariant cohomology
ring $H_T(X)$ (see \cite{KT,TY,Knu}). When $|\mu|+|\nu| = |\lambda|$
the degree zero polynomials $C_{\mu,\nu}^{\lambda}({\bf y})$ equal the Littlewood--Richardson
coefficients $c_{\mu,\nu}^{\lambda}$.

The Kostant polynomial $[X_w]|_v = \sigma_w(v)$ from
Section~\ref{sec:algproof} for Grassmannian permutations $w \preceq v$
corresponding to partitions $\mu\subseteq \lambda \subset d \times
(n-d)$ is also equal to $C_{\mu,\lambda}^{\lambda}({\bf y})$,
see~\cite[\S5]{Bil} and~\cite{Knu}.

The proof of the NHLF outlined by Naruse in \cite{Strobl} is based on the
following two identities.

\begin{lemma} \label{lem:clamlamlam}
\begin{equation} \label{eq:clamlamlam}
(-1)^{|\lambda|} \left.C_{\lambda,\lambda}^{\lambda}({\bf y}) \right|_{y_i=i} =
\prod_{u\in [\lambda]} h(u).
\end{equation}
\end{lemma}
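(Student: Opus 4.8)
The plan is to recognize that $C_{\lambda,\lambda}^{\lambda}(\mathbf{y})$ is exactly the Kostant polynomial $[X_v]\big|_v$ attached to the Grassmannian permutation $v$ of $\lambda$. Indeed, the identification recalled above, $C_{\mu,\lambda}^{\lambda}(\mathbf{y}) = [X_w]\big|_v$ with $w\preceq v$ corresponding to $\mu\subseteq\lambda$, specializes at $\mu=\lambda$ to $w=v$, so $C_{\lambda,\lambda}^{\lambda}(\mathbf{y}) = [X_v]\big|_v$. I would then apply Theorem~\ref{thm:ikna1} with $\mu=\lambda$. Since every excited diagram in $\ED(\lambda/\mu)$ is a subset of $[\lambda]$ of size $|\mu|$, the set $\ED(\lambda/\lambda)$ contains the single diagram $[\lambda]$ (there are no active cells once all of $[\lambda]$ is occupied), so the excited-diagram sum collapses to one term:
\[
C_{\lambda,\lambda}^{\lambda}(\mathbf{y}) \, = \, [X_v]\big|_v \, = \, \prod_{(i,j)\in[\lambda]} \bigl(y_{v(d+j)} - y_{v(d-i+1)}\bigr).
\]

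Next I would substitute $y_p = p$ into each linear factor. The key input is the identity $v(d+1-i) - v(d+j) = h(i,j)$. This follows from two facts already in place in Section~\ref{sec:algproof}: equation~\eqref{eq:vequal} gives $v(d+1-i) = \lambda_i + d + 1 - i$, while the evaluation displayed just before~\eqref{eq:deteq3} records $v(d+j) = d - \lambda'_j + j$ (read off from the exponents of $q$ in $q^{\lambda_i+d+1-i} - q^{d-\lambda'_j+j}$). Subtracting, $v(d+1-i) - v(d+j) = \lambda_i - i + \lambda'_j - j + 1 = h(i,j)$, precisely the hook length of $(i,j)$. Since the factors in Theorem~\ref{thm:ikna1} appear in the opposite order, each one evaluates to $y_{v(d+j)} - y_{v(d-i+1)}\big|_{y_p=p} = -h(i,j)$.

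Combining these gives
\[
C_{\lambda,\lambda}^{\lambda}(\mathbf{y})\big|_{y_i=i} \, = \, \prod_{(i,j)\in[\lambda]} \bigl(-h(i,j)\bigr) \, = \, (-1)^{|\lambda|}\prod_{u\in[\lambda]} h(u),
\]
and multiplying by $(-1)^{|\lambda|}$ yields the claimed equation~\eqref{eq:clamlamlam}. The only points requiring attention are the collapse of the excited-diagram sum to a single term at $\mu=\lambda$ and the clean hook-length value of the linear factors; both are immediate consequences of the machinery already established, so I do not expect any genuine obstacle here—this lemma is essentially a corollary of Theorem~\ref{thm:ikna1} evaluated at the point $y_i=i$.
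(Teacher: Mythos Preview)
Your proposal is correct and follows essentially the same approach as the paper's proof: apply Theorem~\ref{thm:ikna1} with $\mu=\lambda$, observe that $\ED(\lambda/\lambda)=\{[\lambda]\}$ so the sum collapses to a single product, and evaluate each linear factor at $y_p=p$ to obtain $-h(i,j)$. The paper's version is terser, leaving the hook-length computation implicit, while you spell out the identity $v(d+1-i)-v(d+j)=h(i,j)$; but the argument is the same.
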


\begin{proof}
We use Theorem~\ref{thm:ikna1} for $\mu=\lambda \subseteq d \times (n-d)$, since the only
excited diagram in $\ED(\lambda/\lambda)$ is $[\lambda]$ then
\[
(-1)^{|\lambda|} C_{\lambda,\lambda}^{\lambda}({\bf y}) = \prod_{(i,j)\in
    [\lambda]} (y_{d+j-\lambda'_j} - y_{\lambda_i+d-i+1}).
\]
Evaluating this equation at $y_i=i$ gives the desired formula.
\end{proof}

\begin{lemma}[Naruse \cite{Strobl}, see also \cite{MPP2}] \label{lem:key2NHLF}
\[
(-1)^{|\lambda/\mu|} \,
\left.\frac{C_{\mu,\lambda}^{\lambda}({\bf
      y})}{C_{\lambda,\lambda}^{\lambda}({\bf y})} \,\,\right|_{y_i=i}   \,=\,
\frac{f^{\lambda/\mu}}{|\lambda/\mu|!}.
\]
\end{lemma}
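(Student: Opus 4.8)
The plan is to evaluate both the numerator $C_{\mu,\lambda}^{\lambda}$ and the denominator $C_{\lambda,\lambda}^{\lambda}$ at the point $y_i=i$ using the excited-diagram expansion of a restricted Schubert class, and then to recognize the resulting sum via the NHLF. First I would recall the identification $C_{\mu,\lambda}^{\lambda}({\bf y}) = [X_w]\big|_v$ for the Grassmannian permutations $w \preceq v$ attached to $\mu \subseteq \lambda \subseteq d\times(n-d)$, so that Theorem~\ref{thm:ikna1} applies verbatim. Using \eqref{eq:vequal} together with $v(d+j)=d+j-\lambda'_j$ (read off from the boundary of $\lambda$ exactly as in the proof of Theorem~\ref{thm:skewSSYT}), each linear factor specializes as
\[
\bigl(y_{v(d+j)} - y_{v(d-i+1)}\bigr)\big|_{y_p=p} \,=\, (d+j-\lambda'_j) - (\lambda_i+d+1-i) \,=\, -\,h(i,j).
\]

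Consequently $C_{\mu,\lambda}^{\lambda}\big|_{y_i=i} = (-1)^{|\mu|}\sum_{D\in\ED(\lambda/\mu)}\prod_{(i,j)\in D} h(i,j)$, while Lemma~\ref{lem:clamlamlam} (the case $\mu=\lambda$, whose only excited diagram is $[\lambda]$) gives $C_{\lambda,\lambda}^{\lambda}\big|_{y_i=i} = (-1)^{|\lambda|}\prod_{u\in[\lambda]} h(u)$. Dividing, the hook factors over $D$ cancel against those in $[\lambda]$, leaving the inverse hooks over the complement and a residual sign $(-1)^{|\mu|-|\lambda|}=(-1)^{|\lambda/\mu|}$, which is absorbed by the prefactor $(-1)^{|\lambda/\mu|}$:
\[
(-1)^{|\lambda/\mu|}\,\frac{C_{\mu,\lambda}^{\lambda}({\bf y})}{C_{\lambda,\lambda}^{\lambda}({\bf y})}\bigg|_{y_i=i} \,=\, \sum_{D\in\ED(\lambda/\mu)}\ \prod_{u\in[\lambda]\setminus D}\frac{1}{h(u)}.
\]
The right-hand side is exactly the excited-diagram sum appearing in the NHLF (Theorem~\ref{thm:IN}), which equals $f^{\lambda/\mu}/|\lambda/\mu|!$; this completes the argument.

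One point deserves emphasis regarding the logical role of this lemma. In Naruse's original argument the identity is established \emph{independently} of the NHLF and then combined with Lemma~\ref{lem:clamlamlam} and Theorem~\ref{thm:ikna1} to \emph{deduce} it; the route above runs the reverse implication and is legitimate here only because the NHLF has already been proved (three times) earlier in the paper. For Naruse's self-contained derivation one must instead compute the ratio $C_{\mu,\lambda}^{\lambda}/C_{\lambda,\lambda}^{\lambda}$ directly in $T$-equivariant cohomology by iterating the equivariant Chevalley formula: each application adds a single box, so the iteration yields a weighted sum over saturated chains from $w$ to $v$ in the Bruhat order, and for Grassmannian permutations these chains are in bijection with $\SYT(\lambda/\mu)$, producing the count $f^{\lambda/\mu}$. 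I expect the main obstacle to be controlling the equivariant (non-covering) terms generated at each step of this iteration and verifying that, after the specialization $y_i=i$ and division by $C_{\lambda,\lambda}^{\lambda}$, they assemble into the normalization $1/|\lambda/\mu|!$ — this is the delicate computation carried out in \cite{Strobl} and \cite{MPP2}.
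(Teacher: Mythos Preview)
The paper does not supply its own proof of this lemma: it is stated with attribution to Naruse~\cite{Strobl} (and~\cite{MPP2}) and used in Section~\ref{sec:compare} only to explain the equivariant-cohomology origin of the NHLF and to derive the variant formulas via puzzles and edge-labeled tableaux. So there is no in-paper argument to compare against.

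Your derivation is correct. The identification $C_{\mu,\lambda}^{\lambda}({\bf y})=[X_w]\big|_v$ together with Theorem~\ref{thm:ikna1} gives the excited-diagram expansion; the specialization $y_{v(d+j)}-y_{v(d+1-i)}\big|_{y_p=p}=(d+j-\lambda'_j)-(\lambda_i+d+1-i)=-h(i,j)$ is exactly the computation used (with $q^p$ in place of~$p$) in the proof of Theorem~\ref{thm:skewSSYT}; the denominator is Lemma~\ref{lem:clamlamlam}; and the quotient is the NHLF sum, which equals $f^{\lambda/\mu}/|\lambda/\mu|!$ by Theorem~\ref{thm:IN}. You are also right about the logic: in this paper the NHLF is proved independently (via Proposition~\ref{cor:getnaruse} and twice more in \S\ref{sec:HGRPP}), so invoking it here is not circular. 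Naruse's own route---the Chevalley iteration you sketch in your final paragraph---runs the implication the other way and is what is carried out in~\cite{MPP2}; it is not reproduced in this paper.
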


First, we swiftly recover the hook-length formula for $f^{\lambda}$.

\begin{corollary}
Lemma~\ref{lem:key2NHLF} implies the HLF \eqref{eq:hlf}.
\end{corollary}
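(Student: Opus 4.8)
The plan is to specialize Lemma~\ref{lem:key2NHLF} to the straight-shape case $\mu=\varnothing$, where $\lambda/\mu=\lambda$ and $f^{\lambda/\varnothing}=f^{\lambda}$, and then to clear the remaining equivariant structure constant using Lemma~\ref{lem:clamlamlam}. Since both lemmas are already available, the entire argument reduces to a substitution together with one bookkeeping observation about signs.

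First I would observe that $C_{\varnothing,\lambda}^{\lambda}({\bf y})=1$. Indeed, by the description recalled above this polynomial has degree $|\varnothing|+|\lambda|-|\lambda|=0$, hence is a constant, equal to the ordinary Littlewood--Richardson coefficient $c_{\varnothing,\lambda}^{\lambda}$; and since $\sigma_{\varnothing}$ is the unit of the equivariant cohomology ring we have $\sigma_{\varnothing}\ts\sigma_{\lambda}=\sigma_{\lambda}$, so this coefficient equals~$1$. Substituting $\mu=\varnothing$ into Lemma~\ref{lem:key2NHLF} then gives
\[
(-1)^{|\lambda|}\,\left.\frac{1}{C_{\lambda,\lambda}^{\lambda}({\bf y})}\right|_{y_i=i}\,=\,\frac{f^{\lambda}}{|\lambda|!}\,.
\]

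Next I would invoke Lemma~\ref{lem:clamlamlam} to evaluate the denominator, namely $\left.C_{\lambda,\lambda}^{\lambda}({\bf y})\right|_{y_i=i}=(-1)^{|\lambda|}\prod_{u\in[\lambda]}h(u)$. Plugging this in, the factor $(-1)^{|\lambda|}$ meets its inverse $1/(-1)^{|\lambda|}=(-1)^{|\lambda|}$, and the product of the two signs is $(-1)^{2|\lambda|}=1$, yielding
\[
\frac{f^{\lambda}}{|\lambda|!}\,=\,\frac{1}{\prod_{u\in[\lambda]}h(u)}\,,
\]
which is exactly the HLF~\eqref{eq:hlf} once we write $n=|\lambda|$. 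The argument is essentially a direct substitution; the only point needing a moment's care is the evaluation $C_{\varnothing,\lambda}^{\lambda}({\bf y})=1$, which is where one uses that the structure constant degenerates in degree zero to the (trivial) Littlewood--Richardson coefficient. I expect this to be the sole conceptual step, with the rest being the cancellation of signs.
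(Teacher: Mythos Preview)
Your proof is correct and follows essentially the same approach as the paper: set $\mu=\varnothing$ in Lemma~\ref{lem:key2NHLF}, use $C_{\varnothing,\lambda}^{\lambda}({\bf y})=1$, and apply Lemma~\ref{lem:clamlamlam} (equation~\eqref{eq:clamlamlam}) to evaluate the denominator and cancel the signs. Your justification of $C_{\varnothing,\lambda}^{\lambda}({\bf y})=1$ is slightly more detailed than the paper's, which simply asserts it, but the argument is otherwise identical.
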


\begin{proof}
By combining Lemma~\ref{lem:key2NHLF} for $\mu=\varnothing$,
\eqref{eq:clamlamlam}, and  using
$C_{\varnothing,\lambda}^{\lambda}({\bf y})=1$ we obtain the HLF.
\end{proof}

Second, we obtain the NHLF in the following way. The excited diagrams that appear in the NHLF come from the rule
to compute $C_{\mu,\lambda}^{\lambda}({\bf y}) = [X_w]\ts \bigl|_v$ in Theorem~\ref{thm:ikna2}.
Moreover, by Lemma \ref{lem:key2NHLF} any rule to compute
$C_{\mu,\nu}^{\lambda}({\bf y})$ gives a formula for
$f^{\lambda/\mu}$. Below we outline two such rules: the \emph{Knutson--Tao
puzzle rule}~\cite{KT} and the \emph{Thomas--Yong jeu-de-taquin rule}~\cite{TY}.

\medskip

\subsubsection{{\bf Knutson--Tao puzzle rule}} \label{sec:KTpuzzles}

Consider the following eight puzzle pieces, the last one is called the
equivariant piece, the others are called ordinary pieces:
\begin{center}
\includegraphics[scale=0.7]{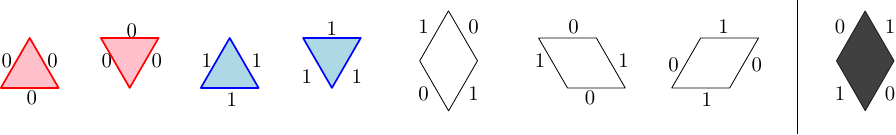}
\end{center}

Given partitions $\lambda,\mu,\nu \subseteq d\times (n-d)$ with $|\lambda|\geq |\mu| + |\nu|$
we consider a tilling of the triangle with edges labelled by the binary
representation of the subsets corresponding to $\nu,
\mu,\lambda$ in $\binom{[n]}{d}$ (clockwise starting from the left edge). To each
equivariant piece in a puzzle we associate coordinates $(i,j)$ coming
from the coordinates on the horizontal edge of the triangle form SW
and SE lines coming from the piece:
\begin{center}
\includegraphics[scale=0.8]{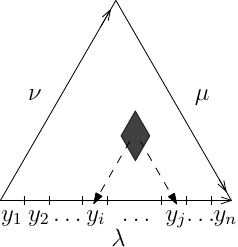}
\end{center}
We denote the piece with its coordinates by $p_{ij}$. The
weight $wt(P)$ of a puzzle $P$ is
\[
wt(P) \, = \, \prod_{p_{ij} \in P;\,\text{eq.}} \, (y_i-y_j),
\]
where the product is over equivariant pieces. Let
$\underset{\lambda}{{}^{\nu}\Delta^{\mu}}$ be the set of puzzles of a
triangle  boundary $\nu,\mu,\lambda$ (clockwise starting from the left
edge of the triangle). Knutson and Tao \cite{KT}
showed that $C^{\lambda}_{\mu,\nu}({\bf y})$ is the weighted sum of puzzles in
$\underset{\lambda}{{}^{\nu}\Delta^{\mu}}$.

\begin{theorem}[Knutson, Tao \cite{KT}]  For all $\la,\mu,\nu$ as above, we have:
\[
C^{\lambda}_{\mu,\nu}({\bf y}) \, = \, \sum_{P \in \, \underset{\lambda}{{}^{\nu}\Delta^{\mu}}} \. wt(P)\ts,
\]
where the sum is over puzzles of a triangle with boundary $\nu,\mu,\lambda$.
\end{theorem}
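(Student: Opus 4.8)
The plan is to follow the original argument of Knutson and Tao: I would show that both sides of the claimed identity satisfy the same recursion together with the same boundary values, and then invoke a uniqueness principle for the equivariant structure constants. Write $P^{\lambda}_{\mu,\nu}({\bf y}) := \sum_{P} wt(P)$ for the puzzle sum on the right-hand side. The key input is that the family $\bigl(C^{\lambda}_{\mu,\nu}({\bf y})\bigr)$ is uniquely characterized among families of polynomials in $\mathbb{Z}[y_1,\ldots,y_n]$ by: (a) homogeneity of degree $|\mu|+|\nu|-|\lambda|$; (b) the vanishing $C^{\lambda}_{\mu,\nu}=0$ unless $\mu,\nu\subseteq\lambda$; (c) the diagonal values $C^{\lambda}_{\lambda,\nu}({\bf y}) = \sigma_{\nu}|_{\lambda}$, obtained by restricting $\sigma_{\nu}$ to the torus-fixed point indexed by~$\lambda$; and (d) a linear recursion coming from the module structure. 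Thus it suffices to verify that $P^{\lambda}_{\mu,\nu}$ satisfies (a)--(d) as well.

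Properties (a)--(c) for the puzzle sum are essentially bookkeeping. Each equivariant piece contributes a linear factor $(y_i-y_j)$, so $wt(P)$ is homogeneous of degree equal to the number of equivariant pieces, which one checks equals $|\mu|+|\nu|-|\lambda|$ by counting tiles along the boundary; the vanishing (b) holds because no valid tiling of the triangle exists when the boundary labels are incompatible with $\mu,\nu\subseteq\lambda$; and for (c), with $\mu=\lambda$ there is a single puzzle whose equivariant pieces reproduce the known product formula for $\sigma_{\nu}|_{\lambda}$ (the same kind of localization product that appears through Billey's formula, cf.\ Theorem~\ref{thm:ikna1}).

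The heart of the proof is the recursion (d), and establishing it on the combinatorial side is the main obstacle. On the algebraic side the recursion is derived by multiplying the identity $\sigma_{\mu}\cdot\sigma_{\nu} = \sum_{\lambda} C^{\lambda}_{\mu,\nu}\sigma_{\lambda}$ by a distinguished divisor class and applying the equivariant Chevalley formula together with associativity; this produces a linear relation whose coefficients are differences $(y_a-y_b)$, expressing a weighted combination of $C^{\lambda}_{\mu,\nu}$ in terms of structure constants with indices differing by a single box plus an \emph{equivariant correction} term. On the puzzle side one must reproduce this exact relation by a local surgery near a chosen corner of the triangle: classify the finitely many tilings of a small neighborhood of that corner, and set up a deletion/insertion of a corner rhombus or equivariant piece that matches puzzles of size $n$ with puzzles of size $n-1$. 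The bijective bookkeeping here --- in particular tracking how the coordinates $(i,j)$ of the equivariant pieces transform and confirming that their weights assemble into precisely the $(y_a-y_b)$ prefactors and the correction term --- is delicate, and this is where essentially all the real work lies.

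Finally I would close the induction with the base case. When $\mu=\varnothing$ both sides reduce to $\delta_{\lambda\nu}$: there is a unique puzzle, containing no equivariant pieces, while $C^{\lambda}_{\varnothing,\nu}=\delta_{\lambda\nu}$ by definition. Combined with the recursion, uniqueness forces $P^{\lambda}_{\mu,\nu}({\bf y})=C^{\lambda}_{\mu,\nu}({\bf y})$ for all $\lambda,\mu,\nu$. As a consistency check, restricting to the top degree $|\mu|+|\nu|=|\lambda|$ eliminates all equivariant pieces and recovers the classical (non-equivariant) puzzle rule for the Littlewood--Richardson coefficients $c^{\lambda}_{\mu,\nu}$, matching $\S$\ref{ss:compare-lr}.
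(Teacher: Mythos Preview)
The paper does not prove this theorem at all; it is quoted as a result of Knutson and Tao~\cite{KT} and used as a black box to derive the corollary~(KTF). So there is no proof in the paper to compare your proposal against.

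That said, your outline is broadly faithful to the strategy of the original Knutson--Tao paper: characterize the equivariant structure constants by a recursion plus initial conditions, and verify that the puzzle sum satisfies the same data. One detail is off. In your item~(c) you assert that for $\mu=\lambda$ there is a \emph{single} puzzle reproducing $\sigma_{\nu}|_{\lambda}$. That is not the case in general: $C^{\lambda}_{\lambda,\nu}=\sigma_{\nu}|_{\lambda}$ is itself a sum with one term per excited diagram of $\lambda/\nu$ (equivalently, per reduced subword), cf.\ Theorem~\ref{thm:ikna1} and Remark~\ref{rem:BilleyF}, and the puzzle count matches that. The unique-puzzle phenomenon the paper mentions is for the triple $\mu=\nu=\lambda$, not for $\mu=\lambda$ with $\nu$ arbitrary. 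This does not break your plan --- item~(c) can still be verified by matching the puzzle sum to Billey's formula term by term --- but the claim as written is incorrect. Your honest acknowledgment that the corner-surgery verification of the recursion is ``where essentially all the real work lies'' is accurate; without carrying that out, what you have is a plan rather than a proof.
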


\begin{corollary}  For all skew shapes $\la/\mu$ as above, we have:
\begin{equation*} \tag{KTF}
f^{\lambda/\mu} \, = \, \frac{|\lambda/\mu|!}{\prod_{u\in [\lambda]} h(u)} \.
\sum_{P \in \, \underset{\lambda}{{}^{\lambda}\Delta^{\mu}}} \,\prod_{p_{ij} \in P;\,\text{eq.}} (j-i)\ts.
\end{equation*}
\end{corollary}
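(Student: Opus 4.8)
The plan is to derive (KTF) by the same mechanism that produced the NHLF from Theorem~\ref{thm:ikna2}, but now feeding in the Knutson--Tao puzzle rule in place of the excited-diagram rule. The two structural inputs are Naruse's identities, Lemma~\ref{lem:key2NHLF} and Lemma~\ref{lem:clamlamlam}, and the only new ingredient is the puzzle evaluation of $C^{\lambda}_{\mu,\lambda}({\bf y})$. First I would invoke the Knutson--Tao theorem in the special case $\nu=\lambda$ to write
\[
C^{\lambda}_{\mu,\lambda}({\bf y}) \, = \, \sum_{P \in \, \underset{\lambda}{{}^{\lambda}\Delta^{\mu}}} \. wt(P), \qquad wt(P) \, = \, \prod_{p_{ij}\in P;\,\text{eq.}} (y_i-y_j).
\]

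Next I would specialize $y_i=i$. Under this substitution each equivariant factor $(y_i-y_j)$ becomes $(i-j)$, so that $\left.wt(P)\right|_{y_i=i} = \prod_{p_{ij}} (i-j)$. The crucial bookkeeping step is to count the equivariant pieces in a puzzle: since $C^{\lambda}_{\mu,\lambda}$ is homogeneous of degree $|\mu|+|\lambda|-|\lambda|=|\mu|$ and the puzzle formula is positive (subtraction-free), every puzzle $P\in\underset{\lambda}{{}^{\lambda}\Delta^{\mu}}$ must contribute a monomial of degree exactly $|\mu|$, i.e.\ contain exactly $|\mu|$ equivariant pieces. Consequently $\prod_{p_{ij}}(i-j) = (-1)^{|\mu|}\prod_{p_{ij}}(j-i)$, and therefore
\[
\left.C^{\lambda}_{\mu,\lambda}({\bf y})\right|_{y_i=i} \, = \, (-1)^{|\mu|} \sum_{P \in \, \underset{\lambda}{{}^{\lambda}\Delta^{\mu}}} \. \prod_{p_{ij}\in P;\,\text{eq.}} (j-i).
\]

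Finally I would substitute this into Lemma~\ref{lem:key2NHLF}, evaluating the denominator by Lemma~\ref{lem:clamlamlam} as $\left.C^{\lambda}_{\lambda,\lambda}\right|_{y_i=i} = (-1)^{|\lambda|}\prod_{u\in[\lambda]} h(u)$, to obtain
\[
\frac{f^{\lambda/\mu}}{|\lambda/\mu|!} \, = \, (-1)^{|\lambda/\mu|} \cdot \frac{(-1)^{|\mu|}\sum_{P} \prod_{p_{ij}}(j-i)}{(-1)^{|\lambda|}\prod_{u\in[\lambda]} h(u)}.
\]
The last task is a sign chase: since $|\lambda/\mu|+|\mu|=|\lambda|$, the numerator signs combine to $(-1)^{|\lambda|}$, which cancels the $(-1)^{|\lambda|}$ in the denominator, leaving $+1$. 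Rearranging yields exactly the stated formula. The only genuinely delicate point in this argument is the claim that every puzzle has precisely $|\mu|$ equivariant pieces; everything else is a direct specialization and sign manipulation. I expect that verifying this equivariant-piece count—either from homogeneity and positivity as above, or by a direct inspection of which boundary data $\underset{\lambda}{{}^{\lambda}\Delta^{\mu}}$ force—will be the main thing to pin down carefully.
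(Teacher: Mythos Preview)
Your proposal is correct and is exactly the derivation the paper intends: the corollary is stated immediately after the Knutson--Tao theorem with no explicit proof, precisely because it follows by plugging that theorem into Lemmas~\ref{lem:key2NHLF} and~\ref{lem:clamlamlam} and specializing $y_i=i$, which is what you do. Your explicit sign chase and the homogeneity/positivity argument for why each puzzle carries exactly $|\mu|$ equivariant pieces are the natural details to fill in; the latter can also be read off directly from the piece-counting in a puzzle, but your degree argument is perfectly adequate.
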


Knutson and Tao also showed that there is a unique puzzle $P_{\lambda}$ with boundary
$\underset{\lambda}{{}^{\lambda}\Delta^{\lambda}}$.  This gives us the following interesting
version of the HLF:

\begin{corollary} For all partitions~$\la$ with $\ell(\la)+\la_1=n$, we  have:
$$\prod_{p_{ij} \in P_{\lambda};\,\text{eq.}} \. (j-i) \, = \, \prod_{u\in [\lambda]} \.h(u)\ts.$$
\end{corollary}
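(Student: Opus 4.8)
The plan is to combine the Knutson--Tao puzzle rule with Lemma~\ref{lem:clamlamlam}, exploiting the uniqueness of the puzzle $P_\lambda$. First I would apply the Knutson--Tao theorem in the special case $\mu=\nu=\lambda$, which expresses $C^{\lambda}_{\lambda,\lambda}(\mathbf{y})$ as a weighted sum of puzzles with boundary $\underset{\lambda}{{}^{\lambda}\Delta^{\lambda}}$. Since there is a unique such puzzle, namely $P_\lambda$, this sum collapses to a single term:
\[
C^{\lambda}_{\lambda,\lambda}(\mathbf{y}) \, = \, wt(P_\lambda) \, = \, \prod_{p_{ij}\in P_\lambda;\,\text{eq.}} (y_i - y_j)\ts.
\]

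Next I would specialize at $y_i = i$. On one hand, the right-hand side above becomes $\prod_{p_{ij}\in P_\lambda;\,\text{eq.}} (i-j)$. On the other hand, Lemma~\ref{lem:clamlamlam} gives $C^{\lambda}_{\lambda,\lambda}(\mathbf{y})\bigl|_{y_i=i} = (-1)^{|\lambda|}\prod_{u\in[\lambda]} h(u)$. Equating the two evaluations yields
\[
\prod_{p_{ij}\in P_\lambda;\,\text{eq.}} (i-j) \, = \, (-1)^{|\lambda|}\prod_{u\in[\lambda]} h(u)\ts.
\]

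The remaining step---and the one requiring the most care---is the sign bookkeeping. Writing $N$ for the number of equivariant pieces in $P_\lambda$, the left-hand side equals $(-1)^N \prod_{p_{ij};\,\text{eq.}} (j-i)$, so the claimed identity is equivalent to the parity statement $N \equiv |\lambda| \pmod 2$; in fact I would prove the sharper $N = |\lambda|$ by a degree count. The structure constant $C^{\lambda}_{\mu,\nu}(\mathbf{y})$ is homogeneous of degree $|\mu|+|\nu|-|\lambda|$, so $C^{\lambda}_{\lambda,\lambda}(\mathbf{y})$ has degree $|\lambda|$; since each equivariant piece contributes a single linear factor $(y_i-y_j)$ to $wt(P_\lambda)$ while the ordinary pieces contribute nothing, the degree of $wt(P_\lambda)$ is exactly $N$. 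Hence $N = |\lambda|$, the two signs $(-1)^N$ and $(-1)^{|\lambda|}$ cancel, and the desired formula
\[
\prod_{p_{ij}\in P_\lambda;\,\text{eq.}} (j-i) \, = \, \prod_{u\in[\lambda]} h(u)
\]
follows. The main obstacle is simply matching the sign conventions in the weight $wt(P)$ and in Lemma~\ref{lem:clamlamlam} correctly; once $N=|\lambda|$ is established the conclusion is immediate.
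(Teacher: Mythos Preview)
Your proposal is correct and follows exactly the approach the paper intends: combine the Knutson--Tao theorem (which, by uniqueness of $P_\lambda$, gives $C^{\lambda}_{\lambda,\lambda}(\mathbf y)=wt(P_\lambda)$) with Lemma~\ref{lem:clamlamlam}, then specialize $y_i=i$. Your added sign analysis via the degree count $N=|\lambda|$ is the right way to close the argument, and is implicitly what the paper is relying on.
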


\begin{example}
For $\lambda=(2^31)$ the puzzle $P_{2^31}$ with boundary
$\underset{2^31}{{}^{2^31}\Delta^{2^31}}$~ is:

\begin{center}
.
\end{center}
\medskip

\noindent
Thus,
\[
f^{(2^31/1^2)} \, = \, \frac{5!}{5\cdot 4\cdot 3\cdot 3\cdot 2\cdot 1\cdot 1}
\bigl( 2\cdot 3 + 2\cdot 3 + 2\cdot 3 + 1\cdot 3 + 1\cdot 3 + 1\cdot 3 \bigl)\. = \.9\ts.
\]
This agrees term by term with the \eqref{eq:OO} (cf.~Example~\ref{ex:OOF}) and is
different from NHLF (cf.~Example~\ref{ex:excited-def}).  In full generality,
the connection is conjectured in~\cite{MPP3}.  Thus, both advantages and disadvantages
of~\eqref{eq:OO} possibly apply in this case as well.
\end{example}

\medskip

\subsubsection{{\bf Thomas--Yong jeu-de-taquin rule}}
Let $n=|\la|$. Consider all skew tableaux $T$ of shape $\lambda/\mu$ with labels
$1,2,\ldots,n$ where each label is either inside a box alone or on a
horizontal edge, not necessarily alone. The labels increase along
columns including the edge labels and along rows only for the
cells. Let $\TYYT(\lambda/\mu, n)$ be the set of these
tableaux. Denote by $T_{\lambda}$ be the {\em row superstandard tableau}
of shape $\lambda$ whose entries are
$1,2,\ldots,\lambda_1$ in the first row,
$\lambda_1+1,\lambda_1+2,\ldots,\lambda_1+\lambda_2$ in the second
row, etc.

Next we perform jeu-de-taquin on each of these tableau where an edge
label can move to an empty box above it, and no label slides to a
horizontal edge. In this jeu-de-taquin
procedure each edge label~$r$ starts right below a box $u_r$ and ends
in a box at row $i_r$. We associate a weight to each labelled edge~$r$
given by $y_{c(u_r)+\ell(\lambda)} - y_{\lambda_{i_r} -i_r +
  \ell(\lambda)+1}$. Denote by $\EqSYT(\lambda,\mu)$ the set of
  tableaux $T \in \TYYT(\lambda/\mu,n)$ that rectify to
$T_{\lambda}$.  Define the weight of each such $T$ by
\[
wt(T) \, = \, \prod_{r=1}^n \. \bigl(y_{c(u_r)+\ell(\lambda)} \ts - \ts y_{\lambda_{i_r} -i_r +\ell(\lambda)+1}\bigr).
\]

\smallskip

\begin{theorem}[Thomas, Yong \cite{TY}]
\[
C_{\mu,\lambda}^{\lambda}({\bf y}) \. = \. \sum_{T\in \EqSYT(\la,\mu)} \. wt(T)\..
\]
\end{theorem}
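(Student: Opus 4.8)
The statement is the equivariant Littlewood--Richardson rule of Thomas--Yong specialized to $\nu=\lambda$, so the plan is to follow their general strategy and specialize. The starting point is that the equivariant structure constants $C_{\mu,\nu}^{\lambda}({\bf y})$ are uniquely characterized inside the ring $H_T(X)$ by a short list of properties: polynomiality with the correct degree $|\mu|+|\nu|-|\lambda|$, the normalization $C_{\emp,\nu}^{\lambda}=\delta_{\nu,\lambda}$, and an equivariant Pieri/Monk recurrence obtained by multiplying $\sigma_\mu\sigma_\nu$ by the class of a special Schubert divisor and invoking associativity. Thus it suffices to show that the proposed sum $\sum_{T\in\EqSYT(\la,\mu)} wt(T)$ satisfies the same recurrence with the same initial conditions; uniqueness then forces equality. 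For the specific case $\nu=\lambda$ one has the further luxury of an independent cross-check, since $C_{\mu,\lambda}^{\lambda}({\bf y})=[X_w]\ts\bigl|_v$ is computed by Theorem~\ref{thm:ikna2} as a factorial Schur function and hence as a sum over excited diagrams; matching the two positive expressions gives a second route to verify the rule once it is well defined.

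The first real task is to make the combinatorial side well defined. This rests on the central structural result behind the rule: an equivariant confluence (diamond) lemma for jeu-de-taquin on the edge-labeled tableaux of $\TYYT(\lambda/\mu,n)$. Concretely, one must show that performing inner jdt slides in any order yields the same rectified straight-shape tableau, and, crucially, that the product of edge weights $\prod_{r} \bigl(y_{c(u_r)+\ell(\lambda)}-y_{\lambda_{i_r}-i_r+\ell(\lambda)+1}\bigr)$ is unchanged throughout the process. The argument is local: one classifies the two-slide configurations -- including the delicate cases where several labels share a horizontal edge -- and checks that both orders of execution produce identical outputs and identical accumulated weights. Once confluence holds, the condition ``$T$ rectifies to $T_\lambda$'' is unambiguous and $wt(T)$ becomes a genuine invariant of the rectification class, so $\EqSYT(\la,\mu)$ and its weight are well defined.

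With invariance in hand, I would verify the Pieri-type recurrence. The point is that an equivariant jdt slide, together with the passage between $\nu$ and a near-by shape, decomposes the relevant tableau set into pieces matching exactly the terms of the equivariant Pieri rule: the ``ordinary'' moves reproduce the classical Littlewood--Richardson recursion, while each equivariant edge label contributes a factor $(y_i-y_j)$ that matches the diagonal difference appearing in the equivariant Pieri coefficient. Checking this termwise, and then specializing to $\nu=\lambda$ with superstandard target $T_\lambda$, yields the stated identity for $C_{\mu,\lambda}^{\lambda}({\bf y})$.

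The main obstacle is plainly the equivariant confluence lemma of the second paragraph: the classical diamond lemma for jeu-de-taquin must be upgraded so that the extra equivariant weights are conserved, and the bookkeeping is genuinely subtle when horizontal edges carry repeated labels and when a slide changes which box an edge label sits beneath, thereby altering its local weight while preserving the global product. Everything downstream -- uniqueness of the structure constants, the termwise match with the Pieri recurrence, and the optional cross-check against the excited-diagram formula for $\nu=\lambda$ -- is comparatively routine once this invariance is established.
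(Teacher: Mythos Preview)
This theorem is not proved in the paper; it is cited as an external result of Thomas and Yong~\cite{TY} and used as a black box to derive Corollary~\ref{cor:thomas_yong} via Lemma~\ref{lem:key2NHLF}. So there is no ``paper's own proof'' to compare against.

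Your sketch is a reasonable outline of the Thomas--Yong strategy itself: establish a weight-preserving confluence lemma for equivariant jeu-de-taquin on edge-labeled tableaux, then verify that the resulting weighted sum satisfies the same recurrence and initial conditions that uniquely characterize the equivariant structure constants. You correctly identify the equivariant diamond lemma as the crux. If your goal were an independent writeup, the main thing missing is any actual execution of that lemma --- the local case analysis when multiple edge labels occupy the same horizontal edge, and the verification that the product of $(y_i-y_j)$ factors is genuinely invariant under reordering slides, are where the substance lies, and you have only named them rather than carried them out. But as far as this paper is concerned, no proof is expected of you here.
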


Specializing $y_i$ as in Lemma~\ref{lem:key2NHLF}, we get the following enumerative formula.

\begin{corollary}\label{cor:thomas_yong}
\begin{equation*} \label{eq:TYF} \tag{TYF}
f^{\lambda/\mu} \, = \, \frac{|\lambda/\mu|!}{\prod_{u\in [\lambda]} h(u)}
\, \sum_{T\in \EqSYT(\la,\mu)}  \, \prod_{r=1}^n \, \bigl(\lambda_{i_r}-i_r+1-c(u_r)\bigr)\ts.
\end{equation*}
\end{corollary}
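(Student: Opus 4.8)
The plan is to derive (TYF) by specializing the Thomas--Yong theorem at $y_i = i$ and feeding the outcome into Lemma~\ref{lem:key2NHLF}, in exactly the same manner that (KTF) was obtained from the puzzle rule. First I would record the two ingredients already in hand. Lemma~\ref{lem:key2NHLF} gives
$$\frac{f^{\lambda/\mu}}{|\lambda/\mu|!} \, = \, (-1)^{|\lambda/\mu|}\,\left.\frac{C_{\mu,\lambda}^{\lambda}({\bf y})}{C_{\lambda,\lambda}^{\lambda}({\bf y})}\right|_{y_i=i},$$
and Lemma~\ref{lem:clamlamlam} evaluates the denominator as $C_{\lambda,\lambda}^{\lambda}({\bf y})|_{y_i=i} = (-1)^{|\lambda|}\prod_{u\in[\lambda]} h(u)$. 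Combining the two reduces the corollary to computing the numerator $C_{\mu,\lambda}^{\lambda}({\bf y})|_{y_i=i}$, which is precisely what the Thomas--Yong theorem provides.

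Next I would specialize each edge weight. Setting $y_i=i$ in a single factor of $wt(T)$ yields
$$y_{c(u_r)+\ell(\lambda)} - y_{\lambda_{i_r}-i_r+\ell(\lambda)+1}\,\Big|_{y_i=i} \, = \, \bigl(c(u_r)+\ell(\lambda)\bigr) - \bigl(\lambda_{i_r}-i_r+\ell(\lambda)+1\bigr) \, = \, -\bigl(\lambda_{i_r}-i_r+1-c(u_r)\bigr),$$
so the $\ell(\lambda)$ contributions cancel and each factor is exactly the negative of the corresponding factor in (TYF). The key combinatorial fact I would then isolate is that every $T\in\EqSYT(\lambda,\mu)$ carries exactly $|\mu|$ edge labels: the $n=|\lambda|$ labels split into the $|\lambda/\mu|$ labels occupying the boxes of $\lambda/\mu$ and the remaining $|\lambda|-|\lambda/\mu|=|\mu|$ labels sitting on horizontal edges, these last being precisely the ones that move under jeu-de-taquin and thus contribute to $wt(T)$. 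Consequently
$$wt(T)\big|_{y_i=i} \, = \, (-1)^{|\mu|}\,\prod_{r} \bigl(\lambda_{i_r}-i_r+1-c(u_r)\bigr),$$
the product ranging over those $|\mu|$ edge labels, and summing over $\EqSYT(\lambda,\mu)$ gives $C_{\mu,\lambda}^{\lambda}({\bf y})|_{y_i=i}$.

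Finally I would assemble the signs. Substituting into Lemma~\ref{lem:key2NHLF} and using Lemma~\ref{lem:clamlamlam},
$$f^{\lambda/\mu} \, = \, |\lambda/\mu|!\,(-1)^{|\lambda/\mu|}\,\frac{(-1)^{|\mu|}\sum_{T} \prod_r (\lambda_{i_r}-i_r+1-c(u_r))}{(-1)^{|\lambda|}\prod_{u\in[\lambda]} h(u)}.$$
Since $|\lambda/\mu| = |\lambda|-|\mu|$, the accumulated sign is $(-1)^{(|\lambda|-|\mu|)+|\mu|-|\lambda|} = 1$, and (TYF) drops out. The step I expect to be the main (albeit modest) obstacle is this sign bookkeeping together with pinning down the edge-label count: the corollary holds only because the specialized weights contribute exactly $(-1)^{|\mu|}$, which in turn forces each tableau to have precisely $|\mu|$ edge labels. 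Once that count is verified, the $(-1)^{|\mu|}$ cancels against the signs coming from the two lemmas, and the remainder is a formal substitution identical in spirit to the derivation of (KTF).
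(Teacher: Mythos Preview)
Your proposal is correct and follows exactly the approach the paper indicates: the paper simply states ``Specializing $y_i$ as in Lemma~\ref{lem:key2NHLF}'' without writing out the details, and you have supplied precisely those details --- the edge-weight specialization, the count of $|\mu|$ edge labels per tableau, and the sign cancellation via $|\lambda/\mu|+|\mu|-|\lambda|=0$. There is nothing to add; your write-up is in fact more explicit than the paper's own treatment.
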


Note an important disadvantage of~\eqref{eq:TYF} when compared to LR-coefficients
and other formulas:  the set of tableaux $\EqSYT(\lambda,\mu)$ does not have an easy description.
In fact, it would be interesting to see if it can be presented as the
number of integer points in some polytope, a result which famously holds in all
other cases.

\begin{example}
\ytableausetup{boxsize=0.15in}
Consider the case when $\lambda/\mu=(2^2/1)$. There are two tableaux
of shape $\lambda/\mu$ that rectify to the superstandard tableaux
$\ytableaushort{12,34}$ of weight $\lambda$:
\[
\ytableaushort{{\parbox{0.1in}{\vspace{0.1in} $\quad$ \\ 1}}2,{$\,$
    3}4} \qquad \text{and} \qquad  \ytableaushort{{$\quad$ }2,{ 1 \parbox{0.1in}{\vspace{0.1in} $\quad$  \\ 3}}4}\,\,,
\]
where the first tableau has weight $(2-1+1 - (0))=2$ corresponding to edge
label~$1$, and the second tableau with weight $(2-2+1 - (-1))=2$
corresponding to edge label~$3$.  By Corollary~\ref{cor:thomas_yong}, we have
$$f^{(2^2/1)} \, = \, \frac{3!}{3\cdot 2\cdot 2\cdot 1} \. (2+2) \. = \.2\ts.$$

Comparing with the terms from the NHLF, we have 2 excited diagrams
\includegraphics[scale=0.8]{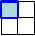}  which contributes a weight 3
(hook length of the blue square) and \includegraphics[scale=0.8]{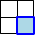} which contributes weight~1, so
$$
f^{(2^2/1)} \, = \,  \frac{3!}{3\cdot 2\cdot 2\cdot 1} \. (3+1)\..
$$

As this example illustrates, the Thomas--Young formula~\eqref{eq:TYF}
and the NHLF have different terms,
and thus neither equivalent nor easily comparable.
\end{example}

\subsection{The Naruse hook-length formula}\label{ss:compare-nhlf}
In lieu of a summary, the NHLF has both SSYT and RPP $q$-analogue. The
RPP $q$-analogue~\eqref{eq:skew-RPP} is proved
fully bijectively.  For the SSYT $q$-analogue, we do not have a description of the (restricted) inverse map $\GH = \HG^{-1}$
to give a fully bijective proof of~\eqref{eq:skewschur}. Instead we prove that the (restricted)
Hillman--Grassl map is bijective in this case in part via an algebraic argument.
We believe that map~$\GH$ can in fact be given an explicit description, but perhaps
the resulting bijective proof would be more involved (cf.~\cite{NPS}).
The NHLF has
a combinatorial proof (via the RPP $q$-analogue and combinatorics of excited and
pleasant diagrams), but no direct bijective proof.  It is also a summation over
a set $\ED(\la/\mu)$ which is easy to compute (Corollary~\ref{cor:GV}).
As a bonus it has common generalization with Stanley and Gansner's trace formulas
(see~$\S$\ref{ss:intro-trace}).

\bigskip

\section{Final remarks} \label{sec:finrem}

\subsection{} \label{ss:finrem-future}
This paper is the first in a series dedicated to the study of the NHLF  and contains most of
the arXiv preprint~\cite{MPP1}, except for the enumerative applications.
The latter are expanded and further generalized in~\cite{MPP2},
including a distinct proof of the NHLF~\eqref{eq:Naruse} using the
Lindstr\"om--Gessel--Viennot lemma.
Generalization to multivariate formulas, product formulas for the
number of SYT of certain skew shapes, and connections to lozenge tilings
can be found in~\cite{MPP3}.  A generalization to
Grothendieck polynomials including the most unusual extension
of the HLF to the number of increasing tableaux will appear
in~\cite{MPP4}. Asymptotic applications of
the NHLF and other formulas for $f^{\lambda/\mu}$ can be found in \cite{MPP5}.
Let us mention that while these next papers in this series rely
on the current work, they are largely independent from each other.

\subsection{} \label{ss:finrem-hist-survey}
There is a very large literature on the number of SYT of both straight
and skew shapes.  We refer to a recent comprehensive survey~\cite{HEC} of this
fruitful subject.  Similarly, there is a large literature on enumeration of
plane partitions, both using bijective and algebraic arguments.  We refer to
an interesting historical overview~\cite{Kra-survey} which begins with
MacMahon's theorem and ends with recent work on ASMs and perfect matchings.

\subsection{}  \label{ss:finrem-GNW}
%
As we mention in the introduction, there are many proofs of the HLF,
some of which give rise to generalizations and pave interesting
connections to other areas (see e.g.\
\cite{Ban,CKP,GNW,Kratt-invol,NPS,P1,Rem,Ver}).  Unfortunately,
none of them easily adapt to skew shapes.  Ideally, one would want
to give a NPS-style bijective proof of the NHLF
(Theorem~\ref{thm:IN}).
In 2017, Konvanlinka \cite{Ko} gave a combinatorial proof of the
NHLF via a {\em bumping algorithm} (see also \cite[\SS 10.2]{MPP2}).

Recall that Stanley's Theorem~\ref{thm:HG} is a special case of more general
{\em Stanley's hook-content formula} for $s_{\lambda}(1,q,\ldots,q^{M})$
(see e.g.~\cite[\S 7.21]{EC2}).
Krattenthaler was able to combine the Hillman--Grassl
correspondence with the jeu-de-taquin and the NPS correspondences
to obtain bijective proofs of the hook-content formula~\cite{Kratt,Kra-another}.
Is there a NHLF-style hook-content formula for
$s_{\lambda/\mu}(1,q,\ldots,q^{M})$? See Section~\ref{sec:bounded_parts} for a version
for border strips and a discussion for general skew shapes.

In a different direction, the hook-length formula for $f^{\lambda}$ has a
celebrated probabilistic proof~\cite{GNW}.  If an NPS-style proof is too
much to hope for, perhaps a GNW-style proof of the NHLF would be more natural
and as a bonus would give a simple way to sample from $\SYT(\la/\mu)$
(as would the NPS-style proof, cf.~\cite{SSG}).  Such algorithm would be
theoretical and computational interest.  Note that for general posets~$\cP$
on~$n$ elements, there is a $O(n^3\log n)$ time MCMC algorithm for
perfect sampling of linear extensions of~$\cP$~\cite{Huber}.

\subsection{} \label{ss:finrem-hist}
The excited diagrams were introduced independently in \cite{IkNa09} by
Ikeda--Naruse and in~\cite{VK,VK2} by Kreiman in the context of
equivariant cohomology theory of Schubert varieties (see
also~\cite{GK, IkNa12}).
For skew shapes coming from {\em vexillary permutations},
they also appear in terms of {\em pipe dreams} or {\em rc-graphs} in the work of
Knutson, Miller and Yong~\cite[\S 5]{KMY}, who  used these objects to
give a formula for {\em double Schubert polynomials} of such permutations.

\subsection{}  \label{ss:finrem-Lam}
As we mention in the previous section, RPP typically do not arise in the
context of symmetric functions.  A notable exception is the recent
work by Lam and Pylyavskyy~\cite{LaPy}, who defined a symmetric function
$g_{\lambda/\mu}({\bf x})$ in terms of RPP of shape $\lambda/\mu$, and
have a LR-rule \cite{Ga}.
However, these functions are not homogeneous and the specialization
$g_{\lambda/\mu}(1,q,q^2,\ldots)$ is different than our RPP \ts $q$-analogue.

\subsection{}  \label{ss:finrem-complexity}
%
By Corollary~\ref{cor:GV}, the number of excited
diagrams of $\lambda/\mu$ can be computed with a determinant of binomials. Thus
$|\ED(\lambda/\mu)|$ can be computed in polynomial time.
This raises a question
whether $|\PD(\lambda/\mu)|$ can be computed efficiently
(see Section~\ref{sec:pleasant}).  Perhaps,
Theorem~\ref{thm:num_pleasant} can be applied in the general case.


%

 \subsection{}  \label{ss:finrem-shifted}
%
Along with Theorem~\ref{thm:IN}, Naruse also announced two formulas for the number
$g^{\lambda/\mu}$ of standard tableaux of skew shifted shape~$\lambda/\mu$,
in terms of {\em type $B$} and {\em type $D$} excited diagrams,
respectively. In fact, as one of the reviewers pointed out, the derivation of
the NHLF is valid for all {\em minuscule types}.
\nin
It would be of interest to find both $q$-analogues of these formulas,
as in theorems~\ref{thm:skewSSYT} and~\ref{thm:skewRPP}.
Let us mention that while some arguments translate to the
shifted case without difficulty (see e.g.~\cite{Kratt-invol,Sag-hook}),
in other cases this is a major challenge (see e.g.~\cite{Fis}).

\vskip.6cm

\subsection*{Acknowledgements}
We are grateful to Per Alexandersson, Dan Betea, Sara Billey, Brian
Chan, Leonid
Petrov, Robert Proctor, Eric Rains, Luis Serrano, Richard Stanley,
Nathan Williams, Matthew Willis, 
and Alex Yong and the referees
for useful comments and help with the references.
The puzzles in
Section~\ref{sec:KTpuzzles} were done using Sage and its algebraic combinatorics features
developed by the Sage-Combinat community \cite{sage-combinat}.
The first author was partially supported by an AMS-Simons travel
grant. The second and third authors were partially supported by the~NSF.

\vskip.8cm

\newpage

\end{document}